\documentclass[a4paper,11pt]{amsart}
\usepackage{amsmath,amssymb,amsthm,graphicx,multirow}
\usepackage[all]{xy}
\usepackage[top=30truemm,bottom=30truemm,left=25truemm,right=25truemm]{geometry}

\allowdisplaybreaks[1]

\newtheorem{Th}{Theorem}[section]
\newtheorem{Cor}[Th]{Corollary}
\newtheorem{Prop}[Th]{Proposition}
\newtheorem{Lem}[Th]{Lemma}
\newtheorem*{Th*}{Theorem}

\theoremstyle{definition}
\newtheorem{Def}[Th]{Definition}
\newtheorem{Cond}[Th]{Condition}
\newtheorem{Ex}[Th]{Example}

\theoremstyle{remark}
\newtheorem{Rem}[Th]{Remark}

\newcommand{\Z}{\mathbb{Z}}
\newcommand{\C}{\mathbb{C}}

\newcommand{\rb}{\mathrm{b}}
\newcommand{\rT}{\mathrm{T}}
\newcommand{\np}{\mathrm{np}}
\newcommand{\pr}{\mathrm{pr}}
\newcommand{\ppr}{\mathrm{pp}}
\newcommand{\reg}{\mathrm{reg}}
\newcommand{\pin}{\mathrm{pi}}

\newcommand{\cC}{\mathcal{C}}
\newcommand{\cD}{\mathcal{D}}

\newcommand{\cF}{\mathcal{F}}
\newcommand{\cH}{\mathcal{H}}
\newcommand{\cT}{\mathcal{T}}
\newcommand{\cS}{\mathcal{S}}
\newcommand{\cW}{\mathcal{W}}
\newcommand{\cX}{\mathcal{X}}
\newcommand{\cY}{\mathcal{Y}}

\newcommand{\sD}{\mathsf{D}}
\newcommand{\sF}{\mathsf{F}}
\newcommand{\sK}{\mathsf{K}}
\newcommand{\sL}{\mathsf{L}}
\newcommand{\sR}{\mathsf{R}}
\newcommand{\sT}{\mathsf{T}}
\newcommand{\sW}{\mathsf{W}}

\newcommand{\rone}{\textbf{1}}
\newcommand{\rtwo}{\textbf{2}}
\newcommand{\rthr}{\textbf{3}}

\newcommand{\bc}{\boldsymbol{c}}
\newcommand{\be}{\boldsymbol{e}}
\newcommand{\bg}{\boldsymbol{g}}
\newcommand{\bh}{\boldsymbol{h}}
\newcommand{\bp}{\boldsymbol{p}}
\newcommand{\bC}{\boldsymbol{C}}
\newcommand{\bD}{\boldsymbol{D}}
\newcommand{\bG}{\boldsymbol{G}}

\DeclareMathOperator{\module}{\mathsf{mod}}
\DeclareMathOperator{\proj}{\mathsf{proj}}
\DeclareMathOperator{\inj}{\mathsf{inj}}
\DeclareMathOperator{\add}{\mathsf{add}}

\DeclareMathOperator{\Hom}{\mathsf{Hom}}
\DeclareMathOperator{\rad}{\mathsf{rad}}
\DeclareMathOperator{\End}{\mathsf{End}}
\DeclareMathOperator{\Ext}{\mathsf{Ext}}
\DeclareMathOperator{\Filt}{\mathsf{Filt}}
\DeclareMathOperator{\Fac}{\mathsf{Fac}}
\DeclareMathOperator{\Sub}{\mathsf{Sub}}
\DeclareMathOperator{\ind}{\mathsf{ind}}
\DeclareMathOperator{\ann}{\mathsf{ann}}
\DeclareMathOperator{\soc}{\mathsf{soc}}
\DeclareMathOperator{\dimension}{\mathsf{dim}}
\DeclareMathOperator{\Cone}{\mathsf{Cone}}
\DeclareMathOperator{\supp}{\mathsf{supp}}

\DeclareMathOperator{\twosilt}{\mathsf{2-silt}}
\DeclareMathOperator{\twocosilt}{\mathsf{2-cosilt}}
\DeclareMathOperator{\inttstr}{\mathsf{int-t-str}}

\DeclareMathOperator{\twosmc}{\mathsf{2-smc}}
\DeclareMathOperator{\sttilt}{\mathsf{s\tau-tilt}}
\DeclareMathOperator{\stitilt}{\mathsf{s}\tau^{--1}\mathsf{-tilt}}
\DeclareMathOperator{\trigid}{\mathsf{\tau-rigid}}
\DeclareMathOperator{\tirigid}{\tau^{--1}\mathsf{-rigid}}
\DeclareMathOperator{\brick}{\mathsf{brick}}
\DeclareMathOperator{\sbrick}{\mathsf{sbrick}}
\DeclareMathOperator{\sbr}{\mathsf{sb}}
\DeclareMathOperator{\fLsbrick}{\mathsf{f_L-sbrick}}
\DeclareMathOperator{\fRsbrick}{\mathsf{f_R-sbrick}}
\DeclareMathOperator{\tors}{\mathsf{tors}}
\DeclareMathOperator{\torf}{\mathsf{torf}}
\DeclareMathOperator{\wide}{\mathsf{wide}}
\DeclareMathOperator{\fwide}{\mathsf{f-wide}}
\DeclareMathOperator{\ftors}{\mathsf{f-tors}}
\DeclareMathOperator{\fLwide}{\mathsf{f_L-wide}}
\DeclareMathOperator{\ftorf}{\mathsf{f-torf}}
\DeclareMathOperator{\fRwide}{\mathsf{f_R-wide}}

\DeclareMathOperator{\Coker}{\mathsf{Coker}}
\DeclareMathOperator{\Ker}{\mathsf{Ker}}
\DeclareMathOperator{\Image}{\mathsf{Im}}
\DeclareMathOperator{\mini}{\mathsf{min}}
\DeclareMathOperator{\maks}{\mathsf{max}}

\renewcommand{\Im}{\Image}
\renewcommand{\mod}{\module}
\renewcommand{\dim}{\dimension}
\renewcommand{\min}{\mini}
\renewcommand{\max}{\maks}

\renewcommand{\Phi}{\varPhi}
\renewcommand{\phi}{\varphi}

\title{Semibricks}
\date{\today}
\author{Sota Asai}
\address{Graduate School of Mathematics, Nagoya University, Furo-cho,
Chikusa-ku, Nagoya-shi, Aichi-ken, 464-8602, Japan}
\email{m14001v@math.nagoya-u.ac.jp}
\keywords{bricks; support $\tau$-tilting modules; torsion classes; wide subcategories;
simple-minded collections; silting objects; t-structures}
\subjclass[2010]{16G10 (primary), 16E35, 16S90 (secondary)}

\begin{document}

\begin{abstract}
In representation theory of finite-dimensional algebras,
(semi)bricks are a generalization of (semi)simple modules,
and they have long been studied.
The aim of this paper is to study semibricks
from the point of view of $\tau$-tilting theory.
We construct canonical bijections between
the set of support $\tau$-tilting modules,
the set of semibricks satisfying a certain finiteness condition,
and the set of 2-term simple-minded collections.
In particular, we unify Koenig--Yang bijections and
Ingalls--Thomas bijections generalized by Marks--\v{S}t\!'ov\'{i}\v{c}ek,
which involve several important notions 
in the derived categories and the module categories.
We also investigate connections between our results and 
two kinds of reduction theorems of $\tau$-rigid modules 
by Jasso and Eisele--Janssens--Raedschelders.
Moreover, we study semibricks over Nakayama algebras and tilted algebras in detail
as examples.
\end{abstract}

\maketitle

\tableofcontents

\setcounter{section}{-1}

\section{Introduction}

In representation theory of a finite-dimensional algebra $A$ over a field $K$, 
the notion of (semi)simple modules is fundamental.
By Schur's Lemma, they satisfy the following properties:
\begin{itemize}
\item the endomorphism ring of a simple module is a division algebra,
\item there exists no nonzero homomorphism between two nonisomorphic simple modules.
\end{itemize} 

A module $M$ in $\mod A$ is called a \textit{brick} 
if its endomorphism ring is a division algebra.
This notion is a generalization of simple modules,
and it has long been studied in representation theory \cite{Ringel, Gabriel1, Gabriel2}.
Typical examples of bricks are given as 
preprojective modules and preinjective modules over
a finite-dimensional hereditary algebra.
Sometimes, it is useful to consider 
sets of isoclasses of pairwise Hom-orthogonal bricks.
We simply call them \textit{semibricks},
and define $\sbrick A$ as the set of semibricks.
It follows from a classical result by Ringel \cite{Ringel} 
that the semibricks $\cS$ correspond bijectively to the wide subcategories $\cW$
of $\mod A$,
that is, the subcategories 
which are closed under taking kernels, cokernels, and extensions.
Under this correspondence, $\cS$ is the set of  
the simple objects in $\cW$,
and $\cW$ consists of all the $A$-modules filtered by bricks in $\cS$.
Moreover, bricks and wide subcategories have close relationship with
ring epimorphisms and universal localizations
\cite{Stenstrom, Schofield, GL}.

In this paper, we assign a condition for semibricks.
We say that a semibrick $\cS$ is \textit{left finite}
if the smallest torsion class $\sT(\cS) \subset \mod A$
containing $\cS$ is functorially finite.
We write $\fLsbrick A$ for the set of left finite semibricks.
Right finite semibricks and the set $\fRsbrick A$ are defined dually
by using torsion-free classes.
We define left-finiteness and right-finiteness of 
wide subcategories in the same way as semibricks.
We write $\fLwide A$ (resp.\ $\fRwide A$) for the set of 
left (resp.\ right) finite wide subcategories 
of $\mod A$.
Clearly, Ringel's bijection is restricted to bijections 
$\fLsbrick A \to \fLwide A$ and $\fRsbrick A \to \fRwide A$.

Recently, Adachi--Iyama--Reiten \cite{AIR} obtained a bijection
from the set $\sttilt A$ of basic \textit{support $\tau$-tilting modules} in $\mod A$
to the set $\ftors A$ of functorially finite torsion classes in $\mod A$,
where $M$ is sent to $\Fac M$.
Support $\tau$-tilting modules are a generalization of tilting modules
and are defined by using the Auslander--Reiten translation $\tau$.
Adachi--Iyama--Reiten also proved that 
there are operations called \textit{mutations} of support $\tau$-tilting modules,
that is, constructing a new support $\tau$-tilting module by 
changing one indecomposable direct summand of a given one.
They showed that such mutations are nothing but the adjacency relations 
with respect to the inclusions of the corresponding torsion classes.

The following first main result shows 
that the support $\tau$-tilting modules also
correspond bijectively to the left finite semibricks,
where $\ind N$ denotes the set of isoclasses of indecomposable direct summands of $N$.
This is an extension of a result by Demonet--Iyama--Jasso \cite{DIJ}.

\begin{Th}[Theorem \ref{sttilt_fsbrick}]\label{intro_1}
We have the following maps,
where $B:=\End_A(M)$ in each case.
\begin{itemize}
\item[(1)]
There exists a surjection $\trigid A \to \fLsbrick A$ defined 
as $M \mapsto \ind(M/{\rad_B M})$. 
Under this map, the left finite semibricks for two $\tau$-rigid modules $M,M' \in \trigid A$ 
coincide if and only if $\Fac M=\Fac M'$.
\item[(2)]
There exists a bijection $\sttilt A \to \fLsbrick A$ defined 
as $M \mapsto \ind(M/{\rad_B M})$.
\end{itemize}
\end{Th}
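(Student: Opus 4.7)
The plan is to realize the bijection of part (2) as a composite of three bijections --- each classical or established earlier in this paper --- and then to identify it explicitly. Concretely, consider the chain
$$
\sttilt A \xrightarrow{\ \Fac\ } \ftors A \xrightarrow{\ \alpha\ } \fLwide A \xrightarrow{\ \text{simples}\ } \fLsbrick A,
$$
where the first arrow is the Adachi--Iyama--Reiten bijection, $\alpha$ is the Marks--\v{S}t\!'ov\'{\i}\v{c}ek refinement of the Ingalls--Thomas bijection sending a functorially finite torsion class $\cT$ to the wide subcategory $\alpha(\cT) := \{X\in\cT \mid \Ker g\in\cT\ \text{for every}\ g\colon Y\to X\ \text{with}\ Y\in\cT\}$, and the last arrow is Ringel's bijection restricted to left finite data. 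Each arrow is a bijection, so the composite is also a bijection, and part (2) reduces to checking that it sends $M$ to $\ind(M/\rad_B M)$.

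For this identification, fix $M\in\sttilt A$, write $M=\bigoplus_{i=1}^{n} M_i$ with each $M_i$ indecomposable, set $B := \End_A(M)$, and let $T_i := M_i/\rad_B M_i$. I would establish three claims. First, each $T_i$ lies in $\alpha(\Fac M)$: any morphism $g\colon Y\to T_i$ with $Y\in\Fac M$ factors through a map $M_i^{k}\to T_i$, which by local-ring considerations is a scalar multiple of the canonical surjection on each component, so $\Im g$ is either $0$ or $T_i$ and $\Ker g$ remains in $\Fac M$. Second, the $T_i$ are pairwise Hom-orthogonal bricks, forming a semibrick: any morphism $T_i\to T_j$ lifts to a map $M_i\to M_j$ in $B$ which, modulo $\rad B$, vanishes for $i\neq j$ and is a scalar for $i=j$. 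Third, and most subtly, these $T_i$ exhaust the simple objects of $\alpha(\Fac M)$; here I would match the cardinality $|\ind M|=n$ with the number of simples of the associated wide subcategory, using the numerical invariants of support $\tau$-tilting theory together with the fact that $M$ is an Ext-projective generator of $\Fac M$.

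Part (1) then follows from part (2) via Bongartz completion. Any basic $\tau$-rigid module $M$ admits a Bongartz completion $\overline{M}\in\sttilt A$, and two basic $\tau$-rigid modules share their Bongartz completion if and only if $\Fac M=\Fac M'$. Because the indecomposable summands of $M$ remain summands of $\overline{M}$ with the same local endomorphism rings, the formula $\ind(M/\rad_{\End M}M)$ agrees with the restriction to these summands of the analogous formula for $\overline{M}$; composing the canonical surjection $\trigid A \twoheadrightarrow \sttilt A$ with the bijection of part (2) then yields the claimed surjection with the stated fibers. The main obstacle, in my view, is the third claim above --- verifying that the $T_i$ exhaust the simple objects of $\alpha(\Fac M)$ rather than merely forming a subset --- which requires a careful structural analysis of how Ext-projectivity of $M$ in $\Fac M$ controls morphisms into arbitrary members of $\alpha(\Fac M)$.
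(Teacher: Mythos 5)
Your architecture is genuinely different from the paper's: you route through the Marks--\v{S}t\!'ov\'{i}\v{c}ek bijection $\sW_\sL\colon\ftors A\to\fLwide A$ and Ringel's bijection and then identify the composite, whereas the paper works directly with $\sT\colon\sbrick A\to\tors A$, proves it is injective on semibricks via a Schur-type lemma for $\Filt(\Fac\cS)$, and reads off both (1) and (2) from the commutative triangle $\Fac=\sT\circ(M\mapsto\ind(M/\rad_B M))$; Marks--\v{S}t\!'ov\'{i}\v{c}ek is then \emph{recovered} rather than used. Your Claims 1 and 2 are correct and match the paper's well-definedness lemma. But the step you yourself flag as the main obstacle is a genuine gap, and the fix you propose cannot work: the number of simple objects of $\sW_\sL(\Fac M)$ is \emph{not} $|\ind M|$ in general; it is the number of indices $i$ with $M_i/\rad_B M_i\neq 0$, equivalently the number of left mutations of $M$ (Proposition \ref{brick_eq}). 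For $A=K(1\to 2)$ and $M=P_1\oplus S_1\in\sttilt A$ one has $|M|=2$, yet $\sW_\sL(\Fac M)=\add P_1$ has a single simple, because the surjection $P_1\to S_1$ has kernel $S_2\notin\Fac M$. So Claim 3 must be proved directly, and the natural tool is exactly the paper's key lemma: both $\{T_i\mid T_i\neq0\}$ and the set of simples of $\sW_\sL(\Fac M)$ are semibricks generating the torsion class $\Fac M$ (the first via the filtration $M\supset\rad_B M\supset\rad_B^2M\supset\cdots$, the second because $\sT\circ\sW_\sL=\mathrm{id}$ on $\ftors A$), and $\sT$ is injective on semibricks. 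Without that injectivity --- the actual heart of the theorem --- your proof is incomplete.

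Part (1) also contains an error: the Bongartz completion is the wrong completion. The Bongartz completion $T$ of $M$ satisfies $\Fac T={}^\perp(\tau M)$, which strictly contains $\Fac M$ in general, and two basic $\tau$-rigid modules with $\Fac M=\Fac M'$ need not share a Bongartz completion: over $K(1\to2)$, the Bongartz completion of $P_1$ is $P_1\oplus P_2$ while that of $P_1\oplus S_1$ is $P_1\oplus S_1$ itself, although $\Fac P_1=\Fac(P_1\oplus S_1)$. You need the opposite ("co-Bongartz") completion, i.e.\ the Ext-projective generator of $\Fac M$ from Proposition \ref{sttilt_ftors}. Even then, the assertion that the semibrick formula for $M$ is the "restriction" of the formula for its completion is not automatic --- the added summands contribute new radical maps into the $M_i$ and could a priori change $M_i/\rad_B M_i$ --- and again the clean way out is the paper's: show $\sT(\ind(M/\rad_B M))=\Fac M$ for every $\tau$-rigid $M$ and invoke injectivity of $\sT$ to identify the fibers with the fibers of $\Fac$.
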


For example, this map sends the progenerator $A$ to
the set of simple $A$-modules.
In the proof of Theorem \ref{intro_1}, 
we obtain a canonical bijection $\sT \colon \fLsbrick A \to \ftors A$,
see Proposition \ref{comm_3}.
Consequently, we recover Marks--\v{S}t\!'ov\'{i}\v{c}ek bijection
between $\fLwide A$ and $\ftors A$ \cite{MS}.
As an application of Theorem \ref{intro_1}, 
we have the following result.

\begin{Cor}[Corollary \ref{card_bound}]
If a semibrick $\cS$ is either left finite or right finite,
then $\# \cS \le n_A$ holds,
where $n_A$ is the number of isoclasses of simple $A$-modules.
\end{Cor}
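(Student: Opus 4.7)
The plan is to derive the bound directly from the bijection in Theorem~\ref{intro_1}(2) (and its dual for the right-finite case).

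First I would handle the left-finite case. If $\cS \in \fLsbrick A$, then by Theorem~\ref{intro_1}(2) there exists a basic support $\tau$-tilting module $M \in \sttilt A$ with $\cS = \ind(M/\rad_B M)$, where $B = \End_A(M)$. The cardinality $\#\cS$ equals the number of non-isomorphic indecomposable direct summands of $M/\rad_B M$ as a $B$-module, which in turn equals the number $|M|$ of non-isomorphic indecomposable direct summands of $M$, since $M/\rad_B M$ is the semisimple top of $M$ over its endomorphism ring, with one simple constituent for each indecomposable summand of $M$.

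Next I would invoke the fundamental inequality $|M| \le n_A$ for any basic support $\tau$-tilting $A$-module $M$, which is a standard result of Adachi--Iyama--Reiten. Combining these two observations yields $\#\cS = |M| \le n_A$, which settles the left-finite case.

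For the right-finite case, I would argue by duality. A semibrick $\cS \in \sbrick A$ is right finite if and only if it is left finite when regarded as a semibrick over $A^{\op}$ via the standard $K$-duality $D = \Hom_K(-,K) \colon \mod A \to \mod A^{\op}$, which sends bricks to bricks, Hom-orthogonal collections to Hom-orthogonal collections, and exchanges torsion classes with torsion-free classes while preserving functorial finiteness. Hence the left-finite case applied to $A^{\op}$ gives $\#\cS \le n_{A^{\op}} = n_A$.

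The argument is essentially a counting consequence of the bijection in Theorem~\ref{intro_1}, so the only genuine point to verify is the identification $\#\ind(M/\rad_B M) = |M|$; this is a standard property of the top of a module over its own endomorphism ring and presents no real obstacle.
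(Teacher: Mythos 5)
Your overall strategy is the same as the paper's: pass to $M \in \sttilt A$ via Theorem \ref{sttilt_fsbrick} (2), bound $\#\cS$ by $|M|$, use $|M| \le |A|$, and dualize for the right finite case. However, the step you single out as the "only genuine point to verify" is in fact false as stated. It is not true that $\#\ind(M/{\rad_B M}) = |M|$ for $M \in \sttilt A$: the module $M$ need not be projective as a left $B$-module, so the semisimple top $M/{\rad_B M}$ can fail to have a constituent for some indecomposable summands of $M$. Concretely, writing $M = \bigoplus_{i=1}^m M_i$ and $N_i = M_i/\sum_{f \in \rad_A(M,M_i)} \Im f$, one has $N_i = 0$ exactly when $M_i \in \Fac\bigl(\bigoplus_{j \ne i} M_j\bigr)$ (this is Proposition \ref{brick_eq}), and this does happen: in Example \ref{sttilt_ex}, for the support $\tau$-tilting module $M = \left(\begin{smallmatrix}2\\3\end{smallmatrix}\right) \oplus \left(\begin{smallmatrix}1\\2\\3\end{smallmatrix}\right) \oplus \left(\begin{smallmatrix}2\end{smallmatrix}\right)$ the third summand lies in $\Fac$ of the first, so the associated semibrick is $\bigl\{\left(\begin{smallmatrix}2\\3\end{smallmatrix}\right), \left(\begin{smallmatrix}1\end{smallmatrix}\right)\bigr\}$ and $\#\cS = 2 < 3 = |M|$.

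Fortunately, only the inequality $\#\cS \le |M|$ is needed, and this is exactly what the paper uses: by Lemma \ref{well-def} (4), $M/{\rad_B M} = \bigoplus_{i=1}^m N_i$ with each $N_i$ a brick or zero and the nonzero ones pairwise nonisomorphic, so $\#\cS = |M/{\rad_B M}| \le m = |M| \le |A|$. So your argument is repaired simply by replacing the claimed equality with this inequality (and by citing the decomposition of $M/{\rad_B M}$ rather than a general "top over the endomorphism ring" principle, which does not apply here). Your reduction of the right finite case to the left finite case for $A^{\mathrm{op}}$ via $D$ is correct and is also how the paper handles it.
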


This corollary does not generally hold for arbitrary semibricks.
For example, if $A$ is the Kronecker algebra,
then there exists a semibrick consisting of infinitely many bricks.

In Subsection \ref{label_mod},
we use the map in Theorem \ref{intro_1} 
to label the exchange quiver of $\sttilt A$ with bricks,
that is, to define a map from the set of arrows in the exchange quiver
to the set of bricks, see Definition \ref{label_mod_def}.

Subsection \ref{wide_mod} is devoted to further study of wide subcategories.
Our aim in this subsection is to describe 
each left finite wide subcategory $\cW$ of $\mod A$
as the module category $\mod A'$ of some finite-dimensional algebra $A'$.
We can take $M \in \sttilt A$ corresponding to $\cW$,
then we have an equivalnce $\Hom_A(M,?) \colon \Fac M \to \Sub_B DM$ 
by a Brenner--Butler type theorem, where $B:=\End_A(M)$.
We restrict this equivalence to $\cW \subset \Fac M$.
The following result shows that
we can regard $\cW$ naturally as $\mod B/\langle e \rangle$
given by a certain explicit idempotent $e \in B$.

\begin{Th}[Theorem \ref{wide_B}]\label{wide_B_intro}
The equivalence $\Hom_A(M,?) \colon \Fac M \to \Sub_B DM$ is restricted to 
an equivalence $\Hom_A(M,?) \colon \cW \to \mod B/\langle e \rangle$.
\end{Th}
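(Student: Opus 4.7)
The plan is to match up the simple objects of $\cW$ with simples of $\mod(B/\langle e \rangle)$ under $\Hom_A(M,?)$, and then invoke $\cW = \Filt_{\mod A}(\cS)$ (from Theorem \ref{intro_1}) to propagate the identification to all of $\cW$. In particular $\cW \subseteq \Fac M$, so the equivalence $\Hom_A(M,?) \colon \Fac M \to \Sub_B DM$ restricts to a fully faithful functor on $\cW$. Writing $M = \bigoplus_{i=1}^k M_i$ with $e_i \in B$ the primitive idempotent of $M_i$, I define the explicit idempotent
\[
e := \sum_{i \,:\, M_i/\rad_B M_i = 0} e_i \in B,
\]
so that $1-e$ picks out exactly the indecomposable summands contributing a nonzero brick to $\cS$.

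The central computation is that for each $i$ with $S_i := M_i/\rad_B M_i \neq 0$, $\Hom_A(M, S_i) \cong e_iB/e_i\rad B$, the simple top of the indecomposable projective right $B$-module $e_iB$. Since $\rad_B M_i$ is a quotient of a power of $M$, it lies in $\Fac M$, and $\tau$-rigidity of $M$ forces $\Ext^1_A(M,-)|_{\Fac M} = 0$; hence $\Hom_A(M,?)$ is exact on $0 \to \rad_B M_i \to M_i \to S_i \to 0$, and it suffices to identify $\Hom_A(M, \rad_B M_i)$ with $e_i\rad B \subset e_iB$. The inclusion $\supseteq$ is immediate from the definition of $\rad_B M_i$ as the trace of $\rad B$ on $M$. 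For $\subseteq$, decompose any $f\colon M \to M_i$ with $\Image(f)\subseteq \rad_B M_i$ along the summands of $M$: off-diagonal components $M_j \to M_i$ ($j\neq i$) automatically contribute to $\rad B$, while the diagonal component $f_i \colon M_i \to M_i$ has image in $\rad_B M_i \subsetneq M_i$ (strict because $S_i \neq 0$), so $f_i$ cannot be an isomorphism and must lie in $\rad \End_A(M_i)$ by locality of the local ring $\End_A(M_i)$.

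With this, $\{e_iB/e_i\rad B : S_i \neq 0\}$ is precisely the complete set of simple $B/\langle e \rangle$-modules viewed as simple $B$-modules annihilated by $e$. Since $\Hom_A(M,?)$ preserves and reflects extensions in $\Fac M$, it sends $\cW = \Filt_{\mod A}(\cS)$ to
\[
\Filt_{\mod B}\bigl(\{e_iB/e_i\rad B : S_i \neq 0\}\bigr) = \mod(B/\langle e \rangle),
\]
where the last equality is the standard d\'evissage identifying the Serre subcategory of $\mod B$ generated by all simples of a quotient algebra with the module category of that quotient. Consistency with the target $\Sub_B DM$ is automatic: each simple $e_iB/e_i\rad B = \Hom_A(M, S_i)$ lies in $\Sub_B DM$ and $\Sub_B DM$ is closed under extensions, so $\mod(B/\langle e \rangle) \subseteq \Sub_B DM$.

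The main obstacle is the radical identification $\Hom_A(M, \rad_B M_i) = e_i\rad B$ for $S_i \neq 0$. The strict inclusion $\rad_B M_i \subsetneq M_i$ is the crux: the summands $M_i$ with $S_i = 0$, where this inclusion becomes equality and the locality argument fails, are precisely those the idempotent $e$ is designed to absorb, explaining why $\mod(B/\langle e \rangle)$ rather than all of $\mod B$ is the correct target.
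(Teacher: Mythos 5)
Your proof is correct, and it takes a genuinely different route in the central step. Both you and the paper reduce to showing that $\Hom_A(M,?)$ carries the simple objects $\cS$ of $\cW$ bijectively onto the simple $B/\langle e \rangle$-modules and then invoke exactness of the equivalence (from \cite[Proposition 3.2]{DIJ}) to propagate through filtrations. But where you differ is in how this is established. You compute $\Hom_A(M,S_i)$ explicitly: you use $\Ext^1_A(M,\Fac M)=0$ to make $\Hom_A(M,?)$ exact on $0 \to \rad_B M_i \to M_i \to S_i \to 0$, then identify $\Hom_A(M,\rad_B M_i)$ with $e_i\rad B$ inside $\Hom_A(M,M_i) \cong e_iB$ by decomposing along summands and using locality of $\End_A(M_i)$ together with the strictness $\rad_B M_i \subsetneq M_i$ when $S_i \neq 0$. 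This gives $\Hom_A(M,S_i) \cong e_iB/e_i\rad B$ directly, pinning down exactly which simple $B$-module each $S_i$ is sent to and making the bijection with the simples of $B/\langle e \rangle$ transparent with no further counting. The paper instead argues more structurally: it shows $\Hom_A(M,S)$ is simple by choosing a simple $B$-submodule $Z$, passing back through the quasi-inverse $? \otimes_B M$, and invoking Lemma \ref{Schur}; it then shows $\Hom_A(M,S)e=0$ via Lemma \ref{well-def}(3); finally it closes the argument by a cardinality count $\#I = \#\cS$. Your approach is more elementary and self-contained (it avoids Lemma \ref{Schur}, the quasi-inverse, and the counting step), while the paper's leverages the torsion-theoretic lemmas already established and is arguably shorter given that machinery. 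Both are valid; yours has the minor advantage of producing the explicit isomorphism $\Hom_A(M,S_i) \cong e_iB/e_i\rad B$ rather than merely asserting simplicity.
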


In Subsection \ref{on_EJR},
we consider semibricks for a factor algebra $A/I$
with $I \subset A$ an ideal.
We clearly have an inclusion $\sbrick A/I \subset \sbrick A$, 
and we give the following sufficient condition
so that the equality $\sbrick A/I=\sbrick A$ holds:
$I$ is generated by some elements
belonging to the intersection of the center $Z(A)$ and the radical $\rad A$.
This condition on the ideal $I$ was originally considered by
Eisele--Janssens--Raedschelders \cite{EJR}.
In this situation, we also prove that $\fLsbrick A/I=\fLsbrick A$ 
and recover their canonical bijection $\sttilt A \to \sttilt A/I$.

\medskip

Furthermore, we apply semibricks to study the derived category $\sD^\rb(\mod A)$ 
in Section \ref{Section_smc}.
A \textit{simple-minded collection} in $\sD^\rb(\mod A)$ is 
a set of isoclasses of objects in $\sD^\rb(\mod A)$ 
satisfying the conditions in Schur's Lemma and some additional conditions,
see Definition \ref{smc_def} for details.
For our purpose, it is useful to consider simple-minded collections
which are \textit{2-term}, that is,
the $i$th cohomology $H^i(X)$ of every object $X$ in a simple-minded collection vanishes if $i \ne -1,0$.
A simple-minded collection in $\sD^\rb(\mod A)$ always consists of 
$n_A$ objects, 
and if it is 2-term,
each of the objects belongs to either $\mod A$ or $(\mod A)[1]$, see \cite{KY,BY}.
We write $\twosmc A$ for the set of 2-term simple-minded collections in $\sD^\rb(\mod A)$.
Our next main theorem gives bijections 
between the sets $\twosmc A$, $\fLsbrick A$, and $\fRsbrick A$.

\begin{Th}[Theorem \ref{smc_fsbrick} (1)]
There exist bijections 
\begin{align*}
? \cap \mod A \colon \twosmc A \to \fLsbrick A \quad \mathrm{and} \quad
?[-1] \cap \mod A \colon \twosmc A \to \fRsbrick A
\end{align*}
given by $\cX \mapsto \cX \cap \mod A$ and 
$\cX \mapsto \cX[-1] \cap \mod A$.
\end{Th}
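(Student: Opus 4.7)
The plan is to factor both maps through bijections already available: the Adachi--Iyama--Reiten bijection $\sttilt A \to \twosilt A$, the Koenig--Yang bijection $\twosilt A \to \twosmc A$, and Theorem~\ref{intro_1}(2), which gives $\sttilt A \to \fLsbrick A$ via $M \mapsto \ind(M/\rad_B M)$ with $B:=\End_A(M)$.

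First I would check well-definedness. Given $\cX \in \twosmc A$, the 2-term hypothesis combined with the indecomposability of each $X \in \cX$ forces every object of $\cX$ to be concentrated either in cohomological degree $0$ or in degree $-1$. Thus $\cX = \cX_0 \sqcup \cX_1[1]$ with $\cX_0,\cX_1 \subset \mod A$. The Schur-type conditions in the definition of a simple-minded collection, together with the full faithfulness of $\mod A \hookrightarrow \sD^\rb(\mod A)$, then say precisely that $\cX_0$ and $\cX_1$ are semibricks.

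Next I would verify that the composition
\begin{equation*}
\sttilt A \xrightarrow{\mathrm{AIR}} \twosilt A \xrightarrow{\mathrm{KY}} \twosmc A \xrightarrow{?\,\cap\,\mod A} \sbrick A
\end{equation*}
coincides with the map $M \mapsto \ind(M/\rad_B M)$ of Theorem~\ref{intro_1}(2). Concretely, if $M \in \sttilt A$ corresponds to a 2-term silting object $P$ with $H^0(P) \simeq M$, then the Koenig--Yang construction produces an SMC whose degree-$0$ objects are the simple tops of $B$ supported on the projective summands of $P$ in cohomological degree $0$, transported to $\mod A$ through the Brenner--Butler equivalence $\Hom_A(M,?)\colon \Fac M \to \Sub_B DM$ of Theorem~\ref{wide_B_intro}; these are exactly the indecomposable summands of $M/\rad_B M$. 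Granting this identification, the composite equals the bijection of Theorem~\ref{intro_1}(2) onto $\fLsbrick A$, so because AIR and KY are bijections, $\cX \mapsto \cX \cap \mod A$ is one as well.

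For the second map I would argue dually, using the $K$-linear duality $D = \Hom_K(-,K)$ which exchanges $\mod A$ with $\mod A^{\mathrm{op}}$, torsion classes with torsion-free classes, and left finite with right finite semibricks. Under the induced involution on 2-term simple-minded collections, the degree-$(-1)$ part of $\cX$ is sent to the degree-$0$ part of the opposite-side SMC, so the bijectivity of $\cX \mapsto \cX[-1] \cap \mod A$ onto $\fRsbrick A$ reduces to the first statement applied to $A^{\mathrm{op}}$. The principal technical obstacle is the compatibility claim in the preceding paragraph: explicitly matching the degree-$0$ part of the Koenig--Yang SMC for $P$ with $\ind(M/\rad_B M)$. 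Once this identification is in hand, everything else follows formally from the earlier bijections.
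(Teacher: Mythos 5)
Your strategy of factoring through the known bijections is the same shape as the paper's argument, but you route through $\sttilt A$ and try to identify the individual bricks, whereas the paper routes through $\ftors A$ and only compares torsion classes. Lemma~\ref{heart_lem} of the paper proves, by induction on the length of an object in the heart, that $\cH \cap \mod A = \sT(\cX \cap \mod A)$; combined with the injectivity of $\sT$ on semibricks and the already-known bijections $(\textup{heart}) \cap \mod A \colon \inttstr A \to \ftors A$ and $\sT \colon \fLsbrick A \to \ftors A$, this gives commutativity of the relevant square without ever needing an explicit brick-level identification. That detour through $\ftors A$ is what makes the compatibility check tractable.

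Two points in your sketch need attention. First, the claim that ``the 2-term hypothesis combined with the indecomposability of each $X \in \cX$ forces every object of $\cX$ to be concentrated either in cohomological degree $0$ or in degree $-1$'' is not true for general indecomposable 2-term complexes; a 2-term complex with nonzero $H^{-1}$ and $H^0$ can perfectly well be indecomposable. What forces the concentration is simple-mindedness, specifically that $X$ is simple in the heart of the corresponding intermediate t-structure, and the paper invokes this nontrivially as \cite[Remark 4.11]{BY}. Second, your compatibility step mixes up two different endomorphism algebras. The Koenig--Yang simple-minded collection consists of the simple objects of the heart $\cH$, and the natural equivalence is $\Hom_{\sD^\rb(\mod A)}(P,?) \colon \cH \to \mod C$ with $C = \End_{\sD^\rb(\mod A)}(P)$, not the Brenner--Butler equivalence $\Hom_A(M,?) \colon \Fac M \to \Sub_B DM$ with $B = \End_A(M)$. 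These algebras agree only up to the canonical surjection $C \to B$ constructed later in the paper (Subsection~\ref{wide_der}), and the simple $C$-modules coming from $\mod A$ are not literally ``simple tops of $B$.'' Making this identification precise essentially requires re-deriving the contents of Lemmas~\ref{weak_strong} and~\ref{P_X_dual}, which is more machinery than your sketch provides. You correctly flag this as the ``principal technical obstacle,'' but as written the proposal leaves the core of the proof open; without either proving $\sT(\cX \cap \mod A) = \cH \cap \mod A$ or an honest brick-level match, the commutativity of your square remains unestablished.
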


Consequently, a 2-term simple-minded collection in $\sD^\rb(\mod A)$
is a union of a right finite semibrick shifted by $[1]$ and a left finite semibrick.

In recent years, thanks to many authors such as \cite{AIR,BY,IT,KY,MS}, 
canonical bijections between the sets of important objects containing 
the following ones have been discovered. 
Here, $\proj A$ (resp.\ $\inj A$) 
denotes the full subcategory of $\mod A$ 
consisting of all the projective (resp.\ injective) modules.

\begin{itemize}
\item[(a)]
The set $\fLsbrick A$ of left finite semibricks in $\mod A$.
\item[(a$'$)]
The set $\fRsbrick A$ of right finite semibricks in $\mod A$.
\item[(b)]
The set $\sttilt A$ of 
isoclasses of basic support $\tau$-tilting $A$-modules in $\mod A$.
\item[(b$'$)]
The set $\stitilt A$ of 
isoclasses of basic support $\tau^{-1}$-tilting $A$-modules in $\mod A$.
\item[(c)]
The set $\ftors A$ of functorially finite torsion classes in $\mod A$.
\item[(c$'$)]
The set $\ftorf A$ of functorially finite torsion-free classes in $\mod A$.
\item[(d)]
The set $\fLwide A$ of left finite wide subcategories of $\mod A$.
\item[(d$'$)]
The set $\fRwide A$ of right finite wide subcategories of $\mod A$.
\item[(e)]
The set $\twosmc A$ of 2-term simple-minded collections in $\sD^\rb(\mod A)$.
\item[(f)]
The set $\inttstr A$ of intermediate t-structures with length heart in $\sD^\rb(\mod A)$.
\item[(g)]
The set $\twosilt A$ of 2-term silting objects in $\sK^\rb(\proj A)$.
\item[(g$'$)]
The set $\twocosilt A$ of 2-term cosilting objects in $\sK^\rb(\inj A)$.
\end{itemize}

If $A$ is the path algebra of a Dynkin quiver $Q$,
Ingalls--Thomas \cite{IT} showed that 
there are also one-to-one correspondences between the above sets  
and the set of clusters in the cluster algebra for the quiver $Q$, which was
introduced by Fomin--Zelevinsky \cite{FZ}. 

Figure \ref{big_comm_intro} shows 
some of the known bijections
and our new bijections 
(arrows with labels in rectangles).
We prove the following result.

\begin{Th}[Theorem \ref{smc_fsbrick} (2)]
The diagram in Figure \ref{big_comm_intro} below is commutative 
and all the maps are bijective.
In this diagram,
$\cT \in \ftors A$ corresponds to $\cF \in \ftorf A$ 
if and only if $(\cT,\cF)$ is a torsion pair in $\mod A$.
\begin{figure}[h]
\begin{align*}
\begin{xy}
( 0, 36) *+{\stitilt A}   ="06",
(55, 36) *+{\ftorf A}     ="16",
(99, 36) *+{\fRsbrick A}  ="36",
( 0, 24) *+{\twocosilt A} ="04",
( 0, 12) *+{\twosilt A}   ="02",
(55, 18) *+{\inttstr A}   ="13",
(99, 18) *+{\twosmc A}    ="33",
( 0,  0) *+{\sttilt A}    ="00",
(55,  0) *+{\ftors A}     ="10",
(99,  0) *+{\fLsbrick A}  ="30",
\ar ^{\Sub}                            "06";"16"
\ar _{\sF}                             "36";"16"
\ar ^{H^{-1}}                          "04";"06"
\ar _{\textup{(heart)}[-1]\cap \mod A}"13";"16"
\ar _{\boxed{?[-1] \cap \mod A}}           "33";"36"
\ar ^{\nu}                             "02";"04"
\ar ^{I \mapsto ({^\perp}I[{<}0],{^\perp}I[{>}0])} "04";"13"
\ar _{P \mapsto (P[{<}0]^\perp,P[{>}0]^\perp)} "02";"13"
\ar ^{\textup{simples of heart}}       "13";"33"
\ar ^{\Fac}                            "00";"10"
\ar _{\sT}                             "30";"10"
\ar _{H^0}                             "02";"00"
\ar ^{\textup{(heart)}\cap \mod A}    "13";"10"
\ar ^{\boxed{? \cap \mod A}}               "33";"30"
\ar@{-} "00"; ( 0,-8)
\ar@{-}_{\boxed{M \mapsto \ind(M/{\rad_B M})}} ( 0,-8);(99,-8)
\ar (99,-8); "30"
\ar@{-} "06"; ( 0,44)
\ar@{-}^{\boxed{M \mapsto \ind(\soc_B M)}} ( 0,44);(99,44)
\ar (99,44); "36"
\end{xy}
\end{align*}
\caption{The commutative diagram}\label{big_comm_intro}
\end{figure}
\end{Th}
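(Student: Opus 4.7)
The plan is to assemble the diagram from previously available bijections together with the boxed results already established earlier in the paper, and then to verify commutativity face by face. Bijectivity of most individual arrows is given by citation, so the bulk of the work is compatibility.

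First I would list the ingredients. The Adachi--Iyama--Reiten results supply $\Fac\colon \sttilt A \to \ftors A$, $\Sub\colon \stitilt A \to \ftorf A$, and the cohomology maps $H^0$ and $H^{-1}$ between 2-term (co)silting objects and support $\tau$-(inverse-)tilting modules. The Koenig--Yang bijections cover $\twosilt A \to \inttstr A$, $\twocosilt A \to \inttstr A$, and the simples-of-heart map $\inttstr A \to \twosmc A$. The Nakayama functor $\nu$ handles $\twosilt A \to \twocosilt A$. The boxed horizontal bijection $\sttilt A \to \fLsbrick A$ via $M \mapsto \ind(M/\rad_B M)$ and its dual $\stitilt A \to \fRsbrick A$ via $M \mapsto \ind(\soc_B M)$ are supplied by Theorem \ref{sttilt_fsbrick}; the boxed vertical bijections $\twosmc A \to \fLsbrick A$ via $?\cap \mod A$ and $\twosmc A \to \fRsbrick A$ via $?[-1]\cap \mod A$ are the already-proved first part of Theorem \ref{smc_fsbrick}. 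Finally, Proposition \ref{comm_3} gives $\sT\colon \fLsbrick A \to \ftors A$ and dually $\sF$.

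The commutativity verification then breaks into three kinds of faces. The quadrilateral $\twosilt A$--$\inttstr A$--$\sttilt A$--$\ftors A$ reduces to the classical identity $\cH \cap \mod A = \Fac H^0(P)$, where $\cH$ is the heart associated to a 2-term silting object $P$; this is immediate from the definition of the t-structure via $(P[{<}0]^\perp, P[{>}0]^\perp)$. The dual top quadrilateral follows symmetrically through the Nakayama functor. The square $\twosmc A$--$\inttstr A$--$\fLsbrick A$--$\ftors A$ requires the identity $\cH \cap \mod A = \sT(\cX \cap \mod A)$, where $\cX$ is the simple-minded collection of simples of $\cH$; the dual identity $\cH[-1] \cap \mod A = \sF(\cX[-1]\cap \mod A)$ handles the torsion-free side. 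The outer U-shapes through $\sttilt A \to \fLsbrick A$ and $\stitilt A \to \fRsbrick A$ amount to $\sT(\ind(M/\rad_B M))=\Fac M$ and its dual, which are already contained in Theorem \ref{sttilt_fsbrick}.

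The main obstacle will be the square involving $\twosmc A$. Given $\cX \in \twosmc A$, writing $\cX^+:=\cX \cap \mod A$ and $\cX^-:=\cX[-1]\cap \mod A$, I would show that $\cH:=\Filt(\cX) \subset \sD^\rb(\mod A)$ is the HRS-tilt of $\mod A$ at the torsion pair $(\sT(\cX^+),\sF(\cX^-))$. The crucial step is the identification $\sT(\cX^+)=\cH \cap \mod A$: two-termness of $\cX$ lets one decompose any filtration in $\cH$ into a "positive" part from $\mod A$ and a "shifted negative" part from $(\mod A)[1]$, after which the semibrick-filtration description of the smallest torsion class containing $\cX^+$ yields equality. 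Once this HRS-style analysis is in place, every face commutes, and bijectivity of each remaining arrow follows either from its cited origin or by three-out-of-four in a commutative square of known bijections.
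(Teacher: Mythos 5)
Your proposal is correct and matches the paper's approach very closely: assemble the diagram from known bijections of \cite{AIR}, \cite{BY}, \cite{KY}, Proposition~\ref{comm_3}, and Theorem~\ref{sttilt_fsbrick}, isolate the nontrivial square involving $\twosmc A$, and establish the key identity $\cH \cap \mod A = \sT(\cX \cap \mod A)$; bijectivity of the one new arrow then follows by three-out-of-four. The only minor stylistic difference is that you cast the key identity as an HRS-tilt statement and propose "decomposing filtrations," whereas the paper (Lemma~\ref{heart_lem}) proves $\cH \cap \mod A = H^0(\cH) = \sT(\cS)$ directly by induction on length in the heart, taking a simple head $Y$ of $X \in \cH$, splitting into the cases $Y \in \mod A$ and $Y \in (\mod A)[1]$, and tracking the long exact cohomology sequence — so if you flesh out your sorting argument you will recover essentially the same computation.
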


\medskip

In Subsection \ref{wide_der},
we study left finite wide subcategories $\cW$ 
in a similar way to Subsection \ref{wide_mod},
but we use $P \in \twosilt A$ and $C:=\End_{\sD^\rb(\mod A)}(P)$
corresponding to $\cW$, instead of 
$M \in \sttilt A$ and $B=\End_A(M)$.
There is an equivalence 
$\Hom_{\sD^\rb(\mod A)}(P,?) \colon \cH \to \mod C$ \cite{IY},
where $\cH$ is the heart of $(P[{<}0]^\perp,P[{>}0]^\perp) \in \inttstr A$.
For an explicitly defined idempotent $f \in C$,
we have the following equivalence, 
which is parallel to Theorem \ref{wide_B_intro}. 

\begin{Th}[Theorem \ref{wide_C}]\label{wide_C_intro}
The equivalence $\Hom_{\sD^\rb(\mod A)}(P,?) \colon \cH \to \mod C$ is restricted to 
an equivalence $\Hom_{\sD^\rb(\mod A)}(P,?) \colon \cW \to \mod C/\langle f \rangle$.
\end{Th}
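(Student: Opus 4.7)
The plan is to mirror the proof of Theorem \ref{wide_B_intro} in the derived setting, transporting the extension-closure structure of $\cW$ through the equivalence $\Hom_{\sD^\rb(\mod A)}(P,?) \colon \cH \to \mod C$, which is an exact equivalence of abelian categories carrying the simples of $\cH$ bijectively to the simples of $\mod C$. By Figure \ref{big_comm_intro}, the simples of $\cH$ form the 2-term simple-minded collection $\cX$ attached to $P$, which splits as $\cX = \cS \sqcup \cS'[1]$, where $\cS := \cX \cap \mod A$ is the left finite semibrick corresponding to $\cW$ (hence $\cW = \Filt(\cS)$ inside $\mod A$) and $\cS' := \cX[-1] \cap \mod A$ is a right finite semibrick.

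Write $P = \bigoplus_i P_i$ with $P_i$ indecomposable, and let $f_i \in C$ denote the corresponding primitive idempotent. The equivalence sends the simple $X_i \in \cX$ of $\cH$ to the simple $C$-module $S_i := \Hom_{\sD^\rb(\mod A)}(P, X_i)$ supported at $f_i$. I expect the explicitly defined idempotent $f$ to equal $\sum_{X_i \in \cS'[1]} f_i$, in which case $\mod C / \langle f \rangle$ is precisely the Serre subcategory of $\mod C$ whose simples are those $S_i$ with $X_i \in \cS$; equivalently, the images under the equivalence of the objects of $\cS$.

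To conclude, I would prove the identification $\cW = \Filt_{\cH}(\cS)$, where the filtrations on the right are taken inside $\cH$. The inclusion $\supseteq$ holds because any short exact sequence in $\cH$ whose outer terms lie in $\mod A$ comes from a triangle in $\sD^\rb(\mod A)$ whose long exact cohomology sequence (together with the 2-term nature of $\cH$) forces the middle term to be concentrated in degree $0$, hence to lie in $\mod A$ and therefore in $\Filt_{\mod A}(\cS) = \cW$. The inclusion $\subseteq$ is immediate, since the $\mod A$-filtration of any $M \in \cW$ gives short exact sequences that remain exact in $\cH$. Since $\mod C / \langle f \rangle$ is likewise the extension closure in $\mod C$ of its simples, the exact equivalence $\Hom_{\sD^\rb(\mod A)}(P,?)$ restricts to the claimed equivalence $\cW \to \mod C / \langle f \rangle$. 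The main obstacle I anticipate is pinning down $f$ explicitly from $P$ — namely, identifying which summands of $P$ contribute to $\cS$ versus $\cS'[1]$ — and verifying $\cW = \Filt_{\cH}(\cS)$ in full detail; both ultimately reduce to bookkeeping for the t-structure $(P[{<}0]^\perp, P[{>}0]^\perp)$.
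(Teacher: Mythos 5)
Your proposal is correct and follows essentially the same route as the paper's proof: identify the simple objects of $\cH$ with the simple $C$-modules via the orthogonality $\Hom_{\sD^\rb(\mod A)}(P_i,X_j)=0$ for $i \ne j$ (Lemma \ref{P_X_dual}), note that $f$ is indeed the sum of the idempotents attached to the summands corresponding to $\cS'[1]$, and match $\cW=\Filt_{\cH}(\cS)$ with the Serre subcategory $\mod C/\langle f \rangle$. The long-exact-cohomology argument you give for $\Filt_{\cH}(\cS) \subset \cW$ is a detail the paper leaves implicit, but it is the right one.
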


Moreover, there is a canonical surjection $\phi \colon C \to B$ of $K$-algebras, 
and so we have a fully faithful functor $\mod B \to \mod C$.
The surjection $\phi$ induces 
a surjection $\phi \colon C/\langle f \rangle \to B/\langle e \rangle$
and a fully faithful functor $\mod B/\langle e \rangle \to \mod C/\langle f \rangle$.
This embedding is actually an equivalence; 
hence, $\phi \colon C/\langle f \rangle \to B/\langle e \rangle$ 
is an isomorphism of algebras, see Theorem \ref{wide_C_B}.

In Subsection \ref{subsec_Gro},
we investigate 2-term silting objects in $\sK^\rb(\proj A)$
and 2-term simple-minded collections in $\sD^\rb(\mod A)$
from the point of view of the Grothendieck groups.

Let $P \in \twosilt A$.
Then, $P$ has exactly $n_A$ nonisomorphic indecomposable direct summands \cite{KY}.
The set $\{ P_1,P_2,\ldots,P_{n_A} \}$ of indecomposable direct summands of $P$ 
gives a $\Z$-basis of 
the Grothendieck group $K_0(\sK^\rb(\proj A))$ \cite{AI}.
On the other hand, 
the corresponding 2-term simple-minded collection $\cX \in \twosmc A$ 
has $n_A$ distinct elements, which are denoted by $X_1,X_2,\ldots,X_{n_A}$. 
They induce a $\Z$-basis of the Grothendieck group $K_0(\sD^\rb(\mod A))$ \cite{KY}.

There is a natural $\Z$-bilinear form
$\langle ?,? \rangle \colon 
K_0(\sK^\rb(\proj A)) \times K_0(\sD^\rb(\mod A)) \to \Z$
given as
\begin{align*}
\langle P, X \rangle := \sum_{i \in \Z} (-1)^j \dim_K \Hom_{\sD^\rb(\mod A)}(P,X[j]).
\end{align*}
We prove that 
the $\Z$-basis $\{ P_1,P_2,\ldots,P_{n_A} \}$ of $K_0(\sK^\rb(\proj A))$
and the $\Z$-basis $\{X_1,X_2,\ldots,X_{n_A}\}$ of $K_0(\sD^\rb(\mod A))$
satisfy the following ``duality'' with respect to the bilinear form 
$\langle ?,? \rangle$.

\begin{Th}[Theorem \ref{P_X_dual_Gro}]
There exists a permutation $\sigma \colon \{1,2,\ldots,n_A\} \to \{1,2,\ldots,n_A\}$
satisfying
\begin{align*}
\langle P_k,X_l \rangle=\begin{cases}
\dim_K \End_{\sD^\rb(\mod A)}(X_l) & (\sigma(k)=l) \\
0 & (\sigma(k) \ne l)
\end{cases}.
\end{align*}
\end{Th}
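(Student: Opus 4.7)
The plan is to transport the computation of $\langle P_k,X_l\rangle$ through the equivalence $\Phi := \Hom_{\sD^\rb(\mod A)}(P,?) \colon \cH \to \mod C$ of Theorem \ref{wide_C}, where $\cH$ denotes the heart of the intermediate t-structure $(P[{<}0]^\perp,P[{>}0]^\perp)$ attached to $P$ by the bijection in Figure \ref{big_comm_intro}.

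First I would reduce the alternating sum defining $\langle P_k,X_l\rangle$ to its $j=0$ summand. Figure \ref{big_comm_intro} identifies $\cX$ with the set of simple objects of $\cH$, so each $X_l$ lies in $\cH\subseteq P[{<}0]^\perp \cap P[{>}0]^\perp$. Unwinding the two perpendicular conditions immediately yields $\Hom_{\sD^\rb(\mod A)}(P,X_l[j])=0$ for every $j\neq 0$, and the same vanishing persists with $P$ replaced by its direct summand $P_k$. Hence the sum defining $\langle P_k,X_l\rangle$ collapses to $\dim_K \Hom_{\sD^\rb(\mod A)}(P_k,X_l)$.

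Next I would evaluate this Hom space by a Peirce-style calculation inside $\mod C$. Let $e_k\in C$ be the primitive idempotent corresponding to the indecomposable summand $P_k$; then there is a natural $K$-linear identification $\Hom_{\sD^\rb(\mod A)}(P_k,X_l)\cong \Phi(X_l)\,e_k$. Because $\Phi$ is an equivalence of abelian categories, it sends the simple object $X_l\in\cH$ to a simple $C$-module. Labeling the simples of $\mod C$ by primitive idempotents via $S_k := \top(e_k C)$, there is a unique permutation $\tau$ with $\Phi(X_l)\cong S_{\tau(l)}$, and by the standard identity $\dim_K S_m e_k = \delta_{km}\dim_K \End_C(S_m)$ we obtain
\[
\dim_K \Hom_{\sD^\rb(\mod A)}(P_k,X_l)=\delta_{k,\tau(l)}\,\dim_K \End_C(\Phi(X_l)).
\]
Setting $\sigma := \tau^{-1}$ and noting that $\End_C(\Phi(X_l))=\End_{\cH}(X_l)=\End_{\sD^\rb(\mod A)}(X_l)$ (because $\Phi$ is an equivalence and $\cH$ is a full subcategory of $\sD^\rb(\mod A)$) gives the claimed formula.

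The main obstacle is essentially conventional bookkeeping: one has to pin down whether $\Phi$ lands in right or left $C$-modules so that the isomorphism $\Hom_{\sD^\rb(\mod A)}(P_k,X_l)\cong \Phi(X_l)e_k$ reads off correctly, and one must verify that $\Phi$ carries the simples of $\cH$ to the simples of $\mod C$ rather than to their projective covers. Both points follow directly from the construction of the equivalence in Theorem \ref{wide_C}, so no new input beyond careful indexing is required.
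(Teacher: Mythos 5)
Your proposal is correct and follows essentially the same route as the paper: reduce $\langle P_k,X_l\rangle$ to the $j=0$ term using $\cX\subset\cH=P[{\ne}0]^\perp$, then exploit the equivalence $\Hom_{\sD^\rb(\mod A)}(P,?)\colon\cH\to\mod C$ to transport simples to simples and read off the Kronecker delta. The only cosmetic difference is that the paper (via its Lemma 2.24 on the $D_j$-module structure of the simple $C$-module $\Hom(P,X_j)$, together with the explicit indexing set up before the theorem) normalizes the permutation to the identity, whereas you leave $\tau$ general and argue via the Peirce identity $\dim_K S_m e_k=\delta_{km}\dim_K\End_C(S_m)$; both steps say the same thing.
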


\medskip

We give examples of semibricks and wide subcategories in Section \ref{sec_ex}.

In Subsection \ref{Nakayama_number},
we calculate the numbers $a_{n,l}:=\#\sbrick A_{n,l}$ 
and $b_{n,l}:=\#\sbrick B_{n,l}$
for the following two families 
$(A_{n,l})_{n,l \ge 1}$ and $(B_{n,l})_{n,l \ge 1}$ of Nakayama algebras:
\begin{align*}
A_{n,l} \colon
\begin{xy}
(  0,  0)*+{1}="1",
( 10,  0)*+{2}="2",
( 20,  0)*+{\cdots}="cdots",
( 30,  0)*+{n}="n",
\ar "1";"2"
\ar "2";"cdots"
\ar "cdots";"n"
\end{xy},
\quad \text{all the paths of length $l$ are 0},\\
B_{n,l} \colon
\begin{xy}
(  0,  0)*+{1}="1",
( 10,  0)*+{2}="2",
( 20,  0)*+{\cdots}="cdots",
( 30,  0)*+{n}="n",
\ar "1";"2"
\ar "2";"cdots"
\ar "cdots";"n"
\ar@{-} "n";(30,-6)
\ar@{-} (30,-6);(0,-6)
\ar (0,-6);"1"
\end{xy}
, \quad \text{all the paths of length $l$ are 0}.
\end{align*}
Our result is expressed with the Catalan numbers $(c_n)_{n \ge 0}$.
In particular, a path algebra of type $\mathbb{A}_n$ has exactly $c_{n+1}$
semibricks.

\begin{Th}[Theorem \ref{Nakayama_main}]
Let $n,l \ge 1$ be integers.
The following equations hold:
\begin{align*}
a_{n,l}&=c_{n+1} \quad (n=1,2,\ldots,l), &
a_{n,l}&=2a_{n-1,l}+\displaystyle \sum_{i=2}^{l}c_{i-1}a_{n-i,l} \quad (n \ge l+1), \\
b_{n,l}&=(n+1)c_n \quad (n=1,2,\ldots,l), &
b_{n,l}&=2b_{n-1,l}+{\displaystyle \sum_{i=2}^{l}c_{i-1}b_{n-i,l}} \quad (n \ge l+1).
\end{align*}
\end{Th}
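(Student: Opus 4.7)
The plan is to prove both identities by induction on $n$, via a common case analysis on how semibricks interact with a fixed vertex of the quiver. First, I would identify the bricks and the Hom-orthogonality relation. Because $A_{n,l}$ and $B_{n,l}$ are Nakayama, every indecomposable module is uniserial and is determined by its top $S_i$ and length $j$; denote it $M_{i,j}$. For $A_{n,l}$ the composition factors $S_i,\ldots,S_{i+j-1}$ of $M_{i,j}$ are pairwise distinct, so $\End(M_{i,j})=K$ and every indecomposable is a brick; for $B_{n,l}$ the same holds precisely when $j\le n$. Since a nonzero morphism between uniserial modules factors as a surjection onto a quotient of the source which embeds into the target, I would derive the criterion that $\Hom(M_{i,j},M_{i',j'})\ne 0$ if and only if the top $S_i$ of the source occurs as a composition factor of $M_{i',j'}$ and the unique submodule of $M_{i',j'}$ with that top has length at most $j$. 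This rephrases Hom-orthogonality as a combinatorial condition on the intervals of composition factors (cyclic in the $B_{n,l}$ case).

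For the base cases ($n\le l$), the algebra $A_{n,l}$ has no nontrivial relation and equals the hereditary path algebra $K\mathbb{A}_n$, so the identity $a_{n,l}=c_{n+1}$ is the classical Catalan count of semibricks in type $\mathbb{A}_n$, obtainable from Theorem~\ref{intro_1} together with the enumeration of support $\tau$-tilting modules of $K\mathbb{A}_n$ by Ingalls--Thomas~\cite{IT}. For $b_{n,l}=(n+1)c_n$ when $n\le l$, I would partition the semibricks of $B_{n,l}$ according to which (if any) wrap-around brick they contain; each stratum is in bijection with semibricks of a suitable linear Nakayama algebra and contributes a Catalan number, and the $n+1$ strata combine to $(n+1)c_n$.

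For the inductive step ($n\ge l+1$) in the linear case, I would fix the vertex $1$ and partition the semibricks of $A_{n,l}$ by the at most one brick with top $S_1$ they contain. If no such brick is present, $\cS$ is supported on vertices $\{2,\ldots,n\}$, giving a semibrick of the linear Nakayama algebra $A_{n-1,l}$ and contributing $a_{n-1,l}$. If a unique brick $M_{1,k}$ with $1\le k\le l$ is present, the Hom-orthogonality criterion forces the remaining bricks of $\cS$ to split into two independent groups supported on the disjoint vertex sets $\{2,\ldots,k-1\}$ and $\{k+1,\ldots,n\}$; the first contributes $c_{k-1}$ (from the base case applied to $K\mathbb{A}_{k-2}$, since $k-2<l$) and the second contributes $a_{n-k,l}$. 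Summing over $k$ and combining the $k=1$ term with the no-top-$S_1$ case using $c_0=1$ yields the claimed recurrence. An analogous argument applies to $B_{n,l}$, after one checks that for $n\ge l+1$ the length bound $j\le l<n$ prevents wrap-around bricks from disrupting the vertex-splitting decomposition.

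The main obstacle will be the cyclic base case $b_{n,l}=(n+1)c_n$ for $n\le l$, where long wrap-around bricks can coexist with one another in non-trivial ways and the clean vertex-splitting reduction of the linear case breaks down. Handling it requires careful bookkeeping of how a chosen wrap-around brick fixes a ``cut point'' on the cycle, after which the remaining semibrick reduces to a Catalan count on the resulting linear arc; the factor $(n+1)$ then records the possible positions of this distinguished wrap-around brick, including its absence.
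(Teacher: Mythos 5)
Your treatment of the linear algebras $A_{n,l}$ is correct and close to the paper's: the paper peels off the unique brick with socle $S_n$ rather than top $S_1$, which is the same decomposition up to a harmless reflection, and your base case via the Ingalls--Thomas count of support $\tau$-tilting modules over $K\mathbb{A}_n$ is a valid alternative to the paper's inductive derivation of $a_{n,l}=c_{n+1}$ from the recurrence itself and the Catalan convolution. The gap lies in the cyclic case. The ``analogous'' vertex-splitting argument does \emph{not} yield the recurrence $b_{n,l}=2b_{n-1,l}+\sum_{i=2}^{l} c_{i-1}b_{n-i,l}$: once you delete a cyclic interval containing the fixed vertex, the complement in $\{1,\dots,n\}$ is a \emph{linear} arc, so the residual semibrick count is an $a$-value and not a $b$-value; moreover, for each length $i$ there are $i$ distinct cyclic intervals of that length through the fixed vertex, introducing a multiplicity that has no analogue in the linear case. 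What the decomposition actually produces is the mixed relation $b_{n,l}=2a_{n-1,l}+\sum_{i=2}^{l} i\,c_{i-1}\,a_{n-i,l}$, i.e.\ Proposition~\ref{zenka_B_A} in the paper. Turning this into the claimed $b$-recurrence requires a separate, purely algebraic step: substitute this expression for each $b$-term on both sides and invoke the already-established $a$-recurrence $\sum_{j=0}^{l} d_j a_{m-j,l}=0$ (with $d_0=1$, $d_1=-2$, $d_j=-c_{j-1}$), which holds for $m\ge 1$ and hence demands $n\ge l+1$. Your remark that $l<n$ ``prevents wrap-around bricks from disrupting the vertex-splitting decomposition'' does not supply this conversion; the residual pieces are still linear, and the factor-of-$i$ multiplicity is unavoidable.

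A second issue is the base case $b_{n,l}=(n+1)c_n$ for $n\le l$. Stratifying ``according to which (if any) wrap-around brick'' a semibrick contains is not well-defined, since a semibrick of $B_{n,l}$ can contain several wrap-around bricks simultaneously (for instance $S_{4,1}$ and $S_{3,2}$ in $B_{4,4}$), and no reason is offered for there being exactly $n+1$ strata of size $c_n$. The paper avoids this entirely: it substitutes $a_{m,l}=c_{m+1}$ (for $m\le l$) into the mixed relation above and simplifies with elementary Catalan identities (Lemma~\ref{B_catalan}). If you want a direct bijective proof here you would need to single out a canonical distinguished brick (say, the maximal brick whose support meets a single fixed vertex, or its absence) and then justify the stratum sizes, neither of which the current sketch does.
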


Since Nakayama algebras are representation-finite, 
every semibrick is left finite.
Thus, by Theorem \ref{intro_1}, 
there is a bijection between the sets $\sbrick A$ and $\sttilt A$.
The latter set was also investigated by Adachi \cite{Adachi}.

Subsection \ref{Section_tilt} deals with functorially finiteness of wide subcategories.
We write $\fwide A$ for the set of functorially finite wide subcategories of $\mod A$.
If $A$ is hereditary, then $\fwide A$ coincides with $\fLwide A$ \cite{IT}.
We show that there is an inclusion $\fLwide A \subset \fwide A$ for
an arbitrary algebra $A$,
and we give an example of a tilted algebra $A$ 
which does not satisfy the equality $\fLwide A = \fwide A$ (Example \ref{MS_ex}).
Moreover, we prove the following result in the case that
$K$ is an algebraically closed field.

\begin{Th}[Theorem \ref{tilt_th}]\label{tilt_th_intro}
Let $H$ be a hereditary algebra,
$T \in \mod H$ be a tilting module,
and $A:=\End_H(T)$.
Then the following assertions hold.
\begin{itemize}
\item[(1)]
If $\Sub_H \tau T$ has only finitely many indecomposable $H$-modules, 
then $\fwide A = \fLwide A$.
If $\Fac_H T$ has only finitely many indecomposable $H$-modules, 
then $\fwide A = \fRwide A$.
\item[(2)]
If $T$ is either preprojective or preinjective, 
then $\fwide A=\fLwide A=\fRwide A$.
\item[(3)]
Assume that $H$ is a hereditary algebra of extended Dynkin type.
We decompose $T$ as $T_{\ppr} \oplus T_{\reg} \oplus T_{\pin}$ with  
a preprojective module $T_{\ppr}$, 
a regular module $T_{\reg}$, 
and a preinjective module $T_{\pin}$.
If $T_{\reg} \ne 0$ and $T_{\pin} = 0$, 
then we have $\fwide A \supsetneq \fRwide A$, and
if $T_{\reg} \ne 0$ and $T_{\ppr} = 0$, 
then we have $\fwide A \supsetneq \fLwide A$.
\end{itemize}
\end{Th}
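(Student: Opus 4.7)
The plan is to transfer the problem via the Brenner--Butler tilting equivalence to the hereditary algebra $H$, where the Ingalls--Thomas theorem gives $\fwide H = \fLwide H = \fRwide H$. Write $(\Fac_H T, \Sub_H \tau T)$ for the tilting torsion pair in $\mod H$ and $(\cX_A, \cY_A)$ for the induced torsion pair in $\mod A$, where the tilting functors produce equivalences $\Sub_H \tau T \simeq \cX_A$ and $\Fac_H T \simeq \cY_A$. Under these equivalences the hypotheses of (1) translate directly into representation-finiteness of $\cX_A$ or $\cY_A$ respectively.

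For part (1), the inclusion $\fLwide A \subseteq \fwide A$ is already noted before the theorem, so what I need is the reverse: given $\cW \in \fwide A$, show that the smallest torsion class $\sT(\cW) \subseteq \mod A$ is functorially finite. My approach is to use the derived tilting equivalence $\sD^\rb(\mod A) \simeq \sD^\rb(\mod H)$ to realize $\cW$ as coming from a wide subcategory of $\mod H$, shifted according to the 2-term structure associated to $T$. Since $H$ is hereditary, the $H$-side wide subcategory is automatically left finite, and its corresponding functorially finite torsion class can be transported back to $\mod A$ provided the component inside $\cX_A$ remains controlled. The hypothesis that $\Sub_H \tau T$ has only finitely many indecomposables guarantees exactly this, because every wide subcategory meeting $\cX_A$ then consists of only finitely many indecomposables and is automatically functorially finite.

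Part (2) reduces to (1): if $T$ is preprojective, then $\tau T$ is preprojective, and submodules of preprojective modules over a hereditary algebra remain preprojective. Since indecomposable preprojective modules of bounded dimension form a finite set, $\Sub_H \tau T$ has only finitely many indecomposables, and (1) applies; the preinjective case is dual and yields $\fwide A = \fRwide A$ simultaneously. For part (3), the idea is to produce explicit counterexamples out of the tubes in the tame hereditary module category: a wide subcategory of $\mod H$ generated by a carefully chosen regular brick transports under the tilting equivalence to a functorially finite wide subcategory of $\mod A$, but because the regular component is infinite and the opposite preinjective (resp.\ preprojective) summand of $T$ vanishes, the associated torsion-free class (resp.\ torsion class) in $\mod A$ cannot be functorially finite.

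The main obstacle is part (1), because wide subcategories of $\mod A$ need not respect the tilting torsion pair $(\cX_A, \cY_A)$, so a naive decomposition of $\cW$ is not available. The cleanest route appears to be through the commutative diagram of Figure \ref{big_comm_intro}: $\cW \in \fwide A$ corresponds via the established bijections to a 2-term simple-minded collection in $\sD^\rb(\mod A)$, hence (by the derived tilting equivalence) to a simple-minded collection in $\sD^\rb(\mod H)$, from which a wide subcategory of $\mod H$ can be extracted. Controlling how functorial finiteness passes through this derived-level translation, using the finiteness of $\Sub_H \tau T$ to prevent pathological spreading of indecomposables across the 2-term shift, is where the real work lies.
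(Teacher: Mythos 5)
There is a genuine gap in each part, and the central one is in (1). Your proposed route for the hard inclusion $\fwide A \subset \fLwide A$ — passing $\cW$ through the bijections of Figure \ref{big_comm_intro} to a 2-term simple-minded collection and then across the derived equivalence — is circular: those bijections are only established for \emph{left finite} wide subcategories, so you cannot feed an arbitrary $\cW \in \fwide A$ into them without already knowing the conclusion. Your fallback claim, that finiteness of $\Sub_H\tau T$ forces ``every wide subcategory meeting $\cX_A$'' to have finitely many indecomposables, is false as stated ($\cW$ may have infinitely many indecomposables in $\cY_A$), and even for the piece $\cW\cap\cX_A$ it does not by itself yield functorial finiteness of $\sT(\cW)$. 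What the paper actually does is prove a direct lemma: for a splitting torsion pair $(\cX,\cY)$ with $\Ext_A^2(\cY,\cY)=0$ and $\cX$ representation-finite, one decomposes $\cW$ as $\cW_\cX \cup \cW_\cY$, shows $\Fac\cW_\cX * \Fac\cW_\cY \subset \Fac\cW_\cY * \Fac\cW_\cX$ so that $\sT(\cW)=\sT(\cW_\cY)*\sT(\cW_\cX)$, proves $\sT(\cW_\cY)=\Fac\cW_\cY$ is functorially finite via a covariant-finiteness argument, and concludes with the Sikko--Smal\o{} theorem that the extension of two functorially finite subcategories is functorially finite. This computation with the torsion pair $(\Fac_A D(\tau T),\Sub_A DT)$ is the real content of (1), and nothing in your sketch replaces it.

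In (2), reducing the preprojective case to (1) gives only $\fwide A=\fLwide A$; the hypothesis ``$\Fac_H T$ has finitely many indecomposables'' generally \emph{fails} for a preprojective tilting module over a representation-infinite $H$, so you cannot get $\fwide A=\fRwide A$ ``simultaneously'' from (1). The paper needs the extra fact that $A$ is also the endomorphism algebra of a preinjective tilting module, to which the dual of (1) applies. In (3), your plan to transport ``a wide subcategory generated by a regular brick'' across the tilting equivalence does not match what is needed: the paper's witness is $\Filt(P_1,M_2,\dots,M_m)$, where $P_1=\Hom_H(T,V_1)$ is a \emph{projective} $A$-module replacing one mouth module $M_1$ of an induced stable tube in $\mod A/\langle e\rangle$; functorial finiteness comes from $P_1$ being projective and the remaining $M_i$ generating a functorially finite wide subcategory of the quotient, while the failure of right finiteness comes from the fact that $\sF(\cW)\cap\mod A/\langle e\rangle$ would force the full mouth $\{M_1,\dots,M_m\}$ of the tube to generate a functorially finite wide subcategory, which is impossible. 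None of this construction, nor the argument for non-functorial-finiteness, is present in your sketch.
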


As a consequence of Theorem \ref{tilt_th_intro} and \cite[XVII.3]{SimS}, 
we obtain the following criterion
in the case that $H$ is a hereditary algebra of extended Dynkin type.

\begin{Cor}[Corollary \ref{ex_Dynkin_table}]
We use the setting of Theorem \ref{tilt_th} (3).
Then, the tilting $H$-module $T$ satisfies one of the conditions (1)--(5) 
in the following table, 
which shows whether $\fwide A = \fLwide A$ and $\fwide A = \fRwide A$
hold in each case.
\textup{
\begin{center}
\begin{tabular}{c|ccc|cc}
No. & $T_{\ppr}$ & $T_{\reg}$ & $T_{\pin}$ &
$\fwide A = \fLwide A$ & $\fwide A = \fRwide A$ \\
\hline
(1) & $\ne 0$ & $=   0$ & $=   0$ & Yes & Yes \\
(2) & $=   0$ & $=   0$ & $\ne 0$ & Yes & Yes \\
(3) & $\ne 0$ & $\ne 0$ & $=   0$ & Yes & No  \\
(4) & $=   0$ & $\ne 0$ & $\ne 0$ & No  & Yes \\
(5) & $\ne 0$ &         & $\ne 0$ & Yes & Yes 
\end{tabular}
\end{center}
}
\end{Cor}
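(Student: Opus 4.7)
The plan is a case analysis along the five rows of the table, which exhaust all possibilities for the tilting $H$-module $T$ (the blank entry for $T_{\reg}$ in row (5) meaning that $T_{\reg}$ is arbitrary, so that row subsumes both the $T_{\reg}=0$ and the $T_{\reg}\ne 0$ subcases).

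The ``Yes/Yes'' assertions of rows (1) and (2) are immediate from Theorem \ref{tilt_th}(2), which handles purely preprojective and purely preinjective tilting modules. The ``No'' entries in rows (3) and (4) are precisely the strict inclusions $\fwide A \supsetneq \fRwide A$ and $\fwide A \supsetneq \fLwide A$ of Theorem \ref{tilt_th}(3).

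It remains to establish the positive equalities in the remaining cells: $\fwide A = \fLwide A$ in row (3), $\fwide A = \fRwide A$ in row (4), and both in row (5). By Theorem \ref{tilt_th}(1), it is enough to show that $\Sub_H \tau T$ (respectively $\Fac_H T$) has only finitely many isoclasses of indecomposables. For row (3) with $T_{\pin} = 0$, the module $\tau T = \tau T_{\ppr} \oplus \tau T_{\reg}$ has summands only in the preprojective component and in the tubes supporting $T_{\reg}$; combining the bound on dimension vectors coming from embeddings into $(\tau T)^n$ with the directedness and strictly growing dimension vectors of the preprojective component of a tame hereditary algebra, together with the fact that each tube contains only finitely many modules of bounded length, forces $\Sub_H \tau T$ to be finite. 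Row (4) is the dual argument for $\Fac_H T$. For row (5), where $T_{\ppr}\ne 0$ and $T_{\pin}\ne 0$, the classification and finiteness results of \cite[XVII.3]{SimS} on extended Dynkin algebras are invoked to supply simultaneously that both $\Sub_H \tau T$ and $\Fac_H T$ have only finitely many indecomposables.

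The principal obstacle is exactly the finiteness assertion in row (5): since $\tau T$ now has preinjective summands and $T$ has preprojective summands, one cannot argue merely that submodules or quotients are confined to a single AR-component or to finitely many tubes, as \emph{a priori} indecomposables from arbitrary tubes of the $\mathbb{P}^1$-family could contribute. Controlling this spread and verifying that only finitely many tube modules actually appear in $\Sub_H \tau T$ and $\Fac_H T$ is precisely what the fine tame-hereditary analysis of \cite[XVII.3]{SimS} provides, and this is where the bulk of the technical work lies before Theorem \ref{tilt_th}(1) can be applied to conclude.
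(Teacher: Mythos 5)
Your overall architecture matches the paper's---rows (1) and (2) from Theorem \ref{tilt_th} (2), the two ``No'' entries from Theorem \ref{tilt_th} (3), and the remaining ``Yes'' entries via finiteness statements---but there are two genuine gaps. First, the assertion that rows (1)--(5) exhaust all possibilities is not justified: the combination $T_{\ppr}=0$, $T_{\reg}\ne 0$, $T_{\pin}=0$ (a purely regular tilting module) appears in none of the rows, and your remark about the blank entry in row (5) does not cover it. One needs the nontrivial fact that a hereditary algebra of extended Dynkin type admits no regular tilting module (\cite[XVII.3.4]{SimS}); this is exactly how the paper opens its proof, and without it the statement ``$T$ satisfies one of the conditions (1)--(5)'' is unproved.

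Second, your direct finiteness argument for $\Sub_H \tau T$ in row (3) does not work as written. An indecomposable object of $\Sub_H \tau T$ is a submodule of $(\tau T)^{\oplus n}$ with $n$ unbounded, so there is no a priori bound on its dimension vector, and the confinement of these indecomposables to the preprojective component and to finitely many tube modules is precisely the nontrivial point; the paper obtains it by citing Step $3^\circ$ of the proof of \cite[XVII.3.5]{SimS} (equivalently, one can use $\Sub_H\tau T=\{Y \mid \Hom_H(T,Y)=0\}$ together with $T_{\ppr}\ne 0$ and the eventual sincerity of $\tau^{r}Y$). For row (5) the paper also takes a shorter route than yours: by \cite[XVII.3.3]{SimS} the algebra $A$ is representation-finite, so every torsion class is functorially finite and both equalities in that row follow at once, with no need to verify the hypotheses of Theorem \ref{tilt_th} (1). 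Your route through Theorem \ref{tilt_th} (1) for row (5) is viable, but it demands exactly the kind of finiteness analysis that your sketch for rows (3) and (4) fails to supply.
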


\subsection{Notation}

We use the symbol ``$\subset$'' as the meaning of ``$\subseteq$''.
The composition of maps or functors $f \colon X \to Y$ and $g \colon Y \to Z$ is denoted by $gf$. 

Throughout of this paper, 
$K$ is a field and $A$ is a finite-dimensional $K$-algebra.
The category of 
finite-dimensional right $A$-modules is denoted by $\mod A$,
and its full subcategory consisting of all the
projective (resp.\ injective) $A$-modules is denoted by $\proj A$ (resp.\ $\inj A$).
Unless otherwise stated, algebras and modules are finite-dimensional,
and subcategories are full subcategories
which are closed under taking isomorphic objects.
The bounded derived category of $\mod A$ is denoted by $\sD^\rb(\mod A)$,
and the homotopy category of the bounded complex category over $\proj A$
(resp.\ $\inj A$) 
is denoted by $\sK^\rb(\proj A)$ (resp.\ $\sK^\rb(\inj A)$).

For each full subcategory $\cC \subset \mod A$,
we define full subcategories of $\mod A$ as follows:
\begin{itemize}
\item
$\add \cC$ is the additive closure of $\cC$,
\item 
$\Fac \cC$ consists of the factor modules of objects in $\add \cC$,
\item 
$\Sub \cC$ consists of the submodules of objects in $\add \cC$,
\item 
$\Filt \cC$ consists of the objects $M$ 
such that there exists a sequence $0 =M_0 \subset M_1 \subset \cdots \subset M_n=M$
with $M_i/M_{i-1} \in \add \cC$,
\item
$\cC^\perp$ and ${^\perp \cC}$ are defined as
$\cC^\perp:=\{ M \in \mod A \mid  \Hom_A(\cC,M)=0 \}$ and
${^\perp \cC}:=\{ M \in \mod A \mid \Hom_A(M,\cC)=0 \}$.
\end{itemize}
If the set of objects of $\cC$ is a finite set $\{ M_1,\ldots,M_n \}$,
then we write $\add (M_1,\ldots,M_n)$ for $\add \cC$,
and other full subcategories are similarly denoted.
In order to emphasize the algebra $A$, we sometimes use notations such as 
$\add_A \cC$.

For $M \in \mod A$, the notation $\ind M$ denotes
the set of isoclasses of indecomposable direct summands of $M$,
and we set $|M|:=\#(\ind M)$.
If $M \cong \bigoplus_{i=1}^m M_i^{n_i}$ with $M_i$ indecomposable,
$M_i \not \cong M_j$ ($i \ne j$), and $n_i \ge 1$,
then $\ind M=\{M_1,\ldots,M_m\}$ and $|M|=m$ hold.

For a set $X \subset A$, 
the notation $\langle X \rangle$ denotes 
the two-sided ideal of $A$ generated by $X$.
We define a functor $D$ as the $K$-dual $\Hom_K(?,K)$.

We often identify an object and its isoclass.

\section{Semibricks in Module Categories}\label{Section_sttilt}

In this section, we consider the sets of pairwise Hom-orthogonal bricks
called \textit{semibricks},
and give bijections between the semibricks and other concepts in module categories,
especially \textit{support $\tau$-tilting modules}.

\subsection{Bijections I}

We first recall the definition of bricks,
and give the definition of semibricks.

\begin{Def}
We define the following notions.
\begin{itemize}
\item[(1)] 
A module $S$ in $\mod A$ is called a \textit{brick} 
if $\End_A(S)$ is a division $K$-algebra.
We write $\brick A$ for the set of isoclasses of bricks in $\mod A$.   
\item[(2)] 
A subset $\cS \subset \brick A$ is called a \textit{semibrick} 
if $\Hom_A(S_1,S_2)=0$ holds for any $S_1 \ne S_2 \in \cS$.
We write $\sbrick A$ for the set of semibricks in $\mod A$.   
\end{itemize}
\end{Def}

By Schur's Lemma,
a simple $A$-module is a brick,
and a set of isoclasses of simple $A$-modules is a semibrick. 

Note that we allow a semibrick to be an infinite set.
This occurs, for example, in the case of the Kronecker quiver algebra.
Any subset of a semibrick is also a semibrick,
so if there is a semibrick consisting of infinitely many bricks in $\mod A$, 
then there are uncountably many semibricks in $\mod A$.
In this paper, we treat the semibricks satisfying some conditions on torsion pairs.
Each of such semibricks has $|A|$ bricks at most, see Corollary \ref{card_bound}.

We recall some fundamental properties of torsion pairs.

A full subcategory $\cT \subset \mod A$ is called a \textit{torsion class} 
if $\cT$ is closed under taking extensions and factor modules, and
a full subcategory $\cF \subset \mod A$ is called a \textit{torsion-free class} 
if $\cF$ is closed under taking extensions and submodules.
We define $\tors A$ as the set of torsion classes in $\mod A$
and $\torf A$ as the set of torsion-free classes in $\mod A$.
For a torsion class $\cT$, the corresponding torsion pair is $(\cT,\cT^\perp)$, and 
for a torsion-free class $\cF$, the corresponding torsion pair is $({^\perp \cF},\cF)$.
Note that ${^\perp \cC} \in \tors A$ and $\cC^\perp \in \torf A$ hold
for any full subcategory $\cC \subset \mod A$.

Let $\cC \subset \mod A$ be a full subcategory. 
We write $\sT(\cC)$ for the smallest torsion class containing $\cC$, 
and $\sF(\cC)$ for the smallest torsion-free class containing $\cC$.
It is well-known that $\sT(\cC)=\Filt(\Fac \cC)$ and $\sF(\cC)=\Filt(\Sub \cC)$ hold, 
see \cite[Lemma 3.1]{MS}.

We mainly consider functorially finite torsion classes and torsion-free classes
(see \cite[Section 2]{MS} for the definition of functorially finiteness) in this paper.
We write $\ftors A$ for the set of functorially finite torsion classes in $\mod A$ and
$\ftorf A$ for the set of functorially finite torsion-free classes in $\mod A$.
By \cite[Theorem]{Smalo},
for a torsion pair $(\cT,\cF)$ in $\mod A$,
$\cT \in \ftors A$ holds if and only if $\cF \in \ftorf A$.

With these preparations on torsion pairs,
we define left finiteness and right finiteness of semibricks as follows.

\begin{Def}
Let $\cS \in \sbrick A$.
\begin{itemize}
\item[(1)]
The semibrick $\cS$ is said to be \textit{left finite} 
if $\sT(\cS)$ is functorially finite.
We write $\fLsbrick A$ for the set of left finite semibricks in $\mod A$.
\item[(2)]
The semibrick $\cS$ is said to be \textit{right finite} 
if $\sF(\cS)$ is functorially finite.
We write $\fRsbrick A$ for the set of right finite semibricks in $\mod A$.
\end{itemize}
\end{Def}

One may wonder whether a subset of a left finite semibrick is left finite again.
It does not hold in general, see Example \ref{MS_ex}. 

We recall the notion of support $\tau$-tilting modules,
which was introduced by 
Adachi--Iyama--Reiten \cite{AIR}.
Let $M \in \mod A$ and $P \in \proj A$,
then the $A$-module $M$ is called a \textit{$\tau$-rigid module} if
$M$ satisfies $\Hom_A(M,\tau M)=0$ for the Auslander--Reiten translation $\tau$,
and the pair $(M,P)$ is called a \textit{$\tau$-rigid pair} if
$M$ is $\tau$-rigid and $\Hom_A(P,M)=0$.
We write $\trigid A$ 
for the set of isoclasses of basic $\tau$-rigid modules in $\mod A$.
Here, $M$ is said to be \textit{basic}
if $M$ can be decomposed into $\bigoplus_{i=1}^m M_i$
with $M_i$ indecomposable and $M_i \not \cong M_j$ for $i \ne j$.
We remark that any $\tau$-rigid module $M$ satisfies
$\Ext_A^1(M,\Fac M)=0$, see \cite[Proposition 5.8]{AS2}.

Assume that $(M,P)$ is a $\tau$-rigid pair. 
We say that $(M,P)$ is a \textit{support $\tau$-tilting pair}
if $|M|+|P|=|A|$, 
and that $(M,P)$ is an \textit{almost support $\tau$-tilting pair}
if $|M|+|P|=|A|-1$.
An $A$-module $M$ is called a \textit{support $\tau$-tilting module} if
there exists $P \in \proj A$ such that
$(M,P)$ is a support $\tau$-tilting pair.
We write $\sttilt A$ 
for the set of isoclasses of basic support $\tau$-tilting modules in $\mod A$,
and we use both notations $M \in \sttilt A$ and $(M,P) \in \sttilt A$.
We remark that if $(M,P)$ and $(M,Q)$ are support $\tau$-tilting pairs,
then $\add P=\add Q$ holds \cite[Proposition 2.3]{AIR}.

The notions of $\tau^{-1}$-rigid modules and
support $\tau^{-1}$-tilting modules are also defined dually,
see the paragraphs just before \cite[Theorem 2.15]{AIR}.
We write 
$\tirigid A$ for 
the set of isoclasses of basic $\tau^{-1}$-rigid modules in $\mod A$ and
$\stitilt A$ for 
the set of isoclasses of basic support $\tau^{-1}$-tilting modules in $\mod A$.

Now, we are ready to state our first main theorem,
which gives a large extension of a bijection given in \cite[Theorem 4.1]{DIJ}.
Our theorem gives a bijection between the support $\tau$-tilting modules and
the left finite semibricks, and its dual bijection.

\begin{Th}\label{sttilt_fsbrick}
We have the following maps,
where $B:=\End_A(M)$ in each case.
\begin{itemize}
\item[(1)]
There exists a surjection $\trigid A \to \fLsbrick A$ defined 
as $M \mapsto \ind(M/{\rad_B M})$. 
Under this map, 
the left finite semibricks for two $\tau$-rigid modules $M,M' \in \trigid A$ 
coincide if and only if $\Fac M=\Fac M'$.
\item[(2)]
There exists a bijection $\sttilt A \to \fLsbrick A$ defined 
as $M \mapsto \ind(M/{\rad_B M})$. 
\item[(3)]
There exists a surjection $\tirigid A \to \fRsbrick A$ defined 
as $M \mapsto \ind({\soc_B M})$. 
Under this map, the right finite semibricks for two $\tau^{-1}$-rigid modules 
$M,M' \in \tirigid A$ 
coincide if and only if $\Sub M=\Sub M'$.
\item[(4)]
There exists a bijection $\stitilt A \to \fRsbrick A$ defined 
as $M \mapsto \ind(\soc_B M)$. 
\end{itemize}
\end{Th}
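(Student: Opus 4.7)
By duality via $D = \Hom_K(?, K)$, parts (3) and (4) follow from parts (1) and (2) applied to the opposite algebra $A^{\mathrm{op}}$, since $D$ exchanges factor modules with submodules and $\rad_B$ with $\soc_B$. So I focus on parts (1) and (2). For $M \in \trigid A$ with basic decomposition $M = \bigoplus_i M_i$ and $B = \End_A(M)$, I would first verify by a direct computation with the primitive idempotents $e_i \in B$ that $\ind(M/\rad_B M) = \{\bar M_i : \bar M_i \ne 0\}$, where $\bar M_i = M_i / (N_i + \rad_{\End_A(M_i)}(M_i))$ and $N_i = \mathrm{tr}_{M/M_i}(M_i)$ denotes the trace.

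The central technical step is to show that $\cS_M := \{\bar M_i : \bar M_i \ne 0\}$ is a semibrick with $\sT(\cS_M) = \Fac M$. My plan is to use the Brenner--Butler-type equivalence $F := \Hom_A(M, ?) \colon \Fac M \to \Sub_B DM$, available because $M$ is $\tau$-rigid. Under $F$, each $M_i$ corresponds to the indecomposable projective $B$-module $Be_i$, and $\bar M_i$ to its simple top. Brick-ness and Hom-orthogonality of the $\bar M_i$ in $\Fac M$ then reduce, via the full faithfulness of $F$, to the analogous properties for simple tops of projective $B$-modules, which are immediate from Schur's lemma. For the equality $\sT(\cS_M) = \Fac M$: the inclusion $\sT(\cS_M) \subseteq \Fac M$ is trivial, while for the reverse, $M$ itself admits the finite filtration $M \supseteq \rad_B M \supseteq \rad_B^2 M \supseteq \cdots \supseteq 0$ with semisimple $B$-module subquotients whose indecomposable parts all lie in $\cS_M$ (since $\rad B$ is nilpotent), so $M \in \Filt(\cS_M) \subseteq \sT(\cS_M)$, and closure of $\sT(\cS_M)$ under factors gives $\Fac M \subseteq \sT(\cS_M)$. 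Functorial finiteness of $\Fac M$ from \cite{AIR} then yields $\cS_M \in \fLsbrick A$.

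For part (2), injectivity of $M \mapsto \cS_M$ on $\sttilt A$ follows from the \cite{AIR} bijection $\sttilt A \to \ftors A$, $M \mapsto \Fac M = \sT(\cS_M)$. For surjectivity, given $\cS \in \fLsbrick A$, take the unique $M \in \sttilt A$ with $\Fac M = \sT(\cS)$ and verify $\cS_M = \cS$: both $\cS_M$ and $\cS$ are the simples of the unique maximal wide subcategory of $\sT(\cS) = \Fac M$ in the Marks--\v{S}t\!'ov\'{i}\v{c}ek sense, identified via Ringel's correspondence between semibricks and wide subcategories. For part (1), given $M \in \trigid A$, the co-Bongartz completion from \cite{AIR} yields $M'' \in \sttilt A$ with $\Fac M = \Fac M''$; showing $\cS_M = \cS_{M''}$ together with part (2) establishes both the surjection and the $\Fac$-coincidence criterion.

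The main obstacle will be the identification $\cS_M = \cS_{M''}$ under co-Bongartz completion. The cleanest approach is to invoke the second paragraph for both $M$ and $M''$: $\cS_M$ and $\cS_{M''}$ are both the simples of the maximal wide subcategory of the common torsion class $\Fac M = \Fac M''$, so they must coincide. Concretely, each extra indecomposable summand $M''_k$ of $M''$ not belonging to $M$ is Ext-projective in $\Fac M$ and lies in $\Filt(\cS_M)$, so its contribution to $M''/\rad_{\End_A(M'')} M''$ is either already among the $\bar M_i$ or vanishes, yielding $\cS_{M''} = \cS_M$.
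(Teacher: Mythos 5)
The central technical gap is your claim that the Brenner--Butler-type functor $\Hom_A(M,?)\colon \Fac M \to \Sub_B DM$ is an equivalence (in particular, fully faithful) for \emph{every} $\tau$-rigid module $M$. This is false unless $M$ is actually support $\tau$-tilting: the equivalence comes from the fact that a support $\tau$-tilting module is a tilting module over $A/\ann M$ (\cite[Proposition 2.2]{AIR}), and a merely $\tau$-rigid module is in general only a partial tilting module over $A/\ann M$. Concretely, take $A$ the path algebra of $1 \to 2$ and $M := P_1$, the indecomposable projective of length two. Then $M$ is $\tau$-rigid, $\Fac M = \add(P_1, S_1)$, $B = \End_A(M) \cong K$, and both $\Hom_A(M, P_1)$ and $\Hom_A(M, S_1)$ are one-dimensional over $B = K$; since $\Hom_A(S_1, P_1) = 0$ while $\Hom_B(K, K) \ne 0$, the functor is not full. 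So your ``central technical step'' --- reducing brick-ness and orthogonality of the $\bar M_i$ to Schur's lemma for simple $B$-modules via full faithfulness --- does not go through as stated for $M \in \trigid A$, which is precisely the generality needed for part (1). (Deferring to part (2) and using co-Bongartz completion does not rescue this, because your argument that $\cS_M = \cS_{M''}$ already presupposes that $\cS_M$ is a semibrick with $\sT(\cS_M) = \Fac M$.)

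The paper avoids this issue in Lemma \ref{well-def}: it argues directly in $\mod A$, lifting any nonzero endomorphism of $N_i$ along the quotient $M_i \twoheadrightarrow N_i$ using only $\Ext^1_A(M, L_i) = 0$ (valid because $M$ is $\tau$-rigid and $L_i \in \Fac M$), and then establishes $\sT\colon \sbrick A \to \tors A$ injective from first principles via a semibrick Schur lemma (Lemma \ref{Schur}, Lemma \ref{sbrick_tors}). Your functorial route can be repaired, but with a weaker tool than you invoked: on $\Fac M$ the functor $\Hom_A(M,?)$ is exact and \emph{faithful} (any $f\colon X \to Y$ with $X \in \Fac M$ and $\Hom_A(M,f) = 0$ vanishes on a cover $M^n \twoheadrightarrow X$), and a direct check shows $\Hom_A(M, L_i) = (\rad B)e_i$, so $\Hom_A(M, N_i)$ is the simple top of $Be_i$. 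Faithfulness together with simplicity of the targets gives orthogonality, and gives brick-ness because a finite-dimensional $K$-subalgebra of a division algebra is itself a division algebra. That is a correct argument, but it is not the one you wrote, and the equivalence you cited is unavailable. Your remaining steps (duality reduction for (3)--(4); the filtration showing $M \in \sT(\cS_M)$, where the subquotients $\rad_B^t M/\rad_B^{t+1}M$ lie in $\Fac\cS_M$ rather than $\add\cS_M$; and the surjectivity argument via Marks--\v{S}t\!'ov\'{i}\v{c}ek plus Ringel, in place of the paper's self-contained Lemma \ref{sbrick_tors}) are all essentially sound.
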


We remark that the two conditions 
$\#\sttilt A<\infty$ and $\ftors A=\tors A$ 
are equivalent
\cite[Theorem 3.8, Proposition 3.9]{DIJ}. 
In this case, $A$ is said to be \textit{$\tau$-tilting finite},
and we have the equations $\fLsbrick A=\fRsbrick A=\sbrick A$ and 
the bijections $\sttilt A \to \sbrick A$ and $\stitilt A \to \sbrick A$.

In the rest of this subsection, we prove Theorem \ref{sttilt_fsbrick}.
To investigate the maps in this theorem in detail,
we sometimes use the following notation.

\begin{Def}\label{summand_symbol}
Let $M \in \trigid A$ and $B:=\End_A(M)$.
We decompose $M$ as
$M=\bigoplus_{i=1}^m M_i$ with $M_i$ indecomposable,
and then define 
\begin{align*}
L:=\rad_B M, \quad N:=M/L, \quad
L_i:=\sum_{f \in \rad_A(M,M_i)} \Im f \subset M_i, \quad N_i:=M_i/L_i
\end{align*}
for $i=1,2,\ldots,m$.
Moreover, we set $I:=\{i \in \{1,2,\ldots,m\} \mid N_i \ne 0\}$.
\end{Def}

Clearly, $L=\bigoplus_{i=1}^m L_i$ and $N=\bigoplus_{i=1}^m N_i$ hold.
Using this notation, we first show the well-definedness of the maps in 
Theorem \ref{sttilt_fsbrick}.

\begin{Lem}\label{well-def}
In the setting of Definition \ref{summand_symbol}, 
the following assertions hold.
\begin{itemize}
\item[(1)]
We have $\Ext_A^1(M,L)=0$.
\item[(2)]
The module $N_i$ is a brick or zero for each $i$.
\item[(3)]
If $i \ne j$, we have $\Hom_A(M_i,N_j)=0$ and $\Hom_A(N_i,N_j)=0$.
\item[(4)]
The $A$-module $N$ is basic and $\ind N=\{N_i \mid i \in I \} \in \sbrick A$. 
\item[(5)]
The torsion class $\sT(N)$ is equal to $\Fac M$.
\item[(6)]
We have $\ind N=\{N_i \mid i \in I \} \in \fLsbrick A$.
\end{itemize}
\end{Lem}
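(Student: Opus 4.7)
My plan is to establish (1)--(6) in order: part (1) provides the key lifting tool, parts (2)--(3) give the brick and Hom-orthogonality properties, and (4)--(6) fall out as formal consequences. The identification $\rad B = \rad_A(M, M)$ is used throughout; it implies $L = \bigoplus_{i=1}^{m} L_i$ as a submodule of $M = \bigoplus_{i=1}^{m} M_i$. For (1), I would observe that $L = \sum_{h \in \rad_A(M, M)} \Im h$ is a quotient of a finite direct sum of copies of $M$ (finitely many because $\rad B$ is finite-dimensional over $K$), so $L \in \Fac M$. The hypothesis $M \in \trigid A$ then gives $\Ext_A^1(M, L) = 0$ via $\Ext_A^1(M, \Fac M) = 0$, which in turn yields the finer vanishing $\Ext_A^1(M_i, L_j) = 0$ for all pairs $i, j$ used below.

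For (2), I would construct a surjective ring map $\End_A(M_i) \to \End_A(N_i)$ with kernel $\rad \End_A(M_i)$. Well-definedness requires $f(L_i) \subset L_i$ for every $f \in \End_A(M_i)$: any $h \in \rad_A(M, M_i)$ composed with $f$ remains in $\rad_A(M, M_i)$, using that $\End_A(M_i)$ is local. Surjectivity follows by lifting $g \colon N_i \to N_i$ pulled back along $M_i \twoheadrightarrow N_i$ to an endomorphism of $M_i$, via the vanishing of $\Ext_A^1(M_i, L_i)$. The kernel is contained in $\rad \End_A(M_i)$ (otherwise an invertible $f$ would force $L_i = M_i$, contradicting $N_i \ne 0$), and the reverse inclusion follows by extending $f \in \rad \End_A(M_i)$ by zero on the other summands to get an element of $\rad_A(M, M_i)$. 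Hence $\End_A(N_i) \cong \End_A(M_i)/\rad \End_A(M_i)$ is a division $K$-algebra. For (3), given $g \colon M_i \to N_j$ with $i \ne j$, I would lift to $\tilde g \colon M_i \to M_j$; the zero-extension of $\tilde g$ sits in $\rad_A(M, M_j)$, so $\Im \tilde g \subset L_j$ and $g = 0$. Precomposing with $M_i \twoheadrightarrow N_i$ yields $\Hom_A(N_i, N_j) = 0$.

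Part (4) is bookkeeping: (3) forces the $N_i$ for distinct $i \in I$ to be pairwise non-isomorphic, so $\ind N = \{N_i \mid i \in I\}$, and (2) together with (3) make this set a semibrick. For (5), the inclusion $\sT(N) \subset \Fac M$ uses that $N \in \Fac M$ and that $\Fac M$ is a torsion class by $\tau$-rigidity. For the reverse containment I reduce to $M \in \sT(N)$ and filter $M$ by $L^{(k)} := (\rad B)^k \cdot M$; this terminates at zero because $\rad B$ is nilpotent, and each factor $L^{(k)}/L^{(k+1)}$ is a semisimple $B$-module. The crucial identification, which is the main obstacle, is that each simple $B$-module, viewed as an $A$-module, is isomorphic to some $N_i$; this relies on the brick $N_i$ being one-dimensional over its endomorphism division ring $D_i = \End_A(N_i)$, because any $D_i$-line in $N_i$ is an $A$-submodule of the brick $N_i$, hence all of it. Granted this, every $L^{(k)}/L^{(k+1)} \in \add N \subset \Fac N$, and the filtration witnesses $M \in \Filt(\Fac N) = \sT(N)$. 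For (6), we conclude $\sT(\ind N) = \Fac M$, which is a functorially finite torsion class because $M \in \trigid A$ (a standard fact in Adachi--Iyama--Reiten theory), giving $\ind N \in \fLsbrick A$.
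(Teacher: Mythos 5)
Parts (1)--(4) and (6) of your proposal are correct and essentially reproduce the paper's argument; in particular your repackaging of (2) as an isomorphism $\End_A(N_i)\cong\End_A(M_i)/\rad\End_A(M_i)$ is a valid (and slightly more informative) version of the same lifting-plus-locality argument. The genuine gap is in (5), in the step identifying the layers $L^{(k)}/L^{(k+1)}=\rad_B^{k}M/\rad_B^{k+1}M$ with objects of $\add N$.

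Two things go wrong there. First, the supporting claim that the brick $N_i$ is one-dimensional over $D_i=\End_A(N_i)$ is false: a $D_i$-line $D_i x$ is not an $A$-submodule, since $(dx)a=d(xa)$ lies in $D_i(xA)$ but not in $D_i x$ in general. Concretely, for $A=K(1\to 2)$ and $M=P_1$ one has $L=0$, $N_1=P_1$, $D_1=K$, and $N_1$ is two-dimensional over $D_1$. Second, ``a simple $B$-module viewed as an $A$-module'' is not well defined: the layers are $B$-$A$-bimodules that are semisimple as left $B$-modules, but their decomposition into simple left $B$-modules is a decomposition into $D_i$-lines, which, as just noted, are not $A$-submodules, so semisimplicity over $B$ yields no decomposition of the layer as an $A$-module. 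Consequently $L^{(k)}/L^{(k+1)}\in\add N$ is unjustified, and one should not expect it: a quotient of $N^{\oplus s}$ with $N$ a non-simple brick need not lie in $\add N$. What is true, and what suffices, is the weaker statement $L^{(k)}/L^{(k+1)}\in\Fac N$: the map $f=\begin{bmatrix}f_1&\cdots&f_s\end{bmatrix}\colon M^{\oplus s}\to M$ built from a $K$-basis of $\rad B$ satisfies $f\bigl((\rad_B^{t}M)^{\oplus s}\bigr)=\rad_B^{t+1}M$, hence induces a surjection $(\rad_B^{t}M/\rad_B^{t+1}M)^{\oplus s}\to\rad_B^{t+1}M/\rad_B^{t+2}M$ of $A$-modules; induction starting from $M/\rad_B M=N$ puts every layer in $\Fac N$, and then $M\in\Filt(\Fac N)=\sT(N)$ as you intended.
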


\begin{proof}
(1)
Because $M$ is $\tau$-rigid, it is enough to show that $L \in \Fac M$.
Take a $K$-basis $f_1,\ldots,f_s \colon M \to M$ of $\rad B \subset B = \End_A(M)$,
and set $f:=\begin{bmatrix} f_1 & \cdots & f_s \end{bmatrix} \colon M^{\oplus s} \to M$.
Then, $\Im f=\rad_B M$ holds, so $L \in \Fac M$.
Thus, we have the assertion.

(2)
It is sufficient to show that any nonzero endomorphism $f \colon N_i \to N_i$
is an isomorphism.
Consider the exact sequence 
$0 \to L_i \xrightarrow{\mu_i} M_i \xrightarrow{\pi_i} N_i \to 0$.
Since $\Ext_A^1(M_i,L_i)=0$ follows from (1), 
we obtain that $\Hom_A(M_i,\pi_i)$ is surjective. 
Thus, there exists $g \colon M_i \to M_i$ such that $\pi_i g = f \pi_i$.
Because $f \ne 0$, the map $g$ is an isomorphism 
(otherwise, we have $\Im g \subset L_i$, which yields $f=0$, a contradiction).
Therefore, $f$ is also an isomorphism.

(3)
Let $i \ne j$ and $f \colon M_i \to N_j$.
Consider the exact sequence 
$0 \to L_j \xrightarrow{\mu_j} M_j \xrightarrow{\pi_j} N_j \to 0$.
Since $\Ext_A^1(M_i,L_j)=0$ follows from (1), 
we get that $\Hom_A(M_i,\pi_j)$ is surjective. 
Thus, there exists $g \colon M_i \to M_j$ such that $\pi_j g = f$.
Because $M$ is basic, we get $M_i \not \cong M_j$.
Therefore, we have $\Im g \subset L_j$ and $f=0$.
We get that $\Hom_A(M_i,N_j)=0$ for all $i \ne j$.
Because $N_i \in \Fac M_i$, we also obtain that $\Hom_A(N_i,N_j)=0$. 

(4)
We obtain the assertions by (2) and (3).

(5)
Since $M$ is $\tau$-rigid, $\Fac M$ is a torsion class \cite[Theorem 5.10]{AS2}.
Thus, the inclusion $\sT(N) \subset \Fac M$ holds.
To prove that $\Fac M \subset \sT(N)$,
it suffices to show that $M \in \sT(N)$.
We define $f \colon M^{\oplus s} \to M$ as in (1), then
$f((\rad_B^t M)^{\oplus s})=\rad_B^{t+1}M$ holds for all $t \ge 0$.
Therefore, $\rad_B^{t+1}M/{\rad_B^{t+2}M} \in \Fac (\rad_B^t M/{\rad_B^{t+1}M})$.
By induction, we have $\rad_B^t M/{\rad_B^{t+1}M} \in \Fac N$ for all $t \ge 0$.
Moreover, there exists $t_0$ such that $\rad_B^{t_0} M=0$.
Then the sequence 
$M \supset \rad_B M \supset \cdots \supset \rad_B^{t_0} M=0$
shows that $M \in \Filt(\Fac N)=\sT(N)$.
Thus, the assertion $\sT(N)=\Fac M$ is proved.

(6)
By (4), it remains to show $\sT(N) \in \ftors A$.
We have obtained $\sT(N)=\Fac M \in \tors A$ in (5),
and $\Fac M$ is functorially finite in $\mod A$ by \cite[Proposition 4.6]{AS1}.
Thus, we have the assertion.
\end{proof}

We recall an important property of support $\tau$-tilting modules 
from \cite{AIR}, which associates the support $\tau$-tilting modules
and the functorially finite torsion classes bijectively.

\begin{Prop}\label{sttilt_ftors}
\cite[Theorem 2.7]{AIR}
There exists a bijection $\Fac \colon \sttilt A \to \ftors A$ 
defined as $M \mapsto \Fac M$.
The inverse $\ftors A \to \sttilt A$ is given by 
sending each $\cT \in \ftors A$ 
to the direct sum of indecomposable Ext-projective objects in $\cT$.
\end{Prop}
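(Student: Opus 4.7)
The plan is to construct the inverse explicitly and verify the two compositions. Well-definedness of $\Fac$ is immediate: for $(M,P) \in \sttilt A$, parts (5)--(6) of Lemma \ref{well-def} applied to the $\tau$-rigid module $M$ already give $\Fac M \in \ftors A$, so $\Fac$ lands in $\ftors A$.

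For the inverse, given $\cT \in \ftors A$, I would invoke the Auslander--Smalø structure theorem for functorially finite torsion classes, which guarantees that $\cT$ contains only finitely many isoclasses $\{M_1,\dots,M_m\}$ of indecomposable Ext-projective objects and that $M := \bigoplus_{i=1}^m M_i$ satisfies $\Fac M = \cT$. This $M$ is $\tau$-rigid by the Auslander--Reiten formula $\Hom_A(M,\tau M) \cong D\overline{\Ext}^1_A(M,M)$, whose right-hand side vanishes thanks to Ext-projectivity of $M$ in $\cT = \Fac M$. Taking $P$ to be the direct sum of those indecomposable projectives $P_j$ not lying in $\cT$, one has $\Hom_A(P,M) = 0$ automatically since $M \in \cT \subset P^\perp$, so $(M,P)$ is a $\tau$-rigid pair.

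The main obstacle is the numerical condition $|M|+|P| = |A|$ needed to promote $(M,P)$ to a support $\tau$-tilting pair. My approach would be a Bongartz-type complement argument phrased in the Grothendieck group $K_0(\proj A)$: show that the $g$-vectors of the indecomposable summands of $M$, together with the negatives of the $g$-vectors of the indecomposable summands of $P$, are $\Z$-linearly independent; then leverage the two maximality properties, namely that $M$ already collects \emph{all} indecomposable Ext-projectives of $\cT$ and that $P$ collects all indecomposable projectives avoiding $\cT$, to conclude they span a sublattice of full rank $|A|$. The technical core here is the uniqueness of the support $\tau$-tilting completion of a $\tau$-rigid pair with prescribed $\Fac$, which is typically established by analyzing almost split sequences in $\cT$ together with the Auslander--Smalø correspondence between Ext-projectives and minimal right $\cT$-approximations.

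Finally, to check the two maps are mutually inverse, one must verify that for any $(M,P) \in \sttilt A$ the set $\ind M$ coincides with the set of indecomposable Ext-projectives of $\Fac M$. The inclusion $\subset$ is immediate from $\tau$-rigidity via the AR formula and the closure of $\Fac M$ under factors; the reverse inclusion follows from the Bongartz-style uniqueness mentioned above, since any additional Ext-projective could be adjoined to $M$ without enlarging $\Fac M$, contradicting maximality. Together with the tautological identity $\Fac M_\cT = \cT$ built into the construction of the inverse, this yields the claimed bijection.
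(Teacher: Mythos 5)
The paper offers no proof of this proposition: it is imported verbatim from Adachi--Iyama--Reiten \cite[Theorem 2.7]{AIR}, so there is nothing in the text for your argument to match. Judged on its own terms, your outline follows the same broad strategy as the original AIR proof (Auslander--Smal\o{} structure of functorially finite torsion classes, Ext-projectivity forcing $\tau$-rigidity, then a completion argument for the count), but it contains one concrete error and one essential gap.

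The error is in the construction of $P$. You take $P$ to be the sum of the indecomposable projectives \emph{not lying in} $\cT$ and then claim $\cT \subset P^\perp$. This is false: for the path algebra of $1 \to 2$ and $\cT = \add S_1$, the projective $P_1$ is not in $\cT$ but $\Hom_A(P_1,S_1) \ne 0$; the correct pair is $(S_1,P_2)$, not $(S_1, P_1 \oplus P_2)$. The right condition is $\Hom_A(P_j,\cT)=0$ (equivalently, the vertex $j$ is outside the support of $M$), which is strictly stronger than $P_j \notin \cT$. The gap is the step $|M|+|P|=|A|$. Linear independence of the $g$-vectors of the summands of $M$ and $P$ only yields the inequality $|M|+|P|\le|A|$; the reverse inequality is the actual content of the theorem, and ``leverage the two maximality properties to conclude full rank'' is not an argument. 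Maximality of the set of Ext-projectives is a statement inside $\mod A$, and converting it into a spanning statement in $K_0(\proj A)$ requires precisely the Bongartz-completion machinery of \cite[Theorems 2.10--2.13]{AIR} (every $\tau$-rigid pair extends to a support $\tau$-tilting pair, and maximal $\tau$-rigid pairs are exactly those with $|M|+|P|=|A|$). The ``uniqueness of the support $\tau$-tilting completion with prescribed $\Fac$'' that you invoke is likewise essentially equivalent to the injectivity you are trying to prove, so as written the argument is circular at its core. If the intent is to use this proposition as a black box, cite \cite{AIR} as the paper does; if the intent is to reprove it, the Bongartz-type completion must actually be carried out.
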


%By Lemma \ref{well-def}, we have the following commutative diagram:
%\begin{align*}\label{trigid_comm}
%\begin{xy}
%(-40,  0) *+{\trigid A} = "0",
%(  0,  0) *+{\ftors A} = "1",
%( 40,  0) *+{\fLsbrick A} = "3",
%(-40, -8) = "4",
%( 40, -8) = "5",
%\ar^{\Fac} "0"; "1"
%\ar_{\sT} "3"; "1"
%\ar@{-} "0"; "4"
%\ar@{-}_{M \mapsto \ind(M/{\rad_B M})} "4"; "5" 
%\ar "5"; "3"
%\end{xy}.
%\end{align*}

To show Theorem \ref{sttilt_fsbrick}, 
we study the operation $\sT$
taking the smallest torsion class in the following lemmas.

\begin{Lem}\label{Schur}
Let $\cS \in \sbrick A$, then the following hold. 
\begin{itemize}
\item[(1)] 
Let $S \in \cS$.
Every nonzero homomorphism $f \colon M \to S$ with $M \in \sT(\cS)$ is surjective. 
Moreover, we have $\Ker f \in \sT(\cS)$.
\item[(2)]
Let $L \in \sT(\cS)$ and assume $L \ne 0$. 
If every nonzero homomorphism $f \colon M \to L$ with $M \in \sT(\cS)$ is surjective 
and satisfies $\Ker f \in \sT(\cS)$, then we have $L \in \cS$.
\end{itemize}
\end{Lem}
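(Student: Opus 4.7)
The plan is to establish (1) by induction on the length of a filtration witnessing $M \in \sT(\cS) = \Filt(\Fac \cS)$, and then to deduce (2) from (1) by using the filtration of $L$ to locate a suitable $S \in \cS$.

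For the base case of (1), with $M \in \Fac \cS$, I would choose an epimorphism $\pi \colon \bigoplus_{j} S_j \twoheadrightarrow M$ with each $S_j \in \cS$ and analyze the composite $f\pi$. By Schur's Lemma combined with the semibrick condition, each component $S_j \to S$ of $f\pi$ vanishes unless $S_j \cong S$, in which case it lies in the division algebra $\End_A(S)$. Since $f \ne 0$ forces at least one component to be a nonzero element of $\End_A(S)$, hence an isomorphism, $f\pi$ is surjective and so is $f$. For the kernel, splitting $\bigoplus_j S_j$ into the summands on which $f\pi$ vanishes and those isomorphic to $S$ on which it acts by invertible endomorphisms gives $\ker(f\pi) \in \add \cS$ by a direct computation; consequently $\ker f = \pi(\ker(f\pi))$ is a quotient of an object of $\add \cS \subset \sT(\cS)$, and lies in $\sT(\cS)$ by closure under quotients.

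For the inductive step, with filtration $0 = M_0 \subset M_1 \subset \cdots \subset M_n = M$, I would split on whether $f|_{M_{n-1}}$ vanishes. If it does, $f$ descends to a nonzero map $\bar f \colon M/M_{n-1} \to S$ on an object of $\Fac \cS$; the base case gives surjectivity and $\ker \bar f \in \sT(\cS)$, so $f$ is surjective and $\ker f$ is an extension of $\ker \bar f$ by $M_{n-1}$, both in $\sT(\cS)$. If it does not, the inductive hypothesis applied to $M_{n-1}$ yields that $f|_{M_{n-1}}$ is surjective with $K' := \ker(f|_{M_{n-1}}) \in \sT(\cS)$; the decisive observation is that surjectivity of $f|_{M_{n-1}}$ forces $M = M_{n-1} + \ker f$, so $\ker f / K' \cong M/M_{n-1} \in \Fac \cS \subset \sT(\cS)$, and $\ker f \in \sT(\cS)$ by extension-closure.

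For (2), I would first observe that the hypothesis applied to an arbitrary nonzero endomorphism $L \to L$ yields surjectivity, which on the finite-dimensional module $L$ is an isomorphism; so $\End_A(L)$ is a division algebra and $L$ is a brick. Using a filtration of $L$ in $\Filt(\Fac \cS)$, I would produce some $S \in \cS$ admitting a nonzero map $S \to L$ (for instance by lifting the bottom layer $L_1 \in \Fac \cS$ to a surjection $\bigoplus_j S_j \twoheadrightarrow L_1 \hookrightarrow L$ and picking a summand on which the composite is nonzero). By assumption this map is surjective with kernel $K \in \sT(\cS)$, and $K \subsetneq S$. If $K \ne 0$, the inclusion $K \hookrightarrow S$ would be a nonzero map from an object of $\sT(\cS)$ into $S \in \cS$, which by (1) would be surjective, contradicting $K \subsetneq S$; hence $K = 0$ and $L \cong S \in \cS$. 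The main obstacle is the inductive step of (1) when $f|_{M_{n-1}} \ne 0$: one must exploit surjectivity of the restriction to establish the identification $\ker f / K' \cong M/M_{n-1}$, so that the submodule $\ker f$ of $M$ can be reassembled inside the torsion class without relying on the (unavailable) closure under submodules.
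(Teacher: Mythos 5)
Your proof is correct. It takes a route that is similar in spirit but organized differently from the paper's proof of part (1). You run a genuine top-down induction on the filtration length, splitting into cases according to whether $f|_{M_{n-1}}$ vanishes, and in the nontrivial case you exploit the second isomorphism theorem to realize $\ker f$ as an extension of $\ker(f|_{M_{n-1}})$ by $M/M_{n-1}\in\Fac\cS$, both of which land in $\sT(\cS)$. The paper instead collapses the induction to a single step: it locates the \emph{minimal} index $k$ with $f(M_k)\ne 0$, passes to the induced map $\bar f\colon M/M_{k-1}\to S$, finds $S_1\in\cS$ with an isomorphism $S_1\to M/M_{k-1}\to S$, and uses this to split $\bar f$, exhibiting $\Ker\bar f$ as a direct summand of $M/M_{k-1}\in\sT(\cS)$; then $\Ker f$ is an extension of $\Ker\bar f$ by $M_{k-1}$. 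Your version buys a cleaner base case at the price of a more elaborate inductive step (and, incidentally, avoids needing that $\sT(\cS)$ is closed under direct summands), while the paper's version buys brevity by working only at the one critical layer. Both are valid; the essential ingredients (the filtration in $\Filt(\Fac\cS)$, the semibrick/Schur rigidity forcing nonzero maps between elements of $\cS$ to be isomorphisms, closure of $\sT(\cS)$ under quotients and extensions) are the same. For part (2) your argument coincides with the paper's; the opening observation that $L$ must be a brick is correct but not actually used in the rest of your proof and could be dropped.
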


\begin{proof}
(1)
Since $M \in \sT(\cS)=\Filt (\Fac \cS)$,
there exists a sequence 
$0=M_0 \subset M_1 \subset \cdots \subset M_{l-1} \subset M_l = M$
with $M_i/M_{i-1} \in \Fac \cS$.
Because $f \ne 0$,
we can take $k:=\min\{ i \mid f(M_i) \ne 0\} \ge 1$.
Then $f$ induces a nonzero map $\overline{f} \colon M/M_{k-1} \to S$
with $f(M_k/M_{k-1}) \ne 0$.
Because $M_k/M_{k-1} \in \Fac \cS$,
there exists a homomorphism $g \colon N \to M/M_{k-1}$ 
with $N \in \add \cS$ and $\Im g=M_k/M_{k-1}$,
and we have a nonzero map $\overline{f}g \ne 0 \colon N \to S$.
Then there exists an indecomposable direct summand $S_1$ of $N$ such that
$(\overline{f}g)|_{S_1} \ne 0 \colon S_1 \to S$.
It is clear that $S_1$ is a brick in $\cS$, 
and $(\overline{f}g)|_{S_1} \colon S_1 \to S$ is isomorphic
because $\cS$ is a semibrick.
Thus, $\overline{f}$ is surjective, and so is $f$.

The isomorphism $(\overline{f}g)|_{S_1} \colon S_1 \to M/M_{k-1} \to S$ implies 
that the epimorphism $\overline{f} \colon M/M_{k-1} \to S$ splits.
Thus, we have $S \oplus \Ker \overline{f} \cong M/M_{k-1} \in \Filt (\Fac \cS)=\sT(\cS)$.
Therefore, $\Ker \overline{f}$ belongs to $\sT(\cS)$.
Clearly, there exists an exact sequence
$0 \to M_{k-1} \to \Ker f \to \Ker \overline{f} \to 0$,
and $M_{k-1}$ also belongs to $\Filt (\Fac \cS)=\sT(\cS)$.
Therefore, we have $\Ker f \in \sT(\cS)$.

(2)
Since $L \in \sT(\cS)=\Filt (\Fac \cS)$ and $L \ne 0$,
there exists a nonzero map $f \colon S \to L$ with $S \in \cS$.
By assumption, $f$ is surjective and $\Ker f \in \sT(\cS)$.
An inclusion $\Ker f \to S$ is zero or surjective by (1),
and then $f \ne 0$ implies $\Ker f=0$.
Therefore, $f$ is isomorphic; hence, we have $L \in \cS$.
\end{proof}

\begin{Lem}\label{sbrick_tors}
The map $\sT \colon \sbrick A \to \tors A$ is injective.
Moreover, it is restricted to an injection $\sT \colon \fLsbrick A \to \ftors A$.
\end{Lem}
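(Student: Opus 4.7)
The plan is to read Lemma \ref{Schur} as giving an intrinsic characterization of the bricks in $\cS$ purely in terms of the torsion class $\sT(\cS)$, after which injectivity is essentially automatic.

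More precisely, for any $\cS \in \sbrick A$ I would combine the two parts of Lemma \ref{Schur} to show that
\[
\cS \;=\; \bigl\{\, L \in \sT(\cS) \;\big|\; L \ne 0,\ \text{every nonzero } f \colon M \to L \text{ with } M \in \sT(\cS) \text{ is surjective and } \Ker f \in \sT(\cS) \,\bigr\}.
\]
The inclusion $\subset$ is exactly Lemma \ref{Schur}(1), and the inclusion $\supset$ is exactly Lemma \ref{Schur}(2). The key point is that the right-hand side depends only on the torsion class $\sT(\cS)$ and not on $\cS$ itself.

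Given this, injectivity of $\sT \colon \sbrick A \to \tors A$ is immediate: if $\cS, \cS' \in \sbrick A$ satisfy $\sT(\cS) = \sT(\cS')$, then the characterizing set on the right is the same for both, so $\cS = \cS'$.

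For the restricted statement, if $\cS \in \fLsbrick A$ then by definition $\sT(\cS) \in \ftors A$, so the map $\sT$ indeed sends $\fLsbrick A$ into $\ftors A$; injectivity is inherited from the unrestricted map. There is no substantial obstacle here since Lemma \ref{Schur} already packages the nontrivial content; the only care needed is in writing down the characterization cleanly so that the quantifier ``$M \in \sT(\cS)$'' is manifestly determined by the torsion class alone.
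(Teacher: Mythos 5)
Your proof is correct, and it takes a cleaner route than the paper's. You invoke both parts of Lemma \ref{Schur} to obtain an \emph{intrinsic characterization} of the semibrick $\cS$ as the set of nonzero objects $L$ in $\sT(\cS)$ such that every nonzero map onto $L$ from $\sT(\cS)$ is surjective with kernel in $\sT(\cS)$; since this set visibly depends only on the torsion class, injectivity drops out at once. The paper's proof, by contrast, uses only part (1) of Lemma \ref{Schur}: given $\sT(\cS_1)=\sT(\cS_2)$ and $S_1\in\cS_1$, it produces a nonzero (hence surjective, by (1)) map $S_2\to S_1$ with $S_2\in\cS_2$, then a surjection $S_1'\to S_2$ with $S_1'\in\cS_1$, composes them, and uses the semibrick property of $\cS_1$ to force the composite to be an isomorphism, whence $S_1\cong S_2$; symmetry finishes. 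So the paper never cites Lemma \ref{Schur}(2) in this proof but instead re-derives the needed rigidity inline from the Hom-orthogonality of $\cS_1$. Your version buys conceptual transparency (the semibrick is \emph{recovered} from the torsion class by a torsion-class-only formula, morally as its ``simple objects''), while the paper's version has the minor advantage of not leaning on part (2) of the auxiliary lemma. Both are valid; yours is the more economical packaging given that Lemma \ref{Schur}(2) is already available.
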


\begin{proof}
Let semibricks $\cS_1$ and $\cS_2$ satisfy $\sT(\cS_1)=\sT(\cS_2)$.

We claim that $\cS_1 \subset \cS_2$.
Let $S_1 \in \cS_1$.
By $S_1 \in \sT(\cS_1) = \sT(\cS_2)=\Filt(\Fac \cS_2)$,
there exists a nonzero map $f \colon S_2 \to S_1$ with $S_2 \in \cS_2$.
Because $S_2 \in \sT(\cS_2)=\sT(\cS_1)$, 
Lemma \ref{Schur} (1) implies that $f$ is surjective.
Similarly, we have a surjection 
$g \colon S_1' \to S_2$ with $S_1' \in \cS_1$.
Then $fg \colon S_1' \to S_1$ is surjective.
Because $\cS_1$ is a semibrick, $fg$ is isomorphic.
Since $g$ is surjective, $f$ is isomorphic.
Thus, we have $S_1 \cong S_2 \in \cS_2$.
Now, we have proved that $\cS_1 \subset \cS_2$.

By symmetry, we also obtain that $\cS_2 \subset \cS_1$; hence, $\cS_1=\cS_2$.
This implies the injectivity.

The restriction $\sT \colon \fLsbrick A \to \ftors A$ 
is well-defined by definition, and it is injective.
\end{proof}

Now, we can show Theorem \ref{sttilt_fsbrick}.

\begin{proof}[Proof of Theorem \ref{sttilt_fsbrick}]
We prove (1) and (2). 
By applying (1) and (2) to the opposite algebra $A^\mathrm{op}$ and taking the $K$-duals,
we obtain (3) and (4).

The maps in (1) and (2) are well-defined by Lemma \ref{well-def}.

Lemma \ref{well-def} also implies that the following diagram is commutative:
\begin{align*}
\begin{xy}
(-40, 12) *+{\sttilt A} = "2",
(-40,  0) *+{\trigid A} = "0",
(  0,  0) *+{\ftors A} = "1",
( 40,  0) *+{\fLsbrick A} = "3",
(-40, -8) = "4",
( 40, -8) = "5",
\ar_{\text{incl.}} "2"; "0"
\ar^{\Fac} "0"; "1"
\ar^{\Fac} "2"; "1"
\ar_{\sT} "3"; "1"
\ar@{-} "0"; "4"
\ar@{-}_{M \mapsto \ind(M/{\rad_B M})} "4"; "5" 
\ar "5"; "3"
\end{xy}.
\end{align*}
Here, $\Fac \colon \trigid A \to \ftors A$ is surjective by Proposition \ref{sttilt_ftors},
so $\sT \colon \fLsbrick A \to \ftors A$ is surjective.
By Lemma \ref{sbrick_tors},
$\sT \colon \fLsbrick A \to \ftors A$ is also injective,
so it is bijective.
Thus, the map in (1) is surjective,
and two $\tau$-rigid modules $M,M' \in \trigid A$ are 
sent to the same left finite semibrick
if and only if $\Fac M=\Fac M'$.
We can prove the bijectivity of the map in (2) in a similar way,
since $\Fac \colon \sttilt A \to \ftors A$ is bijective
by Proposition \ref{sttilt_ftors}.
\end{proof}

In the proof of Theorem \ref{sttilt_fsbrick} above,
we have already obtained the next result.

\begin{Prop}\label{comm_3}
The following conditions hold, where $B:=\End_A(M)$ in each case.
\begin{itemize}
\item[(1)] The map $\sT \colon \fLsbrick A \to \ftors A$ is a bijection,
and we have the following commutative diagram of bijections:
\begin{align*}
\begin{xy}
(-40,  0) *+{\sttilt A} = "0",
(  0,  0) *+{\ftors A} = "1",
( 40,  0) *+{\fLsbrick A} = "3",
(-40, -8) = "4",
( 40, -8) = "5",
\ar^{\Fac} "0"; "1"
\ar_{\sT} "3"; "1"
\ar@{-} "0"; "4"
\ar@{-}_{M \mapsto \ind(M/{\rad_B M})} "4"; "5" 
\ar "5"; "3"
\end{xy}.
\end{align*}
\item[(2)] The map $\sF \colon \fRsbrick A \to \ftorf A$ is a bijection,
and we have the following commutative diagram of bijections:
\begin{align*}
\begin{xy}
(-40,  0) *+{\stitilt A} = "0",
(  0,  0) *+{\ftorf A} = "1",
( 40,  0) *+{\fRsbrick A} = "3",
(-40, -8) = "4",
( 40, -8) = "5",
\ar^{\Sub} "0"; "1"
\ar_{\sF} "3"; "1"
\ar@{-} "0"; "4"
\ar@{-}_{M \mapsto \ind(\soc_B M)} "4"; "5" 
\ar "5"; "3"
\end{xy}.
\end{align*}
\end{itemize}
\end{Prop}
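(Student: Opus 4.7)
The plan is to observe that both parts of this proposition are essentially formal consequences of the machinery already set up for the proof of Theorem \ref{sttilt_fsbrick}, and in fact (as the remark preceding the statement indicates) everything has been established along the way. So I would present the proof as a short assembly of earlier ingredients rather than introducing new arguments.

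For part (1), I would first verify the commutativity of the triangle. Given $M \in \sttilt A$ with $B := \End_A(M)$ and $N := M/\rad_B M$, I need to show $\sT(\ind N) = \Fac M$. This is exactly Lemma \ref{well-def}(5), since $\sT(\ind N) = \sT(N)$ by definition of the operator $\sT$ on the set of isoclasses of summands. For bijectivity of $\sT \colon \fLsbrick A \to \ftors A$, injectivity is given by Lemma \ref{sbrick_tors}. For surjectivity, I would use the commutativity just proved: the composition $\sttilt A \to \fLsbrick A \xrightarrow{\sT} \ftors A$ coincides with $\Fac$, which is surjective by Proposition \ref{sttilt_ftors}; hence $\sT$ is surjective, and therefore bijective. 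Finally, bijectivity of the left-hand map $M \mapsto \ind(M/\rad_B M)$ is forced by the fact that it equals the composite $\sT^{-1} \circ \Fac$ of two bijections, which recovers the second bijection asserted in Theorem \ref{sttilt_fsbrick}(2).

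For part (2), I would deduce the statement by duality. Applying part (1) to the opposite algebra $A^{\mathrm{op}}$ and then using the $K$-dual $D = \Hom_K(?,K)$ exchanges $\mod A^{\mathrm{op}}$ with $\mod A$, turns support $\tau$-tilting modules into support $\tau^{-1}$-tilting modules, sends $\Fac$ to $\Sub$, sends $\sT$ to $\sF$, sends torsion classes to torsion-free classes preserving functorial finiteness (by \cite[Theorem]{Smalo}), and converts the radical $\rad_B M$ on one side into the socle $\soc_B M$ on the other. This yields the stated commutative triangle and the bijectivity of $\sF \colon \fRsbrick A \to \ftorf A$.

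There is no genuine obstacle in this proof; all the substantive content has been absorbed into Lemma \ref{well-def}, Lemma \ref{sbrick_tors}, and Proposition \ref{sttilt_ftors}. The only thing to be careful about is that the maps are truly well-defined on the indicated domains and codomains, but this was already recorded in Lemma \ref{well-def}(6) for the forward direction and follows from the description of $\sT$ via $\Filt(\Fac(?))$ for the inverse direction.
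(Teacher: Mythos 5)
Your proof is correct and takes essentially the same approach as the paper: the paper's own ``proof'' of Proposition~\ref{comm_3} is simply the one-line remark that everything was already obtained in the course of proving Theorem~\ref{sttilt_fsbrick}, and you have reassembled precisely those ingredients (Lemma~\ref{well-def}(5)(6), Lemma~\ref{sbrick_tors}, Proposition~\ref{sttilt_ftors}, and duality via $A^{\mathrm{op}}$ and $D$) in the same order.
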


Therefore, the functorially finite torsion classes in $\mod A$ 
correspond bijectively 
not only to the support $\tau$-tilting modules, which are in the ``projective'' side
(Proposition \ref{sttilt_ftors}),
but also to the left finite semibricks, which are in the ``simple'' side
(Lemma \ref{Schur}).
These are connected by the map in Theorem \ref{sttilt_fsbrick}.

We have the next corollary on the cardinalities of semibricks.

\begin{Cor}\label{card_bound}
If $\cS \in \sbrick A$ is either left finite or right finite,
then $\# \cS \le |A|$ holds.
\end{Cor}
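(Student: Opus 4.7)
The plan is to deduce this immediately from Theorem \ref{sttilt_fsbrick}, which gives surjections $\trigid A \to \fLsbrick A$ and $\tirigid A \to \fRsbrick A$. I will treat the left finite case; the right finite case is dual.

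Suppose $\cS \in \fLsbrick A$. By Theorem \ref{sttilt_fsbrick}(2), there exists $M \in \sttilt A$ such that $\cS = \ind(M/\rad_B M)$, where $B=\End_A(M)$. Decompose $M = \bigoplus_{i=1}^m M_i$ into indecomposables as in Definition \ref{summand_symbol}, so that $M/\rad_B M = \bigoplus_{i=1}^m N_i$, and let $I = \{i \mid N_i \neq 0\}$. By Lemma \ref{well-def}(4), $\cS = \{N_i \mid i \in I\}$, and the $N_i$ for $i \in I$ are pairwise nonisomorphic. Hence
\begin{align*}
\# \cS = \# I \le m = |M|.
\end{align*}

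Since $M \in \sttilt A$, there exists $P \in \proj A$ with $(M,P)$ a support $\tau$-tilting pair, so $|M| + |P| = |A|$, whence $|M| \le |A|$. Combining, $\# \cS \le |A|$, as claimed.

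For the right finite case, one argues dually using Theorem \ref{sttilt_fsbrick}(4): $\cS = \ind(\soc_B M)$ for some $M \in \stitilt A$, and the same counting gives $\#\cS \le |M| \le |A|$. There is no real obstacle here; the whole content has been absorbed into Theorem \ref{sttilt_fsbrick}, and the only thing to verify is the elementary bound $\#\cS \le |M|$ coming from the explicit form of the bijection.
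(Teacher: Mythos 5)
Your proof is correct and follows essentially the same route as the paper's: reduce to Theorem \ref{sttilt_fsbrick} to obtain $M\in\sttilt A$ (resp.\ $\stitilt A$) with $\cS=\ind(M/\rad_B M)$, bound $\#\cS\le|M|$ via Lemma \ref{well-def}(4), and conclude with $|M|\le|A|$ from the definition of support $\tau$-tilting pairs.
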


\begin{proof}
We argue the left finite case. The other case is similarly proved.

Let $\cS \in \fLsbrick A$.
Then, we can take $M \in \sttilt A$ such that 
$\ind(M/{\rad_B M})=\cS$ by Theorem \ref{sttilt_fsbrick}.
Thus, we have $\# \cS=|M/{\rad_B M}| \le |M|$ 
by Lemma \ref{well-def} (4).
By the definition of support $\tau$-tilting modules, 
$|M| \le |A|$ holds.
Therefore, we have $\#\cS \le |A|$.
\end{proof}

\subsection{Labeling the exchange quiver with bricks I}\label{label_mod}

The map in Theorem \ref{sttilt_fsbrick} gives us a way of labeling
the exchange quiver of $\sttilt A$ with bricks,
that is, to label each arrow with a brick.
Here, the exchange quiver is the quiver expressing the \textit{mutations}.

We briefly recall the definition of
mutations in $\sttilt A$ from \cite[Definition 2.19]{AIR}.
Let $(M,P) \ne (N,Q) \in \sttilt A$.
If there exists an almost support $\tau$-tilting pair 
which is a common direct summand of $(M,P)$ and $(N,Q)$,
then we say that $(N,Q)$ is a \textit{mutation} of $(M,P)$.
By \cite[Theorem 2.18]{AIR},
for each direct indecomposable summand $L$ of $M$ or $P$,
there uniquely exists a mutation of $(M,P)$ at $L$.
If $N$ is a mutation of $M$, then exactly one of
$\Fac M \supsetneq \Fac N$ or $\Fac M \subsetneq \Fac N$ holds.
If $\Fac M \supsetneq \Fac N$, 
then $N$ is called a \textit{left mutation} of $M$,
and otherwise $N$ is called a \textit{right mutation} of $M$.
We remark that if $N$ is the left mutation of $M$ at $M_1$ with $M=M_1 \oplus M_2$,
then we have $\Fac N=\Fac M_2$.

The exchange quiver of $\sttilt A$ is a quiver 
with its vertices the elements of $\sttilt A$,
and there is an arrow from $M$ to $N$ if and only if 
$N$ is a left mutation of $M$.
For any vertex $M$, the number of arrows which are either from or to $M$ is always $|A|$.

The following lemma is important, and will be used later.

\begin{Lem}\label{no_tors_between}\cite[Example 3.5]{DIJ}
If $N$ is a left mutation of $M$ in $\sttilt A$,
then there exists no torsion class $\cT \in \tors A$ satisfying 
$\Fac M \supsetneq \cT \supsetneq \Fac N$.
\end{Lem}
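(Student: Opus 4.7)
The plan is to argue by contradiction: assume $\cT \in \tors A$ satisfies $\Fac M \supsetneq \cT \supsetneq \Fac N$. Decompose $M = M_1 \oplus M_2$ with $M_1$ the indecomposable summand at which the left mutation takes place. By \cite[Theorem 2.30]{AIR} the left mutation is given by $N = M_2 \oplus M_1^*$ with $M_1^* \in \Fac M_2$, so in particular $\Fac N = \Fac M_2$. The goal is to show the brick $S_1 := M_1/L_1$ from Definition \ref{summand_symbol} belongs to $\cT$; granted this, the contradiction follows as explained below.

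Two structural facts do most of the work. First, $\Fac M = \sT(\Fac N \cup \{S_1\})$: writing $\cS = \ind(M/\rad_B M) = \{S_1, \ldots, S_m\}$, each brick $S_i$ with $i \ne 1$ is a quotient of $M_i \in \add M_2$, so $\cS \setminus \{S_1\} \subset \Fac N$; combined with $\Fac M = \sT(\cS)$ from Lemma \ref{well-def}(5), this gives the equality. Second, $S_1 \in (\Fac N)^\perp$, equivalently $\Hom_A(M_2, S_1) = 0$: any $h \colon M_2 \to S_1$ lifts across $0 \to L_1 \to M_1 \to S_1 \to 0$ using $\Ext^1_A(M_2, L_1) = 0$ (Lemma \ref{well-def}(1)) to $\tilde h \colon M_2 \to M_1$, which lies in $\rad_A(M, M_1)$ because $M_2$ has no summand isomorphic to $M_1$; hence $\Im \tilde h \subset L_1$ and $h = 0$. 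Granted $S_1 \in \cT$, the first fact yields $\cT \supset \sT(\Fac N \cup \{S_1\}) = \Fac M$, contradicting $\cT \subsetneq \Fac M$.

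The remaining task is to produce $S_1 \in \cT$. Pick $X \in \cT \setminus \Fac N$ and take its torsion decomposition with respect to the torsion class $\Fac N$: an exact sequence $0 \to tX \to X \to Y \to 0$ with $tX \in \Fac N$ and $Y \in (\Fac N)^\perp$, and $Y \ne 0$ since $X \notin \Fac N$. Then $Y \in \cT \cap \Fac M \cap (\Fac N)^\perp$. I plan to identify $\Fac M \cap (\Fac N)^\perp = \Filt(S_1)$: the containment $\Filt(S_1) \subset \Fac M \cap (\Fac N)^\perp$ is immediate from $S_1 \in \Fac M \cap (\Fac N)^\perp$ together with extension-closure on both sides; for the reverse containment, given $Z$ in the intersection, one filters $Z$ by $\Fac \cS$-factors coming from $\Fac M = \Filt(\Fac \cS)$, and uses that a bottom submodule drawn from $\Fac(\cS \setminus \{S_1\}) \subset \Fac N$ would give a nonzero map $\Fac N \to Z$ violating $Z \in (\Fac N)^\perp$, so the bottom piece must be a quotient of $S_1^{a_1}$, a refinement of which (using the brick property of $S_1$) yields an $S_1$-filtration. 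With $Y \in \Filt(S_1)$ established, the top of this filtration gives a nonzero, hence by Lemma \ref{Schur}(1) surjective, map $Y \twoheadrightarrow S_1$; as $\cT$ is closed under quotients, $S_1 \in \cT$.

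The main obstacle is the identification $\Fac M \cap (\Fac N)^\perp = \Filt(S_1)$; this is a small instance of the wide-subcategory-from-torsion-classes correspondence later systematized via the results of Marks--\v{S}t\!'ov\'{i}\v{c}ek, but at this stage of the paper must be verified directly. The delicate direction $\subset$ rests on the brick structure of $S_1$ together with the torsion-pair orthogonality $\Fac N \cap (\Fac N)^\perp = 0$ to recursively extract $S_1$-factors from the $\Fac \cS$-filtration.
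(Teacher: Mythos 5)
The paper itself offers no proof of this lemma; it is quoted from \cite[Example 3.5]{DIJ}, so the comparison here is really with whether your self-contained argument closes. Your reduction is sound up to a point: the facts that $\Fac M=\sT(\Fac N\cup\{S_1\})$, that $S_1\in(\Fac N)^\perp$, and that a torsion class $\cT$ with $\Fac N\subsetneq\cT\subset\Fac M$ contains a nonzero module $Y\in\cT\cap\Fac M\cap(\Fac N)^\perp$ (via the torsion decomposition with respect to $(\Fac N,(\Fac N)^\perp)$) are all correct and correctly justified, and they do reduce the lemma to producing a surjection $Y\twoheadrightarrow S_1$. This is exactly the skeleton the paper later systematizes in Lemma \ref{brick_label_lem} and Proposition \ref{brick_label_prop} --- but note those results are \emph{deduced from} Lemma \ref{no_tors_between}, so you are right that you cannot invoke them and must verify the key identification independently.

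That is where the genuine gap sits. Your argument for $\Fac M\cap(\Fac N)^\perp\subset\Filt(S_1)$ does not go through as sketched. First, knowing that the bottom piece $Z_1$ of a $\Filt(\Fac\cS)$-filtration is a quotient of $S_1^{a_1}$ does not place it in $\Filt(S_1)$: for a non-simple brick $S$, $\Fac(S^a)\not\subset\Filt(S)$ in general (e.g.\ a regular brick over the Kronecker algebra has quotients of $S^2$ whose dimension vectors are not multiples of that of $S$), so ``refining using the brick property'' is not available. Second, and more seriously, the induction does not propagate: the higher factors $Z_j/Z_{j-1}$ of the filtration are subquotients of $Z$, not submodules, so they need not lie in $(\Fac N)^\perp$, and hence there is no reason their $\Fac(\cS\setminus\{S_1\})$-components vanish. (The paper's own proof of Proposition \ref{brick_label_prop} (3) instead inducts on $\dim_K Z$ using a nonzero map $Z\to S_1$, whose existence it gets from Lemma \ref{brick_label_lem} (3) --- which again rests on the present lemma.) So the essential content of the lemma, namely that every nonzero object of $\cT\cap\Fac M\cap(\Fac N)^\perp$ surjects onto $S_1$, is asserted rather than proved; as it stands the argument is incomplete, and the honest course is either to supply a genuinely independent proof of that surjection or to cite \cite[Example 3.5]{DIJ} as the paper does.
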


We analogously define mutations in $\stitilt A$.
Assume that $N$ is a mutation of $M$ in $\stitilt A$.
In this paper, we call $N$ a left mutation of $M$
if $\Sub M \subsetneq \Sub N$
and a right mutation of $M$ if $\Sub M \supsetneq \Sub N$.

There is a one-to-one correspondence between $\sttilt A$ and $\stitilt A$ given
by the following bijections:
\begin{align*}
\sttilt A \xrightarrow[\cong]{\Fac} \ftors A \xrightarrow[\cong]{?^\perp} \ftorf A
\xleftarrow[\cong]{\Sub} \stitilt A.
\end{align*}
We have the next easy observations on this map.

\begin{Prop}\label{sttilt_stitilt}
The above bijection $\sttilt A \to \stitilt A$ satisfies the following properties.
\begin{itemize}
\item[(1)]\cite[Proposition 2.16]{AIR}
Let $(M,P) \in \sttilt A$, and
decompose $M=M_{\np} \oplus M_{\pr}$ 
so that $M_{\np}$ has no nonzero projective direct summand 
and that $M_{\pr}$ is projective.
Then $(M,P) \in \sttilt A$ maps to 
$(\tau M_{\np} \oplus \nu P, \nu M_{\pr}) \in \stitilt A$.
\item[(2)]
Assume that $M,N \in \sttilt A$ map to $M',N' \in \stitilt A$ respectively.
Then $N$ is a left mutation of $M$ in $\sttilt A$ if and only if
$N'$ is a left mutation of $M'$ in $\stitilt A$.
\item[(3)]
The exchange quivers of $\sttilt A$ and $\stitilt A$ are isomorphic
by this bijection.
\end{itemize}
\end{Prop}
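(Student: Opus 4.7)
The plan is to deduce everything from the commutative diagram
\[
\sttilt A \xrightarrow[\cong]{\Fac} \ftors A \xrightarrow[\cong]{?^\perp} \ftorf A \xleftarrow[\cong]{\Sub} \stitilt A
\]
that defines the bijection $\sttilt A \to \stitilt A$, so assertion (1) will reduce to identifying the ``projective side'' and the ``injective side'' of each support $\tau$-tilting pair. Concretely, for (1), I would just quote \cite[Proposition 2.16]{AIR}: given $(M,P) \in \sttilt A$ with $M=M_{\np}\oplus M_{\pr}$, AIR shows that the torsion-free class $(\Fac M)^\perp$ equals $\Sub(\tau M_{\np} \oplus \nu P)$ and that $(\tau M_{\np}\oplus \nu P,\nu M_{\pr})$ is a support $\tau^{-1}$-tilting pair. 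By the uniqueness in Proposition \ref{sttilt_ftors} applied to $\ftorf A \to \stitilt A$ (i.e., the dual of \cite[Theorem 2.7]{AIR}), this is the image of $(M,P)$ under the displayed bijection.

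For (2), the central observation is that left mutations correspond to covering relations in the two lattices. First, $?^\perp$ is an order-reversing bijection between $\ftors A$ and $\ftorf A$, so
\[
\Fac M \supsetneq \Fac N \iff (\Fac M)^\perp \subsetneq (\Fac N)^\perp \iff \Sub M' \subsetneq \Sub N'.
\]
It remains to see that $N$ is a mutation of $M$ in $\sttilt A$ if and only if $N'$ is a mutation of $M'$ in $\stitilt A$. For this I would combine \cite[Theorem 2.33, Corollary 2.34]{AIR} with Lemma \ref{no_tors_between}: a pair $M,N \in \sttilt A$ with $\Fac M \supsetneq \Fac N$ is related by a left mutation precisely when $\Fac N$ is covered by $\Fac M$ in the poset $\ftors A$, i.e., when no $\cT \in \tors A$ lies strictly between them. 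Applying the dual of this to $\stitilt A$ gives the analogous covering-relation characterization for $\stitilt A$ via $\ftorf A$. Since $?^\perp$ is an order-reversing lattice isomorphism $\ftors A \to \ftorf A$, it sends covering relations to covering relations, and (2) follows.

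Finally, (3) is a formal consequence: the vertex sets of the two exchange quivers are in bijection by the very definition of the map $\sttilt A \to \stitilt A$, and by (2) the arrows (which by definition record exactly the left mutations) are mapped bijectively to arrows. So the bijection on vertices extends uniquely to a quiver isomorphism. The only real obstacle is the covering-relation characterization of mutations on both sides; once this is quoted from the Adachi--Iyama--Reiten paper and its dual, the rest is just the order-reversing property of $?^\perp$.
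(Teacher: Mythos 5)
Your argument is correct, but for part (2) it takes a genuinely different route from the paper. The paper's proof of (2) is a one-line observation: by the explicit formula in (1), the bijection $\sttilt A \to \stitilt A$ sends indecomposable direct summands to indecomposable direct summands, so two pairs share an almost complete common direct summand if and only if their images do; hence mutation is preserved, and the left/right distinction follows from the order-reversal of $?^\perp$. You instead characterize left mutation order-theoretically, as a covering relation in $\ftors A$ (resp.\ $\ftorf A$), quoting the Adachi--Iyama--Reiten identification of the exchange quiver with the Hasse quiver together with Lemma \ref{no_tors_between}, and then transport covering pairs through the order-reversing bijection $?^\perp \colon \ftors A \to \ftorf A$. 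Both work; the paper's argument is shorter and needs nothing beyond (1), while yours is independent of the summand-wise formula and is the more robust, lattice-theoretic viewpoint (it is essentially the perspective later systematized in \cite{DIRRT}). One small point of care in your write-up: "covered in the poset $\ftors A$" (no \emph{functorially finite} torsion class strictly between) and "no $\cT \in \tors A$ strictly between" are a priori different conditions; Lemma \ref{no_tors_between} gives the stronger one and the AIR Hasse-quiver theorem uses the weaker one, and for the transport along $?^\perp$ restricted to functorially finite classes it is the weaker (covering in $\ftors A$) notion you need, so the argument is fine, but the two should not be silently identified. Parts (1) and (3) are handled the same way as in the paper.
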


\begin{proof}
The part (2) is easy, because the above bijection preserves direct summands by (1).

The part (3) is immediately deduced from (2).
\end{proof}

We also need the following detailed description of Theorem \ref{sttilt_fsbrick}.

\begin{Prop}\label{brick_eq}
In the setting of Definition \ref{summand_symbol},
let $M \in \sttilt A$.
Then the following conditions are equivalent for $i=1,2,\ldots,m$.
\begin{itemize}
\item[(a)] The module $N_i$ is a brick.
\item[(b)] The module $N_i$ is nonzero.
\item[(c)] The module $M_i$ is not in $\Fac \bigoplus_{j \ne i} M_j$.
\item[(d)] There exists a left mutation of $M$ at $M_i$ in $\sttilt A$.
\end{itemize}
In particular, the number of left mutations of $M$ is equal to $|M/{\rad_B M}|$.
\end{Prop}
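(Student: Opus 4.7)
The plan is to establish the cycle of implications (a)$\Leftrightarrow$(b)$\Leftrightarrow$(c)$\Leftrightarrow$(d). The equivalence (a)$\Leftrightarrow$(b) is immediate from Lemma \ref{well-def}(2), which asserts that $N_i$ is either a brick or zero; a brick is nonzero since $\End_A(0)=0$ is not a division $K$-algebra.

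For (b)$\Leftrightarrow$(c), I would first isolate the structural content of $L_i$. Because $M$ is basic, the indecomposables $M_j$ are pairwise nonisomorphic, so $\rad_A(M_j,M_i)=\Hom_A(M_j,M_i)$ for $j\ne i$, while $\rad_A(M_i,M_i)=\rad\End_A(M_i)=:J_i$. Setting $L_i':=\sum_{j\ne i}\Im(\Hom_A(M_j,M_i)\to M_i)$, this yields $L_i=L_i'+J_iM_i$. Condition (c) is exactly $L_i'\subsetneq M_i$, while (b) is $L_i\subsetneq M_i$, so one direction is trivial. For (c)$\Rightarrow$(b), note that $L_i'$ is stable under $\End_A(M_i)$ (composing any endomorphism of $M_i$ with an element of $\Hom_A(M_j,M_i)$ gives another element of the same space, whose image lies in $L_i'$), so $M_i/L_i'$ is a finite-dimensional module over the local finite-dimensional $K$-algebra $\End_A(M_i)$. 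The assumption $L_i=M_i$ would give $J_i\cdot(M_i/L_i')=M_i/L_i'$, and since $J_i$ is nilpotent, Nakayama's lemma forces $M_i/L_i'=0$, contradicting (c).

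For (c)$\Leftrightarrow$(d), I would apply the bijection $\Fac\colon\sttilt A\to\ftors A$ of Proposition \ref{sttilt_ftors} together with the remark stated just before this proposition: a left mutation $N$ of $M$ at $M_i$ satisfies $\Fac N=\Fac(M/M_i)$. The direction (d)$\Rightarrow$(c) is immediate, since $M_i\in\Fac(M/M_i)=\Fac N$ would give $\Fac M\subset\Fac N$, contradicting $\Fac N\subsetneq\Fac M$. For (c)$\Rightarrow$(d), assume $M_i\notin\Fac(M/M_i)$, so $\Fac(M/M_i)\subsetneq\Fac M$; let $N\in\sttilt A$ be the unique element with $\Fac N=\Fac(M/M_i)$, necessarily distinct from $M$. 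Because $M/M_i$ is $\tau$-rigid it is Ext-projective in $\Fac(M/M_i)$ by \cite[Proposition 5.8]{AS2}, and by Proposition \ref{sttilt_ftors} every support $\tau$-tilting module is the sum of the indecomposable Ext-projectives of its torsion class; hence $M/M_i$ is a direct summand of $N$. Thus the completion of $N$ shares the almost support $\tau$-tilting pair $(M/M_i,P)$ with $(M,P)$, which identifies $N$ as the mutation of $M$ at $M_i$, and it is a left mutation since $\Fac N\subsetneq\Fac M$.

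For the final count, observe that mutation of $(M,P)$ at an indecomposable summand of the projective part $P$ always strictly enlarges $\Fac M$ (it adjoins a new nonzero $A$-module summand to $M$), hence is a right mutation. Therefore the left mutations of $M$ correspond bijectively to indecomposable summands $M_i$ satisfying (d), and by (b)$\Leftrightarrow$(d) together with Lemma \ref{well-def}(4) this count equals $\#\ind N=|M/\rad_BM|$. The main obstacle I anticipate is the clean execution of the Nakayama step in (c)$\Rightarrow$(b), where one must verify the stability of $L_i'$ under $\End_A(M_i)$ and correctly identify the ring acting on $M_i/L_i'$; the remaining implications essentially repackage known mutation theory via the bijection $\sttilt A\cong\ftors A$.
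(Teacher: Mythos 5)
Your proof is correct and, for the core equivalences, follows the paper's approach closely. The treatment of (a)$\Leftrightarrow$(b) via Lemma \ref{well-def}(2) and of (b)$\Leftrightarrow$(c) via the decomposition $L_i=L_i'+L_i''$ and Nakayama's Lemma over $B_i=\End_A(M_i)$ is exactly what the paper does; you flesh out the step that $L_i'$ is a left $B_i$-submodule, which the paper passes over by simply asserting that $L_i'$ and $L_i''$ are $B_i$-$A$-subbimodules. Where you diverge is (c)$\Leftrightarrow$(d): the paper disposes of this by citing \cite[Definition-Proposition 2.28]{AIR}, whereas you give a self-contained re-derivation from the bijection $\Fac\colon\sttilt A\to\ftors A$, using that $M/M_i$ is Ext-projective in $\Fac(M/M_i)$ so the corresponding support $\tau$-tilting module $N$ contains $M/M_i$ as a summand. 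This is a legitimate alternative; it essentially unwinds the AIR argument rather than invoking it. One point you glide over in (c)$\Rightarrow$(d) is why the projective part $Q$ of the pair $(N,Q)$ contains $P$, which is needed to say that $(M/M_i,P)$ is a common almost support $\tau$-tilting summand of $(M,P)$ and $(N,Q)$; this does hold, since $N\in\Fac M$ and $P$ projective with $\Hom_A(P,M)=0$ forces $\Hom_A(P,N)=0$, but it should be recorded. Your closing count, including the observation that mutating at a projective summand always strictly enlarges the torsion class and hence is a right mutation, is also correct and is slightly more explicit than the paper, which leaves the ``in particular'' statement as an immediate consequence of (b)$\Leftrightarrow$(d) and Lemma \ref{well-def}(4).
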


\begin{proof}
The conditions (a) and (b) are equivalent by Lemma \ref{well-def} (2).

Next, we show the equivalence of (b) and (c).
For each $i$, set $B_i:=\End_A(M_i)$ and 
\begin{align*}
L'_i:=\sum_{f \in \rad_A(M_j,M_i), \ j \ne i} \Im f, \quad
L''_i:=\sum_{f \in \rad_A(M_i,M_i)} \Im f.
\end{align*}
They are $B_i$-$A$-subbimodules of $M_i$ and satisfy $L_i=L'_i+L''_i$.
We can see that 
(b) holds if and only if $L'_i+L''_i \ne M_i$, and that
(c) holds if and only if $L'_i \ne M_i$.
Therefore, it is sufficient to prove that $L'_i+L''_i=M_i$ holds if and only if $L'_i=M_i$.
Clearly, $L'_i=M_i$ implies $L'_i+L''_i=M_i$.
On the other hand, assume $L'_i+L''_i=M_i$.
Because $L''_i=\rad_{B_i} M_i$, we have $L'_i=M_i$ by applying Nakayama's Lemma
as left $B_i$-modules.
Thus, (b) and (c) are equivalent.

The equivalence of (c) and (d) is proved in \cite[Definition-Proposition 2.28]{AIR}.
\end{proof}

Now, we are able to define labels of the exchange quivers.

\begin{Def}\label{label_mod_def}
We label the exchange quivers of $\sttilt A$
and $\stitilt A$ with bricks as follows.
\begin{itemize}
\item[(1)]
Let $M \in \sttilt A$ and decompose $M$ as
$M=\bigoplus_{i=1}^m M_i$ with $M_i$ indecomposable.
Assume that $M \to N$ is an arrow in the exchange quiver of $\sttilt A$,
and that $N$ is the left mutation of $M$ at $M_i$.
Then we label this arrow with a brick $M_i/{\sum_{f \in \rad_A(M,M_i)} \Im f}$.
\item[(2)]
Let $N' \in \stitilt A$ and decompose $N'$ as
$N'=\bigoplus_{i=1}^m N'_i$ with $N'_i$ indecomposable.
Assume that $M' \to N'$ is an arrow in the exchange quiver of $\stitilt A$,
and that $M'$ is the right mutation of $N'$ at $N'_i$.
Then we label this arrow with a brick $\bigcap_{f \in \rad_A(N'_i,N')} \Ker f$.
\end{itemize}
\end{Def}

We came up with this labeling from mutations of 2-term simple-minded collections
in the derived category $\sD^\rb(\mod A)$, which are discussed 
in Subsection \ref{der_label}. 
This labeling is extended to the Hasse quiver of $\tors A$ in
\cite[Section 3.2]{DIRRT}.

Now we recall that there is a bijection between $\sttilt A$ and $\stitilt A$ 
(see Proposition \ref{sttilt_stitilt}).
Actually, the two definitions of labeling coincide under this bijection.

\begin{Th}\label{brick_label_coincide}
The bijection between $\sttilt A$ and $\stitilt A$ 
preserves the labels of the exchange quivers of $\sttilt A$
and $\stitilt A$.
\end{Th}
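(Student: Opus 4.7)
My approach is to realize both labels as the unique brick, up to isomorphism, in a wide subcategory of $\mod A$ canonically attached to the cover relation. Fix an arrow $M \to N$ in the exchange quiver of $\sttilt A$ arising as the left mutation at an indecomposable summand $M_i$ of $M$. Let $M', N' \in \stitilt A$ be the images of $M, N$ under the bijection $\sttilt A \to \stitilt A$; by Proposition~\ref{sttilt_stitilt}~(2), the arrow $M' \to N'$ in the exchange quiver of $\stitilt A$ arises as the right mutation of $N'$ at a unique indecomposable summand $N'_j$. Writing $B := \End_A M$ and $B' := \End_A N'$, the two labels are $S := N_i = M_i/\rad_B M_i$ and $S' := \bigcap_{f \in \rad_A(N'_j, N')} \Ker f = \soc_{B'} N'_j$, and the goal is to show $S \cong S'$ as $A$-modules.

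Introduce the subcategory $\cW := \Fac M \cap \Sub N'$. Under the bijection $\ftors A \leftrightarrow \ftorf A$ sending each functorially finite torsion class $\cT$ to $\cT^\perp$, the pair $(\Fac N, \Sub N')$ is a torsion pair; in particular $\Sub N' = (\Fac N)^\perp$, so $\cW = \Fac M \cap (\Fac N)^\perp$. Because $\Fac N \subset \Fac M$, this intersection is a wide subcategory of $\mod A$ (closed under kernels, cokernels, and extensions). The plan is then: (i) show both $S$ and $S'$ are bricks in $\cW$; (ii) show $\cW$ contains, up to isomorphism, a unique brick. Together these force $S \cong S'$.

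For step (i): $S \in \Fac M$ by construction, and $S \in (\Fac N)^\perp$ follows by decomposing $N = M_i^* \oplus (M/M_i)$ with $M_i^* \in \Fac(M/M_i)$ coming from the minimal left $\add(M/M_i)$-approximation in the definition of mutation; then Lemma~\ref{well-def}~(3) yields $\Hom_A(M_j, S) = 0$ for $j \ne i$, so $\Hom_A(M_i^*, S) = 0$, whence $\Hom_A(N, S) = 0$. The dual argument, using the decomposition $M' = (N'_j)^* \oplus (N'/N'_j)$ from the right mutation on the $\stitilt$ side, places $S'$ in $\cW$. That $S, S'$ are bricks follows from Lemma~\ref{well-def}~(2) and its dual.

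For step (ii), I argue by contradiction: suppose two non-isomorphic bricks $T_1, T_2$ lie in $\cW$. Consider $\cT := \sT(\Fac N \cup \{T_1\}) = \Filt(\Fac N \cup \Fac T_1)$. Because $T_1 \in (\Fac N)^\perp \setminus \{0\}$, one has $T_1 \notin \Fac N$, so $\Fac N \subsetneq \cT$. To obtain $\cT \subsetneq \Fac M$ I claim $T_2 \notin \cT$: every submodule of $T_2$ lies in the torsion-free class $\Sub N'$, which is closed under submodules, so the $\Fac N$-factors in any filtration of $T_2$ over $\Fac N \cup \Fac T_1$ collapse to zero, leaving $T_2 \in \Filt(\Fac T_1)$; the Hom-orthogonality of the simple objects of the wide subcategory $\cW$ then forces $T_2 \cong T_1$, a contradiction. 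This violates Lemma~\ref{no_tors_between}, so $\cW$ contains a unique brick up to isomorphism, whence $S \cong S'$. The main obstacle is the delicate filtration analysis in step (ii); in particular, showing that every filtration of $T_2$ in $\cT$ collapses to one in $\Filt(\Fac T_1)$ and that distinct simples of $\cW$ are Hom-orthogonal requires exploiting carefully both the closure of $\Sub N'$ under submodules and the tight structure imposed by the cover relation $\Fac N \lessdot \Fac M$ via Lemma~\ref{no_tors_between}.
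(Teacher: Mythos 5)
Your overall skeleton --- realize both labels as bricks in $\cC:=\Fac M\cap\Sub N'$ and show that $\cC$ contains a unique brick up to isomorphism --- is exactly the paper's (Lemma \ref{brick_label_lem} together with Proposition \ref{brick_label_prop}), and your step (i) is correct and coincides with the paper's argument. The problem is step (ii), in two places. First, the wideness of $\cW=\cC$ is asserted, not proved: for nested torsion classes $\Fac N\subset\Fac M$ the intersection $\Fac M\cap(\Fac N)^\perp$ is closed under extensions but not, in general, under kernels or cokernels (torsion classes are not closed under submodules, torsion-free classes not under quotients). In the paper, wideness is the statement $\cC=\Filt S$ of Proposition \ref{brick_label_prop}(3), which is a \emph{consequence} of the uniqueness you are trying to establish; invoking ``the simple objects of the wide subcategory $\cW$'' at this stage is therefore circular, and in any case two non-isomorphic bricks of a wide subcategory need not be Hom-orthogonal unless both are simple objects of it, which is not known here. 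Second, and more seriously, your claim $T_2\notin\cT$ is false: since $0\ne T_1\in(\Fac N)^\perp$ gives $\Fac N\subsetneq\cT\subset\Fac M$, Lemma \ref{no_tors_between} forces $\cT=\Fac M$, which contains $T_2$. Your sub-argument for it breaks at the filtration collapse: in a filtration $0=X_0\subset\cdots\subset X_n=T_2$ with layers in $\Fac N\cup\Fac T_1$, only the $X_i$ themselves are submodules of $T_2$; the layers $X_i/X_{i-1}$ for $i\ge 2$ are quotients of submodules, need not lie in $\Sub N'$, and so the $\Fac N$-layers do not vanish.

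The repair is the paper's Lemma \ref{brick_label_lem}(2)--(3): use the equality $\sT(\Fac N\cup\{T_1\})=\Fac M$ (which your setup already yields from Lemma \ref{no_tors_between}) \emph{positively} rather than hunting for a strictly intermediate torsion class. If $\Hom_A(T_1,T_2)=0$, then ${}^\perp\{T_2\}$ is a torsion class containing $\Fac N$ (because $T_2\in(\Fac N)^\perp$) and containing $T_1$, hence containing $\sT(\Fac N\cup\{T_1\})=\Fac M\ni T_2$, contradicting $T_2\ne 0$. Thus any two nonzero objects of $\cC$ admit nonzero morphisms in both directions. Taking nonzero $f\colon T\to S$ and $g\colon S\to T$ for any brick $T\in\cC$, Lemma \ref{Schur}(1) makes $f$ surjective, so $gf$ is a nonzero endomorphism of the brick $T$, hence an isomorphism, and $T\cong S$. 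This gives the uniqueness of the brick in $\cC$, and combined with your step (i) and its dual it proves the theorem.
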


The above theorem is proved by
the following lemma and proposition 
characterizing the labels of the exchange quivers.

\begin{Lem}\label{brick_label_lem}
Assume that $M \to N$ is an arrow in the exchange quiver of $\sttilt A$ 
labeled with a brick $S$,
and that $M,N \in \sttilt A$ correspond to $M',N' \in \stitilt A$ respectively.
We set $(\cT_1,\cF_1):=(\Fac M, \Sub M')$, $(\cT_2,\cF_2):=(\Fac N, \Sub N')$, 
and $\cC:=\cT_1 \cap \cF_2$.
Then we have the following assertions.
\begin{itemize}
\item[(1)]
The brick $S$ belongs to $\cC$.
\item[(2)]
If $L \in \cC$ and $L \ne 0$, then $\sT(\cT_2 \cup \{ L \})=\cT_1$.
In particular, we have $\sT(\cT_2 \cup \{ S \})=\cT_1$.
\item[(3)]
If $L_1,L_2 \in \cC$ and $L_1,L_2 \ne 0$, then we have $\Hom_A(L_1,L_2) \ne 0$.
\end{itemize}
\end{Lem}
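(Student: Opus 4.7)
The plan is to handle the three claims in order, using the structure of left mutations (Lemma \ref{well-def}, Proposition \ref{brick_eq}) and the ``no intermediate torsion class'' property (Lemma \ref{no_tors_between}).

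For part (1), I would identify $S$ in the notation of Definition \ref{summand_symbol}: writing $M = M_i \oplus \bigl(\bigoplus_{j \ne i} M_j\bigr)$ with $N$ the left mutation of $M$ at $M_i$, the label is $S = N_i = M_i/L_i$, so $S$ is a factor of $M_i$ and hence belongs to $\Fac M = \cT_1$. For the membership in $\cF_2 = \cT_2^\perp$, I would use the identity $\cT_2 = \Fac N = \Fac\bigl(\bigoplus_{j \ne i} M_j\bigr)$ recorded just before Lemma \ref{no_tors_between}. Because $\cT_2$ is the factor closure of $\bigoplus_{j \ne i} M_j$, verifying $\Hom_A(\cT_2, S) = 0$ reduces to $\Hom_A(M_j, S) = 0$ for every $j \ne i$, and this is exactly Lemma \ref{well-def} (3) applied to the $\tau$-rigid module $M$.

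For part (2), I would set $\cT' := \sT(\cT_2 \cup \{L\})$ and squeeze it between $\cT_2$ and $\cT_1$: the inclusions $\cT_2 \subset \cT' \subset \cT_1$ are immediate since $L \in \cC \subset \cT_1$. Lemma \ref{no_tors_between} then forces $\cT' \in \{\cT_2, \cT_1\}$. If $\cT' = \cT_2$, then $L \in \cT_2 \cap \cF_2$, and $\cF_2 = \cT_2^\perp$ would give $\Hom_A(L, L) = 0$, hence $L = 0$, contradicting the hypothesis. Therefore $\cT' = \cT_1$, and combined with (1) this immediately yields $\sT(\cT_2 \cup \{S\}) = \cT_1$.

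Part (3) is the most delicate, and I expect it to be the main obstacle. The idea is to use (2) to place $L_2$ inside a filtration built from $\cT_2$ and $L_1$, and then extract a nonzero morphism from the bottom step. Applying (2) with $L = L_1$ gives $L_2 \in \cT_1 = \sT(\cT_2 \cup \{L_1\}) = \Filt(\Fac(\cT_2 \cup \{L_1\}))$, so there is a sequence $0 = X_0 \subset X_1 \subset \cdots \subset X_k = L_2$ with each quotient lying in $\Fac(\cT_2 \cup \{L_1\})$; since $L_2 \ne 0$, we may take the first nonzero step, still denoted $X_1$, and note $X_1 \in \Fac(\cT_2 \cup \{L_1\})$. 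Using that $\cT_2$ is closed under direct sums, $X_1$ admits a surjection $\pi \colon T \oplus L_1^n \twoheadrightarrow X_1$ with $T \in \cT_2$ and $n \ge 0$. Because $X_1 \subset L_2 \in \cF_2$ and $\cF_2$ is closed under submodules, $X_1 \in \cF_2 = \cT_2^\perp$, which forces $\pi|_T = 0$; hence $\pi$ factors through a surjection $L_1^n \twoheadrightarrow X_1$. As $X_1 \ne 0$, at least one component $L_1 \to X_1$ is nonzero, and composing with the inclusion $X_1 \hookrightarrow L_2$ produces the sought nonzero map $L_1 \to L_2$. The subtle point, which I regard as the main obstacle, is this interplay: $X_1$ must be simultaneously a factor of an object of $\add(\cT_2 \cup \{L_1\})$ (to exploit the filtration) and a submodule of $L_2 \in \cF_2$ (so that the $\cT_2$-summand is killed by orthogonality).
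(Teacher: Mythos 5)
Your proof is correct and follows essentially the same route as the paper's: part (1) reduces to $\Hom_A(M_j,S)=0$ for $j\ne i$ via Lemma \ref{well-def} (3), part (2) squeezes $\sT(\cT_2\cup\{L\})$ between $\cT_2$ and $\cT_1$ and invokes Lemma \ref{no_tors_between}, and part (3) combines $\cT_1=\sT(\cT_2\cup\{L_1\})$ with $\Hom_A(\cT_2,L_2)=0$. The only cosmetic difference is in (3): the paper argues by contradiction (if additionally $\Hom_A(L_1,L_2)=0$, then $\Hom_A(\cT_1,L_2)=0$, contradicting $0\ne L_2\in\cT_1$), whereas you extract the nonzero map $L_1\to L_2$ directly from the $\Filt(\Fac(-))$ description — the same underlying fact, just made constructive.
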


\begin{proof}
There exists an indecomposable direct summand $M_1$ of $M$
such that $N$ is the left mutation of $M$ at $M_1$.
We decompose $M$ as $M_1 \oplus M_2$,
then we obtain $\Fac M_2=\cT_2$.

(1)
It is clear that $S \in \cT_1$.
To prove that $S \in \cF_2$, it is sufficient to show that $\Hom_A(M_2,S)=0$,
which follows from Lemma \ref{well-def} (3).
Thus, $S$ belongs to $\cF_2$; hence, we have $S \in \cC$.

(2)
Because $L \in \cF_2$ and $L \ne 0$, we get $L \notin \cT_2$.
Thus, we have $\sT(\cT_2 \cup \{ L \}) \supsetneq \cT_2$.
By the assumption $L \in \cT_1$, we also have 
$\cT_1 \supset \sT(\cT_2 \cup \{ L \}) \supsetneq \cT_2$.
By Lemma \ref{no_tors_between}, $\sT(\cT_2 \cup \{ L \})$ must coincide with $\cT_1$.
In particular, we obtain that $\sT(\cT_2 \cup \{ S \})=\cT_1$ by (1).

(3)
Because $L_2 \in \cC \subset \cF_2$, we have $\Hom_A(\cT_2,L_2)=0$.
We assume that $\Hom_A(L_1,L_2)=0$, and deduce a contradiction. 
In this case, (2) implies that $\Hom_A(\cT_1,L_2)=0$, which is false,
because $L_2 \in \cT_1$ and $L_2 \ne 0$.
Thus, we have $\Hom_A(L_1,L_2) \ne 0$.
\end{proof}

\begin{Prop}\label{brick_label_prop}
In the setting above, the following assertions hold.
\begin{itemize}
\item[(1)]
There exists a unique brick in $\cC$, and it is $S$.
\item[(2)]
The arrow $M' \to N'$ in the exchange quiver of $\stitilt A$
is also labeled with $S$.
\item[(3)]
The subcategory $\cC$ coincides with $\Filt S$.
\end{itemize}
\end{Prop}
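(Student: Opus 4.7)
The plan is to prove (3) first by identifying $\cC$ with $\Filt S$, then to deduce (1) as a quick consequence, and finally to obtain (2) by duality. For (3), the key step is that every nonzero $L \in \cC$ admits a surjection onto $S$ whose kernel again lies in $\cC$. Lemma \ref{brick_label_lem} (3) gives a nonzero map $f \colon L \to S$. Since $S$ is a member of the left finite semibrick $\ind(M/{\rad_B M})$, which corresponds to $\cT_1 = \Fac M$ under the bijection $\sT$ of Proposition \ref{comm_3} (1), and $L \in \cT_1 = \sT(\ind(M/{\rad_B M}))$, Lemma \ref{Schur} (1) makes $f$ surjective with $\Ker f \in \cT_1$. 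Because $\cF_2 = \Sub N'$ is closed under submodules and $L \in \cF_2$, also $\Ker f \in \cF_2$, hence $\Ker f \in \cC$. Induction on $\dim_K L$ then yields $\cC \subset \Filt S$; the reverse inclusion is immediate from $S \in \cC$ and the closure of $\cT_1$ and $\cF_2$ under extensions.

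For (1), let $L \in \cC$ be a brick; the induction just performed actually produces a filtration $0 = L_0 \subsetneq L_1 \subsetneq \cdots \subsetneq L_k = L$ with each $L_i/L_{i-1} \cong S$. If $k \ge 2$, compose the canonical projection $L \twoheadrightarrow L/L_{k-1}$ with a chain of isomorphisms $L/L_{k-1} \xrightarrow{\sim} S \xrightarrow{\sim} L_1$ and the inclusion $L_1 \hookrightarrow L$ to obtain an endomorphism $\phi \in \End_A(L)$ whose image is the nonzero submodule $L_1$. Since $L_1 \subset L_{k-1}$, the projection $L \to L/L_{k-1}$ kills $\Image \phi = L_1$, so $\phi^2 = 0$. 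Hence $\phi$ is a nonzero nilpotent element of $\End_A(L)$, contradicting the fact that $\End_A(L)$ is a division algebra. Therefore $k = 1$ and $L \cong S$.

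For (2), the label $S'$ of the arrow $M' \to N'$ in $\stitilt A$ is itself a brick belonging to $\cC = \cT_1 \cap \cF_2$ by the dual of Lemma \ref{brick_label_lem} (1); once this is known, (1) forces $S' \cong S$. I would obtain the dual lemma by applying the arguments of Lemma \ref{brick_label_lem} to the opposite algebra $A^{\mathrm{op}}$, using the dual of Lemma \ref{well-def} (3). The main obstacle I anticipate is the duality bookkeeping: one must track through the correspondence of Proposition \ref{sttilt_stitilt} to match the summand $N'_i$ of $N'$ mutated in $\stitilt A$ with the summand $M_1$ of $M$ mutated in $\sttilt A$, so that the dual construction genuinely produces a brick in $\cT_1 \cap \cF_2$ rather than in a different intersection.
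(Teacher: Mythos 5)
Your proof is correct and follows essentially the same route as the paper's: part (3) is the same induction on $\dim_K L$ using Lemma \ref{brick_label_lem} (1), (3) and Lemma \ref{Schur} (1) (with the same justification that $\sT(\ind(M/{\rad_B M}))=\Fac M=\cT_1$), and part (2) is obtained exactly as in the paper from the dual of Lemma \ref{brick_label_lem} (1) together with (1). The only real difference is in (1): the paper does not route through (3) at all, but instead takes nonzero maps $f\colon L\to S$ and $g\colon S\to L$ from Lemma \ref{brick_label_lem} (3), observes that $f$ is surjective by Lemma \ref{Schur} (1), so $gf\ne 0$ is an isomorphism because $L$ is a brick, whence $f$ itself is an isomorphism. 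Your alternative --- extracting from the proof of (3) a filtration of $L$ with all factors isomorphic to $S$ and producing a nonzero nilpotent endomorphism when the length is at least $2$ --- is also valid, and your ordering (3) $\to$ (1) $\to$ (2) introduces no circularity; it is just marginally longer than the paper's two-line argument.
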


\begin{proof}
(1)
Let $L$ be a brick in $\cC$.
By Lemma \ref{brick_label_lem} (1), $S$ is a brick belonging to $\cC$,
so it suffices to show that $L \cong S$.
By Lemma \ref{brick_label_lem} (3), there exists nonzero maps 
$f \colon L \to S$ and $g \colon S \to L$.
By Lemma \ref{Schur} (1), $f$ is surjective.
Thus, the map $gf \colon L \to L$ is nonzero, and it is an isomorphism, 
because $L$ is a brick.
Since $f$ is surjective, we have $L \cong S$.

(2)
Let $S'$ be the brick on the arrow $M' \to N'$ in the exchange quiver of $\stitilt A$.
The dual of Lemma \ref{brick_label_lem} (1) yields $S' \in \cC$.
Then (1) implies that $S' \cong S$.

(3)
Clearly, $\Filt S \subset \cC$ holds.

It remains to show that $\cC \subset \Filt S$.
Let $L \in \cC$.
If $L=0$, then $L \in \Filt S$.
Thus, we may assume $L \ne 0$,
and we use induction on $\dim_K L$. 
By Lemma \ref{brick_label_lem} (1) and (3), 
there exists a nonzero map $f \colon L \to S$,
and by Lemma \ref{Schur} (1), $f$ is surjective.

If $\dim_K L=\dim_K S$, then we have $L \cong S \in \Filt S$.

If $\dim_K L>\dim_K S$, there exists 
a short exact sequence $0 \to \Ker f \to L \to S \to 0$.
By Lemma \ref{Schur} (1), $\Ker f \in \cT_1$ holds. 
We also have $\Ker f \in \cF_2$, because $L \in \cF_2$.
Thus, we have $\Ker f \in \cC$, 
and then the induction hypothesis implies that $\Ker f \in \Filt S$,
since $\dim_K \Ker f < \dim_K L$.
Therefore, $L \in \Filt S$ is obtained.

The induction process is now complete.
\end{proof}

\begin{proof}[Proof of Theorem \ref{brick_label_coincide}]
It is an immediate result of Proposition \ref{brick_label_prop} (2). 
\end{proof}

We remark that the subcategory $\cC=\Filt S$ of $\mod A$
in Proposition \ref{brick_label_prop}
is a \textit{wide subcategory} of $\mod A$, see Subsection \ref{wide_mod}.

\begin{Ex}\label{sttilt_ex}
Let $A$ be the path algebra of the quiver $1 \to 2 \to 3$.
Figure \ref{A3} below is the exchange quivers 
of $\sttilt A$ and $\stitilt A$ labeled with bricks.
The bricks in $\ind(M/{\rad_B M}) \in \fLsbrick A$ 
for each $M \in \sttilt A$ 
and the bricks in $\ind(\soc_B M) \in \fRsbrick A$ 
for each $M \in \stitilt A$ are denoted by bold letters.
\begin{figure}[h]
\begin{align*}
\begin{xy}
(-42, 27)*+{
\begin{smallmatrix} \rthr \end{smallmatrix},
\begin{smallmatrix} \rtwo \\ 3 \end{smallmatrix},
\begin{smallmatrix} \rone \\ 2 \\ 3 \end{smallmatrix}
}= "01",
(-42,  0)*+{
\begin{smallmatrix} \rtwo \\ \rthr \end{smallmatrix},
\begin{smallmatrix} \rone \\ 2 \\ 3 \end{smallmatrix},
\begin{smallmatrix} 2 \end{smallmatrix}
}= "02",
(  0, 27)*+{
\begin{smallmatrix} \rthr \end{smallmatrix},
\begin{smallmatrix} \rone \\ \rtwo \\ 3 \end{smallmatrix},
\begin{smallmatrix} 1 \end{smallmatrix}
}= "03",
(-60,  0)*+{
\begin{smallmatrix} \rthr \end{smallmatrix},
\begin{smallmatrix} \rtwo \\ 3 \end{smallmatrix}
}= "04",
(-18,  0)*+{
\begin{smallmatrix} \rone \\ \rtwo \\ \rthr
\end{smallmatrix},
\begin{smallmatrix} \rtwo \end{smallmatrix},
\begin{smallmatrix} 1 \\ 2 \end{smallmatrix}
}= "05",
(-42,-27)*+{
\begin{smallmatrix} \rtwo \\ \rthr \end{smallmatrix},
\begin{smallmatrix} 2 \end{smallmatrix}
}= "06",
(  0, 12)*+{
\begin{smallmatrix} \rone \\ \rtwo \\ \rthr
\end{smallmatrix},
\begin{smallmatrix} 1 \\ 2 \end{smallmatrix},
\begin{smallmatrix} 1 \end{smallmatrix}
}= "07",
( 42, 27)*+{
\begin{smallmatrix} \rthr \end{smallmatrix},
\begin{smallmatrix} \rone \end{smallmatrix}
}= "08",
(  0,-12)*+{
\begin{smallmatrix} \rtwo \end{smallmatrix},
\begin{smallmatrix} \rone \\ 2 \end{smallmatrix}
}= "09",
( 18,  0)*+{
\begin{smallmatrix} \rone \\ \rtwo \end{smallmatrix},
\begin{smallmatrix} 1 \end{smallmatrix}
}= "10",
( 60,  0)*+{
\begin{smallmatrix} \rthr \end{smallmatrix}
}= "11",
(  0,-27)*+{
\begin{smallmatrix} \rtwo \end{smallmatrix}
}= "12",
( 42,  0)*+{
\begin{smallmatrix} \rone \end{smallmatrix}
}= "13",
( 42, -27)*+{0}= "14",
\ar_{\begin{smallmatrix} 3 \end{smallmatrix}} "01";"02" 
\ar^{\begin{smallmatrix} 2 \end{smallmatrix}} "01";"03" 
\ar_{\begin{smallmatrix} 1 \end{smallmatrix}} "01";"04"
\ar^{\begin{smallmatrix} 2 \\ 3 \end{smallmatrix}} "02";"05" 
\ar_{\begin{smallmatrix} 1 \end{smallmatrix}} "02";"06"
\ar_{\begin{smallmatrix} 3 \end{smallmatrix}} "03";"07" 
\ar^{\begin{smallmatrix} 1 \\ 2 \end{smallmatrix}} "03";"08"
\ar_{\begin{smallmatrix} 3 \end{smallmatrix}} "04";"06" 
\ar@{-} "04";(-60,-36) 
\ar@{-}^{\begin{smallmatrix} 2 \end{smallmatrix}} (-60,-36);(60,-36) 
\ar(60,-36);"11"
\ar^{\begin{smallmatrix} 2 \end{smallmatrix}} "05";"07" 
\ar_{\begin{smallmatrix} 1 \\ 2 \\ 3 \end{smallmatrix}} "05";"09"
\ar^{\begin{smallmatrix} 2 \\ 3 \end{smallmatrix}} "06";"12"
\ar^{\begin{smallmatrix} 1 \\ 2 \\ 3 \end{smallmatrix}} "07";"10"
\ar^{\begin{smallmatrix} 1 \end{smallmatrix}} "08";"11" 
\ar_{\begin{smallmatrix} 3 \end{smallmatrix}} "08";"13"
\ar_{\begin{smallmatrix} 2 \end{smallmatrix}} "09";"10" 
\ar_{\begin{smallmatrix} 1 \end{smallmatrix}} "09";"12"
\ar^{\begin{smallmatrix} 1 \\ 2 \end{smallmatrix}} "10";"13"
\ar^{\begin{smallmatrix} 3 \end{smallmatrix}} "11";"14"
\ar^{\begin{smallmatrix} 2 \end{smallmatrix}} "12";"14"
\ar_{\begin{smallmatrix} 1 \end{smallmatrix}} "13";"14"
\end{xy}
\end{align*}
\begin{align*}
\begin{xy}
(-42, 27)*+{0}= "01",
(-42,  0)*+{
\begin{smallmatrix} \rthr \end{smallmatrix}
}= "02",
(  0, 27)*+{
\begin{smallmatrix} \rtwo \end{smallmatrix}
}= "03",
(-60,  0)*+{
\begin{smallmatrix} \rone \end{smallmatrix}
}= "04",
(-18,  0)*+{
\begin{smallmatrix} 3 \end{smallmatrix},
\begin{smallmatrix} \rtwo \\ \rthr \end{smallmatrix}
}= "05",
(-42,-27)*+{
\begin{smallmatrix} \rthr \end{smallmatrix},
\begin{smallmatrix} \rone \end{smallmatrix}
}= "06",
(  0, 12)*+{
\begin{smallmatrix} 2 \\ \rthr \end{smallmatrix},
\begin{smallmatrix} \rtwo \end{smallmatrix}
}= "07",
( 42, 27)*+{
\begin{smallmatrix} 2 \end{smallmatrix},
\begin{smallmatrix} \rone \\ \rtwo \end{smallmatrix}
}= "08",
(  0,-12)*+{
\begin{smallmatrix} 3 \end{smallmatrix},
\begin{smallmatrix} 2 \\ 3 \end{smallmatrix},
\begin{smallmatrix} \rone \\ \rtwo \\ \rthr
\end{smallmatrix}
}= "09",
( 18,  0)*+{
\begin{smallmatrix} 2 \\ 3 \end{smallmatrix},
\begin{smallmatrix} \rone \\ \rtwo \\ \rthr
\end{smallmatrix},
\begin{smallmatrix} \rtwo \end{smallmatrix}
}= "10",
( 60,  0)*+{
\begin{smallmatrix} 1 \\ \rtwo \end{smallmatrix},
\begin{smallmatrix} \rone \end{smallmatrix}
}= "11",
(  0,-27)*+{
\begin{smallmatrix} 3 \end{smallmatrix},
\begin{smallmatrix} 1 \\ \rtwo \\ \rthr \end{smallmatrix},
\begin{smallmatrix} \rone \end{smallmatrix}
}= "12",
( 42,  0)*+{
\begin{smallmatrix} 2 \end{smallmatrix},
\begin{smallmatrix} 1 \\ 2 \\ \rthr \end{smallmatrix},
\begin{smallmatrix} \rone \\ \rtwo \end{smallmatrix}
}= "13",
( 42,-27)*+{
\begin{smallmatrix} 1 \\ 2 \\ \rthr \end{smallmatrix},
\begin{smallmatrix} 1 \\ \rtwo \end{smallmatrix},
\begin{smallmatrix} \rone \end{smallmatrix}
}= "14",
\ar_{\begin{smallmatrix} 3 \end{smallmatrix}} "01";"02" 
\ar^{\begin{smallmatrix} 2 \end{smallmatrix}} "01";"03" 
\ar_{\begin{smallmatrix} 1 \end{smallmatrix}} "01";"04"
\ar^{\begin{smallmatrix} 2 \\ 3 \end{smallmatrix}} "02";"05" 
\ar_{\begin{smallmatrix} 1 \end{smallmatrix}} "02";"06"
\ar_{\begin{smallmatrix} 3 \end{smallmatrix}} "03";"07" 
\ar^{\begin{smallmatrix} 1 \\ 2 \end{smallmatrix}} "03";"08"
\ar_{\begin{smallmatrix} 3 \end{smallmatrix}} "04";"06" 
\ar@{-} "04";(-60,-36) 
\ar@{-}^{\begin{smallmatrix} 2 \end{smallmatrix}} (-60,-36);(60,-36) 
\ar(60,-36);"11"
\ar^{\begin{smallmatrix} 2 \end{smallmatrix}} "05";"07" 
\ar_{\begin{smallmatrix} 1 \\ 2 \\ 3 \end{smallmatrix}} "05";"09"
\ar^{\begin{smallmatrix} 2 \\ 3 \end{smallmatrix}} "06";"12"
\ar^{\begin{smallmatrix} 1 \\ 2 \\ 3 \end{smallmatrix}} "07";"10"
\ar^{\begin{smallmatrix} 1 \end{smallmatrix}} "08";"11" 
\ar_{\begin{smallmatrix} 3 \end{smallmatrix}} "08";"13"
\ar_{\begin{smallmatrix} 2 \end{smallmatrix}} "09";"10" 
\ar_{\begin{smallmatrix} 1 \end{smallmatrix}} "09";"12"
\ar^{\begin{smallmatrix} 1 \\ 2 \end{smallmatrix}} "10";"13"
\ar^{\begin{smallmatrix} 3 \end{smallmatrix}} "11";"14"
\ar^{\begin{smallmatrix} 2 \end{smallmatrix}} "12";"14"
\ar_{\begin{smallmatrix} 1 \end{smallmatrix}} "13";"14"
\end{xy}
\end{align*}
\caption{The exchange quivers of $\sttilt A$ and $\stitilt A$}\label{A3}
\end{figure}
\end{Ex}

Labeling the exchange quiver of $\sttilt A$ with bricks
is originally considered as \textit{layer labeling}
in the case that $A$ is 
a preprojective algebra of Dynkin type $\varDelta$ by 
Iyama--Reading--Reiten--Thomas \cite{IRRT}. 
The definition of their layer labeling uses
the bijection between $\sttilt A$ and the Coxeter group of $\varDelta$
given by Mizuno \cite[Theorem 2.21]{Mizuno},
so it is rather different from the definition of our labeling.
However, 
for the preprojective algebras of Dynkin type, 
layers are precisely bricks,
and the layer on each arrow $M \to N$ 
belongs to $\cC$ in
Proposition \ref{brick_label_prop}, see \cite[Theorem 4.1]{IRRT}.
Therefore, their layer labeling completely coincides with our labeling.

\begin{Ex}\label{preproj_ex}
Let $A$ be the preprojective algebra of type $\mathbb{A}_3$.
This is given by the following quiver and relations:
\begin{align*}
\begin{xy}
( 0,0) *+{1} = "1",
(10,0) *+{2} = "2",
(20,0) *+{3} = "3"
\ar @<1mm>  ^{\alpha} "1"; "2"
\ar @<1mm>  ^{\beta}  "2"; "1"
\ar @<1mm>  ^{\gamma} "2"; "3"
\ar @<1mm>  ^{\delta} "3"; "2"
\end{xy}, \quad
\alpha\beta=0, \quad \beta\alpha=\gamma\delta, \quad \delta\gamma = 0.
\end{align*}
Figure \ref{A3_preproj} below is the exchange quiver
of $\sttilt A$ labeled with bricks. 
The bricks in the semibrick $\ind(M/{\rad_B M}) \in \fLsbrick A$ 
for each $M \in \sttilt A$ are denoted by bold letters.
Compare this labeling with the layer labeling of the exchange quiver of 
$\sttilt A$ \cite[Figure 2]{IRRT}.
\begin{figure}[h]
\begin{align*}
\begin{xy}
(  0,132) *+{
\begin{smallmatrix} \rone&& \\ &2& \\ &&3 \end{smallmatrix},
\begin{smallmatrix} &\rtwo& \\ 1&&3 \\ &2& \end{smallmatrix},
\begin{smallmatrix} &&\rthr \\ &2& \\ 1&& \end{smallmatrix}
}="1234",
( 32,114) *+{
\begin{smallmatrix} 2& \\ &3 \end{smallmatrix},
\begin{smallmatrix} &\rtwo& \\ \rone&&3 \\ &2& \end{smallmatrix},
\begin{smallmatrix} &&\rthr \\ &2& \\ 1&& \end{smallmatrix}
}="2134",
(  0,114) *+{
\begin{smallmatrix} \rone&& \\ &\rtwo& \\ &&3 \end{smallmatrix},
\begin{smallmatrix} 1&&3 \\ &2& \end{smallmatrix},
\begin{smallmatrix} &&\rthr \\ &\rtwo& \\ 1&& \end{smallmatrix}
}="1324",
(-32,114) *+{
\begin{smallmatrix} \rone&& \\ &2& \\ &&3 \end{smallmatrix},
\begin{smallmatrix} &\rtwo& \\ 1&&\rthr \\ &2& \end{smallmatrix},
\begin{smallmatrix} &2 \\ 1& \end{smallmatrix}
}="1243",
( 64, 90) *+{
\begin{smallmatrix} \rtwo& \\ &3 \end{smallmatrix},
\begin{smallmatrix} &3 \\ 2& \end{smallmatrix},
\begin{smallmatrix} &&\rthr \\ &\rtwo& \\ \rone&& \end{smallmatrix}
}="2314",
( 32, 90) *+{
\begin{smallmatrix} 3 \end{smallmatrix},
\begin{smallmatrix} \rone&&3 \\ &2& \end{smallmatrix},
\begin{smallmatrix} &&\rthr \\ &\rtwo& \\ 1&& \end{smallmatrix}
}="3124",
(  0, 90) *+{
\begin{smallmatrix} 2& \\ &3 \end{smallmatrix},
\begin{smallmatrix} &\rtwo& \\ \rone&&\rthr \\ &2& \end{smallmatrix},
\begin{smallmatrix} &2 \\ 1& \end{smallmatrix}
}="2143",
(-32, 90) *+{
\begin{smallmatrix} \rone&& \\ &\rtwo& \\ &&3 \end{smallmatrix},
\begin{smallmatrix} 1&&\rthr \\ &2& \end{smallmatrix},
\begin{smallmatrix} 1 \end{smallmatrix}
}="1342",
(-64, 90) *+{
\begin{smallmatrix} \rone&& \\ &\rtwo& \\ &&\rthr \end{smallmatrix},
\begin{smallmatrix} 1& \\ &2 \end{smallmatrix},
\begin{smallmatrix} &\rtwo \\ 1& \end{smallmatrix}
}="1423",
( 70, 66) *+{
\begin{smallmatrix} \rtwo& \\ &3 \end{smallmatrix},
\begin{smallmatrix} &\rthr \\ 2& \end{smallmatrix}
}="2341",
( 42, 66) *+{
\begin{smallmatrix} 3 \end{smallmatrix},
\begin{smallmatrix} &3 \\ 2& \end{smallmatrix},
\begin{smallmatrix} &&\rthr \\ &\rtwo& \\ \rone&& \end{smallmatrix}
}="3214",
( 14, 66) *+{
\begin{smallmatrix} 3 \end{smallmatrix},
\begin{smallmatrix} \rone&&\rthr \\ &\rtwo& \end{smallmatrix},
\begin{smallmatrix} 1 \end{smallmatrix}
}="3142",
(-14, 66) *+{
\begin{smallmatrix} \rtwo& \\ &\rthr \end{smallmatrix},
\begin{smallmatrix} 2 \end{smallmatrix},
\begin{smallmatrix} &\rtwo \\ \rone& \end{smallmatrix}
}="2413",
(-42, 66) *+{
\begin{smallmatrix} \rone&& \\ &\rtwo& \\ &&\rthr \end{smallmatrix},
\begin{smallmatrix} 1& \\ &2 \end{smallmatrix},
\begin{smallmatrix} 1 \end{smallmatrix}
}="1432",
(-70, 66) *+{
\begin{smallmatrix} \rone& \\ &2 \end{smallmatrix},
\begin{smallmatrix} &\rtwo \\ 1& \end{smallmatrix}
}="4123",
( 64, 42) *+{
\begin{smallmatrix} 3 \end{smallmatrix},
\begin{smallmatrix} &\rthr \\ \rtwo& \end{smallmatrix}
}="3241",
( 32, 42) *+{
\begin{smallmatrix} \rtwo& \\ &\rthr \end{smallmatrix},
\begin{smallmatrix} 2 \end{smallmatrix}
}="2431",
(  0, 42) *+{
\begin{smallmatrix} \rthr \end{smallmatrix},
\begin{smallmatrix} \rone \end{smallmatrix}
}="3412",
(-32, 42) *+{
\begin{smallmatrix} 2 \end{smallmatrix},
\begin{smallmatrix} &\rtwo \\ \rone& \end{smallmatrix}
}="4213",
(-64, 42) *+{
\begin{smallmatrix} \rone& \\ &\rtwo \end{smallmatrix},
\begin{smallmatrix} 1 \end{smallmatrix}
}="4132",
( 32, 18) *+{
\begin{smallmatrix} \rthr \end{smallmatrix}
}="3421",
(  0, 18) *+{
\begin{smallmatrix} \rtwo \end{smallmatrix}
}="4231",
(-32, 18) *+{
\begin{smallmatrix} \rone \end{smallmatrix}
}="4312",
(  0,  0) *+{
0
}="4321",
\ar^{\begin{smallmatrix} 1 \end{smallmatrix}} "1234";"2134"
\ar^{\begin{smallmatrix} 2 \end{smallmatrix}} "1234";"1324"
\ar_{\begin{smallmatrix} 3 \end{smallmatrix}} "1234";"1243"
\ar^{\begin{smallmatrix} &2 \\ 1& \end{smallmatrix}} "2134";"2314"
\ar^(.3){\begin{smallmatrix} 3 \end{smallmatrix}} "2134";"2143"
\ar_(.25){\begin{smallmatrix} 1& \\ &2 \end{smallmatrix}} "1324";"3124"
\ar^(.25){\begin{smallmatrix} &3 \\ 2& \end{smallmatrix}} "1324";"1342"
\ar_(.3){\begin{smallmatrix} 1 \end{smallmatrix}} "1243";"2143"
\ar_{\begin{smallmatrix} 2& \\ &3 \end{smallmatrix}} "1243";"1423"
\ar^{\begin{smallmatrix} &&3 \\ &2& \\ 1&& \end{smallmatrix}} "2314";"2341"
\ar_{\begin{smallmatrix} 2 \end{smallmatrix}} "2314";"3214"
\ar^{\begin{smallmatrix} 1 \end{smallmatrix}} "3124";"3214"
\ar_{\begin{smallmatrix} &3 \\ 2& \end{smallmatrix}} "3124";"3142"
\ar^(.3){\begin{smallmatrix} &2& \\ 1&&3 \end{smallmatrix}} "2143";"2413"
\ar_(.3){\begin{smallmatrix} 1& \\ &2 \end{smallmatrix}} "1342";"3142"
\ar_{\begin{smallmatrix} 3 \end{smallmatrix}} "1342";"1432"
\ar^{\begin{smallmatrix} 2 \end{smallmatrix}} "1423";"1432"
\ar_{\begin{smallmatrix} 1&& \\ &2& \\ &&3 \end{smallmatrix}} "1423";"4123"
\ar^{\begin{smallmatrix} 2 \end{smallmatrix}} "2341";"3241"
\ar^(.3){\begin{smallmatrix} 3 \end{smallmatrix}} "2341";"2431"
\ar_(.2){\begin{smallmatrix} &&3 \\ &2& \\ 1&& \end{smallmatrix}} "3214";"3241"
\ar^(.25){\begin{smallmatrix} 1&&3 \\ &2& \end{smallmatrix}} "3142";"3412"
\ar_(.3){\begin{smallmatrix} &2 \\ 1& \end{smallmatrix}} "2413";"2431"
\ar^(.3){\begin{smallmatrix} 2& \\ &3 \end{smallmatrix}} "2413";"4213"
\ar^(.2){\begin{smallmatrix} 1&& \\ &2& \\ &&3 \end{smallmatrix}} "1432";"4132"
\ar_(.3){\begin{smallmatrix} 1 \end{smallmatrix}} "4123";"4213"
\ar_{\begin{smallmatrix} 2 \end{smallmatrix}} "4123";"4132"
\ar^{\begin{smallmatrix} &3 \\ 2& \end{smallmatrix}} "3241";"3421"
\ar^(.25){\begin{smallmatrix} 2& \\ &3 \end{smallmatrix}} "2431";"4231"
\ar_(.25){\begin{smallmatrix} 1 \end{smallmatrix}} "3412";"3421"
\ar^(.25){\begin{smallmatrix} 3 \end{smallmatrix}} "3412";"4312"
\ar_(.25){\begin{smallmatrix} &2 \\ 1& \end{smallmatrix}} "4213";"4231"
\ar_{\begin{smallmatrix} 1& \\ &2 \end{smallmatrix}} "4132";"4312"
\ar^{\begin{smallmatrix} 3 \end{smallmatrix}} "3421";"4321"
\ar^{\begin{smallmatrix} 2 \end{smallmatrix}} "4231";"4321"
\ar_{\begin{smallmatrix} 1 \end{smallmatrix}} "4312";"4321"
\end{xy}
\end{align*}
\caption{The exchange quiver of $\sttilt A$}\label{A3_preproj}
\end{figure}
\end{Ex}

Now we recall a result of Jasso \cite{Jasso} on \textit{reductions}
of support $\tau$-tilting modules.

Fix $U \in \trigid A$.
For $M \in \mod A$, we have a canonical exact sequence
$0 \to \mathsf{t}M \to M \to \mathsf{f}M \to 0$ 
with $\mathsf{t}M \in \Fac U$ and $\mathsf{f}M \in U^\perp$.

There uniquely exists the \textit{Bongartz completion} $T$ of the $\tau$-rigid module $U$,
that is, the module $T \in \sttilt A$ satisfying 
$U \in \add T$ and $\Fac T={^\perp(\tau U)}$ \cite[Theorem 2.10]{AIR}.
We define two subsets 
$\sttilt_U A \subset \sttilt A$ and
$\ftors_U A \subset \ftors A$ by
\begin{align*}
\sttilt_U A:=\{ M \in \sttilt A \mid U \in \add M \}, \quad
\ftors_U A:=\{\cT \in \ftors A \mid \Fac U \subset \cT \subset {^\perp(\tau U)}\}.
\end{align*}
Let $M \in \sttilt A$, then
$M \in \sttilt_U A$ holds if and only if $\Fac U \subset \Fac M \subset {^\perp(\tau U)}$
\cite[Proposition 2.9]{AIR}. 
Therefore, the bijection $\Fac \colon \sttilt A \to \ftors A$ is restricted to
a bijection $\Fac \colon \sttilt_U A \to \ftors_U A$.

We consider the algebra $C:=\End_A(T)/[U]$,
where $[U]$ denotes the ideal of $\End_A(T)$ consisting of the morphisms
factoring through some objects in $\add U$.
Then, there is an equivalence 
$\Phi:=\Hom_A(T,?) \colon U^\perp \cap {^\perp (\tau U)} \to \mod C$ \cite[Theorem 3.8]{Jasso}.
We also remark that $M \in \sttilt_U A$ implies
$\Fac M \cap U^\perp \subset U^\perp \cap {^\perp (\tau U)}$.

Under this preparation, the following result holds.

\begin{Prop}\label{reduction}
\cite[Theorems 3.14, 3.16, Corollary 3.17]{Jasso}
There exist bijections
$\ftors_U A \to \ftors C$ and $\sttilt_U A \to \sttilt C$ given by
$\cT \mapsto \Phi(\cT \cap U^\perp)$ and $M \mapsto \Phi(\mathsf{f}M)$.
Moreover, they are compatible with mutations and satisfy the following commutative diagram:
\begin{align*}
\begin{xy}
( 0,  8) *+{\sttilt_U A}  ="01",
(48,  8) *+{\sttilt C}    ="11",
( 0, -8) *+{\ftors_U A}   ="00",
(48, -8) *+{\ftors C}     ="10",
\ar^{\Fac} "11";"10"
\ar^{\Fac} "01";"00"
\ar^{M \mapsto \Phi(\mathsf{f} M)} "00";"10"
\ar^{\cT \mapsto \Phi(\cT \cap U^\perp)} "01";"11"
\end{xy}.
\end{align*}
\end{Prop}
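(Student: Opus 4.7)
The plan is to establish the torsion-class bijection $\ftors_U A \to \ftors C$ first and then deduce the support $\tau$-tilting bijection via Proposition \ref{sttilt_ftors}, using as the core tool the equivalence $\Phi = \Hom_A(T,?) \colon U^\perp \cap {^\perp(\tau U)} \to \mod C$, which transports torsion-theoretic data in the ``slab'' $U^\perp \cap {^\perp(\tau U)}$ faithfully into $\mod C$.

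First I would check that $\cT \mapsto \Phi(\cT \cap U^\perp)$ lands in $\ftors C$. Since $\cT \subset {^\perp(\tau U)}$, the intersection $\cT \cap U^\perp$ sits inside the slab. Extension-closedness is immediate from that of $\cT$ and $U^\perp$, and closure under quotients within the slab follows because any quotient of an object in $U^\perp$ still lies in $U^\perp$. Using exactness of $\Hom_A(T,?)$ on $\Fac T$, which contains the slab, I would conclude that $\Phi(\cT \cap U^\perp)$ is a torsion class in $\mod C$, and functorial finiteness would be transported from left/right approximations in $\mod A$ to the corresponding ones in $\mod C$ via $\Phi$.

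For the inverse, given $\cT' \in \ftors C$, I would set $\cT := \sT(\Fac U \cup \Phi^{-1}(\cT')) \subset \mod A$ and verify $\cT \subset {^\perp(\tau U)}$ using the fact that $\Fac T = {^\perp(\tau U)}$ is already a torsion class containing both $\Fac U$ and $\Phi^{-1}(\cT')$. The critical identity $\cT \cap U^\perp = \Phi^{-1}(\cT')$ would be proved by induction along a $\Filt$-filtration of an object $M \in \cT$ by objects of $\Fac U$ and $\Phi^{-1}(\cT')$: its torsion-free part $\mathsf{f}M$ in the torsion pair $(\Fac U, U^\perp)$ lies in $\Phi^{-1}(\cT')$, and any $M \in \cT \cap U^\perp$ satisfies $M = \mathsf{f}M$. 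Functorial finiteness of $\cT$ is then deduced by lifting approximations from $\cT'$ through $\Phi$ and combining with the functorial finiteness of $\Fac U$ guaranteed by \cite[Proposition 4.6]{AS1}.

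Once the torsion-class bijection is established, the support $\tau$-tilting bijection is obtained through Proposition \ref{sttilt_ftors}: for $M \in \sttilt_U A$ I would identify $\Phi(\Fac M \cap U^\perp)$ with $\Fac_C \Phi(\mathsf{f}M)$ by showing that $\mathsf{f}M$ is the Ext-projective generator of $\Fac M \cap U^\perp$ inside the slab; since $\Phi$ is an equivalence that preserves $\Ext^1$, this yields the commutative square. Mutation compatibility then becomes formal, because both sides are governed by covering relations in their respective torsion-class posets via Lemma \ref{no_tors_between}, and the bijection is a poset isomorphism. The main obstacle is the identity $\cT \cap U^\perp = \Phi^{-1}(\cT')$ on the inverse side: one must propagate the torsion-closure operation through canonical sequences $0 \to \mathsf{t}M \to M \to \mathsf{f}M \to 0$ while simultaneously preserving functorial finiteness, and this is precisely where the Bongartz-completion property $\Fac T = {^\perp(\tau U)}$ is essential rather than a formal consequence of $\Phi$ being an equivalence.
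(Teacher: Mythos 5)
The paper gives no proof of this proposition: it is quoted from \cite{Jasso}, so there is no internal argument to compare against. Your overall strategy --- set up the bijection on torsion classes via the equivalence $\Phi$ on the slab $U^\perp \cap {^\perp(\tau U)}$, transfer to support $\tau$-tilting modules through the two $\Fac$ bijections, and obtain mutation compatibility from the resulting poset isomorphism together with Lemma \ref{no_tors_between} --- is essentially Jasso's, and the inverse construction $\cT' \mapsto \sT(\Fac U \cup \Phi^{-1}(\cT'))$ with the identity $\cT \cap U^\perp = \Phi^{-1}(\cT')$ is the right pivot.

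There is, however, one concretely false step in your well-definedness argument. You justify closure of $\Phi(\cT \cap U^\perp)$ under quotients by asserting that ``any quotient of an object in $U^\perp$ still lies in $U^\perp$.'' This is wrong: $U^\perp = \{M \mid \Hom_A(U,M)=0\}$ is a torsion-\emph{free} class, closed under submodules and extensions but not under quotients. For instance, over the path algebra of $1 \to 2 \to 3$ with $U = S_2$ (which is $\tau$-rigid), the projective $P_1 = \left(\begin{smallmatrix}1\\2\\3\end{smallmatrix}\right)$ lies in $U^\perp$ but its quotient $\left(\begin{smallmatrix}1\\2\end{smallmatrix}\right)$ does not. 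The underlying issue is that quotients in $\mod C$ do not correspond to quotients in $\mod A$ under $\Phi^{-1}$: the abelian structure on the slab computes cokernels as $\mathsf{f}(\Coker_A)$. The correct argument is that a quotient of $\Phi(X)$ in $\mod C$ corresponds to $\mathsf{f}(Z)$ for a $\mod A$-quotient $Z$ of $X$; then $Z \in \cT$ since $\cT$ is quotient-closed, $\mathsf{f}(Z)$ is a further quotient of $Z$ and hence in $\cT$, and $\mathsf{f}(Z) \in U^\perp$ by construction --- i.e.\ the same functor $\mathsf{f}$ that you invoke correctly on the inverse side is already needed here. A secondary soft spot is the transport of functorial finiteness, which you leave at the level of ``lifting approximations''; the clean route is to write $\cT = \Fac U * \Phi^{-1}(\cT')$ via the canonical sequences and invoke \cite[Theorem 2.6]{SikS}, or to go through support $\tau$-tilting modules and Proposition \ref{sttilt_ftors} as Jasso does. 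With those repairs your outline matches the cited proof.
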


This proposition says that
the exchange quiver of $\sttilt C$ is isomorphic to
the full subquiver of the exchange quiver of $\sttilt A$
consisting of the elements in $\sttilt_U A$.
We have the following assertion on the labels of 
these exchange quivers.

\begin{Th}\label{label_reduc}
Let $M \to N$ be an arrow labeled with a brick $S$ in the exchange quiver of $\sttilt A$.
Assume $M,N \in \sttilt_U A$ and set $Y:=\Phi(\mathsf{f}M)$ and $Z:=\Phi(\mathsf{f}N)$.
Then the corresponding arrow $Y \to Z$ in the exchange quiver of $\sttilt C$
is labeled with a brick $F(S)$.
\end{Th}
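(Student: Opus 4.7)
The plan is to apply the uniqueness part of Proposition \ref{brick_label_prop}(1) inside $\mod C$. Specifically, I will show that $\Phi(S)$ lies in the subcategory $\Fac Y \cap (\Fac Z)^\perp \subset \mod C$, that it is a brick there, and then conclude by uniqueness that it must coincide with the brick labeling the arrow $Y \to Z$. (I read the label $F(S)$ in the statement as $\Phi(S)$.)

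First I check that $S$ lies in the domain of $\Phi$. By Lemma \ref{brick_label_lem}(1), $S \in \Fac M \cap (\Fac N)^\perp$. The hypothesis $M \in \sttilt_U A$ gives $\Fac M \subset {^\perp(\tau U)}$, so $S \in {^\perp(\tau U)}$. The hypothesis $N \in \sttilt_U A$ gives $\Fac U \subset \Fac N$, so $\Hom_A(U,S) \subset \Hom_A(\Fac N, S) = 0$, i.e.\ $S \in U^\perp$. Hence $S \in U^\perp \cap {^\perp(\tau U)}$ and $\Phi(S)$ is defined. That $Y \to Z$ is an arrow in the exchange quiver of $\sttilt C$ is immediate from the mutation-compatibility in Proposition \ref{reduction}.

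Next I verify the two containments. Since $S \in \Fac M \cap U^\perp$, Proposition \ref{reduction} gives $\Phi(S) \in \Phi(\Fac M \cap U^\perp) = \Fac Y$. For the Hom-vanishing, every object of $\Fac Z$ is of the form $\Phi(W)$ with $W \in \Fac N \cap U^\perp \subset U^\perp \cap {^\perp(\tau U)}$; since $\Phi$ is fully faithful on $U^\perp \cap {^\perp(\tau U)}$ and both $W$ and $S$ lie there, we obtain
\[
\Hom_C(\Phi(W), \Phi(S)) \cong \Hom_A(W,S) = 0,
\]
the last equality because $W \in \Fac N$ and $S \in (\Fac N)^\perp$. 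Thus $\Phi(S) \in \Fac Y \cap (\Fac Z)^\perp$. Moreover $\Phi(S)$ is a brick in $\mod C$ because $\End_C(\Phi(S)) \cong \End_A(S)$ is a division ring.

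Finally, applying Proposition \ref{brick_label_prop}(1) inside $\mod C$ to the arrow $Y \to Z$, the brick labeling this arrow is the unique brick contained in $\Fac Y \cap (\Fac Z)^\perp$. Since $\Phi(S)$ is such a brick, we conclude that this label is $\Phi(S)$. The main subtlety is ensuring that every Hom-computation takes place between objects of $U^\perp \cap {^\perp(\tau U)}$ so that the full faithfulness of $\Phi$ may legitimately be invoked; once that bookkeeping is in place, the argument reduces to a direct application of the uniqueness characterization of brick labels.
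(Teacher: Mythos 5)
Your proof is correct and follows essentially the same route as the paper: both arguments reduce the claim, via the uniqueness statement in Proposition \ref{brick_label_prop}(1), to showing $\Phi(S)\in\Phi(\Fac M\cap U^\perp)\cap\Phi(\Fac N\cap U^\perp)^\perp$, and both verify the two containments using $S\in\Fac M\cap(\Fac N)^\perp$ together with the equivalence $\Phi$ on $U^\perp\cap{^\perp(\tau U)}$. Your explicit checks that $S$ lies in the domain of $\Phi$ and that $\Phi(S)$ is a brick are sensible bookkeeping that the paper leaves implicit, and your reading of $F(S)$ as $\Phi(S)$ is the intended one.
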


\begin{proof}
By Proposition \ref{brick_label_prop} (1),
the assertion holds if the brick $\Phi(S)$ belongs to 
$\Fac Y \cap (\Fac Z)^\perp$.
Proposition \ref{brick_label_prop} (1) also implies that
$S$ is a brick in $\cT_1 \cap (\cT_2)^\perp$,
where $\cT_1:=\Fac M$ and $\cT_2:=\Fac N$.
By Proposition \ref{reduction}, 
we have $\Fac Y=\Phi(\cT_1 \cap U^\perp)$ and $\Fac Z=\Phi(\cT_2 \cap U^\perp)$,
so it suffices to show 
$\Phi(S) \in \Phi(\cT_1 \cap U^\perp) \cap \Phi(\cT_2 \cap U^\perp)^\perp$.

We first prove $\Phi(S) \in \Phi(\cT_1 \cap U^\perp)$.
It is enough to show $S \in \cT_1 \cap U^\perp$.
Since $U \in \cT_2$, we get $(\cT_2)^\perp \subset U^\perp$.
We have seen $S \in \cT_1 \cap (\cT_2)^\perp$, so we get $S \in \cT_1 \cap U^\perp$.
Therefore, $\Phi(S) \in \Phi(\cT_1 \cap U^\perp)$.

Next, we show $\Phi(S) \in \Phi(\cT_2 \cap U^\perp)^\perp$, 
which is equivalent to $\Hom_C(\Phi(\cT_2 \cap U^\perp),\Phi(S))=0$ by definition.
By using the equivalence
$\Phi=\Hom_A(T,?) \colon U^\perp \cap {^\perp (\tau U)} \to \mod C$,
it suffices to get $\Hom_A(\cT_2 \cap U^\perp,S)=0$,
which follows from $S \in (\cT_2)^\perp$.
Therefore, $\Phi(S) \in \Phi(\cT_2 \cap U^\perp)^\perp$.

Now, we have proved
$\Phi(S) \in F(\cT_1 \cap U^\perp) \cap \Phi(\cT_2 \cap U^\perp)^\perp$.
Then, the argument in the beginning gives the assertion.
\end{proof}

We give an example of Theorem \ref{label_reduc}.

\begin{Ex}
We use the setting of Example \ref{sttilt_ex}.
Define $U,T_1,T_2 \in \mod A$ as follows:
\begin{align*}
U:=\left( \begin{smallmatrix} 2 \end{smallmatrix} \right), \quad 
T_1:=\left( \begin{smallmatrix} 1 \\ 2 \\ 3 \end{smallmatrix} \right), \quad
T_2:=\left( \begin{smallmatrix} 2 \\ 3 \end{smallmatrix} \right).
\end{align*}
Then it is easy to see that $U \in \trigid A$
and that the Bongartz completion $T$ of $U$ is $T_2 \oplus T_1 \oplus U \in \sttilt A$.
The algebra $C=\End_A(T)/[U]$ is isomorphic to the path algebra of the quiver $1 \to 2$,
where $T_1,T_2$ correspond to the vertices $1,2$.
In the following two quivers, 
the left one is the full subquiver of the exchange quiver of $\sttilt A$
consisting of the elements in $\sttilt_U A$,
and the right one is the exchange quiver of $\sttilt C$,
and both of them are labeled with bricks:
\begin{align*}
\begin{xy}
(-42, 12)*+{
\begin{smallmatrix} \rtwo \\ \rthr \end{smallmatrix},
\begin{smallmatrix} \rone \\ 2 \\ 3 \end{smallmatrix},
\begin{smallmatrix} 2 \end{smallmatrix}
}= "02",
(-18, 12)*+{
\begin{smallmatrix} \rone \\ \rtwo \\ \rthr
\end{smallmatrix},
\begin{smallmatrix} \rtwo \end{smallmatrix},
\begin{smallmatrix} 1 \\ 2 \end{smallmatrix}
}= "05",
(-42,-15)*+{
\begin{smallmatrix} \rtwo \\ \rthr \end{smallmatrix},
\begin{smallmatrix} 2 \end{smallmatrix}
}= "06",
(  0,  0)*+{
\begin{smallmatrix} \rtwo \end{smallmatrix},
\begin{smallmatrix} \rone \\ 2 \end{smallmatrix}
}= "09",
(  0,-15)*+{
\begin{smallmatrix} \rtwo \end{smallmatrix}
}= "12",
\ar^{\begin{smallmatrix} 2 \\ 3 \end{smallmatrix}} "02";"05" 
\ar_{\begin{smallmatrix} 1 \end{smallmatrix}} "02";"06"
\ar_{\begin{smallmatrix} 1 \\ 2 \\ 3 \end{smallmatrix}} "05";"09"
\ar^{\begin{smallmatrix} 2 \\ 3 \end{smallmatrix}} "06";"12"
\ar_{\begin{smallmatrix} 1 \end{smallmatrix}} "09";"12"
\end{xy}, \quad
\begin{xy}
(-42, 12)*+{
\begin{smallmatrix} \rtwo \end{smallmatrix},
\begin{smallmatrix} \rone \\ 2 \end{smallmatrix}
}= "02",
(-18, 12)*+{
\begin{smallmatrix} \rone \\ \rtwo
\end{smallmatrix},
\begin{smallmatrix} 1 \end{smallmatrix}
}= "05",
(-42,-15)*+{
\begin{smallmatrix} \rtwo \end{smallmatrix}
}= "06",
(  0,  0)*+{
\begin{smallmatrix} \rone \end{smallmatrix}
}= "09",
(  0,-15)*+{
0
}= "12",
\ar^{\begin{smallmatrix} 2 \end{smallmatrix}} "02";"05" 
\ar_{\begin{smallmatrix} 1 \end{smallmatrix}} "02";"06"
\ar_{\begin{smallmatrix} 1 \\ 2 \end{smallmatrix}} "05";"09"
\ar^{\begin{smallmatrix} 2 \end{smallmatrix}} "06";"12"
\ar_{\begin{smallmatrix} 1 \end{smallmatrix}} "09";"12"
\end{xy}.
\end{align*}
We can easily check that the brick on each arrow of the left quiver
is sent to the brick on the corresponding arrow of the right quiver by
the functor $\Phi=\Hom_A(T,?)$.
\end{Ex}

\subsection{Realizing wide subcategories as module categories I}\label{wide_mod}

In this subsection, we study wide subcategories of the module category $\mod A$
by using our results on semibricks.
Our goal here is describing a left finite wide subcategory of $\mod A$
as the module category $\mod E$ of some explicitly given algebra $E$.

First, we give the definition of wide subcategories.

\begin{Def}
A full subcategory $\cW \subset \mod A$ is called a \textit{wide subcategory}
if $\cW$ is a subcategory and closed under kernels, cokernels, and 
extensions of $\mod A$.
We write $\wide A$ for the set of wide subcategories of $\mod A$.
\end{Def}

Clearly, a wide subcategory of $\mod A$ is precisely 
an abelian subcategory of $\mod A$ closed under extensions.
We recall the following important result deduced from \cite[1.2]{Ringel}.

\begin{Prop}\label{sbrick_wide}
The map $\Filt \colon \sbrick A \to \wide A$ is a bijection,
and its inverse $\wide A \to \sbrick A$ sends $\cW \in \wide A$ to
the set of isoclasses of simple objects in $\cW$.
\end{Prop}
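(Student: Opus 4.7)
I would prove directly that $\Filt$ and the simple-objects operation $\cW \mapsto \{\text{iso classes of simples of } \cW\}$ are mutually inverse. Starting from $\cW \in \wide A$, the subcategory $\cW$ is itself an abelian category whose objects have finite length in $\cW$ (being finite-dimensional over $K$); Schur's lemma inside $\cW$ then places the set $\cS$ of isoclasses of simple objects of $\cW$ in $\sbrick A$, and Jordan--H\"older inside $\cW$ gives every object a finite filtration with factors in $\cS$. Thus $\cW \subset \Filt \cS$, while the reverse inclusion is immediate from $\cS \subset \cW$ and closure of $\cW$ under extensions, so $\cW = \Filt \cS$.

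For the opposite direction, fix $\cS \in \sbrick A$ and set $\cW := \Filt \cS$. Closure of $\cW$ under extensions is built into the definition, so the work lies in closure under kernels and cokernels. The main technical step is the local statement: for any $M \in \cW$ and $S \in \cS$, every nonzero morphism $f \colon M \to S$ is surjective and $\Ker f \in \cW$. Surjectivity is Lemma \ref{Schur}(1); to upgrade $\Ker f \in \sT(\cS)$ to $\Ker f \in \Filt \cS$, I would revisit the proof of Lemma \ref{Schur}(1). If $k$ is the least index with $f(M_k) \ne 0$ along a filtration $0 = M_0 \subset \dots \subset M_n = M$ by $\cS$, then Schur-orthogonality of the semibrick forces the short exact sequence $0 \to M_k/M_{k-1} \to M/M_{k-1} \to M/M_k \to 0$ to split via $\overline{f}$, so $\Ker \overline{f} \cong M/M_k$, which carries the inherited filtration with factors $M_{k+1}/M_k, \ldots, M_n/M_{n-1} \in \cS$ and hence lies in $\cW$. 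The sequence $0 \to M_{k-1} \to \Ker f \to M/M_k \to 0$ then places $\Ker f$ in $\cW$.

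With the local statement in hand, closure of $\cW$ under arbitrary kernels, images, and cokernels follows by induction on the length of a filtration of the codomain of a given morphism $f \colon M \to N$ in $\cW$: compose with $N \twoheadrightarrow N/N_1$, apply the induction hypothesis to the composite $M \to N/N_1$, and patch via the local statement applied to the induced map into $N_1 \in \cS$ by a short diagram chase in $\mod A$. Once $\cW \in \wide A$ is established, identifying its simple objects with $\cS$ is immediate: each $S \in \cS$ is simple in $\cW$ because any proper monomorphism into $S$ from an object of $\cW$ would yield a nonzero non-surjective morphism, contradicting the local statement, and any simple object of $\cW$ falls into $\cS$ by Lemma \ref{Schur}(2). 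The main obstacle is the kernel-closure step, specifically sharpening Lemma \ref{Schur}(1) to place $\Ker f$ inside the possibly smaller class $\Filt \cS$; once this is extracted by revisiting the existing proof, the remaining arguments are essentially formal.
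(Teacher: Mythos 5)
The paper supplies no proof of Proposition \ref{sbrick_wide}: it is stated as ``deduced from \cite[1.2]{Ringel}.'' Your argument is therefore a self-contained replacement rather than a parallel to an existing proof, and its substance is correct. For the direction starting from $\cW\in\wide A$, the point is that $\cW$ is a length abelian category with $\End_{\cW}(S)=\End_A(S)$ for its simple objects $S$ (full embedding), so the simples form a semibrick $\cS$ and Jordan--H\"older gives $\cW\subset\Filt\cS$; the reverse inclusion is extension-closure. For the direction starting from $\cS\in\sbrick A$, your local statement is exactly the right sharpening of Lemma \ref{Schur}(1): because the slice $M_k/M_{k-1}$ now lies in $\cS$ itself rather than only in $\Fac\cS$, Hom-orthogonality of $\cS$ makes $\overline{f}|_{M_k/M_{k-1}}$ an isomorphism, the sequence $0\to M_k/M_{k-1}\to M/M_{k-1}\to M/M_k\to 0$ splits, and $\Ker\overline{f}\cong M/M_k$ visibly lies in $\Filt\cS$, so $\Ker f\in\Filt\cS$ via $0\to M_{k-1}\to\Ker f\to\Ker\overline{f}\to 0$. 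The induction on the filtration length of $N$ for kernel and cokernel closure of a general $f\colon M\to N$ then runs as you indicate.

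The one step I would reword is the final appeal to Lemma \ref{Schur}(2) to show that every simple object of $\cW=\Filt\cS$ belongs to $\cS$. That lemma requires the surjectivity-and-kernel hypothesis for all $M\in\sT(\cS)$, whereas after establishing that $\cW$ is wide you have verified it only for $M$ in the a priori smaller class $\Filt\cS$. The gap can be bridged, but it is simpler to bypass the lemma: a simple object $L$ of $\cW$ lies in $\Filt\cS$, so (after refining a filtration with factors in $\add\cS$ to one with factors in $\cS$) there is a filtration $0=L_0\subset L_1\subset\cdots\subset L_n=L$ with each $L_i/L_{i-1}\in\cS$; then $L_1$ is a nonzero subobject of $L$ belonging to $\cW$, hence $L=L_1\in\cS$. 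Equivalently, once $\cS$ is known to consist of objects that are simple in $\cW$ and that filter every object of $\cW$, the Jordan--H\"older theorem inside $\cW$ already forces the set of simples to be $\cS$.
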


It is easy to see that, if $\cS$ is a semibrick and 
$\cW=\Filt \cS$ is the corresponding wide subcategory,
then torsion classes $\sT(\cS)$ and $\sT(\cW)$ coincide.
Thus, we can define left finiteness and right finiteness
for wide subcategories as in the case of semibricks.
We write $\fLwide A$ for the set of left finite wide subcategories,
and $\fRwide A$ for the set of right finite wide subcategories.
Clearly, the bijection $\Filt \colon \sbrick A \to \wide A$ 
in Proposition \ref{sbrick_wide} 
is restricted to bijections $\fLsbrick A \to \fLwide A$ and $\fRsbrick A \to \fRwide A$.

Now, we define  $\sW_\sL(\cT)=\{ M \in \cT \mid 
\mbox{for any $f \colon L \to M$ in $\cT$, $\Ker f \in \cT$} \}$
for $\cT \in \tors A$ and 
and $\sW_\sR(\cF)=\{ M \in \cF \mid 
\mbox{for any $f \colon M \to N$ in $\cF$, $\Coker f \in \cF$} \}$
for $\cF \in \torf A$.
Each of them is a wide subcategory of $\mod A$, 
see \cite[Proposition 2.12]{IT} for the proof.
We have well-defined maps $\sW_\sL \colon \tors A \to \wide A$
and $\sW_\sR \colon \torf A \to \wide A$.
We recall the next result by Marks--\v{S}t\!'ov\'{i}\v{c}ek.

\begin{Prop}\label{tors_wide}
The operations $\sT$ and $\sW_\sL$ satisfy the following properties.
\begin{itemize}
\item[(1)] \cite[Proposition 3.3]{MS}
A composition $\sW_\sL \circ \sT \colon \wide A \to \wide A$ is the identity.
In particular, the map $\sT \colon \wide A \to \tors A$ is injective.
\item[(2)] \cite[Theorem 3.10]{MS}
The map $\sT \colon \fLwide A \to \ftors A$ is bijective.
The inverse is given by $\sW_\sL \colon \ftors A \to \fLwide A$.
\end{itemize}
\end{Prop}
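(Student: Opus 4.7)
The plan is to prove (1) by identifying $\sW_\sL(\sT(\cW))$ with $\cW$ through their semibricks of simple objects, and then derive (2) by combining (1) with the bijection $\sT \colon \fLsbrick A \to \ftors A$ from Proposition \ref{comm_3}.

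To attack (1), I would first invoke Proposition \ref{sbrick_wide} to write $\cW = \Filt \cS$ for its semibrick $\cS$, noting $\sT(\cW) = \sT(\cS)$. The strategy is to show that the semibrick of simples of the wide subcategory $\sW_\sL(\sT(\cS))$ equals $\cS$; this forces $\sW_\sL(\sT(\cS)) = \Filt \cS = \cW$ by Proposition \ref{sbrick_wide}, since every object in a wide subcategory of $\mod A$ has finite length and hence a composition series in the subcategory.

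For the identification of the simples, Lemma \ref{Schur} does the main work. First, each $S \in \cS$ lies in $\sW_\sL(\sT(\cS))$, because for any nonzero $f \colon L \to S$ with $L \in \sT(\cS)$, Lemma \ref{Schur} (1) makes $f$ surjective with $\Ker f \in \sT(\cS)$ (and if $f = 0$ the kernel is $L$ itself). Moreover such an $S$ is simple in $\sW_\sL(\sT(\cS))$: any nonzero subobject $S' \hookrightarrow S$ inside $\sW_\sL(\sT(\cS)) \subset \sT(\cS)$ must be surjective onto $S$ by Lemma \ref{Schur} (1), and $S$ being a brick then forces $S' = S$. Conversely, if $M$ is simple in $\sW_\sL(\sT(\cS))$, then $M$ is a brick in $\sT(\cS)$, and I would verify the hypotheses of Lemma \ref{Schur} (2): for any nonzero $f \colon L \to M$ with $L \in \sT(\cS)$ one has $\Ker f \in \sT(\cS)$ by definition of $\sW_\sL$, and $\Im f$ is a subobject of $M$ that again lies in $\sW_\sL(\sT(\cS))$ (the kernel of any $L' \to \Im f$ agrees with the kernel of its composite into $M$), so simplicity of $M$ forces $\Im f = M$, i.e.\ $f$ is surjective. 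Lemma \ref{Schur} (2) then gives $M \in \cS$. The main obstacle is this last closure check that $\Im f$ inherits the $\sW_\sL$-property from $M$; everything else is a direct application of Lemma \ref{Schur}.

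For (2), part (1) shows $\sT \colon \wide A \to \tors A$ is injective, and restricting yields injectivity of $\sT \colon \fLwide A \to \ftors A$. Bijectivity then follows by factoring this map through the two known bijections $\Filt \colon \fLsbrick A \to \fLwide A$ (restriction of Proposition \ref{sbrick_wide}, well-defined because $\sT(\cS) = \sT(\Filt \cS)$) and $\sT \colon \fLsbrick A \to \ftors A$ (Proposition \ref{comm_3}). Finally, since $\sW_\sL \circ \sT = \mathrm{id}$ by (1), the map $\sW_\sL \colon \ftors A \to \fLwide A$ is the inverse.
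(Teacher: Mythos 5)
The paper does not prove this proposition; it cites \cite[Proposition 3.3, Theorem 3.10]{MS} and then only remarks that Lemmas \ref{Schur}, \ref{sbrick_tors} and Proposition \ref{comm_3} are an analogue of it. Your proposal turns that remark into an actual internal derivation, and the argument is correct. For (1), identifying $\cS$ with the set of simple objects of $\sW_\sL(\sT(\cS))$ via the two parts of Lemma \ref{Schur} and then invoking Proposition \ref{sbrick_wide} is exactly the right strategy; the closure check you single out as the main obstacle does go through: $\Im f$ is a quotient of $L\in\sT(\cS)$ and so lies in $\sT(\cS)$, and for any $g\colon L'\to\Im f$ with $L'\in\sT(\cS)$ one has $\Ker g=\Ker(\iota g)\in\sT(\cS)$ where $\iota\colon\Im f\hookrightarrow M$ is the inclusion, using $M\in\sW_\sL(\sT(\cS))$. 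For (2), factoring the restricted map through the bijections $\Filt\colon\fLsbrick A\to\fLwide A$ and $\sT\colon\fLsbrick A\to\ftors A$ of Proposition \ref{comm_3}, together with $\sW_\sL\circ\sT=\mathrm{id}$ from (1), is a clean and non-circular route that avoids any further appeal to \cite{MS}. The trade-off relative to the paper's choice is simply that \cite{MS} proves the statement from scratch in the language of torsion classes and wide subcategories, whereas you reuse the semibrick calculus (Lemma \ref{Schur}) already developed here; the two are genuinely parallel, which is precisely what the paper's comment asserts.
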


We note that our Lemmas \ref{Schur}, \ref{sbrick_tors} and Proposition \ref{comm_3} are 
an analogue of Proposition \ref{tors_wide}.
We have $\ftors A \subset \sT(\wide A)=\sT(\sbrick A)$.

Summing up the result of Adachi--Iyama--Reiten, Marks--\v{S}t\!'ov\'{i}\v{c}ek and ours,
we have the next assertions.

\begin{Prop}\label{comm_4}
We have the following commutative diagrams of bijections:
\begin{align*}
\begin{xy}
(-40,  0) *+{\sttilt A} = "0",
(  0,  0) *+{\ftors A} = "1",
( 40, 16) *+{\fLwide A} = "2",
( 40,  0) *+{\fLsbrick A} = "3",
(-40, -8) = "4",
( 40, -8) = "5",
\ar@<1mm>^{\Fac} "0"; "1"
\ar@<1mm>^{\textup{Ext-proj's}} "1"; "0"
\ar@<1mm>^{\sW_\sL} "1"; "2"
\ar@<1mm>^{\sT} "2"; "1"
\ar@<1mm>^{\Filt} "3"; "2"
\ar@<1mm>^{\textup{simples}} "2"; "3"
\ar^{\sT} "3"; "1"
\ar@{-} "0"; "4"
\ar@{-}_{M \mapsto \ind(M/{\rad_B M})} "4"; "5" 
\ar "5"; "3"
\end{xy}, \\
\begin{xy}
(-40,  0) *+{\stitilt A} = "0",
(  0,  0) *+{\ftorf A} = "1",
( 40, 16) *+{\fRwide A} = "2",
( 40,  0) *+{\fRsbrick A} = "3",
(-40, -8) = "4",
( 40, -8) = "5",
\ar@<1mm>^{\Sub} "0"; "1"
\ar@<1mm>^{\textup{Ext-inj's}} "1"; "0"
\ar@<1mm>^{\sW_\sR} "1"; "2"
\ar@<1mm>^{\sF} "2"; "1"
\ar@<1mm>^{\Filt} "3"; "2"
\ar@<1mm>^{\textup{simples}} "2"; "3"
\ar^{\sF} "3"; "1"
\ar@{-} "0"; "4"
\ar@{-}_{M \mapsto \ind(\soc_B M)} "4"; "5" 
\ar "5"; "3"
\end{xy}.
\end{align*}
\end{Prop}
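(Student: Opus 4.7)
The plan is to assemble the diagrams entirely from pieces that have already been established in the paper, so the task reduces to checking commutativity. All the individual bijections are already available: $\Fac \colon \sttilt A \to \ftors A$ with inverse ``Ext-projectives'' is Proposition \ref{sttilt_ftors}; the pair $\sW_\sL \colon \ftors A \to \fLwide A$ and $\sT \colon \fLwide A \to \ftors A$ is Proposition \ref{tors_wide} (2); the restriction of Ringel's bijection of Proposition \ref{sbrick_wide} to the left-finite parts gives $\Filt \colon \fLsbrick A \to \fLwide A$ with inverse ``simples''; the bijection $\sT \colon \fLsbrick A \to \ftors A$ is Proposition \ref{comm_3} (1); and the bottom horizontal bijection $\sttilt A \to \fLsbrick A$ is Theorem \ref{sttilt_fsbrick} (2).

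For commutativity there are essentially two identities to check. First, for $\cS \in \fLsbrick A$ we need $\sT(\Filt \cS) = \sT(\cS)$, which is immediate from the two containments $\cS \subset \Filt \cS \subset \sT(\cS)$ together with minimality of $\sT$. This shows that the triangle formed by $\Filt$, ``simples'', and the two copies of $\sT$ commutes, and pins down the outer square involving $\fLwide A$ and $\fLsbrick A$. Second, for $M \in \sttilt A$ setting $N = M/{\rad_B M}$, Lemma \ref{well-def} (5) gives $\sT(N) = \Fac M$, and since $\sT(N) = \sT(\ind N)$ we conclude that the composition $\sttilt A \to \fLsbrick A \xrightarrow{\sT} \ftors A$ equals $\Fac$. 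This is exactly the commutativity of the bottom trapezoid in the first diagram of Proposition \ref{comm_3}, already recorded there.

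Finally, the second commutative diagram of Proposition \ref{comm_4} (for $\stitilt A$, $\ftorf A$, $\fRwide A$, $\fRsbrick A$) is obtained by applying the first diagram to the opposite algebra $A^{\mathrm{op}}$ and then transporting along the $K$-dual $D$, exactly as in the derivation of parts (3), (4) of Theorem \ref{sttilt_fsbrick} from parts (1), (2). There is no real obstacle here: the work has all been done in Propositions \ref{sttilt_ftors}, \ref{tors_wide}, \ref{sbrick_wide}, \ref{comm_3} and Theorem \ref{sttilt_fsbrick}, and what remains is only the bookkeeping verification of the two identities above.
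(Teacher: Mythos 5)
Your proposal is correct and takes essentially the same route as the paper, which simply cites Propositions \ref{sttilt_ftors}, \ref{comm_3}, and \ref{tors_wide}; you just spell out the two commutativity checks (that $\sT(\Filt\cS)=\sT(\cS)$, and that $\sT$ of the semibrick $\ind(M/{\rad_B M})$ equals $\Fac M$) which the paper leaves implicit, the first of which is also noted in the sentence preceding the proposition.
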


\begin{proof}
Propositions \ref{sttilt_ftors}, \ref{comm_3}, and \ref{tors_wide} imply the assertion. 
\end{proof}

Next we recall that a support $\tau$-tilting $A$-module $M \in \sttilt A$ 
is a tilting $A/{\ann M}$-module
\cite[Lemma 2.1, Proposition 2.2]{AIR}.
By Brenner--Butler's theorem, $\Hom_A(M,?) \colon \Fac M \to \Sub_B DM$ is an equivalence,
where $B:=\End_A(M)$.
It is also an equivalence of exact categories \cite[Proposition 3.2]{DIJ}
and allows us to regard the wide subcategory $\cW$ as a Serre subcategory of $\mod B$
as follows.
We use the notation in Definition \ref{summand_symbol} here.
Recall that $I=\{ i \in \{1,2,\ldots,m\} \mid N_i \ne 0 \}$.

\begin{Th}\label{wide_B}
In the setting of Definition \ref{summand_symbol}, let $M \in \sttilt A$.
Define $e_i \in B$ as the idempotent endomorphism 
$M_i \to M \to M_i$ for each $i=1,2,\ldots,m$
and set $e:=1-\sum_{i \in I} e_i$.
Then the equivalence $\Hom_A(M,?) \colon \Fac M \to \Sub_B DM$ 
is restricted to an equivalence 
\begin{align*}
\cW \cong \mod B/\langle e \rangle.
\end{align*}
\end{Th}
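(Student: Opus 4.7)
The strategy is to lift the identification of simple objects through the equivalence of exact categories $\Hom_A(M,?) \colon \Fac M \to \Sub_B DM$.

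First, I will compute $\Hom_A(M, N_i)$ for $i \in I$. Applying $\Hom_A(M,?)$ to the short exact sequence $0 \to L_i \to M_i \to N_i \to 0$ and using $\Ext_A^1(M, L_i) = 0$ from Lemma \ref{well-def}(1), we obtain a short exact sequence $0 \to \Hom_A(M, L_i) \to \Hom_A(M, M_i) \to \Hom_A(M, N_i) \to 0$. Under the natural identification $\Hom_A(M, M_i) \cong e_i B$, I will show that $\Hom_A(M, L_i)$ corresponds to $e_i \rad B$: given $\phi \colon M \to M_i$ with $\Im \phi \subset L_i$, each component $\phi|_{M_j} \colon M_j \to M_i$ lies in $\rad_A(M_j, M_i)$, because $M$ is basic for $j \ne i$, and because $L_i \subsetneq M_i$ (as $N_i \ne 0$, i.e., $i \in I$) for $j = i$. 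Conversely, for $b \in \rad B$, one has $e_i b \cdot M = e_i \cdot L = L_i$, so $e_i b \in \Hom_A(M, L_i)$. Hence $\Hom_A(M, N_i) \cong e_i B / e_i \rad B$, which is the simple top $S_{B,i}$ of the indecomposable projective $e_i B$.

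Second, I will identify $\mod B/\langle e \rangle$ with $\Filt_B \{S_{B,i} : i \in I\}$ inside $\Sub_B DM$. Since $\langle e \rangle = BeB$, a right $B$-module $Y$ lies in $\mod B/\langle e \rangle$ if and only if $Ye = 0$, if and only if no composition factor $S_{B,j}$ with $j \notin I$ appears in $Y$. Thus $\mod B/\langle e \rangle$ is a Serre subcategory of $\mod B$ whose simples are $\{S_{B,i} : i \in I\}$, and every object has a composition series with these simple factors, so $\mod B/\langle e \rangle = \Filt_B \{S_{B,i} : i \in I\}$. As each $S_{B,i} = \Hom_A(M, N_i)$ lies in the image $\Sub_B DM$ of $\Fac M$ and the torsion-free class $\Sub_B DM$ is closed under extensions, we obtain the inclusion $\mod B/\langle e \rangle \subset \Sub_B DM$.

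Third, I will transport the filtration description across the equivalence. Since $\Hom_A(M,?)$ is an equivalence of exact categories, it induces a bijection between filtrations in $\Fac M$ with subquotients in $\{N_i : i \in I\}$ and filtrations in $\Sub_B DM$ with subquotients in $\{S_{B,i} : i \in I\}$. Combined with the previous step, this shows $\Hom_A(M, \cW) = \Filt_B\{S_{B,i} : i \in I\} = \mod B/\langle e \rangle$, so the restriction $\Hom_A(M,?) \colon \cW \to \mod B/\langle e \rangle$ is essentially surjective. Full faithfulness is inherited from the ambient equivalence, giving the desired equivalence $\cW \simeq \mod B/\langle e \rangle$. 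The main obstacle will be establishing $\Hom_A(M, L_i) = e_i \rad B$, which pins down $\Hom_A(M, N_i)$ as a simple $B$-module and crucially uses both that $M$ is basic and the condition $i \in I$.
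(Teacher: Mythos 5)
Your proof is correct, and its overall skeleton agrees with the paper's: both restrict the exact equivalence $\Hom_A(M,?) \colon \Fac M \to \Sub_B DM$ to the $\Filt$-closures and reduce everything to identifying $\Hom_A(M,\cS)$ with the set of simple $B/\langle e \rangle$-modules. Where you genuinely diverge is in how that identification is established. The paper argues abstractly: it takes a simple $B$-submodule $Z \subset \Hom_A(M,S)$, transports the inclusion back to $\Fac M$ via the quasi-inverse $? \otimes_B M$, invokes Lemma \ref{Schur} (1) to force $Z \cong \Hom_A(M,S)$, checks $\Hom_A(M,S) \in \mod B/\langle e \rangle$ via Lemma \ref{well-def} (3), and then closes with a cardinality count $\# \cS = \# I$. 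You instead compute $\Hom_A(M,N_i)$ explicitly: applying $\Hom_A(M,?)$ to $0 \to L_i \to M_i \to N_i \to 0$ and using $\Ext_A^1(M,L_i)=0$, you identify $\Hom_A(M,L_i)$ with $e_i \rad B$ inside $\Hom_A(M,M_i) \cong e_i B$; your verification that a map $M \to M_i$ has image in $L_i$ if and only if all its components are radical is sound, and it correctly uses both that $M$ is basic (for the components $M_j \to M_i$ with $j \ne i$) and that $i \in I$, i.e.\ $L_i \subsetneq M_i$ (for the component $M_i \to M_i$). This yields $\Hom_A(M,N_i) \cong e_iB/e_i\rad B$ directly, which buys you two things: the counting argument disappears, and you see precisely which simple corresponds to which brick, namely $N_i \mapsto \operatorname{\mathsf{top}}(e_iB)$. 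The paper's route is lighter on computation and reuses Lemma \ref{Schur}, which is needed elsewhere anyway. (One cosmetic point: the equality ``$e_i b \cdot M = L_i$'' for a single $b \in \rad B$ should be an inclusion $\Im(e_i b) \subset L_i$, but this is exactly what your argument needs.)
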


\begin{proof}
Since $\Hom_A(M,?) \colon \Fac M \to \Sub_B DM$ is
an equivalence of exact categories \cite[Proposition 3.2]{DIJ},
it is restricted to an equivalence $\cW=\Filt \cS \to \Filt_B \Hom_A(M,\cS)$.
Thus, it suffices to show that $\Hom_A(M,\cS)$ coincides with 
the set of simple $B/\langle e \rangle$-modules.
The cardinalities of these two sets coincide,
because the number of isoclasses of simple  
$\mod B/\langle e \rangle$-modules is $\# I$, 
and it is equal to $\# \cS$ by Lemma \ref{well-def} (4).
Therefore, we show that each element of $\Hom_A(M,\cS)$ 
is a simple $B/\langle e \rangle$-module;
then, we obtain the desired equivalence.
Let $S \in \cS$.

We first show that $\Hom_A(M,S)$ is a simple $B$-module.
Because $\Hom_A(M,S)$ is a nonzero $B$-module in $\Sub_B DM$,
we can take a simple submodule $Z$ of $\Hom_A(M,S)$.
It is enough to obtain $\Hom_A(M,S) \cong Z$.

Since $M \in \Sub_B DM$, the submodule $Z$ is also in $\Sub_B DM$. 
We define $f \colon Z \to \Hom_A(M,S)$ as the canonical inclusion.
A quasi-inverse of $\Hom_A(M,?) \colon \Fac M \to \Sub_B DM$
is given by $? \otimes_B M \colon \Sub_B DM \to \Fac M$, 
which sends $f$ to a nonzero homomorphism 
$f \otimes_B M \colon Z \otimes_B M \to \Hom_A(M,S) \otimes_B M$
in $\Fac M$.
Since $\Hom_A(M,S) \otimes_B M \cong S \in \cS$ and 
$Z \otimes_B M \in \Fac M = \sT(\cS)$,
the nonzero homomorphism $f \otimes_B M$ is surjective
and $\Ker f \in \sT(\cS) = \Fac M$ holds by Lemma \ref{Schur} (1).

Thus, there exists a short exact sequence
$0 \to \Ker f \to Z \otimes_B M \to S \to 0$ in $\Fac M$.
By using the equivalence $\Hom_A(M,?) \colon \Fac M \to \Sub_B DM$ 
of exact categories, 
we have a short exact sequence
$0 \to \Hom_A(M,\Ker f) \to Z \to \Hom_A(M,S) \to 0$ in $\Sub_B DM$.
Since $Z$ is a simple $B$-module and $\Hom_A(M,S) \ne 0$,
we obtain that $\Hom_A(M,S) \cong Z$
and that $\Hom_A(M,S)$ is a simple $B$-module.

Next, we show $\Hom_A(M,S) \in \mod B/\langle e \rangle$.
By Lemma \ref{well-def} (4), we can take $j \in I$ such that $S=N_j$.
Clearly, $\Hom_A(M,N_j) \in \mod B/\langle e \rangle$ is equivalent to 
$\Hom_A(M_i,N_j)=0$ for each $i \in \{1,2,\ldots,m\} \setminus I$.
By Lemma \ref{well-def} (3), the latter condition holds.
We have $\Hom_A(M,S) \in \mod B/\langle e \rangle$.

These two properties yield that 
each element of $\Hom_A(M,\cS)$ is a simple $B/\langle e \rangle$-module.
By the argument in the first paragraph, 
we get $\cW \cong \mod B/\langle e \rangle$.
\end{proof}

We remark that Theorem \ref{wide_B} follows also 
from Jasso's reductions of $\tau$-rigid modules.
In the setting of Theorem \ref{wide_B}, set $U:=\bigoplus_{i \notin I} M_i$.
Then, we can check that $M$ is the Bongartz completion of $U$. 
From \cite[Theorem 3.8]{Jasso}, 
we have an equivalence $\Hom_A(M,?) \colon U^\perp \cap {^\perp (\tau U)} \to \mod C$
with $C:=\End_A(M)/[U]$.
The algebra $C$ is clearly $B/\langle e \rangle$.
It follows from Theorem \ref{label_reduc} that the simple objects of
the category $U^\perp \cap {^\perp (\tau U)}$ are the elements in $\cS$.
Therefore, $U^\perp \cap {^\perp (\tau U)}$ coincides with $\cW$,
and we obtain the equivalence of Theorem \ref{wide_B}.

As an application, we prove the next assertion.
We write $\fwide A$ for the set of functorially finite wide subcategories of $\mod A$.
This is also deduced from \cite[Proposition 3.3, Lemma 3.8]{MS}.

\begin{Prop}\label{left_func}
We have an inclusion $\fLwide A \subset \fwide A$.
\end{Prop}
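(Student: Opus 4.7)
The plan is to realize $\cW$ via Theorem \ref{wide_B} as the module category $\mod \overline{B}$ with $\overline{B} := B/\langle e\rangle$, where $M \in \sttilt A$ corresponds to $\cW$ and $B := \End_A(M)$, and then to transport approximations back and forth across the Brenner--Butler equivalence $\Phi = \Hom_A(M,?) \colon \Fac M \to \Sub_B DM$. The setup is that $\Fac M \in \ftors A$ is functorially finite in $\mod A$ by Proposition \ref{sttilt_ftors}, while $\mod \overline{B}$ is a Serre subcategory of $\mod B$ and is therefore trivially functorially finite there. A key preliminary step will be to establish the inclusion $\mod \overline{B} \subset \Sub_B DM$: the simple objects of $\mod \overline{B}$ are $\Phi(N_i)$ for the bricks $N_i$ in $\cS = \ind(M/\rad_B M)$, and since each $N_i \in \Fac M$ one has $\Phi(N_i) \in \Sub_B DM$, so closure of the torsion-free class $\Sub_B DM$ under extensions gives the inclusion for every finite $\overline{B}$-module.

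Given $Y \in \mod A$, I would build a right $\cW$-approximation as follows. First take a right $\Fac M$-approximation $p \colon Y_1 \to Y$, and set $V := \{x \in \Phi(Y_1) \mid x \cdot \langle e\rangle = 0\}$, the largest $\overline{B}$-submodule of $\Phi(Y_1)$, which lies in $\Sub_B DM$ because $\Sub_B DM$ is closed under submodules. Setting $W := \Phi^{-1}(V) \in \cW$ and letting $\iota \colon W \to Y_1$ correspond under $\Phi^{-1}$ to the inclusion $V \hookrightarrow \Phi(Y_1)$, the composite $p \circ \iota$ is a right $\cW$-approximation: any $g \colon W' \to Y$ with $W' \in \cW \subset \Fac M$ factors through $p$ as $g = p \circ g'$, and since $\Phi(W') \cdot \langle e\rangle = 0$ the image of $\Phi(g')$ lies in $V$, which yields the required factorization after applying $\Phi^{-1}$. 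The left $\cW$-approximation is constructed dually, using a left $\Fac M$-approximation $\iota \colon Y \to Y_1$ and the quotient $\Phi(Y_1) \twoheadrightarrow V' := \Phi(Y_1)/\Phi(Y_1)\cdot\langle e\rangle$; here the preliminary inclusion $\mod \overline{B} \subset \Sub_B DM$ is what ensures $V' \in \Sub_B DM$, so that $\Phi^{-1}(V') \in \cW$ makes sense, and then the composite $Y \to Y_1 \to \Phi^{-1}(V')$ is verified to be universal in the analogous way.

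The main obstacle will be establishing $\mod \overline{B} \subset \Sub_B DM$, which is essential specifically for the left approximation (the right case gets submodule closure for free). Once this is checked via the simple objects $\Phi(N_i)$ together with the extension closure of $\Sub_B DM$, the fact that $\Phi$ is an equivalence of exact categories makes both constructions run mechanically, and they together establish functorial finiteness of $\cW$ in $\mod A$.
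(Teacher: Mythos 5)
Your proof is correct and follows essentially the same route as the paper: both reduce the question to $\mod B$ via the equivalence $\Hom_A(M,?)\colon \cW \to \mod B/\langle e\rangle$ of Theorem \ref{wide_B}, and both ultimately rely on $\mod B/\langle e\rangle$ being a Serre subcategory of $\mod B$ together with the functorial finiteness of $\Fac M$. The only difference is cosmetic: the paper invokes transitivity and restriction of functorial finiteness abstractly, while you unwind the same composition by constructing the right (resp.\ left) $\cW$-approximation explicitly as the largest $B/\langle e\rangle$-submodule (resp.\ quotient) of the transported $\Fac M$-approximation; your worry about $\mod B/\langle e\rangle \subset \Sub_B DM$ is unnecessary, since it is immediate from the very statement of Theorem \ref{wide_B}, but your verification of it is correct.
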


\begin{proof}
Let $\cW \in \fLwide A$ and take $M \in \sttilt A$ 
corresponding to $\cW$ in Proposition \ref{comm_4}.
Because $\Fac M \in \ftors A$,
it suffices to show that $\cW$ is functorially finite in $\Fac M$.
By Theorem \ref{wide_B},
this condition is equivalent to that $\mod B/\langle e \rangle$ 
is functorially finite in $\Sub_B DM$,
which holds true because $\mod B/\langle e \rangle$ is 
functorially finite in $\mod B \supset \Sub_B DM$.
\end{proof}

\subsection{Semibricks for factor algebras}\label{on_EJR}

Let $I \subset A$ be an ideal of a finite dimensional algebra $A$.
Then we obviously have $\sbrick A/I \subseteq \sbrick A$,
but the equality does not hold in general.

In this subsection, we first prove that $\sbrick A/I = \sbrick A$ holds
in the following condition given by Eisele--Janssens--Raedschelders \cite{EJR}:
\begin{quote}
the ideal $I \subset A$ is
generated by a set $X \subset Z(A) \cap \rad A$.
\end{quote}
Here, $Z(A)$ denotes the center of $A$.
Moreover, by using semibricks, we give another proof of their theorem, 
which gives a canonical bijection $\sttilt A \to \sttilt A/I$.

For $M \in \mod A$ and every $a \in Z(A)$, 
we have an endomorphism $(\cdot a) \colon M \to M$.
We can take a $K$-basis $a_1,a_2,\ldots, a_s$ 
of the vector subspace $KX \subset Z(A)$ generated by $X$,
then the image of the homomorphism 
$\begin{bmatrix} (\cdot a_1) & \cdots & (\cdot a_s) \end{bmatrix}$
coincides with $MI$.
Thus, the submodule $MI \subset M$ satisfies $MI \in \Fac M$.
 
From now on, we define $\overline{A}:=A/I$ and $\overline{M}:=M/MI$ for $M \in \mod A$.
The operation $\overline{?}$ gives a right exact functor 
$\overline{?} \colon \mod A \to \mod \overline{A}$.

First, we observe that $\sbrick \overline{A} = \sbrick A$.

\begin{Prop}
We have $\sbrick A=\sbrick \overline{A}$.
\end{Prop}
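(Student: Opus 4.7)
The inclusion $\sbrick \overline{A} \subset \sbrick A$ is immediate: an $\overline{A}$-module is an $A$-module annihilated by $I$, and for such modules $\Hom_{\overline{A}}(?,?) = \Hom_A(?,?)$ and $\End_{\overline{A}}(?) = \End_A(?)$, so bricks and Hom-orthogonality transfer without change. The work lies in the reverse inclusion $\sbrick A \subset \sbrick \overline{A}$.

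The plan is to show that every brick $S \in \brick A$ is annihilated by $I$, hence lies in $\mod \overline{A}$. The key observation is that for each $a \in X \subset Z(A)$, the multiplication map $(\cdot a) \colon S \to S$ is an $A$-module endomorphism precisely because $a$ is central. Since $a \in \rad A$ and $A$ is finite-dimensional, $\rad A$ is nilpotent, so $a^n = 0$ for some $n$, and hence $(\cdot a)^n = (\cdot a^n) = 0$. This exhibits $(\cdot a)$ as a nilpotent element of the division algebra $\End_A(S)$, which forces $(\cdot a) = 0$, i.e.\ $Sa = 0$.

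From $SX = 0$ and the centrality of $X$, one then deduces $SI = 0$: indeed, $I = \langle X \rangle$ equals the right ideal $XA$ (by centrality of the generators), so $SI = SXA = 0$. Therefore $S$ carries a canonical $\overline{A}$-module structure, and since $\End_{\overline{A}}(S) = \End_A(S)$ is a division algebra, $S \in \brick \overline{A}$.

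Finally, given a semibrick $\cS \in \sbrick A$, each $S \in \cS$ lies in $\brick \overline{A}$ by the above, and for distinct $S_1, S_2 \in \cS$ we have $\Hom_{\overline{A}}(S_1,S_2) = \Hom_A(S_1,S_2) = 0$. Thus $\cS \in \sbrick \overline{A}$. There is no substantial obstacle here; the only subtle point is to remember to invoke centrality twice, once to make $(\cdot a)$ an $A$-endomorphism and once to pass from the vanishing on generators of $X$ to the vanishing of the full two-sided ideal $I$.
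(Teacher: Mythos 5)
Your proof is correct and follows the same approach as the paper's: show a brick $S$ is annihilated by $I$ by observing that for each central radical generator $a$, the multiplication $(\cdot a)$ is a nilpotent endomorphism of $S$, hence zero in the division algebra $\End_A(S)$. You spell out two points the paper leaves implicit (that centrality is what makes $(\cdot a)$ an $A$-endomorphism, and that centrality lets you pass from $SX=0$ to $SI=0$ via $I=XA$), but the argument is the same.
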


\begin{proof}
It is sufficient to show that $S \in \brick A$ implies $S \in \brick \overline{A}$.
By assumption, $I$ is generated by the set $X \subset Z(A) \cap \rad A$.
For each $x \in X$, 
the $A$-endomorphism $(\cdot x) \colon S \to S$ is nilpotent because $x \in \rad A$,
and this map must be zero since $S \in \brick A$,
so we have $Sx=0$.
Thus, $SI=0$ holds; hence, we have $S \in \brick \overline{A}$.
We have the assertion.
\end{proof}

In the rest of this subsection, 
we prove the following.

\begin{Th}\label{EJR_th}
We have the following assertions.
\begin{itemize}
\item[(1)]
We have $\fLsbrick A = \fLsbrick \overline{A}$.
\item[(2)]
\cite[Theorem 4.1]{EJR}
There exists a bijection $\sttilt A \to \sttilt \overline{A}$ given as 
$M \mapsto \overline{M}$, which satisfies the following commutative diagram:
\begin{align*}
\begin{xy}
( 0, 8) *+{\sttilt A} = "1",
(48, 8) *+{\sttilt \overline{A}} = "2",
( 0,-8) *+{\fLsbrick A} = "3",
(48,-8) *+{\fLsbrick \overline{A}} = "4",
\ar^{\overline{?}} "1"; "2"
\ar@{=} "3"; "4"
\ar_{\cong}^{M \mapsto \ind (M/{\rad_B M})} "1"; "3"
\ar_{\cong}^{M' \mapsto \ind (M'/{\rad_{B'} M'})} "2"; "4"
\end{xy} \quad
\begin{pmatrix}
B:=\End_A(M), \\
B':=\End_{\overline{A}}(\overline{M})
\end{pmatrix}.
\end{align*}
Moreover, this bijection preserves the mutations,
and hence the exchange quivers of $\sttilt A$ and $\sttilt \overline{A}$ are isomorphic.
\item[(3)]
The labels of the exchange quivers of $\sttilt A$ and $\sttilt \overline{A}$ coincide.
\end{itemize}
\end{Th}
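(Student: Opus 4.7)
My plan is to establish (1) first, then derive (2) by combining (1) with Theorem \ref{sttilt_fsbrick}, and finally deduce (3) from the intrinsic description of the brick label in Proposition \ref{brick_label_prop}. For part (1), I start from $\sbrick A = \sbrick \overline{A}$ and need only match the two left-finiteness conditions. The core observation I use is that for every $A$-module $M$, the submodule $MI$ lies in $\Fac_A M$, since a $K$-basis of $KX \subset Z(A)$ provides central endomorphisms whose combined image is $MI$. Iterating along the nilpotent $I$-adic filtration, I find that any $\cT \in \tors A$ is closed under $M \mapsto MI$, so $\cT = \Filt_A(\cT \cap \mod \overline{A})$. Combining this with $\Fac_A \cS = \Fac_{\overline{A}} \cS$ (the bricks being annihilated by $I$) and the stability of $\mod \overline{A}$ under submodules, I obtain $\sT_A(\cS) \cap \mod \overline{A} = \sT_{\overline{A}}(\cS)$ and a bijection $\tors A \cong \tors \overline{A}$ via $\cT \mapsto \cT \cap \mod \overline{A}$. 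The forward restriction $\ftors A \to \ftors \overline{A}$ is then routine, because a right $\Fac_A M$-approximation $f \colon Y \to X$ of $X \in \mod \overline{A}$ factors through $\overline{Y}$ and descends to a right $\Fac_{\overline{A}} \overline{M}$-approximation.

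The hard part will be the reverse restriction, namely showing $\Filt_A(\cT') \in \ftors A$ whenever $\cT' \in \ftors \overline{A}$. I plan to promote this to a statement about support $\tau$-tilting modules: I verify that $M \mapsto \overline{M}$ gives a well-defined map $\sttilt A \to \sttilt \overline{A}$ (using $\Fac_{\overline{A}} \overline{M} = \Fac_A M \cap \mod \overline{A}$ together with Lemma \ref{well-def}), and then construct its inverse by lifting each $M' \in \sttilt \overline{A}$ to some $M \in \sttilt A$ via a projective cover $P \twoheadrightarrow M'$ in $\mod A$ and the Bongartz-completion machinery. The central-nilpotent property of $I$ is then what I use to check that the lift satisfies $\overline{M} = M'$. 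With both directions of $\ftors A \leftrightarrow \ftors \overline{A}$ in place at the functorially finite level, part (1) follows.

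Part (2) then comes by composing $\sttilt A \cong \fLsbrick A = \fLsbrick \overline{A} \cong \sttilt \overline{A}$ from Theorem \ref{sttilt_fsbrick}. To identify this composite with $M \mapsto \overline{M}$, I compare $\ind(M/\rad_B M)$ with $\ind(\overline{M}/\rad_{B'} \overline{M})$ via Lemma \ref{well-def}: each $N_i = M_i/L_i$ already lies in $\mod \overline{A}$, and the canonical surjection $B \twoheadrightarrow B'$ (induced by $M \twoheadrightarrow \overline{M}$) yields $\overline{\rad_B M} = \rad_{B'} \overline{M}$. Mutation preservation follows from Lemma \ref{no_tors_between} and the order-preserving nature of the bijection $\ftors A \cong \ftors \overline{A}$, since covers in one poset transport to covers in the other. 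For part (3), Proposition \ref{brick_label_prop}(1) identifies the label on $M \to N$ as the unique brick of $\Fac_A M \cap (\Fac_A N)^\perp$; intersecting with $\mod \overline{A}$ transports this to $\Fac_{\overline{A}} \overline{M} \cap (\Fac_{\overline{A}} \overline{N})^\perp$ by part (1), and since bricks already lie in $\mod \overline{A}$ by $\sbrick A = \sbrick \overline{A}$, the two labels coincide.
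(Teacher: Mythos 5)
Your overall structure -- prove (1), derive (2) from Theorem~\ref{sttilt_fsbrick}, deduce (3) from the characterization of labels -- matches the paper, and your forward inclusion argument (a right $\cT$-approximation $Y \to X$ of $X \in \mod\overline{A}$ factors through $\overline{Y}$, giving contravariant finiteness of $\cT \cap \mod\overline{A}$ in $\mod\overline{A}$) is actually a cleaner route to $\fLsbrick A \subset \fLsbrick\overline{A}$ than the paper's, which goes through $\sttilt$ and Lemma~\ref{EJR_basic}. However, you have a genuine gap at the step you yourself flag as hard, namely the reverse inclusion $\fLsbrick\overline{A} \subset \fLsbrick A$. Your proposal to lift $M' \in \sttilt\overline{A}$ to some $M \in \sttilt A$ ``via a projective cover and Bongartz completion'' is not a construction: there is no specified $\tau$-rigid $A$-module to complete, and checking that whatever lift you write down is $\tau$-rigid over $A$ is exactly the kind of statement that requires proof, so the plan is circular as stated. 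The paper avoids this entirely: from $\sT_A(\cS) \cap \mod\overline{A} = \sT_{\overline{A}}(\cS)$ and $I^n = 0$ one gets $\sT_A(\cS) = (\sT_{\overline{A}}(\cS))^{*n}$ (Lemma~\ref{EJR_tors}(2)), and since $\sT_{\overline{A}}(\cS)$ is functorially finite in $\mod A$ (being functorially finite in the functorially finite subcategory $\mod\overline{A}$), the Sikko--Smal\o\ theorem on extensions of functorially finite subcategories \cite[Theorem 2.6]{SikS} gives that the iterated extension closure $(\sT_{\overline{A}}(\cS))^{*n}$ is functorially finite. That extension-closure argument is the key input you are missing.

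There is a second, smaller gap in your part (2). Composing $\sttilt A \cong \fLsbrick A = \fLsbrick\overline{A} \cong \sttilt\overline{A}$ does give a bijection $\Psi$, and your radical-quotient comparison $\ind(M/\rad_B M) = \ind(\overline{M}/\rad_{B'}\overline{M})$ (the paper's Lemma~\ref{EJR_basic}(1)--(2)) is correct. But to conclude $\Psi(M) = \overline{M}$ you also need to know that $\overline{M}$ actually lies in $\sttilt\overline{A}$; equality of semibricks alone does not identify $\overline{M}$ with $\Psi(M)$ unless $\overline{M}$ is already a support $\tau$-tilting $\overline{A}$-module. The paper supplies this via \cite[Proposition 5.6(a)]{DIRRT} ($\overline{M}$ is $\tau_{\overline{A}}$-rigid) together with the count $|\overline{M}| = |M|$ coming from $B/\rad B \cong B'/\rad B'$; your Lemma~\ref{well-def} reference does not give $\tau_{\overline{A}}$-rigidity, since the Auslander--Reiten translations over $A$ and $\overline{A}$ are unrelated in general. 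Parts (2)-mutation preservation and (3) are fine once these gaps are filled, and your use of Proposition~\ref{brick_label_prop}(1) for (3) is a legitimate alternative to the paper's argument via the definition of labels.
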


Eisele--Janssens--Raedschelders obtained the above result in a geometric way,
but we show this result in terms of semibricks.
There is another approach to this problem in \cite[Section 5]{DIRRT}, 
which deals with lattice congruences on torsion classes.

The following lemmas are crucial.

\begin{Lem}\label{EJR_tors}
Let $\cS \in \sbrick \overline{A}$, then we have the following assertions.
\begin{itemize}
\item[(1)]
We have $\sT_A(\cS) \cap \mod \overline{A} = \sT_{\overline{A}}(\cS)$.
\item[(2)]
Take an integer $n>0$ such that $I^n=0$.
Then $\sT_A(\cS)=(\sT_{\overline{A}}(\cS))^{*n} \subset \mod A$ holds,
where $(\sT_{\overline{A}}(\cS))^{*n}$ consists of the $A$-modules $L$
having a sequence $0=L_0 \subset L_1 \subset \cdots \subset L_n =L$
with $L_i/L_{i-1} \in \sT_{\overline{A}}(\cS)$.
\item[(3)]
If $\cS \in \fLsbrick \overline{A}$, then $\cS \in \fLsbrick A$.
\item[(4)]
We have $\fLsbrick \overline{A} \subset \fLsbrick A$.
\end{itemize}
\end{Lem}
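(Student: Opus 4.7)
The plan is to prove (1), (2), (3) in order, with (4) being an immediate consequence. Parts (1) and (2) give two complementary descriptions of $\sT_A(\cS)$ and are essentially formal; part (3) contains the substantive content.

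For (1), the inclusion $\sT_{\overline{A}}(\cS)\subset\sT_A(\cS)\cap\mod\overline{A}$ is immediate because the right-hand side is a torsion class of $\mod\overline{A}$ containing $\cS$: closure under factors and extensions in $\mod\overline{A}$ agrees with the corresponding operations taken in $\mod A$. For the reverse inclusion, I would use $\sT_A(\cS)=\Filt_A(\Fac_A\cS)$ together with the observation that $\Fac_A\cS=\Fac_{\overline{A}}\cS$, since every factor of an object of $\cS\subset\mod\overline{A}$ is automatically annihilated by $I$. If $M\in\mod\overline{A}$ admits a filtration $0=M_0\subset\dots\subset M_l=M$ in $\mod A$ with $M_i/M_{i-1}\in\Fac_{\overline{A}}\cS$, then each $M_i$ is a submodule of $M$, hence lies in $\mod\overline{A}$, so this is already a filtration in $\mod\overline{A}$ and $M\in\sT_{\overline{A}}(\cS)$.

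For (2), the inclusion $(\sT_{\overline{A}}(\cS))^{*n}\subset\sT_A(\cS)$ follows from closure of $\sT_A(\cS)$ under extensions. For the reverse, I would consider the filtration $M\supset MI\supset\dots\supset MI^n=0$ of any $M\in\sT_A(\cS)$. The discussion preceding the lemma already shows $MI\in\Fac M$; iterating, $MI^i\in\Fac M\subset\sT_A(\cS)$ for all $i$, so each successive quotient $MI^i/MI^{i+1}$ lies in $\sT_A(\cS)$ by closure under factors and is annihilated by $I$. Part (1) then places it in $\sT_{\overline{A}}(\cS)$, exhibiting $M$ in $(\sT_{\overline{A}}(\cS))^{*n}$.

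Part (3) is the main obstacle. By Proposition \ref{comm_3} applied over $\overline{A}$, I can choose $N\in\sttilt\overline{A}$ with $\Fac_{\overline{A}}N=\sT_{\overline{A}}(\cS)$; the goal is to produce an $A$-module $\widetilde{N}$ with $\Fac_A\widetilde{N}=\sT_A(\cS)$, which by Proposition \ref{sttilt_ftors} will show $\sT_A(\cS)\in\ftors A$. I would lift a minimal projective presentation $P^{-1}\to P^0\to N\to 0$ in $\mod\overline{A}$ to a complex $\widetilde{P}^{-1}\to\widetilde{P}^0$ in $\proj A$ (using that idempotents and maps between projectives lift modulo $I\subset\rad A$) and set $\widetilde{N}:=\Coker(\widetilde{P}^{-1}\to\widetilde{P}^0)$, so that $\overline{\widetilde{N}}=N$. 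The inclusion $\widetilde{N}\in\sT_A(\cS)$ would be checked via (2) by analyzing the quotients $\widetilde{N}I^i/\widetilde{N}I^{i+1}$ through the lifted presentation, while for the surjectivity direction one takes $X\in\sT_A(\cS)$, uses (1) to get $X/XI\in\Fac_{\overline{A}}N$, and lifts a surjection $N^k\twoheadrightarrow X/XI$ via projectivity of $(\widetilde{P}^0)^k$ to a map descending to $\widetilde{N}^k\to X$ modulo an error landing in $XI$; iterating through the $I$-adic filtration and invoking $I^n=0$ forces surjectivity onto $X$. The technical crux is verifying this factorization and the iteration. Once (3) is established, (4) is immediate: any $\cS\in\fLsbrick\overline{A}$ lies in $\sbrick A$ by the earlier equality $\sbrick A=\sbrick\overline{A}$, and $\sT_A(\cS)\in\ftors A$ by (3), so $\cS\in\fLsbrick A$.
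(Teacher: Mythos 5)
Your proofs of (1), (2), and (4) match the paper's approach and are correct. For (3), however, you take a fundamentally different route from the paper, and there is a genuine gap. The paper's argument is short and abstract: $\sT_{\overline{A}}(\cS)$ is functorially finite in $\mod\overline{A}$ by hypothesis, hence in $\mod A$ since $\mod\overline{A}$ is functorially finite in $\mod A$; then by (2) together with \cite[Theorem 2.6]{SikS} (a $*$-product of functorially finite subcategories is functorially finite), $\sT_A(\cS)=(\sT_{\overline{A}}(\cS))^{*n}$ is functorially finite. No lifting of modules along $A\to\overline{A}$ is needed.

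Your approach instead lifts the projective presentation of $N$ to obtain $\widetilde N$ over $A$ and aims to show $\Fac_A\widetilde N=\sT_A(\cS)$. The inclusion $\Fac_A\widetilde N\subset\sT_A(\cS)$ can indeed be made to work via (2) and the filtration $\widetilde N\supset\widetilde N I\supset\cdots$. The reverse inclusion is where it breaks down: the composite $(\widetilde P^{-1})^k\to(\widetilde P^0)^k\to X$ lands in $XI$ but need not vanish, so the lift of $N^k\twoheadrightarrow X/XI$ does \emph{not} factor through $\widetilde N^k$. Killing the error yields only a quotient $X/Z$ (with $Z\subset XI$) in $\Fac_A\widetilde N$, and promoting this to $X\in\Fac_A\widetilde N$ requires extension-closure of $\Fac_A\widetilde N$, i.e.\ $\tau$-rigidity of $\widetilde N$ --- which is precisely the content of the Eisele--Janssens--Raedschelders theorem that this section is re-deriving, so the proposed ``iteration'' is not a routine verification but the hard direction of that theorem in disguise. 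Separately, Proposition \ref{sttilt_ftors} is not the right reference for the conclusion: it presupposes $\widetilde N\in\sttilt A$, which you have not established; even if $\sT_A(\cS)=\Fac_A\widetilde N$ were known, the correct citation would be \cite[Proposition 4.6]{AS1}. The Sikko--Smal\o\ shortcut is the key idea your proposal is missing.
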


\begin{proof}
(1)
This follows as 
$\sT_A(\cS) \cap \mod \overline{A} = \Filt_A(\Fac \cS) \cap \mod \overline{A}
= \Filt_{\overline{A}}(\Fac \cS) = \sT_{\overline{A}}(\cS)$.

(2)
Clearly, we get that $\sT_A(\cS) \supset (\sT_{\overline{A}}(\cS))^{*n}$.
We show that $\sT_A(\cS) \subset (\sT_{\overline{A}}(\cS))^{*n}$.
Let $L \in \sT_A(\cS)$.
In the sequence $0=LI^n \subset \cdots \subset LI \subset L$,
we have $LI^t/LI^{t+1} \in \sT_A(\cS) \cap \mod \overline{A}$ for all $t \ge 0$.
From (1), we get $LI^t/LI^{t+1} \in \sT_{\overline{A}}(\cS)$.
Thus, $L$ belongs to $(\sT_{\overline{A}}(\cS))^{*n}$.
Now, the inclusion $\sT_A(\cS) \subset (\sT_{\overline{A}}(\cS))^{*n}$ is proved;
hence, $\sT_A(\cS) = (\sT_{\overline{A}}(\cS))^{*n}$.

(3)
By assumption, 
$\sT_{\overline{A}}(\cS)$ is functorially finite in $\mod \overline{A}$.
Because $\mod \overline{A}$ is functorially finite in $\mod A$,
the subcategory
$\sT_{\overline{A}}(\cS)$ is functorially finite in $\mod A$.
By (2) and \cite[Theorem 2.6]{SikS}, 
$\sT_A(\cS) = (\sT_{\overline{A}}(\cS))^{*n}$ 
is functorially finite in $\mod A$.

(4)
It immediately follows from (3).
\end{proof}

\begin{Lem}\label{EJR_basic}
Let $M \in \sttilt A$, and set $B:=\End_A(M)$ and $B':=\End_{\overline{A}}(\overline{M})$.
Then we have the following properties.
\begin{itemize}
\item[(1)]
The functor $\overline{?}$ induces a $K$-algebra epimorphism 
$\overline{?}_{M,M} \colon B \to B'$,
and it is restricted to a surjection $\rad B \to \rad B'$.
\item[(2)]
As $A$-modules, $M/{\rad_B M} \cong \overline{M}/{\rad_{B'} \overline{M}}$.
\item[(3)]
The $\overline{A}$-modules $\overline{M}$ belongs to $\sttilt \overline{A}$.
\item[(4)]
We have $\fLsbrick A \subset \fLsbrick \overline{A}$.
\end{itemize}
\end{Lem}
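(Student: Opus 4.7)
The plan is to prove the four parts in order: (1) is a direct Ext-vanishing consequence of $\tau$-rigidity; (2) is a radical computation built on (1); (3) is the substantive content, proved via a pullback trick that gives $\tau$-rigidity; and (4) is a formal corollary of (2), (3), and Theorem \ref{sttilt_fsbrick}.

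For (1), I would apply $\Hom_A(M,-)$ to the exact sequence $0 \to MI \to M \to \overline{M} \to 0$. Since $MI \in \Fac M$ (as noted before Lemma \ref{EJR_tors}) and $M$ is $\tau$-rigid, $\Ext^1_A(M,MI)=0$, so $\End_A(M) \twoheadrightarrow \Hom_A(M,\overline{M}) = \End_{\overline{A}}(\overline{M})$ is surjective and multiplicative, giving the epimorphism $B \twoheadrightarrow B'$. Its restriction to a surjection $\rad B \to \rad B'$ is the standard fact for epimorphisms of finite-dimensional $K$-algebras: the image of $\rad B$ is nilpotent hence contained in $\rad B'$, and $B'$ modulo this image is a quotient of the semisimple $B/\rad B$, forcing the reverse inclusion. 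For (2), I would use (1) to write $\rad_{B'}\overline{M} = \sum_{f \in \rad B}\Im\overline{f} = (\rad_B M + MI)/MI$, so $\overline{M}/\rad_{B'}\overline{M} \cong M/(\rad_B M+MI)$, and reduce to proving $MI \subset \rad_B M$. For each $x \in X$, the map $(\cdot x) \colon M \to M$ lies in $Z(B)$ (by centrality of $x$) and is nilpotent (since $x \in \rad A$); a central nilpotent element generates a nilpotent two-sided ideal of $B$, hence lies in $\rad B$, so $Mx \subset \rad_B M$. By centrality of $X$ we have $I = XA$, yielding $MI = (MX)A \subset \rad_B M$.

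For (3), the core task is $\tau$-rigidity of $\overline{M}$ over $\overline{A}$; the remaining conditions on the pair $(\overline{M},\overline{P})$ are routine. Specifically, $|A| = |\overline{A}|$ since $I \subset \rad A$; the surjections $\End_A(M_i) \twoheadrightarrow \End_{\overline{A}}(\overline{M_i})$ and $\End_A(P_j) \twoheadrightarrow \End_{\overline{A}}(\overline{P_j})$ transport locality (so $\overline{M_i}$ and $\overline{P_j}$ remain indecomposable and nonzero by Nakayama), and a lifting argument inside each local endomorphism ring rules out the identification of non-isomorphic summands, giving $|\overline{M}| + |\overline{P}| = |M| + |P| = |\overline{A}|$; moreover $\Hom_{\overline{A}}(\overline{P},\overline{M}) = \Hom_A(P,\overline{M}) = 0$ follows by applying $\Hom_A(P,-)$ to $0 \to MI \to M \to \overline{M} \to 0$, using projectivity of $P$ and $\Hom_A(P,M) = 0$. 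For the key $\tau$-rigidity, take $N \in \Fac_{\overline{A}}\overline{M} = \Fac_A M \cap \mod\overline{A}$ and any $\overline{A}$-extension $\xi \colon 0 \to N \to E \to \overline{M} \to 0$; form the pullback along $M \twoheadrightarrow \overline{M}$ to obtain an $A$-extension $0 \to N \to E' \to M \to 0$, which splits since $N \in \Fac_A M$ and $M$ is $\tau$-rigid. Composing a section $M \to E'$ with the canonical map $E' \to E$ yields an $A$-linear $s \colon M \to E$ lifting $M \twoheadrightarrow \overline{M}$. Since $E$ is an $\overline{A}$-module, $s(M) \cdot I = 0$, so $s(MI) = 0$ and $s$ factors as a map $\overline{M} \to E$ splitting $\xi$ in $\mod\overline{A}$. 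Hence $\overline{M}$ is Ext-projective in $\Fac_{\overline{A}}\overline{M}$, completing (3). For (4), given $\cS \in \fLsbrick A$, Theorem \ref{sttilt_fsbrick} provides $M \in \sttilt A$ with $\cS = \ind(M/\rad_B M)$; by (3), $\overline{M} \in \sttilt\overline{A}$, and by (2), $\cS = \ind(\overline{M}/\rad_{B'}\overline{M}) \in \fLsbrick\overline{A}$ via Theorem \ref{sttilt_fsbrick} applied to $\overline{A}$.

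The main obstacle is the $\tau$-rigidity argument in (3). A direct long-exact-sequence attempt applying $\Hom_A(-,N)$ to $0 \to MI \to M \to \overline{M} \to 0$ delivers $\Ext^1_A(\overline{M},N)$ rather than $\Ext^1_{\overline{A}}(\overline{M},N)$, and worse, the connecting map $\Hom_A(M,N) \to \Hom_A(MI,N)$ is identically zero whenever $N$ is annihilated by $I$ (the underlying $\overline{A}$-linear map $MI/MI^2 \to \overline{M}$ vanishes), so this route produces no vanishing. The pullback trick circumvents the change-of-rings issue entirely: it produces the splitting $A$-linearly from $\Ext^1_A(M,\Fac M) = 0$, and then the single identity $EI = 0$ forces the splitting to descend to $\mod\overline{A}$.
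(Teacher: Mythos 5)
Your proposal is correct, and parts (1), (2) and (4) follow the paper's own route: the same $\Ext^1_A(M,MI)=0$ argument gives the surjection $B\to B'$ (you replace the paper's element-wise radical comparison by the standard fact that a surjective algebra map carries $\rad B$ onto $\rad B'$, which is fine), and your reduction of (2) to $MI\subset\rad_B M$ via the central nilpotent endomorphisms $(\cdot x)$ makes explicit a containment the paper uses tacitly. The genuine divergence is in (3): the paper outsources $\tau$-rigidity of $\overline{M}$ to \cite[Proposition 5.6 (a)]{DIRRT} and deduces basicness from the isomorphism $B/\rad B\cong B'/\rad B'$ of part (1), whereas you prove $\tau$-rigidity directly by the pullback trick --- split the pulled-back extension over $A$ using $\Ext^1_A(M,\Fac M)=0$, then use $EI=0$ to descend the section to $\mod\overline{A}$, obtaining $\Ext^1_{\overline{A}}(\overline{M},\Fac_{\overline{A}}\overline{M})=0$. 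This buys a self-contained argument at the cost of needing the \emph{converse} direction of the Auslander--Smal\o{} criterion (Ext-projectivity in $\Fac$ implies $\tau$-rigidity); the paper only quotes the forward direction, so you should cite the full equivalence, e.g.\ \cite[Proposition 5.8]{AS2} or \cite[Proposition 1.2 (a)]{AIR}. Your per-summand locality argument for $|\overline{M}|=|M|$ also works, though the paper's count of division-algebra factors of $B'/\rad B'$ is slightly slicker.
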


\begin{proof}
(1)
The map $\overline{?}_{M,M} \colon B \to B'$ is clearly a $K$-algebra homomorphism.
We prove its surjectivity.
Let $\alpha \in B'$.
Consider the exact sequence $0 \to MI \to M \xrightarrow{p} \overline{M} \to 0$.
Since $MI \in \Fac M$ and $M \in \sttilt A$ imply $\Ext_A^1(M,MI)=0$,
we get that $\Hom_A(M,M) \to \Hom_A(M,\overline{M})$ is surjective.
Thus, there exists $f \in \Hom_A(M,M)=B$ such that $pf=\alpha p$,
and we obtain that $\overline{f}=\alpha$.
Thus, we get a $K$-algebra epimorphism 
$\overline{?}_{M,M} \colon B \to B'$.

Next, let $f \in B$. 
For the remaining statement, 
it suffices to show that $\overline{f} \in \rad B'$ holds
if and only if $f \in \rad B$.
If $f \in \rad B$, then $f$ is nilpotent, and so is $\overline{f}$.
Thus, we have $\overline{f} \in \rad B'$.
On the other hand, if $f \notin \rad B$, then
there exists an indecomposable direct summand $M_1$ of $M$
such that the component $f_{11} \colon M_1 \to M_1$ of $f$ is isomorphic.
Clearly, $\overline{f_{11}} \colon \overline{M_1} \to \overline{M_1}$ is an isomorphism.
Here, $\overline{M_1} \ne 0$ holds, because $I \subset \rad A$.
Thus, we have $\overline{f_{11}} \notin \rad \End_{\overline{A}}(\overline{M_1})$,
and hence $\overline{f} \notin \rad \End_{\overline{A}}(\overline{M})=\rad B'$.
Thus $\overline{f} \in \rad B'$ holds
if and only if $f \in \rad B$.

(2)
By (1), the canonical epimorphism $M \to \overline{M}$ is restricted to an epimorphism
\begin{align*}
\rad_B M=\sum_{f \in \rad B} \Im f \to \sum_{g \in \rad B'} \Im g = 
\rad_{B'} \overline{M}
\end{align*}
of $A$-modules,
so we get $M/{\rad_B M}=\overline{M}/{\rad_{B'} \overline{M}}$ as $A$-modules.

(3)
\cite[Proposition 5.6 (a)]{DIRRT} implies that 
$\overline{M}$ is a $\tau$-rigid $\overline{A}$-module.
On the other hand, we obtain an isomorphism $B/{\rad B} \cong B'/{\rad B'}$
of semisimple $K$-algebras from (1).
By assumption, $M$ is basic, 
so $B/{\rad B}$ is a direct sum of $|M|$ division $K$-algebras.
Thus, $B'/{\rad B'}$ is also a direct sum of $|M|$ division $K$-algebras;
hence, $\overline{M}$ is basic and $|\overline{M}|=|M|$.
These observations implies that $\overline{M}$ is 
a basic support $\tau$-tilting $\overline{A}$-module.

(4)
Let $\cS \in \fLsbrick A$, then Theorem \ref{sttilt_fsbrick} implies that
there exists $M \in \sttilt A$ such that $\cS=\ind(M/{\rad_B M})$.
By (2), we obtain $\cS=\ind(\overline{M}/{\rad_{B'} \overline{M}})$, 
and by (3), we have $\overline{M} \in \sttilt \overline{A}$.
Applying Theorem \ref{sttilt_fsbrick} again,
$\cS=\ind(\overline{M}/{\rad_{B'} \overline{M}})$ belongs to 
$\fLsbrick \overline{A}$.
This implies the assertion.
\end{proof}

Now, we can finish the proof of the theorem.

\begin{proof}[Proof of Theorem \ref{EJR_th}]
(1) 
It follows from \ref{EJR_tors} (4) and \ref{EJR_basic} (4).

(2)
The map $\overline{?} \colon \sttilt A \to \sttilt \overline{A}$ 
is well-defined by Lemma \ref{EJR_basic} (3), 
and the diagram is commutative by Lemma \ref{EJR_basic} (2).
The vertical two maps in the diagram are 
bijective by Theorem \ref{sttilt_fsbrick},
so the map $\overline{?} \colon \sttilt A \to \sttilt \overline{A}$ is also bijective.
It is easy to see that this bijection preserves the mutations.
Thus, the exchange quivers of $\sttilt A$ and $\sttilt \overline{A}$ 
are isomorphic.

(3)
Let $M \to N$ be an arrow in the exchange quiver of $\sttilt A$ labeled with a brick $S$.
Lemma \ref{EJR_basic} (2) implies that
$S \in \ind (\overline{M}/{\rad_B \overline{M}})$. 
By the definition of labels,
$S$ is on the unique arrow $\overline{M} \to \overline{L}$
with $S \notin \Fac \overline{L}$ in the exchange quiver of $\sttilt \overline{A}$.
Thus, it suffices to show that $S \notin \Fac \overline{N}$.
By the definition of labels again, we have $S \notin \Fac N$.
Then $S \notin \Fac \overline{N}$ holds.
Thus, the arrow $\overline{M} \to \overline{N}$ is labeled with $S$.
\end{proof}

\section{Semibricks in Derived Categories}\label{Section_smc}

In this section, we consider 2-term simple-minded collections in 
$\sD^\rb(\mod A)$
to understand more properties of left finite or right finite semibricks.
We call a triangulated subcategory $\cC'$ of a triangulated category $\cC$ 
a \textit{thick subcategory} if $\cC'$ is closed under direct summands in $\cC$.

\subsection{Bijections II}

First, we give the definition of 2-term simple-minded collections in the derived category 
$\sD^\rb(\mod A)$.

\begin{Def}\label{smc_def}
A set $\cX$ of isoclasses of objects in $\sD^\rb(\mod A)$ is called 
a \textit{simple-minded collection} in $\sD^\rb(\mod A)$ if it satisfies the following conditions:
\begin{itemize}
\item for any $X \in \cX$, the endomorphism ring 
$\End_{\sD^\rb(\mod A)}(X)$ is a division $K$-algebra,
\item for any $X_1 \ne X_2 \in \cX$,
we have $\Hom_{\sD^\rb(\mod A)}(X_1,X_2)=0$,
\item for any $X_1, X_2 \in \cX$ and $n < 0$,
we have $\Hom_{\sD^\rb(\mod A)}(X_1,X_2[n])=0$,
\item the smallest thick subcategory of $\sD^\rb(\mod A)$ containing $\cX$ 
is $\sD^\rb(\mod A)$ itself.
\end{itemize}
A simple-minded collection $\cX$ in $\sD^\rb(\mod A)$ is said to be \textit{2-term}
if the $i$th cohomology $H^i(X)$ is $0$ for any $i \ne -1,0$ and any $X \in \cX$.
We write $\twosmc A$ for the set of 2-term simple-minded collections in $\sD^\rb(\mod A)$.
\end{Def}

\begin{Rem}\label{smc_rem}
We note the following basic properties holding for $\cX \in \twosmc A$.
\begin{itemize}
\item[(1)]
\cite[Corollary 5.5]{KY}
The cardinality $\# \cX$ is equal to $|A|$.
\item[(2)]
\cite[Remark 4.11]{BY}
Every $X \in \cX$ belongs to either $\mod A$ or $(\mod A)[1]$ up to isomorphisms in $\sD^\rb(\mod A)$.
\end{itemize}
\end{Rem}

The following is the main theorem of this section.
It means that every 2-term simple-minded collection is divided into
a left finite semibrick and a right finite semibrick,
and there are bijections between these three notions.

\begin{Th}\label{smc_fsbrick}
We have the following assertions.
\begin{itemize}
\item[(1)]
There are bijections 
\begin{align*}
? \cap \mod A \colon \twosmc A \to \fLsbrick A \quad \mathrm{and} \quad
?[-1] \cap \mod A \colon \twosmc A \to \fRsbrick A
\end{align*}
given by $\cX \mapsto \cX \cap \mod A$ and 
$\cX \mapsto \cX[-1] \cap \mod A$, respectively.
\item[(2)]
The diagram in Figure \ref{big_comm} below is commutative and all the maps are bijective.
In this diagram,
$\cT \in \ftors A$ corresponds to $\cF \in \ftorf A$ 
if and only if $(\cT,\cF)$ is a torsion pair in $\mod A$.
\item[(3)]
If $\cX \in \twosmc A$ corresponds to $\cS \in \fLsbrick A$ and $\cS' \in \fRsbrick A$,
then we have $\cX=\cS \cup \cS'[1]$ and a torsion pair $(\sT(\cS),\sF(\cS'))$ in $\mod A$.
\end{itemize}
\begin{figure}[h]
\begin{align*}
\begin{xy}
( 0, 36) *+{\stitilt A}   ="06",
(50, 36) *+{\ftorf A}     ="16",
(90, 36) *+{\fRsbrick A}  ="36",
( 0, 24) *+{\twocosilt A} ="04",
( 0, 12) *+{\twosilt A}   ="02",
(50, 18) *+{\inttstr A}   ="13",
(90, 18) *+{\twosmc A}    ="33",
( 0,  0) *+{\sttilt A}    ="00",
(50,  0) *+{\ftors A}     ="10",
(90,  0) *+{\fLsbrick A}  ="30",
\ar ^{\Sub}                            "06";"16"
\ar _{\sF}                             "36";"16"
\ar ^{H^{-1}}                          "04";"06"
\ar _{\textup{(heart)}[-1]\cap \mod A}"13";"16"
\ar _{?[-1] \cap \mod A}           "33";"36"
\ar ^{\nu}                             "02";"04"
\ar ^{I \mapsto ({^\perp}I[{<}0],{^\perp}I[{>}0])} "04";"13"
\ar _{P \mapsto (P[{<}0]^\perp,P[{>}0]^\perp)} "02";"13"
\ar ^{\textup{simples of heart}}       "13";"33"
\ar ^{\Fac}                            "00";"10"
\ar _{\sT}                             "30";"10"
\ar _{H^0}                             "02";"00"
\ar ^{\textup{(heart)}\cap \mod A}    "13";"10"
\ar ^{? \cap \mod A}               "33";"30"
\ar@{-} "00"; ( 0,-8)
\ar@{-}_{M \mapsto \ind(M/{\rad_B M})} ( 0,-8);(90,-8)
\ar (90,-8); "30"
\ar@{-} "06"; ( 0,44)
\ar@{-}^{M \mapsto \ind(\soc_B M)} ( 0,44);(90,44)
\ar (90,44); "36"
\end{xy}\\
(\text{$B:=\End_A(M)$ in each case})
\end{align*}
\caption{The commutative diagram}\label{big_comm}
\end{figure}
\end{Th}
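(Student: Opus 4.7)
My plan is to exploit the decomposition in Remark \ref{smc_rem}(2): every $\cX \in \twosmc A$ splits disjointly as $\cX = \cS \sqcup \cS'[1]$ with $\cS := \cX \cap \mod A$ and $\cS' := \cX[-1] \cap \mod A$. First I would verify that $\cS$ and $\cS'$ are semibricks in $\mod A$ and that $\Hom_A(\cS, \cS') = 0$. For any $S, T \in \cS$ the identifications $\End_{\sD^\rb(\mod A)}(S) = \End_A(S)$ and $\Hom_{\sD^\rb(\mod A)}(S, T) = \Hom_A(S, T)$ turn the Schur-type conditions of Definition \ref{smc_def} directly into the semibrick axioms; the same works for $\cS'$. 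The cross vanishing is the negative-degree axiom at $n = -1$ applied to $(S, S'[1])$, giving $\Hom_A(S, S') = \Hom_{\sD^\rb(\mod A)}(S, (S'[1])[-1]) = 0$.

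Next I would invoke the Koenig--Yang bijection $\twosmc A \to \inttstr A$ to obtain the intermediate t-structure whose heart $\cH$ has $\cX$ as its set of simples; the associated $\cT := \cH \cap \mod A$ then lies in $\ftors A$, $\cF := \cH[-1] \cap \mod A$ lies in $\ftorf A$, and $(\cT, \cF)$ is a torsion pair in $\mod A$, all of this being part of the standard package from \cite{KY, AIR}. The main technical step is to prove $\sT(\cS) = \cT$ and dually $\sF(\cS') = \cF$. The inclusion $\sT(\cS) \subset \cT$ is immediate from $\cS \subset \cT$. For the reverse, fix $N \in \cT$ and choose a composition series $0 = N^0 \subset N^1 \subset \cdots \subset N^l = N$ of $N$ in the length category $\cH$ with simple factors $X_i := N^i / N^{i-1} \in \cX$. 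I would take $H^0$ of each triangle $N^{i-1} \to N^i \to X_i \to N^{i-1}[1]$ and set $Y^i := H^0(N^i)$. Two cases arise: when $X_i \in \cS$ (so $H^{-1}(X_i) = 0$ and $H^0(X_i) = X_i$), the long exact sequence gives a short exact sequence $0 \to Y^{i-1} \to Y^i \to X_i \to 0$ in $\mod A$; when $X_i = S'[1]$ with $S' \in \cS'$ (so $H^{-1}(X_i) = S'$ and $H^0(X_i) = 0$), it gives $Y^i$ as a module-theoretic quotient of $Y^{i-1}$. Starting from $Y^0 = 0 \in \sT(\cS)$ and using that $\sT(\cS)$ is closed under both extensions by $\cS$-elements and quotients, induction yields $N = Y^l \in \sT(\cS)$. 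The equality $\sF(\cS') = \cF$ follows by the dual analysis, applied to a composition series of $F[1]$ inside $\cH$ and reading off $H^{-1}$. The main obstacle is arranging the cohomology so that the composition series in $\cH$ produces an inductive module-theoretic construction; working with $H^0$ rather than trying to match $\cH$-subobjects with submodules is the trick that sidesteps the issue that a mono $N^1 \hookrightarrow N$ in $\cH$ need not be a monomorphism of modules.

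With $\sT(\cS) = \cT$ and $\sF(\cS') = \cF$ in hand, part (3) is immediate: $\cX = \cS \sqcup \cS'[1]$ and $(\sT(\cS), \sF(\cS')) = (\cT, \cF)$ is a torsion pair. For part (1), Proposition \ref{comm_3} provides the bijection $\sT \colon \fLsbrick A \to \ftors A$, while the chain $\twosmc A \to \inttstr A \to \ftors A$ from \cite{KY, AIR} supplies a bijection $\cX \mapsto \cH \cap \mod A$; the equality just proved identifies $? \cap \mod A \colon \twosmc A \to \fLsbrick A$ as the composite of the latter bijection with $\sT^{-1}$, forcing it to be bijective, and the dual argument takes care of $?[-1] \cap \mod A$. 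Finally, for part (2), the lower half of Figure \ref{big_comm}, involving the rows $\sttilt$--$\ftors$--$\fLsbrick$ together with the columns through $\twosilt A$ and $\inttstr A$, is Proposition \ref{comm_4} glued to the known bijections from \cite{AIR, BY, KY, MS}, and the upper half is its dual; the only genuinely new commutativities are those involving the edges $M \mapsto \ind(M/\rad_B M)$, $M \mapsto \ind(\soc_B M)$, $? \cap \mod A$ and $?[-1] \cap \mod A$, and these reduce to routine diagram chases once the torsion-class identifications above are established.
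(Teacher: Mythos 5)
Your proposal is correct and follows essentially the same strategy as the paper: establish $\cH \cap \mod A = \sT(\cS)$ by an induction over a composition series in the heart (taking $H^0$ of the connecting triangles), and then assemble the bijections via Propositions \ref{comm_3}, \ref{silt_smc}, \ref{silt_ftors} and their duals. The only small variation is that the paper's Lemma \ref{heart_lem} peels off a simple \emph{subobject} $Y$ and recurses on the cokernel $Z$, leading to the dichotomy ``extension of $H^0(Z)$ by a quotient of $Y$'' versus ``$H^0(X)\cong H^0(Z)$'', whereas you fix a composition series and add simple \emph{quotients}, giving the marginally cleaner dichotomy ``extension by an element of $\cS$'' versus ``module-theoretic quotient''; both are valid.
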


It follows that the correspondence between $\sttilt A$ and $\stitilt A$
in Theorem \ref{smc_fsbrick} is the same one given before
Proposition \ref{sttilt_stitilt}.

To prove this theorem, we use the notions and fundamental properties
of 2-term silting objects in the homotopy category $\sK^\rb(\proj A)$ and 
intermediate t-structures in $\sD^\rb(\mod A)$.
We refer to \cite{BY,KY} here.

An object $P \in \sK^\rb(\proj A)$ is called a \textit{silting object} 
in $\sK^\rb(\proj A)$
if the following conditions are satisfied:
\begin{itemize}
\item for any $n>0$, we have $\Hom_{\sK^\rb(\proj A)}(P,P[n])=0$,
\item the smallest thick subcategory of $\sK^\rb(\proj A)$ containing $P$
is $\sK^\rb(\proj A)$ itself.
\end{itemize}
A silting object $P$ in $\sK^\rb(\proj A)$ 
is said to be \textit{2-term} if it is isomorphic to some complex
$(P^{-1} \to P^0)$ in $\sK^\rb(\proj A)$ 
with its components zero except for $-1$st and 0th ones.
We write $\twosilt A$ for the set of isoclasses of basic
2-term silting objects in $\sK^\rb(\proj A)$.

Dually, an object $I \in \sK^\rb(\inj A)$ is called a \textit{2-term cosilting object} 
in $\sK^\rb(\inj A)$ if the following conditions are satisfied:
\begin{itemize}
\item for any $n>0$, we have $\Hom_{\sK^\rb(\inj A)}(I,I[n])=0$,
\item the smallest thick subcategory of $\sK^\rb(\inj A)$ containing $I$
is $\sK^\rb(\inj A)$ itself,
\item $I$ is isomorphic to some complex $(I^{-1} \to I^0)$ in $\sK^\rb(\inj A)$.
\end{itemize}
We write $\twocosilt A$ for the set of isoclasses of basic ones.
The Nakayama functor $\nu \colon \sK^\rb(\proj A) \to \sK^\rb(\inj A)$
induces a bijection from $\twosilt A$ to $\twocosilt A$.
There is a formula called the Nakayama duality:
$D \Hom_{\sD^\rb(\mod A)}(P,X) \cong \Hom_{\sD^\rb(\mod A)}(X,\nu P)$
for $P \in \sK^\rb(\proj A)$ and $X \in \sD^\rb(\mod A)$.

We next recall the notion of intermediate t-structures.
The concept of t-structures was introduced 
by Be\u{\i}linson--Bernstein--Deligne \cite{BBD}.
First, let $\cD$ be a triangulated category and 
$(\cD^{\le 0}, \cD^{\ge 0})$ be a pair of additive full subcategories of $\cD$.
For simplicity, we define 
$\cD^{\le n}:=\cD^{\le 0} [-n]$ and $\cD^{\ge n}:=\cD^{\ge 0} [-n]$ for $n \in \Z$.
The pair $(\cD^{\le 0}, \cD^{\ge 0})$ is called a \textit{t-structure} in $\cD$
if it satisfies the following conditions:
\begin{itemize}
\item we have $\cD^{\le -1} \subset \cD^{\le 0}$ and $\cD^{\ge 1} \subset \cD^{\ge 0}$, 
and $\Hom_{\cD}(\cD^{\le 0},\cD^{\ge 1})=0$,
\item for every $X \in \cD$, there exists a triangle
$Y \to X \to Z \to Y[1]$ in $\cD$ with $Y \in \cD^{\le 0}$ and $Z \in \cD^{\ge 1}$. 
\end{itemize}
In this case, 
it is easy to see that each of $\cD^{\le 0}$ and $\cD^{\ge 0}$ determines the other.
For a t-structure $(\cD^{\le 0}, \cD^{\ge 0})$ in $\cD$, 
the intersection $\cD^{\le 0} \cap \cD^{\ge 0}$ is called 
the \textit{heart} of the t-structure.
It is well-known that the heart is an abelian category \cite[Th\'{e}or\`{e}me 1.3.6]{BBD}.

If $\cD=\sD^\rb(\mod A)$, there is a canonical t-structure 
$(\cD_{\mathrm{std}}^{\le 0},\cD_{\mathrm{std}}^{\ge 0})$
in $\sD^\rb(\mod A)$ defined with cohomologies, namely,
\begin{align*}
\cD_{\mathrm{std}}^{\le 0} &:= \{ X \in \sD^\rb(\mod A) \mid H^i(X)=0 \ (i>0) \}, \\
\cD_{\mathrm{std}}^{\ge 0} &:= \{ X \in \sD^\rb(\mod A) \mid H^i(X)=0 \ (i<0) \},
\end{align*}
and it is called the \textit{standard t-structure} in $\sD^\rb(\mod A)$.
We say that a t-structure $(\cD^{\le 0}, \cD^{\ge 0})$ in $\sD^\rb(\mod A)$
is \textit{intermediate} with respect to the standard t-structure 
(or simply \textit{intermediate}) if
$\cD_{\mathrm{std}}^{\le -1} \subset \cD^{\le 0} \subset \cD_{\mathrm{std}}^{\le 0}$,
or equivalently,
$\cD_{\mathrm{std}}^{\ge -1} \supset \cD^{\ge 0} \supset \cD_{\mathrm{std}}^{\ge 0}$
holds.
We can see that every intermediate t-structure 
$(\cD^{\le 0}, \cD^{\ge 0})$ in $\sD^\rb(\mod A)$ 
is \textit{bounded}, that is,
\begin{align*}
\bigcup_{n \in \Z} \cD^{\le n}=\sD^\rb(\mod A)=\bigcup_{n \in \Z} \cD^{\ge n}
\end{align*}
hold.
The next lemma follows from the definition of intermediate t-structures.

\begin{Lem}\label{heart_tors_pair}
Each intermediate t-structure $(\cD^{\le 0},\cD^{\ge 0})$ in $\sD^\rb(\mod A)$ 
with its heart $\cH$
gives a torsion pair $(\cH \cap \mod A, \cH[-1] \cap \mod A)$ in $\mod A$.
\end{Lem}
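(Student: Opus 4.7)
The plan is to verify the two defining conditions of a torsion pair for $\cT := \cH \cap \mod A$ and $\cF := \cH[-1] \cap \mod A$: namely, that $\Hom_A(\cT, \cF) = 0$ and that every $M \in \mod A$ admits a short exact sequence $0 \to T \to M \to F \to 0$ with $T \in \cT$ and $F \in \cF$.

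The Hom-vanishing is immediate from the t-structure axiom $\Hom_{\cD}(\cD^{\le 0}, \cD^{\ge 1}) = 0$. Indeed, if $T \in \cT \subset \cH \subset \cD^{\le 0}$ and $F \in \cF$, then $F[1] \in \cH \subset \cD^{\ge 0}$, equivalently $F \in \cD^{\ge 1}$; hence $\Hom_A(T, F) = \Hom_{\sD^\rb(\mod A)}(T, F) = 0$.

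For the second condition, I would apply the truncation triangle of the t-structure to $M$:
\[
Y \to M \to Z \to Y[1],
\]
with $Y \in \cD^{\le 0}$ and $Z \in \cD^{\ge 1}$. The intermediate condition gives $\cD^{\le 0} \subset \cD_{\mathrm{std}}^{\le 0}$ and $\cD^{\ge 1} = \cD^{\ge 0}[-1] \subset \cD_{\mathrm{std}}^{\ge -1}[-1] = \cD_{\mathrm{std}}^{\ge 0}$. Since $M$ is concentrated in degree $0$, chasing the long exact cohomology sequence of this triangle forces $H^i(Y) = H^i(Z) = 0$ for every $i \ne 0$. Hence $Y$ and $Z$ are isomorphic in $\sD^\rb(\mod A)$ to $A$-modules, and the truncation triangle reduces to a short exact sequence $0 \to Y \to M \to Z \to 0$ in $\mod A$.

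It remains to verify the memberships $Y \in \cT$ and $Z \in \cF$. Since $Y \in \mod A \subset \cD_{\mathrm{std}}^{\ge 0} \subset \cD^{\ge 0}$ and $Y \in \cD^{\le 0}$, we obtain $Y \in \cH \cap \mod A = \cT$. For $Z$, the shift $Z[1]$ is concentrated in degree $-1$, so $Z[1] \in \cD_{\mathrm{std}}^{\le -1} \subset \cD^{\le 0}$; moreover $Z \in \cD^{\ge 1}$ directly gives $Z[1] \in \cD^{\ge 0}$. Hence $Z[1] \in \cH$, equivalently $Z \in \cH[-1] \cap \mod A = \cF$. No essential obstacle is expected here; the only care needed is in tracking the shift conventions associated with the intermediate t-structure.
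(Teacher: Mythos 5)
Your proof is correct, and it is exactly the standard argument the paper leaves implicit (the paper states only that the lemma ``follows from the definition of intermediate t-structures'' and gives no proof). Both the Hom-vanishing and the cohomology chase showing that the truncation triangle of $M$ degenerates to a short exact sequence in $\mod A$, with the two terms landing in $\cH\cap\mod A$ and $\cH[-1]\cap\mod A$ respectively, are carried out correctly with the paper's shift conventions.
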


In this paper, we only consider intermediate t-structures in $\sD^\rb(\mod A)$ 
such that its heart is an abelian category with length.
The set of such intermediate t-structures is denoted by $\inttstr A$.

The following is an important result by Br\"{u}stle--Yang \cite{BY}
on these concepts, and 
it is a ``2-term'' 
restrictions of Koenig--Yang bijections \cite[Theorem 6.1]{KY}.

\begin{Prop}\label{silt_smc}\cite[Corollary 4.3]{BY}
We have the following bijections.
\begin{itemize}
\item[(1)] There is a bijection $\twosilt A \to \inttstr A$ given by
$P \mapsto (P[{<}0]^\perp,P[{>}0]^\perp)$, where 
\begin{align*}
P[{<}0]^\perp&= \{X \in \sD^\rb(\mod A) \mid \Hom_{\sD^\rb(\mod A)}(P[n],X)=0 \ (n<0) \}\\
&= \{X \in \sD^\rb(\mod A) \mid \Hom_{\sD^\rb(\mod A)}(X,\nu P[n])=0 \ (n<0) \}
= {^\perp}\nu P[{<}0], \\
P[{>}0]^\perp &= \{X \in \sD^\rb(\mod A) \mid \Hom_{\sD^\rb(\mod A)}(P[n],X)=0 \ (n>0) \}\\
&= \{X \in \sD^\rb(\mod A) \mid \Hom_{\sD^\rb(\mod A)}(X,\nu P[n])=0 \ (n>0) \}
= {^\perp}\nu P[{>}0].
\end{align*}
\item[(2)] There is a bijection $\inttstr A \to \twosmc A$ defined as follows:
each $(\cD^{\le 0}, \cD^{\ge 0}) \in \inttstr A$ is sent to the set of isoclasses of 
simple objects in the heart $\cH=\cD^{\le 0} \cap \cD^{\ge 0}$.
\end{itemize}
\end{Prop}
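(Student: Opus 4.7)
The plan is to reconstruct Brüstle--Yang's argument by building the two bijections in stages, leveraging the general silting/t-structure correspondence of Koenig--Yang.

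For (1), fix $P \in \twosilt A$ and set $\cD^{\le 0} := P[{<}0]^\perp$ and $\cD^{\ge 0} := P[{>}0]^\perp$. First I would verify that this pair is a bounded t-structure on $\sD^\rb(\mod A)$: the shift inclusions $\cD^{\le -1} \subset \cD^{\le 0}$ and $\cD^{\ge 1} \subset \cD^{\ge 0}$ are automatic from the definitions, the Hom-vanishing $\Hom_{\sD^\rb(\mod A)}(\cD^{\le 0}, \cD^{\ge 1}) = 0$ reduces to $\Hom(P[n], \cD^{\ge 1}) = 0$ for $n \le 0$ which is built in, and the truncation triangles for arbitrary $X \in \sD^\rb(\mod A)$ are built inductively from right $\add(P[n])$-approximations using that $P$ generates $\sK^\rb(\proj A)$ as a thick subcategory. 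The alternative descriptions in terms of ${^\perp}\nu P[{<}0]$ and ${^\perp}\nu P[{>}0]$ follow immediately from the Nakayama duality $D\Hom_{\sD^\rb(\mod A)}(P, X) \cong \Hom_{\sD^\rb(\mod A)}(X, \nu P)$.

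Next I would show that $P$ being 2-term is equivalent to the t-structure being intermediate with length heart. If $P$ is concentrated in degrees $-1$ and $0$, then for $X \in \cD_{\mathrm{std}}^{\le -1}$ and $m > 0$ the complex $X[m]$ has cohomology in degrees $\le -2$, and since $P$ is a bounded complex of projectives supported in degrees $\{-1, 0\}$, the space $\Hom_{\sD^\rb(\mod A)}(P, X[m])$ vanishes, giving $\cD_{\mathrm{std}}^{\le -1} \subset \cD^{\le 0}$. The opposite inclusion $\cD^{\le 0} \subset \cD_{\mathrm{std}}^{\le 0}$ is checked symmetrically using that $A$ lies in the thick closure of $P$ with a two-step filtration, so $H^i(X) = \Hom_{\sD^\rb(\mod A)}(A, X[i])$ may be tested against $P$. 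The length-heart condition then follows because indecomposable summands of $P$ yield finitely many isoclasses of simples in the heart via $\Hom_{\sD^\rb(\mod A)}(P, -)$. Conversely, any intermediate t-structure with length heart arises from a 2-term silting object by running the standard Koenig--Yang recipe: take minimal projective resolutions in the heart of the simples, assemble them into a complex $P$, and observe that intermediateness forces the amplitude to be at most $2$.

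For (2), given $(\cD^{\le 0}, \cD^{\ge 0}) \in \inttstr A$ with heart $\cH$, define $\cX$ to be the isoclasses of simple objects of $\cH$. Schur's lemma in $\cH$ gives that each $X \in \cX$ is a brick and that distinct elements are Hom-orthogonal. For $n < 0$ one has $X_2[n] \in \cH[n] \subset \cD^{\ge -n} \subset \cD^{\ge 1}$ while $X_1 \in \cD^{\le 0}$, so $\Hom_{\sD^\rb(\mod A)}(X_1, X_2[n]) = 0$. Thick generation follows since $\cH$ is length, so every object of $\cH$ admits a finite filtration by simples and hence $\cH \subset \mathrm{thick}(\cX)$; combined with the fact that the intermediate (hence bounded) t-structure has $\mathrm{thick}(\cH) = \sD^\rb(\mod A)$, this gives $\mathrm{thick}(\cX) = \sD^\rb(\mod A)$. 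The 2-term property drops out of intermediateness: $\cH \subset \cD_{\mathrm{std}}^{\ge -1} \cap \cD_{\mathrm{std}}^{\le 0}$ forces $H^i(X) = 0$ for $i \ne -1, 0$. The inverse map (recovering a heart from a 2-term simple-minded collection) is the classical construction of the heart as the extension-closure of $\cX$, checked against the t-structure axioms.

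The technically hardest step will be the construction of the truncation triangles in Part (1). This genuinely uses the silting condition $\Hom_{\sK^\rb(\proj A)}(P, P[n]) = 0$ for $n > 0$ to propagate approximations through an induction on the cohomological amplitude of $X$; it is here that the full strength of silting theory enters, not merely the formal Hom-vanishing at a single degree. Once this is in hand, the remaining identifications are formal consequences of the 2-term hypothesis restricting the amplitudes appropriately.
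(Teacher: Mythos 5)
You should first be aware that the paper contains no proof of this statement at all: Proposition \ref{silt_smc} is imported verbatim from \cite[Corollary 4.3]{BY} (which in turn rests on the Koenig--Yang correspondences of \cite{KY}), so there is no in-paper argument to measure your reconstruction against. Judged on its own, your sketch follows the standard silting-theoretic strategy and much of it is sound: the shift inclusions, the Nakayama-duality reformulation via ${^\perp}\nu P$, the verification in (2) that the simples of a length heart of an intermediate t-structure form a 2-term simple-minded collection, and the two-term test of $H^i(X)=\Hom_{\sD^\rb(\mod A)}(A,X[i])$ against a triangle $A\to P'\to P''\to A[1]$ with $P',P''\in\add P$ are all along the right lines.

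There are, however, genuine soft spots. First, your reduction of $\Hom_{\sD^\rb(\mod A)}(\cD^{\le 0},\cD^{\ge 1})=0$ to the ``built in'' vanishing $\Hom(P[n],\cD^{\ge 1})=0$ silently assumes that $\cD^{\le 0}$ is the extension closure of $\{P[n]\mid n\ge 0\}$, which is itself a consequence of the truncation-triangle construction you defer to the end; as stated the argument is circular, and in the 2-term case the clean way out is to identify the pair with the Happel--Reiten--Smal\o{} tilt of the standard t-structure at the torsion pair $(\Fac H^0(P),\,(H^0(P))^{\perp})$, which yields the axioms, the intermediacy, and the heart simultaneously. Second, your surjectivity step in (1) is wrong as described: given an intermediate t-structure with length heart, the 2-term silting object is \emph{not} obtained by assembling minimal projective resolutions of the simples of the heart --- the simples give the simple-minded collection, not the silting object, and the passage from a simple-minded collection back to a silting object in \cite{KY} is a nontrivial approximation construction. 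The intended route in the 2-term setting goes through the module category: intermediacy gives the torsion pair $(\cH\cap\mod A,\cH[-1]\cap\mod A)$, the length condition gives functorial finiteness, and the bijection of \cite{AIR} produces a support $\tau$-tilting module, hence a 2-term silting object, after which one checks the two assignments are mutually inverse. Third, a minor but real gap: having only finitely many isoclasses of simples does not make the heart a length category; for that you need the equivalence $\Hom_{\sD^\rb(\mod A)}(P,?)\colon\cH\to\mod\End_{\sD^\rb(\mod A)}(P)$ as in \cite{IY}.
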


Note that the heart of the corresponding intermediate t-structure
$(P[{<}0]^\perp,P[{>}0]^\perp)$ for $P \in \twosilt A$ is
\begin{align*}
P[{\ne}0]^\perp &= \{X \in \sD^\rb(\mod A) 
\mid \Hom_{\sD^\rb(\mod A)}(P[n],X)=0 \ (n \ne 0) \}.
\end{align*}
It will play an important role in the proof of Theorem \ref{smc_fsbrick}.

Proposition \ref{silt_smc} gives bijections on notions in 
$\sK^\rb(\proj A)$ and $\sD^\rb(\mod A)$,
but we also would like to know their relationship with notions in the module category.
Here, we cite \cite{AIR,BY}.

\begin{Prop}\label{silt_sttilt}\cite[Theorem 3.2]{AIR}
The map $\twosilt A \ni P \mapsto H^0(P) \in \sttilt A$ is bijective.
The inverse is given as follows:
each $(M,P) \in \sttilt A$ with a minimal projective resolution 
$P^{-1} \to P^0 \to M \to 0$ of $M$
is sent to $(P^{-1} \oplus P \to P^0) \in \twosilt A$.
\end{Prop}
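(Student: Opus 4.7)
The plan is to set up explicit mutually inverse maps and to verify the required properties through a dictionary that turns the silting vanishing $\Hom_{\sK^\rb(\proj A)}(P,P[1])=0$ into the $\tau$-rigidity vanishing $\Hom_A(M,\tau M) = 0$. Given a basic $P \in \twosilt A$, I would first decompose it canonically as $P = P_1 \oplus P_2[1]$, where $P_2 \in \proj A$ is basic and $P_1 = (P_1^{-1} \xrightarrow{f} P_1^0)$ is chosen so that no indecomposable summand of $P_1$ is isomorphic to a stalk $Q[1]$ with $Q \in \proj A$. Setting $M := H^0(P) = \Coker f$, the target is to prove that $(M,P_2)$ is a support $\tau$-tilting pair.

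To show well-definedness of $H^0 \colon \twosilt A \to \sttilt A$, I would verify the three defining conditions in turn. The equality $\Hom_A(P_2,M) = 0$ is an immediate computation:
\begin{align*}
\Hom_{\sK^\rb(\proj A)}(P_2[1],P_1[1]) \cong \Hom_{\sK^\rb(\proj A)}(P_2,P_1) \cong \Hom_A(P_2,M),
\end{align*}
where the last step uses that $P_2$ is concentrated in degree $0$ and $H^0(P_1) = M$; this vanishes by the silting hypothesis. The $\tau$-rigidity $\Hom_A(M,\tau M) = 0$ is the heart of the proof, and I would establish it via the chain of natural isomorphisms
\begin{align*}
\Hom_{\sK^\rb(\proj A)}(P_1, P_1[1]) \cong \Hom_{\sD^\rb(\mod A)}(P_1, M[1]) \cong D\Hom_A(M, \tau M),
\end{align*}
obtained as follows: the first isomorphism uses the triangle $(\Ker f)[1] \to P_1 \to M$ together with the vanishing of $\Hom_{\sD^\rb(\mod A)}(P_1, (\Ker f)[j])$ for $j \geq 2$, which is a direct consequence of the projectivity of the components of $P_1$; the second uses Nakayama duality $D\Hom_{\sD^\rb(\mod A)}(P_1, M[1]) \cong \Hom_{\sD^\rb(\mod A)}(M,\nu P_1[-1])$, the identification $H^{-1}(\nu P_1) = \tau M$ coming from the definition of $\tau$ via a projective presentation, and the canonical triangle $\tau M \to \nu P_1[-1] \to \nu M[-1] \to \tau M[1]$ combined with $\Ext^{<0}_A(M,\nu M) = 0$. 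The count $|M|+|P_2|=|A|$ then follows from the standard fact that the classes of the indecomposable summands of a basic $2$-term silting object form a $\Z$-basis of $K_0(\sK^\rb(\proj A)) \cong \Z^{|A|}$, together with a matching of indecomposable summands of $P_1$ with those of $M$ provided by the canonical decomposition.

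For the inverse, given $(M,P) \in \sttilt A$ with minimal projective presentation $P^{-1} \xrightarrow{f} P^0 \to M \to 0$, I would set $\Psi(M,P) := (P^{-1} \oplus P \xrightarrow{[f\ 0]} P^0)$ and verify the $2$-term silting properties by reversing the dictionary above: the vanishing $\Hom(\Psi(M,P),\Psi(M,P)[1]) = 0$ decomposes into blocks controlled by $\Hom_A(M,\tau M) = 0$, $\Hom_A(P,M) = 0$, and the projectivity of $P$, while the thick-subcategory generation of $\sK^\rb(\proj A)$ follows from the count $|\Psi(M,P)| = |M| + |P| = |A|$ combined with linear independence of the classes in $K_0$. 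The identities $H^0 \Psi(M,P) = M$ and $\Psi H^0(P) = P$ then follow from the uniqueness of minimal projective presentations and the canonical decomposition established at the outset.

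The main obstacle I expect is the careful handling of the difference between minimal and non-minimal $2$-term presentations in the isomorphism chain $\Hom(P_1,P_1[1]) \cong D\Hom_A(M,\tau M)$: the Nakayama-functor computation of $\tau M$ requires either a minimal presentation or a verification that trivial summands $Q \xrightarrow{1} Q$ contribute nothing to either side, and this bookkeeping must be made precise so that the $\tau$-rigidity extraction is unaffected by non-minimality. A secondary, more standard, point is the fact that a basic $2$-term silting object has exactly $|A|$ indecomposable summands; this requires invoking the silting generation of $\sK^\rb(\proj A)$ together with the linear independence of the classes in $K_0$.
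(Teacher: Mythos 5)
The paper offers no proof of this proposition: it is quoted directly from \cite[Theorem 3.2]{AIR}, so there is nothing internal to compare against. Your outline is, in substance, a faithful reconstruction of the original Adachi--Iyama--Reiten argument: the splitting $P=P_1\oplus P_2[1]$, the identification $\Hom_{\sK^\rb(\proj A)}(P_2[1],P_1[1])\cong\Hom_A(P_2,M)$, and the chain $\Hom(P_1,P_1[1])\cong\Hom(P_1,M[1])\cong D\Hom_A(M,\tau M)$ obtained from the truncation triangle, the Nakayama functor and $H^{-1}(\nu P_1)=\tau M$ are precisely the content of their Propositions 3.3, 3.6 and Lemma 3.4, and you correctly isolate the minimality bookkeeping (contractible summands and stalk summands $Q[1]$) as the point that must be handled for the $\tau$-rigidity dictionary and for the matching $|P_1|=|M|$. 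The one step your sketch genuinely understates is the thick-generation condition for $\Psi(M,P)$: linear independence of $|A|$ classes in $K_0(\sK^\rb(\proj A))$ does not by itself imply that they generate $\sK^\rb(\proj A)$ as a thick subcategory. In \cite{AIR} the statement that a $2$-term presilting object with $|A|$ indecomposable summands is silting is deduced from a Bongartz-type completion argument, and that input (or an equivalent maximality argument) is needed; the $K_0$ count alone does not close this step.
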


\begin{Prop}\label{silt_ftors}
\cite[Theorem 4.9]{BY}
We have a bijection 
$(\textup{heart}) \cap \mod A \colon \inttstr A \to \ftors A$ defined as
$(\cD^{\le 0},\cD^{\ge 0}) \mapsto \cH \cap \mod A$,
where $\cH=\cD^{\le 0} \cap \cD^{\ge 0}$ is the heart.
Moreover, this bijection joins
the following commutative diagram of bijections:
\begin{align*}
\begin{xy}
( 0,  8) *+{\twosilt A}   ="01",
(55,  8) *+{\inttstr A}   ="11",
( 0, -8) *+{\sttilt A}    ="00",
(55, -8) *+{\ftors A}     ="10",
\ar^{P \mapsto (P[{<}0]^\perp,P[{>}0]^\perp)} "01";"11"
\ar^{\Fac}                                    "00";"10"
\ar^{H^0}                                     "01";"00"
\ar^{(\textup{heart}) \cap \mod A}            "11";"10"
\end{xy}.
\end{align*}
\end{Prop}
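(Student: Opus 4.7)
The plan is to exploit that the other three arrows in the square are already known to be bijections (Propositions \ref{silt_smc}, \ref{silt_sttilt}, and \ref{sttilt_ftors}): once the square commutes, the fourth arrow $(\textup{heart}) \cap \mod A$ is automatically a bijection onto $\ftors A$. This also handles well-definedness simultaneously: Lemma \ref{heart_tors_pair} already gives that $\cH \cap \mod A$ is a torsion class, and commutation identifies it with $\Fac M$, which is functorially finite by Proposition \ref{sttilt_ftors}. So the whole proposition reduces to verifying the commutativity of the square.

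The essential identity to establish is: for $P \in \twosilt A$ with associated support $\tau$-tilting pair $(M, Q) \in \sttilt A$ where $M = H^0(P)$, the heart $\cH := P[{\ne}0]^\perp$ of the t-structure $(P[{<}0]^\perp, P[{>}0]^\perp)$ satisfies $\cH \cap \mod A = \Fac M$. For $X \in \mod A$, a direct chain-level calculation, using that $P$ is concentrated in cohomological degrees $-1, 0$ and $X$ in degree $0$, shows $\Hom_{\sD^\rb(\mod A)}(P, X[n]) = 0$ automatically whenever $n \notin \{0, 1\}$. Hence the heart condition reduces to the single requirement $\Hom_{\sD^\rb(\mod A)}(P, X[1]) = 0$.

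Applying $\Hom_{\sD^\rb(\mod A)}(-, X[1])$ to the canonical triangle $P^{-1} \xrightarrow{d} P^0 \to P \to P^{-1}[1]$ and using that $P^0, P^{-1} \in \proj A$ kills the outer terms, yielding
\[
\Hom_{\sD^\rb(\mod A)}(P, X[1]) \cong \Coker\bigl(d^{\ast} \colon \Hom_A(P^0, X) \to \Hom_A(P^{-1}, X)\bigr).
\]
Writing $P = (P^{-1}_M \oplus Q \to P^0)$ as in Proposition \ref{silt_sttilt}, with $P^{-1}_M \to P^0 \to M \to 0$ the minimal projective presentation of $M$ and $d|_Q = 0$, this cokernel splits as
\[
\Coker\bigl(\Hom_A(P^0, X) \to \Hom_A(P^{-1}_M, X)\bigr) \oplus \Hom_A(Q, X).
\]
By Nakayama duality applied to the defining identity $\tau M = \Ker(\nu P^{-1}_M \to \nu P^0)$, the first summand is naturally identified with $D\Hom_A(X, \tau M)$.

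Therefore $\Hom_{\sD^\rb(\mod A)}(P, X[1]) = 0$ is equivalent to $\Hom_A(X, \tau M) = 0$ together with $X \in Q^\perp$, i.e., $X \in {^\perp(\tau M)} \cap Q^\perp$. Invoking the characterisation $\Fac M = {^\perp(\tau M)} \cap Q^\perp$ for a support $\tau$-tilting pair $(M, Q)$ from \cite{AIR} finishes the identification $\cH \cap \mod A = \Fac M$, closes the square, and completes the proof. The main obstacle lies in the two $\tau$-tilting-theoretic ingredients of the last step: identifying the cokernel with $D\Hom_A(X, \tau M)$ via Nakayama duality and invoking AIR's intrinsic description of $\Fac M$; the chain-level vanishings and the splitting of the cokernel are routine.
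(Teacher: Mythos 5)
Your argument is correct. Note first that the paper does not prove this proposition at all: it is imported verbatim from Br\"ustle--Yang \cite[Theorem 4.9]{BY}, so there is no internal proof to compare against; what you have written is a self-contained derivation of the cited result. Your reduction is sound: since $H^0$, $\Fac$, and $P \mapsto (P[{<}0]^\perp,P[{>}0]^\perp)$ are already bijections (Propositions \ref{silt_smc}, \ref{silt_sttilt}, \ref{sttilt_ftors}), commutativity of the square forces the fourth arrow to be a well-defined bijection, so everything rests on the identity $\cH \cap \mod A = \Fac H^0(P)$. Your verification of that identity is the standard one and each step checks out: the degree argument killing $\Hom(P,X[n])$ for $n \ne 0,1$; the rotation of the triangle $P^{-1} \to P^0 \to P$ giving $\Hom_{\sD^\rb(\mod A)}(P,X[1]) \cong \Coker(d^\ast)$; the splitting off of $\Hom_A(Q,X)$; the identification $\Coker\bigl(\Hom_A(P^0,X) \to \Hom_A(P^{-1}_M,X)\bigr) \cong D\Hom_A(X,\tau M)$, which follows from left-exactness of $\Hom_A(X,?)$ applied to $0 \to \tau M \to \nu P^{-1}_M \to \nu P^0$ together with $\Hom_A(X,\nu P) \cong D\Hom_A(P,X)$ (this is essentially \cite[Proposition 2.4]{AIR} and does give the full $\Hom$, not merely the stable $\Hom$, precisely because the presentation is minimal); and finally $\Fac M = {^\perp(\tau M)} \cap Q^\perp$ for a support $\tau$-tilting pair, which is \cite[Theorem 2.12]{AIR}. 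The only stylistic remark is that you should make explicit that you may replace an arbitrary $P \in \twosilt A$ by its isomorphic representative $(P^{-1}_M \oplus Q \to P^0)$ built from the minimal presentation, since $\Hom(P,X[1])$ depends only on the isomorphism class; this is immediate from Proposition \ref{silt_sttilt} but worth a sentence.
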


In Proposition \ref{silt_ftors}, we can easily see 
$\cH \cap \mod A=\cD^{\le 0} \cap \mod A$ 
from the definition of intermediate t-structures,
and the original statement in \cite{BY} uses the latter.

Now, we start the proof of Theorem \ref{smc_fsbrick}.

\begin{Lem}\label{heart_lem}
Let $P \in \twosilt A$,
$\cH:=P[{\ne}0]$ be the heart of the corresponding t-structure,
%\begin{itemize} 
%\item[(1)]
%\cite[Remark 4.11]{BY}
%Let $X$ be an object in $\cH$. 
%Then $H^{-1}(X)[1]$ and $H^0(X)$ are also in $\cH$,
%and there exists an exact sequence $0 \to H^{-1}(X)[1] \to X \to H^0(X) \to 0$
%in $\cH$,
%where $H^i \colon \sD^\rb(\mod A) \to \mod A$
%is the functor taking the $i$th cohomology with respect to $\mod A$
%as usual.
%In particular, if $X$ is a simple object in $\cH$, 
%then in $\sD^\rb(\mod A)$, $X$ is isomorphic to an object in $\mod A$ or $(\mod A)[1]$.
%\item[(2)]
%We have $H^0(\cH)=\cH \cap \mod A$.
%\item[(3)]
$\cX$ be the set of isoclasses of simple objects in $\cH$,
and $\cS:=\cX \cap \mod A$ as in Theorem \ref{smc_fsbrick}.
Then we have $\cH \cap \mod A = \sT(\cS)$.
Moreover, $\cS$ belongs to $\fLsbrick A$.
%\end{itemize}
\end{Lem}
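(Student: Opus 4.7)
The plan is to first identify $\cS$ as a semibrick and record the easy inclusion $\sT(\cS)\subset\cH\cap\mod A$, and then to prove the reverse inclusion by induction on the $\cH$-length of objects in $\cT:=\cH\cap\mod A$, being careful about the discrepancy between kernels in $\mod A$ and kernels in $\cH$.

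First, by Proposition \ref{silt_smc} the set $\cX$ of simples of $\cH$ is a 2-term simple-minded collection, so by Remark \ref{smc_rem}(2) each element of $\cX$ lies in $\mod A$ or in $(\mod A)[1]$. Writing $\cS':=\cX[-1]\cap\mod A$ gives a disjoint decomposition $\cX=\cS\sqcup\cS'[1]$, and as a subset of the pairwise Hom-orthogonal collection of bricks $\cX$, $\cS$ is automatically a semibrick in $\mod A$. Since $\cS\subset\cT$ and $\cT\in\ftors A$ by Proposition \ref{silt_ftors}, the inclusion $\sT(\cS)\subset\cT$ is immediate.

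For the reverse inclusion, I would use that $(\cT,\cF):=(\cH\cap\mod A,\cH[-1]\cap\mod A)$ is a torsion pair in $\mod A$ by Lemma \ref{heart_tors_pair}, together with the standard description of the heart of an intermediate t-structure: every $X\in\cH$ is a 2-term complex satisfying $H^{-1}(X)\in\cF$ and $H^0(X)\in\cT$. I then induct on the length $n$ of $M\in\cT$ in $\cH$, which is finite by definition of $\inttstr A$. For the inductive step, pick a simple $\cH$-subobject $S\hookrightarrow M$; the crucial subclaim is $S\in\cS$. Indeed, if $S=T[1]$ for some $T\in\cS'\subset\mod A$, then the nonzero monomorphism $S\to M$ would give a nonzero element of $\Hom_{\sD^\rb(\mod A)}(T[1],M)=\Ext_A^{-1}(T,M)=0$, a contradiction; so $S\in\cS\subset\mod A$.

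The inclusion $S\hookrightarrow M$ is now a morphism in $\mod A$, and the triangle $S\to M\to M/S\to S[1]$ together with the above description of $\cH$ yields $\Ker_A(S\to M)=H^{-1}(M/S)\in\cF$ and $\Coker_A(S\to M)=H^0(M/S)\in\cT$. There are two cases. If $\Ker_A(S\to M)=0$, then $0\to S\to M\to M/S\to 0$ is a short exact sequence in $\mod A$ with $M/S\in\cT$ of $\cH$-length $n-1$, and the induction hypothesis plus closure of $\sT(\cS)$ under extensions yields $M\in\sT(\cS)$. Otherwise $\Im(S\to M)$ is a nonzero quotient of $S\in\cS$ and hence lies in $\Fac\cS\subset\sT(\cS)$; it sits in a short exact sequence $0\to\Im(S\to M)\to M\to\Coker_A(S\to M)\to 0$ in $\mod A$ with $\Coker_A(S\to M)\in\cT$ of $\cH$-length at most $n-2$ (from the $\cH$-exact sequence $0\to\Ker_A(S\to M)[1]\to M/S\to\Coker_A(S\to M)\to 0$), so applying induction to $\Coker_A(S\to M)$ and using extension-closure of $\sT(\cS)$ again gives $M\in\sT(\cS)$. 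This proves $\cT\subset\sT(\cS)$, whence $\sT(\cS)=\cT\in\ftors A$ and $\cS\in\fLsbrick A$. The main subtlety is the possible non-injectivity of $S\to M$ in $\mod A$, which is handled by the constraint $H^{-1}(M/S)\in\cF$.
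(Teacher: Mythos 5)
Your proof is correct. Both it and the paper's proof proceed by induction on $\cH$-length and peel off a simple subobject, but the two inductions are set up differently, and the difference is worth recording. The paper inducts on the length of an \emph{arbitrary} $X \in \cH$ and proves the statement $H^0(X) \in \sT(\cS)$: choosing a simple sub $Y$ with quotient $Z$ of length one less, the induction always applies directly to $Z$, and in the case $Y \in (\mod A)[1]$ the isomorphism $H^0(X)\cong H^0(Z)$ makes that branch trivial. You instead induct only over $M \in \cT = \cH\cap\mod A$, which is more economical but forces two extra observations that the paper's formulation sidesteps: first, that a simple $\cH$-subobject of $M \in \mod A$ must already lie in $\cS$ rather than in $\cS'[1]$ (your vanishing of $\Hom(T[1],M)=\Ext_A^{-1}(T,M)$ is exactly right); and second, that when $S\to M$ fails to be injective in $\mod A$, the module $H^0(M/S)=\Coker_A(S\to M)$, to which you must pass in order to stay inside $\cT$, has $\cH$-length strictly smaller than $n-1$, namely at most $n-2$, via the $\cH$-exact sequence $0\to H^{-1}(M/S)[1]\to M/S\to H^0(M/S)\to 0$. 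Your invocation of the Happel--Reiten--Smal\o{} description of the tilted heart ($X\in\cH$ iff $H^{-1}X\in\cF$, $H^0X\in\cT$) plays the role that the citation $H^0(\cH)=\cH\cap\mod A$ from \cite[Remark 4.11]{BY} plays in the paper. Both routes are sound; the paper's larger induction set avoids the two-step drop in length, while yours keeps the entire argument inside $\mod A$.
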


\begin{proof}
%(2)
%It is easily deduced from (1).
%
%(3)
We first prove that $\sT(\cS) \subset \cH \cap \mod A$.
Because $\cH \cap \mod A$ is a torsion class in $\mod A$ 
by Lemma \ref{heart_tors_pair},
it is sufficient to see that $\cS \subset \cH \cap \mod A$.
This is clear.

Next, we show the converse $\cH \cap \mod A \subset \sT(\cS)$.
Since $H^0(\cH)=\cH \cap \mod A$ follows from \cite[Remark 4.11]{BY},
it is enough to show $H^0(\cH) \subset \sT(\cS)$,
that is, $H^0(X) \in \sT(\cS)$ for all $X \in \cH$.
We use induction on the length $l$ of $X$ in the abelian category $\cH$ with length.

If $l=0$, then $X \cong 0$, so $H^0(X)=0 \in \sT(\cS)$.

If $l \ge 1$, there exists a short exact sequence $0 \to Y \to X \to Z \to 0$ in $\cH$
with $Y$ simple and $Z$ of length $l-1$.
By Remark \ref{smc_rem} (2), we may assume that $Y$ belongs to $\mod A$ or $(\mod A)[1]$.
By the induction hypothesis, we can see $H^0(Z) \in \sT(\cS)$.
The short exact sequence is lifted to a triangle $Y \to X \to Z \to Y[1]$.

If $Y \in \mod A$, we have $Y \in \cS$ and
an exact sequence $Y \to H^0(X) \to H^0(Z) \to 0$
in $\mod A$.
Set $M:=\Ker(H^0(X) \to H^0(Z))$, then we get $M \in \Fac Y \subset \sT(\cS)$.
Moreover, there is a short exact sequence $0 \to M \to H^0(X) \to H^0(Z) \to 0$
in $\mod A$ with $H^0(Z) \in \sT(\cS)$.
Thus, the module $H^0(X)$ is also in $\sT(\cS)$.

If $Y \in (\mod A)[1]$, we have an exact sequence $0 \to H^0(X) \to H^0(Z) \to 0$
in $\mod A$.
Thus, we have $H^0(X) \cong H^0(Z)$ and $H^0(X) \in \sT(\cS)$.

The induction is complete, and we have $\cH \cap \mod A = H^0(\cH) = \sT(\cS)$.

Obviously, $\cS$ is a semibrick,
and $\sT(\cS)=\cH \cap \mod A$ is functorially finite in $\mod A$ 
by Proposition \ref{silt_ftors}.
These mean $\cS \in \fLsbrick A$.
\end{proof}

\begin{proof}[Proof of Theorem \ref{smc_fsbrick}]
(1)(2)
From Proposition \ref{silt_ftors} and Lemma \ref{heart_lem}, 
it follows that the map $? \cap \mod A \colon \twosmc A \to \fLsbrick A$ is well-defined,
and the following diagram is commutative:
\begin{align*}
\begin{xy}
( 0,  8) *+{\twosilt A}   ="01",
(55,  8) *+{\inttstr A}   ="11",
(99,  8) *+{\twosmc A}    ="21",
( 0, -8) *+{\sttilt A}    ="00",
(55, -8) *+{\ftors A}     ="10",
(99, -8) *+{\fLsbrick A}  ="20",
\ar^{P \mapsto (P[{<}0]^\perp,P[{>}0]^\perp)} "01";"11"
\ar^{\textup{simples of heart}} "11";"21"
\ar^{\Fac}                                    "00";"10"
\ar_{\sT}                                     "20";"10"
\ar^{H^0}                                     "01";"00"
\ar^{(\textup{heart}) \cap \mod A}            "11";"10"
\ar^{? \cap \mod A}                           "21";"20"
\end{xy}.
\end{align*}
Because the other maps in this diagram are bijective 
by Propositions \ref{comm_3}, \ref{silt_smc} and \ref{silt_ftors}, 
so the map $? \cap \mod A \colon \twosmc A \to \fLsbrick A$ is bijective.

We also have the dual commutative diagram 
by considering the opposite algebra $A^\mathrm{op}$ and appropriate shifts of complexes,
and thus, the map $?[-1] \cap \mod A \colon \twosmc A \to \fRsbrick A$ is bijective.
The part (1) is proved.

The remaining part of the commutative diagram in (2) is obtained 
from the Nakayama duality and Theorem \ref{sttilt_fsbrick}. 

Moreover, 
Proposition \ref{silt_ftors} and Lemma \ref{heart_tors_pair} imply that
$\cT \in \ftors A$ corresponds to $\cF \in \ftorf A$
if and only if $(\cT,\cF)$ is a torsion pair in $\mod A$.

(3)
It is immediately deduced from (2) and Remark \ref{smc_rem}.
\end{proof}

As in Lemma \ref{heart_lem}, 
the map $(\text{heart}) \cap \mod A \colon \inttstr A \to \ftors A$
is equal to $H^0(\text{heart})$,
and dually,
the map $(\text{heart})[-1] \cap \mod A \colon \inttstr A \to \ftorf A$
is equal to $H^{-1}(\text{heart})$.
Therefore, in the commutative diagram in Theorem \ref{smc_fsbrick} (2), 
we can regard the bottom row as the 0th cohomologies of 
the middle(-lower) row,
and the top row as the $-1$st cohomologies of the middle(-upper) row.

\subsection{Labeling the exchange quiver with bricks II}\label{der_label}

In this section, we label the exchange quiver of $\twosmc A$ with bricks
in a similar way to Subsection \ref{label_mod}.
First, we recall the fundamental properties of mutations in $\twosmc A$ from \cite{BY}.

\begin{Prop}\label{smc_mut}\cite[Subsection 3.7]{BY}
Let $\cX \in \twosmc A$ and $\cS:=\cX \cap \mod A$.
\begin{itemize}
\item[(1)]
Assume $X \in \cX$.
Then there exists a left mutation of $\cX$ at $X$ in $\twosmc A$ 
if and only if $X \in \cS$.
\item[(2)]
For $S_0 \in \cS$, 
a left mutation of $\cX$ at $S_0$ uniquely exists, and it is given as follows:
\begin{itemize}
\item define a full subcategory $\mathcal{E}:=\Filt S_0$ of $\mod A$,
\item set $Y_{S_0}:=S_0[1]$,
\item for any $S \in \cS \setminus \{S_0\}$, 
take a left minimal $\mathcal{E}$-approximation
$f_S \colon S[-1] \to E_S$ in $\sD^\rb(\mod A)$ and 
define $Y_S$ as the mapping cone of $f_S$
(we can see $Y_S \in \mod A$ and 
have an exact sequence $0 \to E_S \to Y_S \to S \to 0$ in $\mod A$),
\item for any $S'[1] \in \cX \setminus \cS$, 
take a left minimal $\mathcal{E}$-approximation
$f_{S'[1]} \colon S' \to E_{S'[1]}$ in $\mod A$.
If $f_{S'[1]}$ is injective, then set $Y_{S'[1]}:=\Coker f_{S'[1]}$,
and otherwise (in this case $f_{S'[1]}$ is surjective), 
set $Y_{S'[1]}:=(\Ker f_{S'[1]})[1]$,
\item now $\cY:=\{Y_X \mid X \in \cX \}$ is the left mutation of $\cX$ at $S_0$.
\end{itemize}
\end{itemize}
\end{Prop}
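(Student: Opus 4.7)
My plan is to reduce the proposition to the mutation theory of 2-term silting objects via the bijection $\twosmc A \cong \twosilt A$ obtained by composing Proposition \ref{silt_smc} with Proposition \ref{silt_sttilt}. Write $P \in \twosilt A$ for the silting object corresponding to $\cX$, so that $\cX$ is the set of simples of the heart $\cH = P[{\ne}0]^\perp$. Through the Koenig--Yang formalism \cite{KY}, left silting mutations of $P$ are in canonical bijection with left mutations of $\cX$ as a simple-minded collection in $\sD^\rb(\mod A)$; both parts of the proposition then follow once I characterize which silting mutations of $P$ remain 2-term and match the explicit formula of part (2) with the mutation induced by silting.

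For part (1), using $H^0 \colon \twosilt A \to \sttilt A$ (Proposition \ref{silt_sttilt}) and the commutative diagram of Theorem \ref{smc_fsbrick}, the left silting mutations of $P$ that remain in $\twosilt A$ are in bijection with the left mutations of $M := H^0(P) \in \sttilt A$, which by Proposition \ref{brick_eq} are indexed by $\ind(M/\rad_B M)$; the latter set is identified with $\cS = \cX \cap \mod A$. A direct computation of the approximation triangle shows that the left silting mutation at the indecomposable summand of $P$ corresponding to $S'[1] \in \cX \setminus \cS$ produces a silting complex with components in three consecutive degrees, so such a mutation leaves $\twosilt A$; this yields the ``only if'' direction.

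For part (2), once existence and uniqueness of the left mutation at $S_0 \in \cS$ are settled, it suffices to verify that the $\cY$ of the statement is a 2-term simple-minded collection and that it agrees with the silting-induced mutation. The description is the standard recipe for mutating a simple-minded collection at $S_0$ with respect to the thick subcategory $\Filt S_0$, so simple-mindedness is inherited from the general theory. To check 2-termness element by element: $Y_{S_0} = S_0[1] \in (\mod A)[1]$ is clear; for $S \in \cS \setminus \{S_0\}$, the cone of $f_S \colon S[-1] \to E_S$ with $E_S \in \Filt S_0 \subset \mod A$ fits in a triangle whose long exact cohomology sequence forces $Y_S \in \mod A$ and yields the short exact sequence $0 \to E_S \to Y_S \to S \to 0$; for $S'[1] \in \cX \setminus \cS$, the approximation $f_{S'[1]} \colon S' \to E_{S'[1]}$ lives in $\mod A$, and its cone sits in the 2-term window---lying in $\mod A$ when $f_{S'[1]}$ is injective and in $(\mod A)[1]$ when it is surjective.

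The main obstacle is the dichotomy that $f_{S'[1]}$ is either injective or surjective---a generic morphism in $\mod A$ is of course neither. I expect this to follow from the Hom-vanishings $\Hom_{\sD^\rb(\mod A)}(S'[1], S_0[m]) = 0$ for $m \le 0$ guaranteed by $\cX$ being a simple-minded collection, combined with the minimality of the approximation $f_{S'[1]}$ into $\Filt S_0$: any nontrivial factorization through a proper subobject or quotient in $\Filt S_0$ would violate minimality, pinning $f_{S'[1]}$ to one of the two extremal cases. Compatibility of $\cY$ with the silting-side mutation can then be verified by computing the new heart from the approximation triangle that defines the mutated silting object and identifying its simple objects one at a time with the $Y_X$ above.
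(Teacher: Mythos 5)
The paper does not prove this proposition: it is quoted from \cite[Subsection 3.7]{BY} (which in turn rests on the mutation theory of simple-minded collections in \cite{KY}), so there is no in-paper argument to compare yours against. Judged on its own, your outline follows the same strategy as that literature --- transporting the problem to $\twosilt A$ via Propositions \ref{silt_smc} and \ref{silt_sttilt} and matching mutations on the two sides --- but it leaves the decisive steps unproved. The existence of the left minimal $\Filt S_0$-approximations is itself part of what must be established; the claim that the left silting mutation at a summand corresponding to $S'[1]$ leaves the 2-term window is only asserted; and the identification of the simple objects of the mutated heart with the explicit cones $Y_X$ is exactly the content of the cited result, not a routine verification one can wave at.

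The one place where you propose a concrete mechanism --- deducing that $f_{S'[1]}\colon S' \to E_{S'[1]}$ is injective or surjective from minimality of the approximation --- does not work: a left minimal approximation into a subcategory is in general neither injective nor surjective, and minimality forbids splitting off summands of the target, not nontrivial kernels and cokernels simultaneously. The dichotomy is a \emph{consequence} of the mutated collection lying in $\twosmc A$, not an input to it. Once the general mutation theory guarantees that $\cY$ is a simple-minded collection, the explicit triangles show every $Y_X$ has cohomology concentrated in degrees $-1$ and $0$ (the cone of $f_{S'[1]}$ has $H^{-1}=\Ker f_{S'[1]}$ and $H^0=\Coker f_{S'[1]}$; the cone of $f_S$ is the extension of $S$ by $E_S$; and $Y_{S_0}=S_0[1]$), so $\cY\in\twosmc A$ by definition; Remark \ref{smc_rem} (2) then forces each $Y_X$ to be a stalk complex, which for $Y_{S'[1]}$ is precisely the statement that $f_{S'[1]}$ is injective or surjective. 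Rerouting that step through Remark \ref{smc_rem} (2), and supplying references or arguments for the existence of the approximations and for the silting--smc compatibility of mutation, would close the gaps.
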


We define labels of the exchange quiver of $\twosmc A$ with bricks as follows.

\begin{Def}\label{label_smc_def}
In the setting of Proposition \ref{smc_mut}, 
we label the arrow $\cX \to \cY$ in the exchange quiver of $\twosmc A$
with a brick $S_0$.
\end{Def}

The right mutation in $\twosmc A$ at each object in $\cS':=\cX[-1] \cap \mod A$
is similarly defined.
By definition,
the labels on the arrows from $\cX$ are the elements of $\cS$, and
the labels on the arrows to $\cX$ are the elements of $\cS'$.

We give an example, compare it with Example \ref{sttilt_ex}.

\begin{Ex}\label{smc_ex}
Let $A$ be the path algebra of the quiver $1 \to 2 \to 3$.
Figure \ref{A3_smc} below is 
the exchange quiver of $\twosmc A$ labeled with bricks.
\begin{figure}[h]
\begin{align*}
\begin{xy}
(-45, 27)*+{
\begin{smallmatrix} 3 \end{smallmatrix},
\begin{smallmatrix} 2 \end{smallmatrix},
\begin{smallmatrix} 1 \end{smallmatrix}
}= "01",
(-45,  0)*+{
\begin{smallmatrix} 2 \\ 3 \end{smallmatrix},
\begin{smallmatrix} 1 \end{smallmatrix},
\begin{smallmatrix} 3 \end{smallmatrix}[1]
}= "02",
(  0, 27)*+{
\begin{smallmatrix} 3 \end{smallmatrix},
\begin{smallmatrix} 1 \\ 2 \end{smallmatrix},
\begin{smallmatrix} 2 \end{smallmatrix}[1]
}= "03",
(-69,  0)*+{
\begin{smallmatrix} 3 \end{smallmatrix},
\begin{smallmatrix} 2 \end{smallmatrix},
\begin{smallmatrix} 1 \end{smallmatrix}[1]
}= "04",
(-18,  0)*+{
\begin{smallmatrix} 1 \\ 2 \\ 3 \end{smallmatrix},
\begin{smallmatrix} 2 \end{smallmatrix},
\begin{smallmatrix} 2 \\ 3 \end{smallmatrix}[1]
}= "05",
(-45,-27)*+{
\begin{smallmatrix} 2 \\ 3 \end{smallmatrix},
\begin{smallmatrix} 3 \end{smallmatrix}[1],
\begin{smallmatrix} 1 \end{smallmatrix}[1]
}= "06",
(  0, 15)*+{
\begin{smallmatrix} 1 \\ 2 \\ 3 \end{smallmatrix},
\begin{smallmatrix} 3 \end{smallmatrix}[1],
\begin{smallmatrix} 2 \end{smallmatrix}[1]
}= "07",
( 45, 27)*+{
\begin{smallmatrix} 3 \end{smallmatrix},
\begin{smallmatrix} 1 \end{smallmatrix},
\begin{smallmatrix} 1 \\ 2 \end{smallmatrix}[1]
}= "08",
(  0,-15)*+{
\begin{smallmatrix} 2 \end{smallmatrix},
\begin{smallmatrix} 1 \end{smallmatrix},
\begin{smallmatrix} 1 \\ 2 \\ 3 \end{smallmatrix}[1]
}= "09",
( 18,  0)*+{
\begin{smallmatrix} 1 \\ 2 \end{smallmatrix},
\begin{smallmatrix} 1 \\ 2 \\ 3 \end{smallmatrix}[1],
\begin{smallmatrix} 2 \end{smallmatrix}[1]
}= "10",
( 69,  0)*+{
\begin{smallmatrix} 3 \end{smallmatrix},
\begin{smallmatrix} 2 \end{smallmatrix}[1],
\begin{smallmatrix} 1 \end{smallmatrix}[1]
}= "11",
(  0,-27)*+{
\begin{smallmatrix} 2 \end{smallmatrix},
\begin{smallmatrix} 2 \\ 3 \end{smallmatrix}[1],
\begin{smallmatrix} 1 \end{smallmatrix}[1]
}= "12",
( 45,  0)*+{
\begin{smallmatrix} 1 \end{smallmatrix},
\begin{smallmatrix} 3 \end{smallmatrix}[1],
\begin{smallmatrix} 1 \\ 2 \end{smallmatrix}[1]
}= "13",
( 45,-27)*+{
\begin{smallmatrix} 3 \end{smallmatrix}[1],
\begin{smallmatrix} 2 \end{smallmatrix}[1],
\begin{smallmatrix} 1 \end{smallmatrix}[1]
}= "14",
\ar_{\begin{smallmatrix} 3 \end{smallmatrix}} "01";"02" 
\ar^{\begin{smallmatrix} 2 \end{smallmatrix}} "01";"03" 
\ar_{\begin{smallmatrix} 1 \end{smallmatrix}} "01";"04"
\ar^{\begin{smallmatrix} 2 \\ 3 \end{smallmatrix}} "02";"05" 
\ar_{\begin{smallmatrix} 1 \end{smallmatrix}} "02";"06"
\ar_{\begin{smallmatrix} 3 \end{smallmatrix}} "03";"07" 
\ar^{\begin{smallmatrix} 1 \\ 2 \end{smallmatrix}} "03";"08"
\ar_{\begin{smallmatrix} 3 \end{smallmatrix}} "04";"06" 
\ar@{-} "04";(-69,-36) 
\ar@{-}^{\begin{smallmatrix} 2 \end{smallmatrix}} (-69,-36);(69,-36) 
\ar(69,-36);"11"
\ar^(.45){\begin{smallmatrix} 2 \end{smallmatrix}} "05";"07" 
\ar_(.45){\begin{smallmatrix} 1 \\ 2 \\ 3 \end{smallmatrix}} "05";"09"
\ar^{\begin{smallmatrix} 2 \\ 3 \end{smallmatrix}} "06";"12"
\ar^(.55){\begin{smallmatrix} 1 \\ 2 \\ 3 \end{smallmatrix}} "07";"10"
\ar^{\begin{smallmatrix} 1 \end{smallmatrix}} "08";"11" 
\ar_{\begin{smallmatrix} 3 \end{smallmatrix}} "08";"13"
\ar_(.55){\begin{smallmatrix} 2 \end{smallmatrix}} "09";"10" 
\ar_{\begin{smallmatrix} 1 \end{smallmatrix}} "09";"12"
\ar^{\begin{smallmatrix} 1 \\ 2 \end{smallmatrix}} "10";"13"
\ar^{\begin{smallmatrix} 3 \end{smallmatrix}} "11";"14"
\ar^{\begin{smallmatrix} 2 \end{smallmatrix}} "12";"14"
\ar_{\begin{smallmatrix} 1 \end{smallmatrix}} "13";"14"
\end{xy}
\end{align*}
\caption{The exchange quiver of $\twosmc A$}\label{A3_smc}
\end{figure}
\end{Ex}

The three exchange quivers of $\sttilt A$, $\stitilt A$, and $\twosmc A$ 
in Examples \ref{sttilt_ex} and \ref{smc_ex} are naturally isomorphic
by the maps in Theorem \ref{smc_fsbrick}.
This holds for an arbitrary finite dimensional $K$-algebra 
\cite[Corollary 4.3, Theorem 4.9]{BY}.
Now, we show that the maps in Theorem \ref{smc_fsbrick} also preserve 
the labels of the exchange quivers.

\begin{Th}\label{brick_label_same}
The maps in Theorem \ref{smc_fsbrick} preserve the labels
of the exchange quivers of $\sttilt A$, $\stitilt A$, and $\twosmc A$.
\end{Th}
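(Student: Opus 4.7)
The plan is to prove that the bijection between $\sttilt A$ and $\twosmc A$ induced by Theorem \ref{smc_fsbrick} preserves the brick labels; compatibility with $\stitilt A$ then follows from Theorem \ref{brick_label_coincide} by transitivity.

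Fix an arrow $M \to N$ in the exchange quiver of $\sttilt A$ labeled with a brick $S$, write $M', N' \in \stitilt A$ for the corresponding elements, and let $\cX \to \cY$ be the corresponding arrow in $\twosmc A$. Denote its label by $S_0$; by Proposition \ref{smc_mut} and Definition \ref{label_smc_def}, $S_0$ is the unique element of $\cS_\cX := \cX \cap \mod A$ at which the left mutation producing $\cY$ is performed, so in particular $S_0[1] \in \cY$ by the explicit formula of Proposition \ref{smc_mut} (2). Set $\cT_1 := \Fac M$, $\cT_2 := \Fac N$, $\cF_2 := \Sub N'$, and $\cC := \cT_1 \cap \cF_2$. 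By Proposition \ref{brick_label_prop} (1), $S$ is the unique brick lying in $\cC$, so it suffices to show that $S_0$ is itself a brick belonging to $\cC$.

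The brick condition on $S_0$ is automatic, since $\cS_\cX$ is a semibrick. For $S_0 \in \cT_1$, apply Theorem \ref{smc_fsbrick} to $\cX$: the left finite semibrick $\cS_\cX$ satisfies $\sT(\cS_\cX) = \cT_1$, so $S_0 \in \cS_\cX \subset \cT_1$. For $S_0 \in \cF_2$, apply Theorem \ref{smc_fsbrick} to $\cY$: the right finite semibrick attached to $\cY$ is $\cS'_\cY := \cY[-1] \cap \mod A$, with $\sF(\cS'_\cY) = \cF_2$. Since $S_0 \in \mod A$ and $S_0[1] \in \cY$, we obtain $S_0 \in \cS'_\cY \subset \cF_2$. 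Therefore $S_0 \in \cC$, and the uniqueness assertion in Proposition \ref{brick_label_prop} (1) forces $S_0 = S$, as desired.

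The main (though minor) obstacle I anticipate is the bookkeeping of the direction of the mutation along the composite chain $\sttilt A \to \ftors A \to \inttstr A \to \twosmc A$: one must confirm that a left mutation $M \to N$ in $\sttilt A$ (the one with $\Fac M \supsetneq \Fac N$) really corresponds to a left mutation $\cX \to \cY$ in $\twosmc A$ in the sense of Proposition \ref{smc_mut}, so that the identity $S_0[1] \in \cY$ is available. This is standard and follows from the monotonicity of the bijections $\Fac$ and $(\textup{heart}) \cap \mod A$ encoded in Propositions \ref{silt_sttilt} and \ref{silt_ftors}; once that bookkeeping is in place, the uniqueness statement in Proposition \ref{brick_label_prop} (1) closes the argument.
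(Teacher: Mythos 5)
Your proof is correct and follows essentially the same route as the paper's: both arguments identify the brick labeling the arrow in $\twosmc A$ as an element of both $\cX \cap \mod A$ and $\cY[-1] \cap \mod A$, hence a brick in $\Fac M \cap \Sub N'$, and then invoke the uniqueness statement of Proposition \ref{brick_label_prop}~(1) to conclude that it coincides with the labels $S$ on the corresponding arrows of $\sttilt A$ and $\stitilt A$. The "obstacle" you flag at the end --- matching the directions of mutation across the three exchange quivers --- is indeed a point the paper glides over, but it is settled by the cited results of Br\"{u}stle--Yang (in particular \cite[Corollary 4.3, Theorem 4.9]{BY}), which the paper invokes just before the theorem; your sketch of why it holds is accurate.
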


\begin{proof}
Let $\cX \to \cY$ be an arrow in the exchange quiver of $\twosmc A$
labeled with a brick $S$.
We have $S \in \cX \cap \mod A$ and $S \in \cY[-1] \cap \mod A$
by definition.
By Theorem \ref{smc_fsbrick}, 
we can take $M \in \sttilt A$ corresponding to $\cX \in \twosmc A$
and $N' \in \stitilt A$ corresponding to $\cY \in \twosmc A$,
and then $\sT(\cX \cap \mod A)=\Fac M$ and $\sF(\cY[-1] \cap \mod A)=\Sub N'$ hold.
Therefore, $S$ is a brick belonging to $\Fac M \cap \Sub N'$.
Now, Proposition \ref{brick_label_prop} implies the assertion.
\end{proof}

\subsection{Realizing wide subcategories as module categories II}\label{wide_der}

We have investigated each left finite subcategory $\cW \in \fLwide A$ 
in terms of the endomorphism algebra $B:=\End_A(M)$ of 
the corresponding support $\tau$-tilting module $M \in \sttilt A$
in Subsection \ref{wide_mod}.
Now, instead of $B$, 
we use the endomorphism algebra $C:=\End_{\sD^\rb(\mod A)}(P)$
of the corresponding 2-term silting objects $P \in \twosilt A$
to study $\cW$.

From now on, we fix $P \in \twosilt A$ 
and set $C:=\End_{\sK^\rb(\proj A)}(P)$.
The $K$-algebra $C$ is isomorphic to $C':=\End_{\sK^\rb(\inj A)}(\nu P)$
by the Nakayama functor $\nu \colon \sK^\rb(\proj A) \to \sK^\rb(\inj A)$.
As in \cite[Proposition 4.8]{IY}, the heart $\cH=P[{\ne}0]^\perp$ of 
the corresponding t-structure $(P[{<}0]^\perp,P[{>}0]^\perp)$ in $\sD^\rb(\mod A)$
is equivalent to 
$\mod C$ by the functor
\begin{align*}
\Hom_{\sD^\rb(\mod A)}(P,?) \colon \cH \xrightarrow{\cong} \mod C.
\end{align*}
By the Nakayama duality, we also have an equivalence 
$D\Hom_{\sD^\rb(\mod A)}(?,\nu P) \colon \cH \to \mod C'$.
We also define 
\begin{align*}
M'&:=H^{-1}(\nu P) \in \stitilt A, & M&:=H^0(P) \in \sttilt A, \\
B'&:=\End_A(M'), & B&:=\End_A(M), \\
\cS'&:=\ind(\soc_{B'} M') \in \fRsbrick A, & \cS&:=\ind(M/{\rad_B M}) \in \fLsbrick A,
\end{align*}
and then Theorem \ref{smc_fsbrick} 
implies that $\cX:=\cS \cup \cS'[1] \in \twosmc A$.

Let $P_1,P_2,\ldots,P_n$ be the indecomposable direct summands of $P$, where
$n=|A|$ by \cite[Corollary 5.1]{KY}.
For $i=1,2,\ldots,n$, we set 
\begin{align*}
M'_i&:=H^{-1}(\nu P_i), & M_i&:=H^0(P_i), \\
N'_i&:=\bigcap_{f \in \rad_A(M'_i,M')} \Ker f, & 
N_i&:=M_i/\sum_{f \in \rad_A(M,M_i)} \Im f.
\end{align*}
We obtain that $N_i$ is a brick or zero from Lemma \ref{well-def} (2).
The module $N'_i$ is also a brick or zero.
Clearly, $\cS'=\{ N'_i \mid N'_i \ne 0\}$ and 
$\cS=\{ N_i \mid N_i \ne 0\}$ hold.

\begin{Lem}\label{weak_strong}
In the setting above, the following assertions hold for $i=1,2,\ldots,n$.
\begin{itemize}
\item[(1)] The module $M_i$ is projective if and only if $M'_i=0$.
\item[(2)] We have $N_i \ne 0$ if and only if $N'_i = 0$.
\item[(3)] We have $M_i \ne 0$ if and only if $M'_i$ is not injective.
\item[(4)] If $M_i$ is projective then $N_i \ne 0$, and if $N_i \ne 0$ then $M_i \ne 0$.
\end{itemize}
\end{Lem}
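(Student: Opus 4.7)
The plan is to first dispatch (1), (3), and (4) by a direct case analysis of the indecomposable summands of a 2-term silting object, and then to handle (2) by identifying, for each $i$, the simple object of the heart $\cH = P[{\ne}0]^\perp$ corresponding to $P_i$ under the equivalence $\Hom_{\sD^\rb(\mod A)}(P,?) \colon \cH \to \mod C$. I denote this simple object by $X_i$; by Theorem~\ref{smc_fsbrick} it lies in $\cX := \cS \cup \cS'[1]$, and it is characterized up to isomorphism by the orthogonality conditions $\Hom_{\sD^\rb(\mod A)}(P_j[k], X_i) = 0$ for all $(j, k) \ne (i, 0)$ together with $\Hom_{\sD^\rb(\mod A)}(P_i, X_i) \ne 0$.

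By Proposition~\ref{silt_sttilt}, each indecomposable summand $P_i$ of $P$ falls into exactly one of three types: $P_i = (0 \to M_i)$ with $M_i$ indecomposable projective, or $P_i$ is the minimal projective presentation of a nonprojective indecomposable $M_i$, or $P_i = Q_i[1]$ for some indecomposable projective $Q_i$ (in which case $M_i = 0$). A direct computation of $M'_i = H^{-1}(\nu P_i)$ in each case gives, respectively, $0$, $\tau M_i$ (nonzero and non-injective by Auslander--Reiten theory), and $\nu Q_i$ (indecomposable injective). Assertions (1) and (3) then follow by reading off these characterizations, with the understanding that ``projective'' and ``injective'' in the statement refer to nonzero modules. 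For (4), the implication $N_i \ne 0 \Rightarrow M_i \ne 0$ is tautological; conversely, if $M_i$ is a nonzero projective summand of $M$, then any epimorphism $\bigoplus_{j \ne i} M_j^{\oplus n_j} \to M_i$ splits by projectivity and forces $M_i \cong M_j$ for some $j \ne i$, contradicting the basicness of $M$, so $M_i \notin \Fac \bigoplus_{j \ne i} M_j$ and Proposition~\ref{brick_eq} yields $N_i \ne 0$.

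For (2), suppose first $N_i \ne 0$; I will prove $X_i \cong N_i$. Since $N_i \in \cS \subset \Fac M$ and $\Ext^1_A(M, N_i) = 0$ by $\tau$-rigidity, a straightforward computation with the 2-term complex $P_j$ shows that $\Hom_{\sD^\rb(\mod A)}(P_j[k], N_i) = 0$ for all $k \ne 0$ and reduces the case $k = 0$ to $\Hom_A(M_j, N_i)$. The decisive step is to verify $\Hom_A(M_j, N_i) = 0$ for $j \ne i$: given $f \colon M_j \to N_i$, the short exact sequence $0 \to L_i \to M_i \to N_i \to 0$ combined with $\Ext^1_A(M_j, L_i) = 0$ (Lemma~\ref{well-def} (1)) lifts $f$ to some $g \colon M_j \to M_i$, and basicness of $M$ forces $g \in \rad_A(M_j, M_i)$, whence $\Image g \subset L_i$ and $f = 0$. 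Together with $\Hom_A(M_i, N_i) \ne 0$ supplied by the canonical projection, this identifies $N_i$ as the simple of $\cH$ corresponding to $P_i$. Dually, if $N'_i \ne 0$, then $N'_i[1] \in \cH$ because $N'_i \in \Sub M'$, and the Nakayama duality gives $\Hom_{\sD^\rb(\mod A)}(P_j, N'_i[1]) \cong D\Hom_A(N'_i, M'_j)$; the dual argument, using the intersection definition of $N'_i$, yields $X_i \cong N'_i[1]$.

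Uniqueness of $X_i$ in $\cX$ then forbids $N_i$ and $N'_i$ from both being nonzero, since that would place $X_i$ simultaneously in $\mod A$ and in $(\mod A)[1]$. Conversely, if $N_i = 0 = N'_i$, then $X_i$ would equal some $N_j$ or $N'_j[1]$ with $j \ne i$, but the identifications in the previous paragraph give $\Hom_{\sD^\rb(\mod A)}(P_j, X_i) \ne 0$ in either case, contradicting the defining orthogonality of $X_i$. Hence exactly one of $N_i, N'_i$ is nonzero, proving (2). The main technical obstacle in this plan is precisely the matching of indices in the identification $N_i \cong X_i$: it requires the sharper vanishing $\Hom_A(M_j, N_i) = 0$ for $j \ne i$, which exploits the minimality encoded in $N_i = M_i / \sum_{f \in \rad_A(M, M_i)} \Image f$ together with $\tau$-rigidity of $M$, and is strictly stronger than the generic Hom-orthogonality among elements of a semibrick.
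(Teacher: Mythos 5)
Your proposal is correct, and for the key part (2) the decisive mechanism is the same as the paper's: if $N_i$ and $N'_i$ were both nonzero, then $N_i$ and $N'_i[1]$ would be two simple objects of the heart $\cH$ with $\Hom_{\sD^\rb(\mod A)}(P_i,N_i)\ne 0$ (from $N_i\in\Fac M_i$) and $\Hom_{\sD^\rb(\mod A)}(P_i,N'_i[1])\ne 0$ (from $N'_i\in\Sub M'_i$ plus Nakayama duality), forcing them to be isomorphic under the equivalence $\Hom_{\sD^\rb(\mod A)}(P,?)\colon\cH\to\mod C$ --- impossible since one lies in $\mod A$ and the other in $(\mod A)[1]$. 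Where you diverge is the converse direction (``at least one of $N_i,N'_i$ is nonzero''): the paper dispatches it by counting, using $\#\cS+\#\cS'=n$ from Remark \ref{smc_rem} so that only one implication needs proving, whereas you establish the full orthogonality $\Hom_{\sD^\rb(\mod A)}(P_j,X_i)=0$ for $j\ne i$ via Lemma \ref{well-def} (3) and its dual, thereby pinning down the index of each simple of $\cH$ exactly. Your route is longer but yields Lemma \ref{P_X_dual} essentially as a byproduct (the paper proves that separately, using instead the structure of $\Hom_{\sD^\rb(\mod A)}(P,X_j)$ as a simple module over the division ring $\End_{\sD^\rb(\mod A)}(X_j)$); the paper's counting argument is slicker but less explicit. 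For (1), (3), (4), which the paper declares obvious, your case analysis of the summands of $P$ via Proposition \ref{silt_sttilt} and the computation of $H^{-1}(\nu P_i)$ is accurate, and your remark that ``projective''/``injective'' must be read as excluding the zero module is the right reading (it is forced by how the lemma is used afterwards and by part (4) itself).
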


\begin{proof}
The parts (1), (3) and (4) are obvious.

(2)
Because $\#\cS+\#\cS'=n$ by Remark \ref{smc_rem},
it is sufficient to show that the condition $N_i \ne 0$ implies $N'_i = 0$.
We assume $N_i \ne 0$ and $N'_i \ne 0$ and deduce a contradiction.

Because $N_i$ and $N'_i[1]$ are in 
$\cX=\cS \cup \cS'[1] \in \twosmc A$,
they are simple in $\cH$ by Theorem \ref{smc_fsbrick}.
On the other hand,
$N_i \in \Fac M_i$ implies $\Hom_{\sD^\rb(\mod A)}(P_i,N_i) \ne 0$, and
$N'_i \in \Sub M'_i$ implies $\Hom_{\sD^\rb(\mod A)}(N'_i[1],\nu P'_i) \ne 0$.
By the Nakayama duality,
we have $\Hom_{\sD^\rb(\mod A)}(P_i,N'_i[1]) \ne 0$.

Considering the equivalence $\Hom_{\sD^\rb(\mod A)}(P,?) \colon \cH \to \mod C$,
the two simple objects $N_i$ and $N'_i[1]$ in $\cH$ must be isomorphic. 
It is a contradiction.
\end{proof}

We have a one-to-one correspondence
between the indecomposable direct summands of $P$ and the elements of $\cX$
as follows.
See also \cite[Lemma 5.3]{KY}.

\begin{Lem}\label{P_X_dual}
For $i \in \{1,2,\ldots,n\}$, 
let $X_i:=N_i$ if $N_i \ne 0$ and $X_i:=N'_i[1]$ if $N'_i \ne 0$.
Then we have $\cX=\{X_1,X_2,\ldots,X_n\}$.
Moreover, let $j \in \{1,2,\ldots,n\}$ and
define a division ring $D_j$ as $\End_{\sD^\rb(\mod A)}(X_j)$.
Then,
\begin{align*}
\Hom_{\sD^\rb(\mod A)}(P_i,X_j) \cong \begin{cases}
D_j & (i=j) \\
0 & (i \ne j)
\end{cases}
\end{align*}
as left $D_j$-modules.
\end{Lem}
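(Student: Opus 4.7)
The plan is to exploit the equivalence $F := \Hom_{\sD^\rb(\mod A)}(P,?) \colon \cH \xrightarrow{\cong} \mod C$, which matches simple objects of the heart $\cH = P[{\neq}0]^\perp$ with simple right $C$-modules. Since $\cX$ comprises the simples of $\cH$, and the simples of $\mod C$ are parameterized by the primitive idempotents $e_i \in C$ corresponding to the summands $P_i$, the computation reduces to identifying which $X_j$ maps to the simple at $e_i$ under $F$.

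First I would verify $\cX = \{X_1, \ldots, X_n\}$. Since $\cX = \cS \cup \cS'[1]$ with $\cS = \{N_i : N_i \neq 0\}$ and $\cS' = \{N'_i : N'_i \neq 0\}$, and Lemma \ref{weak_strong}(2) says exactly one of $N_i, N'_i$ is nonzero for each $i$, the elements $X_i$ are well defined and pairwise distinct (elements of $\cS$ are distinct by Lemma \ref{well-def}(4), dually for $\cS'$, and $\cS \cap \cS'[1] = \emptyset$). Thus $\{X_1, \ldots, X_n\}$ is an $n$-element subset of $\cX$, and by Remark \ref{smc_rem}(1) it equals $\cX$. Next, because $P$ is basic, $C$ is a basic $K$-algebra whose simple right modules $T_1, \ldots, T_n$ are characterized by $T_k e_j = 0$ for $j \neq k$ and $T_k e_k = T_k$ as a one-dimensional module over $\End_C(T_k)$. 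Under $F$, $F(X_i) \cong T_{\sigma(i)}$ for some permutation $\sigma$ of $\{1, \ldots, n\}$. Once $\sigma = \mathrm{id}$ is shown, the conclusion $\Hom_{\sD^\rb(\mod A)}(P_i, X_j) \cong F(X_j) e_i = T_j e_i$ immediately yields the claimed formula, since $\End_{\sD^\rb(\mod A)}(X_j) = \End_C(F(X_j)) = D_j$ transports to the left action on $T_j e_j = T_j \cong D_j$.

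To show $\sigma = \mathrm{id}$, it suffices to verify $\Hom_{\sD^\rb(\mod A)}(P_i, X_i) \neq 0$ for each $i$, since this forces $F(X_i) e_i \neq 0$ and hence $\sigma(i) = i$. Split into two cases. If $X_i = N_i$, apply $\Hom_{\sD^\rb(\mod A)}(-, N_i)$ to the canonical triangle $M'_i[1] \to P_i \to M_i \to M'_i[2]$ (coming from truncation of the $2$-term complex $P_i$): the outer terms vanish since $N_i$ is concentrated in degree $0$, giving $\Hom_{\sD^\rb(\mod A)}(P_i, N_i) \cong \Hom_A(M_i, N_i)$, which contains the canonical surjection $M_i \to N_i$. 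If $X_i = N'_i[1]$, use the Nakayama duality to identify $\Hom_{\sD^\rb(\mod A)}(P_i, N'_i[1]) \cong D\Hom_{\sD^\rb(\mod A)}(N'_i[1], \nu P_i)$, and apply $\Hom_{\sD^\rb(\mod A)}(N'_i[1], -)$ to the dual triangle $M'_i[1] \to \nu P_i \to H^0(\nu P_i) \to M'_i[2]$ to obtain $\Hom_{\sD^\rb(\mod A)}(N'_i[1], \nu P_i) \cong \Hom_A(N'_i, M'_i)$, which contains the canonical inclusion $N'_i \hookrightarrow M'_i$.

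The main obstacle is bookkeeping: the two cases ($X_i = N_i$ versus $X_i = N'_i[1]$) must be handled in parallel, and one needs to consistently track the left $D_i$-module structure that is induced on $\Hom(P_i, X_i)$ via $\End_{\sD^\rb(\mod A)}(X_i)$ and matched, through the equivalence $F$, with the left action of $\End_C(F(X_i))$ on the simple right $C$-module $T_i \cong F(X_i)$. Once this identification is made, the final equality $\Hom_{\sD^\rb(\mod A)}(P_i, X_i) \cong D_i$ of left $D_i$-modules follows from the fact that the simple module $T_i$ is one-dimensional over $\End_C(T_i) = D_i$.
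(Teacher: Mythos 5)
Your proposal is correct and follows essentially the same route as the paper: both rely on the equivalence $\Hom_{\sD^\rb(\mod A)}(P,?)\colon \cH \to \mod C$ sending $X_j$ to a simple $C$-module (hence a one-dimensional $D_j$-module), together with the nonvanishing of $\Hom_{\sD^\rb(\mod A)}(P_i,X_i)$ checked in the same two cases via the surjection $M_i \to N_i$, respectively the inclusion $N'_i \hookrightarrow M'_i$ combined with Nakayama duality. The only (harmless) slip is notational: the $-1$st cohomology of $P_i$ is $H^{-1}(P_i)$, not $M'_i = H^{-1}(\nu P_i)$, but your vanishing argument only uses that the outer terms of the truncation triangle are shifts of modules, so the conclusion stands.
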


\begin{proof}
Because $\cX=\cS \cup \cS'[1]$, we have $\cX=\{X_1,\ldots,X_n\}$. 

Next, let $j \in \{1,2,\ldots,n\}$.
Then, $X_j$ is a simple object in $\cH$.
Thus, by the equivalence $\Hom_{\sD^\rb(\mod A)}(P,?) \colon \cH \to \mod C$,
the $C$-module $\Hom_{\sD^\rb(\mod A)}(P,X_j)$ is simple.
This module is a simple module over its endomorphism algebra
$\End_C (\Hom_{\sD^\rb(\mod A)}(P,X_j))$.
By the functoriality of $\Hom_{\sD^\rb(\mod A)}(P,?) \colon \cH \to \mod C$,
the module $\Hom_{\sD^\rb(\mod A)}(P,X_j)$ is a simple $D_j$-module.
Thus, it is isomorphic to $D_j$ as left $D_j$-modules.

Now, $D_j \cong \Hom_{\sD^\rb(\mod A)}(P,X_j) \cong 
\bigoplus_{i=1}^n \Hom_{\sD^\rb(\mod A)}(P_i,X_j)$ hold
as left $D_j$-modules.
Therefore, there uniquely exists $i$ such that $\Hom_{\sD^\rb(\mod A)}(P_i,X_j) \ne 0$.
As in the proof of Lemma \ref{weak_strong},
we have $\Hom_{\sD^\rb(\mod A)}(P_j,X_j) \ne 0$.
Therefore, 
$\Hom_{\sD^\rb(\mod A)}(P_j,X_j)$ is 
isomorphic to $D_j$ as left $D_j$-modules,
and $\Hom_{\sD^\rb(\mod A)}(P_i,X_j)$ must be zero if $i \ne j$.
\end{proof}

By Lemma \ref{weak_strong}, 
we may assume that $P_1,P_2,\ldots,P_n$ and $k \le l \le m$ satisfy the following:
\begin{itemize}
\item $M_i$ is projective if and only if $i=1,\ldots,k$,
\item $N_i \ne 0$ holds if and only if $i=1,\ldots,l$,
\item $M_i \ne 0$ holds if and only if $i=1,\ldots,m$. 
\end{itemize}
In this case, we have 
\begin{align*}
\cS=\{N_1,\ldots,N_l\}, \quad \cS'=\{N'_{l+1},\ldots,N'_n\}, \quad
\cX=\{ N_1,\ldots,N_l,N'_{l+1}[1],\ldots,N'_n[1]\}.
\end{align*}
The corresponding left finite wide subcategory is $\cW=\Filt \cS$,
and the corresponding right finite wide subcategory is $\cW'=\Filt \cS'$.

We can see the two categories $\cW, \cW'[1]$ as
Serre subcategories of $\cH$ by the next theorem, 
which is the main result of this subsection.

We define $f_i \in C$ as the idempotent endomorphism
$P \to P_i \to P$ in $\sK^\rb(\proj A)$
and set $f:=f_{l+1}+\cdots+f_n$ for $P \in \twosilt A$.
We also define $f'_i \in C'$ as the idempotent endomorphism
$\nu P \to \nu P_i \to \nu P$ in $\sK^\rb(\inj A)$
and set $f':=f'_1+\cdots+f'_l$ for $\nu P \in \twocosilt A$.

\begin{Th}\label{wide_C}
In the setting above, we have the following equivalences.
\begin{itemize}
\item[(1)]
The equivalence 
$\Hom_{\sD^\rb(\mod A)}(P,?) \colon \cH \to \mod C$
is restricted to an equivalence 
\begin{align*}
\cW \cong \mod C/\langle f \rangle.
\end{align*}
\item[(2)]
The equivalence 
$D\Hom_{\sD^\rb(\mod A)}(?,\nu P) \colon \cH \to \mod C'$
is restricted to an equivalence
\begin{align*}
\cW'[1] \cong \mod C'/\langle f' \rangle.
\end{align*}
\end{itemize}
\end{Th}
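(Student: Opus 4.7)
The plan is to mirror the proof of Theorem \ref{wide_B}, taking advantage of the simplification that $\Hom_{\sD^\rb(\mod A)}(P,?) \colon \cH \to \mod C$ is already an equivalence of \emph{abelian} categories (rather than merely exact categories). I focus on part (1); part (2) then follows by the analogous argument together with the Nakayama duality.

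First, since an equivalence of abelian categories commutes with the $\Filt$-construction, restricting $\Hom_{\sD^\rb(\mod A)}(P,?)$ to $\cW = \Filt_\cH \cS$ yields an equivalence $\cW \cong \Filt_{\mod C}\Hom_{\sD^\rb(\mod A)}(P,\cS)$. On the other hand, $\mod C/\langle f\rangle$ is a Serre subcategory of $\mod C$, hence equals $\Filt$ of its own simples. It thus suffices to identify
\[
\{\Hom_{\sD^\rb(\mod A)}(P,N_i) \mid i=1,\ldots,l\}
\]
with the set of isoclasses of simple $C/\langle f\rangle$-modules.

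The identification rests on Lemma \ref{P_X_dual}. Each $N_i$ with $i \le l$ is simple in $\cH$ by Proposition \ref{silt_smc} and Theorem \ref{smc_fsbrick}, so $\Hom_{\sD^\rb(\mod A)}(P,N_i)$ is a simple $C$-module. The right action of $f_j \in C$ on $\Hom_{\sD^\rb(\mod A)}(P,N_i)$ factors through $\Hom_{\sD^\rb(\mod A)}(P_j,N_i)$, which vanishes for $j \ne i$ by Lemma \ref{P_X_dual}; taking $j \in \{l+1,\ldots,n\}$ (all distinct from $i$, since $i \le l$) and summing shows $\Hom_{\sD^\rb(\mod A)}(P,N_i) \cdot f = 0$, so $\Hom_{\sD^\rb(\mod A)}(P,N_i)$ is indeed a simple $C/\langle f\rangle$-module. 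For the converse, every simple $C$-module is of the form $\Hom_{\sD^\rb(\mod A)}(P,X_j)$ for some $j$; because $f_1,\ldots,f_n$ is a complete orthogonal idempotent decomposition of $1_C$, exactly one $f_i$ acts as the identity on any given simple $C$-module, and Lemma \ref{P_X_dual} forces it to be $f_j$. Hence $\Hom_{\sD^\rb(\mod A)}(P,X_j) \cdot f = 0$ exactly when $j \le l$, i.e.\ exactly when $X_j \in \cS$.

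Combining the two observations establishes (1). For (2), the Nakayama duality supplies natural isomorphisms $D\Hom_{\sD^\rb(\mod A)}(X,\nu P_i) \cong \Hom_{\sD^\rb(\mod A)}(P_i,X)$, so the identical analysis applied to the objects $X_j = N'_j[1]$ ($j \ge l+1$) realises $\cW'[1]$ inside $\mod C'$ as precisely $\mod C'/\langle f'\rangle$. The principal technical input is Lemma \ref{P_X_dual}; given it, the argument is essentially bookkeeping with idempotents, and I do not anticipate any serious obstacle beyond checking that completeness of the decomposition $f_1+\cdots+f_n=1_C$ correctly rules out the ``wrong'' simples.
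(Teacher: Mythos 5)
Your proof is correct and follows essentially the same route as the paper: both rest on Lemma \ref{P_X_dual} and the exactness of $\Hom_{\sD^\rb(\mod A)}(P,?)$, and both come down to observing that $X\in\cW$ is detected by the vanishing of $\Hom_{\sD^\rb(\mod A)}(P_i,X)$ for $i>l$, i.e.\ by $\Hom_{\sD^\rb(\mod A)}(P,X)\cdot f = 0$. The only cosmetic difference is that you phrase the argument by matching the simple objects of $\cW$ and $\mod C/\langle f\rangle$ and appealing to the $\Filt$-description of a Serre subcategory, whereas the paper directly characterizes membership of $X$ in each side by composition factors; the underlying reasoning (and the implicit identification $\cW=\Filt_{\cH}\cS$) is the same.
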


\begin{proof}
We only prove (1).
The part (2) is shown similarly.
Let $X \in \cH$.

A simple object $X_i \in \cH$ does not appear in the composition factors of $X$ in $\cH$
if and only if $\Hom_{\sD^\rb(\mod A)}(P_i,X)=0$, which is deduced from
an exact functor $\Hom_{\sD^\rb(\mod A)}(P_i,?) \colon \cH \to \mod K$
and Lemma \ref{P_X_dual}.
This is also equivalent to $\Hom_{\sD^\rb(\mod A)}(P,X) \circ f_i=0$.
 
The condition $X \in \cW$ exactly means that
none of $X_{l+1},\ldots,X_n$ appear in the composition factors of $X$ in $\cH$.
By the observation above, this holds if and only if $\Hom_{\sD^\rb(\mod A)}(P,X) \circ f=0$.
Therefore, the equivalence 
$\Hom_{\sD^\rb(\mod A)}(P,?) \colon \cH \to \mod C$ is restricted to
an equivalence $\cW \cong \mod C/\langle f \rangle$.
\end{proof}

Now, we relate Theorem \ref{wide_C} to Theorem \ref{wide_B}.
The idempotent $e \in B$ defined in Theorem \ref{wide_B} is
$e=\sum_{i=l+1}^m e_i$ in the current setting,
where each $e_i \in B$ is the idempotent endomorphism $M_i \to M \to M_i$.
There is an epimorphism $\phi \colon C \to B$ of $K$-algebras 
defined as $g \mapsto H^0(g)$,
which induces an epimorphism $\phi \colon C/\langle f \rangle \to B/\langle e \rangle$. 
We dually define an idempotent $e' \in B'$
as $e':=\sum_{i=k+1}^l e'_i$,
with each $e'_i \in B$ the idempotent endomorphism $M'_i \to M' \to M'_i$.
Then we have a $K$-algebra epimorphism $\phi' \colon C' \to B'$
defined as $g \mapsto H^{-1}(g)$,
which induces an epimorphism 
$\phi' \colon C'/\langle f' \rangle \to B'/\langle e' \rangle$. 

\begin{Th}\label{wide_C_B}
The following diagrams are commutative up to isomorphisms of functors:
\begin{align*}
&\begin{xy}
(  0,  8)*+{\cW}="01",
(  0, -8)*+{\cW}="00",
( 48,  8)*+{\mod B/\langle e \rangle}="11",
( 48, -8)*+{\mod C/\langle f \rangle}="10",
( 80,  8)*+{\mod B}="21",
( 80, -8)*+{\mod C}="20",
\ar@{=} "01";"00"
\ar_{\cong}^{\phi} "11";"10"
\ar^{\phi} "21";"20"
\ar_{\cong}^{\Hom_A(M,?)} "01";"11"
\ar_{\cong}^{\Hom_{\sD^\rb(\mod A)}(P,?)} "00";"10"
\ar^{\textup{nat}} "11";"21"
\ar^{\textup{nat}} "10";"20"
\end{xy}, \\
&\begin{xy}
(  0,  8)*+{\cW'}="01",
(  0, -8)*+{\cW'[1]}="00",
( 48,  8)*+{\mod B'/\langle e' \rangle}="11",
( 48, -8)*+{\mod C'/\langle f' \rangle}="10",
( 80,  8)*+{\mod B'}="21",
( 80, -8)*+{\mod C'}="20",
\ar_{\cong}^{[1]} "01";"00"
\ar_{\cong}^{\phi'} "11";"10"
\ar^{\phi'} "21";"20"
\ar_{\cong}^{\Hom_A(?,M')} "01";"11"
\ar_{\cong}^{\Hom_{\sD^\rb(\mod A)}(?,\nu P)} "00";"10"
\ar^{\textup{nat}} "11";"21"
\ar^{\textup{nat}} "10";"20"
\end{xy}.
\end{align*}
In particular, $\phi \colon C/\langle f \rangle \to B/\langle e \rangle$
and $\phi' \colon C'/\langle f' \rangle \to B'/\langle e' \rangle$ are isomorphisms.
\end{Th}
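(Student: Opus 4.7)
The plan is to first verify the outer right square of each diagram by constructing a natural isomorphism $\Hom_A(M, X) \cong \Hom_{\sD^\rb(\mod A)}(P, X)$ for $X \in \mod A$ that intertwines the $B$- and $C$-module structures via $\phi$. The left square will then follow by restricting to $\cW$ using Theorems \ref{wide_B} and \ref{wide_C}, once I check that $\phi$ maps $\langle f \rangle$ into $\langle e \rangle$. The second diagram is handled identically via Nakayama duality. Finally, the ``In particular'' claim will drop out as a formal consequence of the left square commuting with equivalences as the vertical arrows.

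For the natural isomorphism, write $P = (P^{-1} \xrightarrow{d} P^0) \in \sK^\rb(\proj A)$ concentrated in degrees $-1$ and $0$; by Proposition \ref{silt_sttilt} we have $M = H^0(P) = \Coker d$. For any $X \in \mod A$ concentrated in degree $0$, a chain map $P \to X$ is precisely a morphism $P^0 \to X$ vanishing on $\Im d$, and the only nullhomotopy is zero since the relevant components land in trivially placed terms of $X$. Hence $\Hom_{\sK^\rb(\proj A)}(P, X) = \Hom_A(M, X)$ naturally in $X$. The right $C$-action on $\Hom_{\sD^\rb(\mod A)}(P, X)$ is by precomposition with endomorphisms of $P$, and this translates under the identification to the $B$-action on $\Hom_A(M, X)$ pulled back along $\phi \colon g \mapsto H^0(g)$, giving commutativity of the outer right square.

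Next I compute $\phi(f_i) = H^0(f_i)$: for $i \le m$ this equals the idempotent $e_i \in B$, while for $i > m$ the target $M_i = 0$ forces it to be zero. Summing yields $\phi(f) = \sum_{i=l+1}^{m} e_i = e$, so $\phi$ descends to a surjection $\phi \colon C/\langle f \rangle \to B/\langle e \rangle$, and the middle square commutes. Theorems \ref{wide_B} and \ref{wide_C} tell us that the two left vertical functors restrict to equivalences $\cW \to \mod B/\langle e \rangle$ and $\cW \to \mod C/\langle f \rangle$, so the leftmost square commutes up to the natural isomorphism constructed above. The second diagram is treated analogously, replacing $\Hom_{\sD^\rb(\mod A)}(P, ?)$ by the Nakayama duality equivalence $D\Hom_{\sD^\rb(\mod A)}(?, \nu P) \colon \cH \to \mod C'$, using $\Hom_A(?, M') \colon \cW' \to \mod B'/\langle e' \rangle$, and inserting the shift isomorphism $[1] \colon \cW' \to \cW'[1]$.

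For the ``In particular'' statement, once the left square commutes with both vertical arrows being equivalences, the induced functor $\phi^* \colon \mod B/\langle e \rangle \to \mod C/\langle f \rangle$ is itself an equivalence by two-out-of-three. The map $\phi \colon C \to B$ is surjective: given $h \in \End_A(M)$, the composite of $h$ with the projection $P^0 \twoheadrightarrow M$ lifts to some $\tilde h^0 \colon P^0 \to P^0$ by projectivity of $P^0$, and then $\tilde h^0 \circ d$ vanishes modulo $\Im d$ and hence lifts along $d$ to $\tilde h^{-1} \colon P^{-1} \to P^{-1}$ by projectivity of $P^{-1}$. A surjective algebra map whose restriction-of-scalars functor is an equivalence is automatically an isomorphism, since the kernel must annihilate every module, including the regular one. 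I expect the main technical point to be the equivariance check in the second paragraph; the remaining steps are either direct computations or formal deductions.
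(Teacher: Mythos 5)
Your proof is correct and follows the paper's own approach: both construct the natural transformation via $\Hom_A(\pi,X)$ with $\pi\colon P\to M=H^0(P)$ the canonical quasi-isomorphism, and verify $C$-equivariance from the identity $\pi\circ h=H^0(h)\circ\pi$ before restricting along the inclusions $\cW\hookrightarrow\Fac M$, $\mod B/\langle e\rangle\hookrightarrow\mod B$, $\mod C/\langle f\rangle\hookrightarrow\mod C$. Your closing argument (surjectivity of $\phi$ plus the kernel annihilating the regular module) is a mild and valid variant of the paper's appeal to both algebras being basic, and you have filled in the supporting facts --- $\phi(f)=e$ and surjectivity of $C\to B$ via projective lifting --- that the paper states in the surrounding text rather than in the proof itself.
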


\begin{proof}
We show the commutativity of the first diagram. The second one is similarly proved.

It is enough to show that
the functor $\Hom_{\sD^\rb(\mod A)}(P,?) \colon \cW \to \mod C/\langle f \rangle$ is 
isomorphic to the following functor:
each $X \in \cW$ is sent to $\Hom_A(M,X)$ regarded as an object 
in $\mod C$, 
where $C$ acts on $\Hom_A(M,X)$ by $g \cdot h=g \circ H^0(h)$
for $g \in \Hom_A(M,X)$ and $h \in C$.

We consider a natural morphism $\pi \colon P \to M$ in $\sD^\rb(\mod A)$.
By straightforward calculation, a map 
$\Hom_A(\pi,X) \colon \Hom_A(M,X) \to \Hom_{\sD^\rb(\mod A)}(P,X)$ for $X \in \cW$
is an isomorphism of $K$-vector spaces functorial in $X$.
If it is a functorial isomorphism of $C$-modules
with the $C$-action on $\Hom_A(M,X)$ defined as above,
then the diagram is commutative.
This is easily deduced from the formula $g \circ H^0(h) \circ \pi=g \circ \pi \circ h$
for $g \in \Hom_A(M,X)$ and $h \in C$.

Now, we have proved that $\phi$ induces an equivalence
between $\mod B/\langle e \rangle$ and $\mod C/\langle f \rangle$.
Because $C/\langle f \rangle$ and $B/\langle e \rangle$ are basic algebras, 
$\phi \colon C/\langle f \rangle \to B/\langle e \rangle$ is an isomorphism.
\end{proof}

\subsection{Grothendieck groups and semibricks}\label{subsec_Gro}

In this subsection, 
we observe semibricks and 2-term simple-minded collections 
from the point of view of Grothendieck groups.

We first briefly recall the definition of Grothendieck groups of triangulated categories.
Let $\cD$ be an essentially small triangulated category.
Then the Grothendieck group $K_0(\cD)$ of $\cD$ is the abelian group
generated by all isomorphic classes $[X]$ in $\cD$ and bounded by the relation
$[X]-[Y]+[Z]=0$ for each triangle $X \to Y \to Z \to X[1]$.
In this paper, we consider the case $\cD=\sD^\rb(\mod A)$ and the case 
$\cD=\sK^\rb(\proj A)$.
It is clear that $[X[1]]=-[X]$.

For simplicity, we assume that $A$ is a basic finite-dimensional $K$-algebra.
Let $n:=|A|$ and 
$e_1,e_2,\ldots,e_n$ be the primitive idempotents in $A$.

It is well-known that the Grothendieck group $K_0(\sD^\rb(\mod A))$
has the family $([e_i (A/{\rad A})])_{i=1}^n$ of 
isoclasses of simple modules as a $\Z$-basis \cite[III.1.2]{Happel}.
If $e_i (A/{\rad A})$ appears $c_i$ times 
in the composition factors of a module $M \in \mod A$,
then in $K_0(\sD^\rb(\mod A))$, 
the element $[M]$ is equal to $\sum_{i=1}^n c_i[e_i (A/{\rad A})]$,
and for each complex $X=(X^a \to X^{a+1} \to \cdots \to X^b) \in \sD^\rb(\mod A)$,
the element $[X]$ is equal to $\sum_{j=a}^b (-1)^j [X^j]$.

Similarly, the Grothendieck group $K_0(\sK^\rb(\proj A))$ has
the family $([e_i A])_{i=1}^n$ of isoclasses of indecomposable projective modules
as a $\Z$-basis.
If a projective module 
$P \in \proj A$ is decomposed as $P \cong \bigoplus_{i=1}^n(e_i A)^{\oplus g_i}$,
then $[P]=\sum_{i=1}^n g_i [e_i A]$ holds,
and for each complex $P=(P^a \to P^{a+1} \to \cdots \to P^b) \in \sK^\rb(\proj A)$,
the element $[P]$ is equal to $\sum_{j=a}^b (-1)^j [P^j]$.

For these two Grothendieck groups,
there is a natural bilinear form
\begin{align*}
\langle ?, ? \rangle \colon 
K_0(\sK^\rb(\proj A)) \times K_0(\sD^\rb(\mod A)) \to \Z
\end{align*}
defined as 
\begin{align*}
\langle P, X \rangle := \sum_{i \in \Z} (-1)^j \dim_K \Hom_{\sD^\rb(\mod A)}(P,X[j])
\end{align*}
for any $P \in \sK^\rb(\proj A)$ and $X \in \sD^\rb(\mod A)$.
It is easy to see that
\begin{align*}
\langle e_i A, e_j(A/{\rad A}) \rangle = \begin{cases}
\dim_K \End_A (e_i(A/{\rad A})) & (i=j) \\
0 & (i \ne j)
\end{cases}.
\end{align*}
The ring $\End_A (e_i(A/{\rad A}))$ is a division $K$-algebra,
since $e_i(A/{\rad A})$ is a simple $A$-module.
We define an $n \times n$ diagonal matrix $\bD$ 
so that its $(i,i)$ entry is $\dim_K \End_A (e_i(A/{\rad A}))$.

Under this preparation, we fix $P \in \twosilt A$.
Decompose $P \cong \bigoplus_{k=1}^n P_k$, 
and define a vertical vector
$\bg_k=\begin{bmatrix} (g_k)_1 & (g_k)_2 & \cdots & (g_k)_n \end{bmatrix}^\rT \in \Z^n$ 
so that $[P_k]=\sum_{i=1}^n (g_k)_i [e_i A]$ holds in $K_0(\sK^\rb(\proj A))$
for each $k \in \{1,2,\ldots,n\}$.
Here, $?^\rT$ means the transpose.
The vector $\bg_k$ is called the \textit{g-vector} of $[P_k]$.
Since $P \in \twosilt A$,
the g-vectors $\bg_1,\bg_2,\ldots,\bg_n$ generate $\Z^n$
\cite[Theorem 2.27]{AI}.

We take $\cX \in \twosmc A$ corresponding to $P \in \twosilt A$
in Proposition \ref{silt_smc}.
By Lemma \ref{P_X_dual}, we may assume that $\cX=\{X_1,X_2,\ldots,X_n\}$
and that $\Hom_{\sD^\rb(\mod A)}(P_k,S_l)=0$ holds for $k \ne l$.
We define a vertical vector
$\bc_k=\begin{bmatrix} (c_k)_1 & (c_k)_2 & \cdots & (c_k)_n \end{bmatrix}^\rT \in \Z^n$ 
so that $[X_k]=\sum_{i=1}^n (c_k)_i [e_i (A/{\rad A})]$ holds in $K_0(\sD^\rb(\mod A))$
for each $k \in \{1,2,\ldots,n\}$.
We here call $\bc_k$ the \textit{c-vector} of $[X_k]$.
Since $X \in \twosmc A$,
the c-vectors $\bc_1,\bc_2,\ldots,\bc_n$ generate $\Z^n$
\cite[Lemma 3.3]{KY}.

It is clear that $\langle P_k,X_l \rangle=(\bg_k)^\rT \cdot \bD \cdot \bc_l$.
Actually, this value is given as follows.

\begin{Th}\label{P_X_dual_Gro}
Let $k,l \in \{1,2,\ldots,n\}$.
Then,
\begin{align*}
\langle P_k,X_l \rangle=\begin{cases}
\dim_K \End_{\sD^\rb(\mod A)}(X_l) & (k=l) \\
0 & (k \ne l)
\end{cases}
\end{align*}
\end{Th}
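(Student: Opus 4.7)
The plan is to observe that essentially everything we need has already been assembled: the bilinear form $\langle P_k,X_l\rangle$ is an alternating sum of Hom dimensions, and thanks to the earlier machinery most of those Homs vanish.

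First, I would recall that $\cX$ is constructed as the set of simples of the heart $\cH = P[{\ne}0]^\perp$ of the intermediate t-structure $(P[{<}0]^\perp,P[{>}0]^\perp)$ associated to $P$ (Proposition \ref{silt_smc}). By the very definition of this heart, we have $\Hom_{\sD^\rb(\mod A)}(P[j],X_l)=0$ for every $j\ne 0$ and every $l$. Equivalently, $\Hom_{\sD^\rb(\mod A)}(P,X_l[-j])=0$ for $j\ne 0$, so after reindexing
\begin{align*}
\Hom_{\sD^\rb(\mod A)}(P,X_l[j])=0 \quad \text{for all } j\ne 0.
\end{align*}
Since $P_k$ is a direct summand of $P$, this vanishing is inherited by each $P_k$: $\Hom_{\sD^\rb(\mod A)}(P_k,X_l[j])=0$ for every $j\ne 0$.

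Consequently, in the defining sum $\langle P_k,X_l\rangle=\sum_{j\in\Z}(-1)^j\dim_K\Hom_{\sD^\rb(\mod A)}(P_k,X_l[j])$, only the term $j=0$ survives, so $\langle P_k,X_l\rangle = \dim_K \Hom_{\sD^\rb(\mod A)}(P_k,X_l)$.

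Finally, I would invoke Lemma \ref{P_X_dual}: up to the chosen reindexing it yields $\Hom_{\sD^\rb(\mod A)}(P_k,X_l)\cong D_l$ as left $D_l$-modules when $k=l$, and $0$ when $k\ne l$, where $D_l:=\End_{\sD^\rb(\mod A)}(X_l)$. Taking $K$-dimensions gives exactly $\dim_K \End_{\sD^\rb(\mod A)}(X_l)$ on the diagonal and $0$ off-diagonal, completing the proof. There is no real obstacle here; the theorem is essentially an immediate consequence of Lemma \ref{P_X_dual} together with the fact that $X_l$ lies in the heart of the t-structure determined by $P$. The only point requiring a moment's care is the sign/shift bookkeeping to confirm that only $j=0$ contributes, which I have spelled out above.
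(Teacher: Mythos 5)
Your proposal is correct and follows essentially the same route as the paper: both use the fact that $X_l$ lies in the heart $P[{\ne}0]^\perp$ (via Proposition \ref{silt_smc}) to kill all terms with $j\ne 0$, and then invoke Lemma \ref{P_X_dual} for the remaining $j=0$ term.
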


\begin{proof}
Let $k,l \in \{1,2,\ldots,n\}$.
Proposition \ref{silt_smc} implies $\Hom_{\sD^\rb(\mod A)}(P_k,X_l[j])=0$ 
for any $j \ne 0$.
Thus, $\langle P_k,X_l \rangle$ coincides with
$\dim_K \Hom_{\sD^\rb(\mod A)}(P_k,X_l)$.
Now, the assertion follows from Lemma \ref{P_X_dual}.
\end{proof}

We define $n \times n$ matrices $\bG$ and $\bC$ as 
$\bG:=\begin{bmatrix} \bg_1 & \bg_2 & \cdots & \bg_n\end{bmatrix}$ and
$\bC:=\begin{bmatrix} \bc_1 & \bc_2 & \cdots & \bc_n\end{bmatrix}$.
Then, the above theorem means $\bG^\rT \bD \bC=\bD'$,
where $\bD'$ is the $n \times n$ diagonal matrix
whose $(i,i)$ entry is $\dim_K \End_{\sD^\rb(\mod A)}(X_i)$.
Since the g-vectors generate $\Z^n$ and so do the c-vectors,
$\bG$ and $\bC$ are invertible matrices on $\Z$.
Therefore, the Smith normal forms of $\bD$ and $\bD'$ are the same.
In particular, if $\bD$ is the identity matrix, then so is $\bD'$.
We do not know whether the diagonal matrices $\bD$ and $\bD'$ always coincide
up to reordering of their entries,
but if the 2-term silting object 
$P \in \twosilt A$ can be obtained from $A \in \twosilt A$ or $A[1] \in \twosilt A$
by repeating mutations, then we have $\bD=\bD'$ up to reordering of their entries, 
see \cite[Remark 7.7]{KY}.

\section{Examples}\label{sec_ex}

\subsection{Semibricks for Nakayama algebras}\label{Nakayama_number}

In this subsection, we calculate the numbers of semibricks for 
the following Nakayama algebras.
For integers $n,l \ge 1$,
let $A_{n,l}$ be the algebra given by the following quiver with relations:
\begin{align*}
\begin{xy}
(  0,  0)*+{1}="1",
( 10,  0)*+{2}="2",
( 20,  0)*+{\cdots}="cdots",
( 30,  0)*+{n}="n",
\ar "1";"2"
\ar "2";"cdots"
\ar "cdots";"n"
\end{xy},
\quad \text{all the paths of length $l$ are 0},
\end{align*}
and $B_{n,l}$ be the algebra given by the following quiver with relations:
\begin{align*}
\begin{xy}
(  0,  0)*+{1}="1",
( 10,  0)*+{2}="2",
( 20,  0)*+{\cdots}="cdots",
( 30,  0)*+{n}="n",
\ar "1";"2"
\ar "2";"cdots"
\ar "cdots";"n"
\ar@{-} "n";(30,-6)
\ar@{-} (30,-6);(0,-6)
\ar (0,-6);"1"
\end{xy}
, \quad \text{all the paths of length $l$ are 0}.
\end{align*}
In this subsection, we calculate 
$a_{n,l}:=\#\sbrick A_{n,l}$ and $b_{n,l}:=\#\sbrick B_{n,l}$.
For $n=0$, let $A_{0,l}:=0$; hence $a_{0,l}=1$, since $\#\sbrick A_{0,l}=\{\emptyset\}$.
For convinience, we also set $a_{n,l}=0$ for $n<0$.

To state our result, we recall the $n$th Catalan numbers:
\begin{align*}
c_n=\frac{1}{n+1}
\begin{pmatrix} 2n \\ n \end{pmatrix} = \frac{(2n)!}{(n+1)! \cdot n!} \quad 
(n \ge 0).
\end{align*}
They satisfy the following equation: 
\begin{align*}
c_{n+1}=\sum_{i=0}^{n} c_i c_{n-i} \quad 
(n \ge 0). \tag{*}
\end{align*}
The next equations are the main result of this subsection.

\begin{Th}\label{Nakayama_main}
Let $n,l \ge 1$ be integers.
\begin{itemize}
\item[(1)]
The following equations hold:
\begin{align*}
a_{n,l}=c_{n+1} \quad (n=1,2,\ldots,l), \quad
a_{n,l}=2a_{n-1,l}+\displaystyle \sum_{i=2}^{l}c_{i-1}a_{n-i,l} \quad (n \ge 1).
\end{align*}
\item[(2)]
The following equations hold:
\begin{align*}
b_{n,l}=(n+1)c_n \quad (n=1,2,\ldots,l), \quad
b_{n,l}=2b_{n-1,l}+{\displaystyle \sum_{i=2}^{l}c_{i-1}b_{n-i,l}} \quad (n \ge l+1).
\end{align*}
\item[(3)]
Let $\xi_1,\xi_2,\ldots,\xi_l \in \C$ be the roots of the polynomial 
$F_l(X):=X^l-2X^{l-1}-\sum_{i=2}^{l}c_{i-1}X^{l-i}$ with multiplicities.
Then we have 
\begin{align*}
a_{n,l}=\sum_{\begin{smallmatrix} 
t_1,t_2\ldots,t_l \in \Z_{\ge 0}, \\ t_1+t_2+\cdots+t_l=n 
\end{smallmatrix}} \xi_1^{t_1} \xi_2^{t_2} \cdots \xi_l^{t_l} \quad \mathrm{and} \quad 
b_{n,l}=\xi_1^n+\xi_2^n+\cdots+\xi_l^n \quad 
(n \ge 1).
\end{align*}
\end{itemize}
\end{Th}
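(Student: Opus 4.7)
The plan is to exploit Theorem~\ref{sttilt_fsbrick}, which combined with the fact that Nakayama algebras are representation-finite identifies $\sbrick A_{n,l}$ with $\sttilt A_{n,l}$, and analogously for $B_{n,l}$. The bricks admit the classical combinatorial description: those of $A_{n,l}$ are the uniserial modules $M[i,j]$ indexed by intervals $[i,j] \subset \{1,\ldots,n\}$ with $j - i + 1 \le l$, and those of $B_{n,l}$ are the analogous arcs of length at most $\min(l,n)$ on the cycle. A direct calculation gives $\Hom(M[i,j],M[i',j']) \ne 0$ iff $i' \le i \le j' \le j$, so a semibrick is exactly a family of intervals (resp.\ arcs) pairwise either disjoint or strictly nested with both endpoints strict.

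For Part (1) I would classify a semibrick $\cS$ of $A_{n,l}$ according to the maximal interval containing vertex $1$. The three cases ``no such interval'', ``$\{[1,1]\}$ is maximal'', and ``$[1,k]$ with $2 \le k \le l$ is maximal'' contribute $a_{n-1,l}$, $a_{n-1,l}$, and $a_{k-2,l} \cdot a_{n-k,l}$ respectively (the last one because strict containment of the remaining intervals in $[1,k]$ splits them independently into semibricks on $\{2,\ldots,k-1\}$ and $\{k+1,\ldots,n\}$). Since $A_{m,l}$ coincides with the path algebra $K\mathbb{A}_m$ whenever $m \le l$ (no path of length $l$ exists in that range), the classical count $\#\sbrick K\mathbb{A}_m = c_{m+1}$ gives $a_{m,l} = c_{m+1}$ for $m \le l$, so $a_{k-2,l} = c_{k-1}$ in the sum; this yields the stated recurrence and the base-case formula simultaneously.

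For Part (2) the cyclic analogue is subtler because vertex $1$ can lie in the interior of an arc, so sub-arcs through $1$ may themselves be strictly contained in it. I would condition on the full chain of arcs of $\cS$ containing vertex $1$ (they are pairwise nested since they share vertex $1$), decompose the complement into annular layers between consecutive members of the chain plus the region disjoint from the outermost member, and reduce each region to a linear $A$-type count. This produces a relation expressing $b_{n,l}$ in terms of the $a$-values; applying the operator $L(z) = 1 - 2z - \sum_{i=2}^{l} c_{i-1} z^i$ --- which annihilates $A(z)$ up to an additive constant by Part (1) --- then converts this into the claimed $b$-only recurrence for $n \ge l+1$. The initial values $b_{n,l} = (n+1) c_n$ for $n \le l$ follow from the stabilization $b_{n,l} = b_{n,n}$ when $l \ge n$ (the brick set and Hom-spaces of $B_{n,l}$ are independent of $l$ in that range) together with a direct enumeration of $\sbrick B_{n,n}$, which yields $(n+1)c_n = \binom{2n}{n}$.

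Part (3) is then formal. The generating function $\sum_n a_{n,l} z^n = \prod_{i=1}^{l} (1 - \xi_i z)^{-1}$ expands as $\sum_n h_n(\xi_1,\ldots,\xi_l) z^n$, giving the formula for $a_{n,l}$. For $b_{n,l}$ the power sum $p_n = \sum_i \xi_i^n$ satisfies the same linear recurrence for $n \ge l$ (immediate from $F_l(\xi_i) = 0$), and a short Newton-identity check using $e_1(\xi) = 2$ and $e_i(\xi) = -c_{i-1}$ for $i \ge 2$ (read directly off from $F_l$) shows that $p_n$ matches the initial values $(n+1)c_n$ for $n = 1,\ldots,l$, whence $b_{n,l} = p_n$. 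The main obstacle I anticipate is the cyclic decomposition in Part (2): correctly tracking the nested chain of arcs through vertex $1$ and bookkeeping the annular contributions requires careful case analysis, and translating the resulting mixed relation into the clean $b$-only recurrence seems to need the generating-function detour rather than a direct combinatorial identity; the base case $b_{n,n} = (n+1) c_n$, where arcs of maximal length span the whole cycle, is the second delicate point.
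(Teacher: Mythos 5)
Your part (1) is essentially the mirror image of the paper's argument: the paper classifies a semibrick of $A_{n,l}$ by the unique brick whose socle is $S_n$ (support $[n-i+1,n]$), whereas you classify by the unique brick whose top is $S_1$ (support $[1,k]$); both split the remaining bricks into two independent linear regions and give $a_{n,l}=2a_{n-1,l}+\sum a_{i-2,l}a_{n-i,l}$. The paper then proves $a_{m,l}=c_{m+1}$ for $m\le l$ \emph{from} this recurrence by induction, rather than citing the Ingalls--Thomas count, which keeps the argument self-contained; your appeal to the classical count is fine but slightly weakens the ``simultaneously'' claim, since you obtain the base case from an external fact, not from the recurrence.

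The genuine gap is in part (2). You propose to condition on the \emph{entire nested chain} of arcs through vertex $1$ and then bookkeep annular layers. This is more than is needed and, importantly, does not readily produce the clean intermediate relation. The paper's key simplification is to condition only on the arc of \emph{maximal dimension} containing a fixed vertex (the paper uses $n$): once that arc $[[u,v]]$ of size $i$ is fixed, every other brick is either supported in the linear complement ($n-i$ vertices, giving $a_{n-i,l}$) or strictly inside the linear interval $[[u,v]]\setminus\{u,v\}$ ($i-2$ vertices, giving $a_{i-2,l}=c_{i-1}$), and this interior count already absorbs any smaller arcs through the fixed vertex automatically. Summing over the $i$ possible positions of $[[u,v]]$ gives $b_{n,l}=2a_{n-1,l}+\sum_{i=2}^l ic_{i-1}a_{n-i,l}$ directly. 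Your chain decomposition splits the interior into several pieces whose count you would then need to re-sum back to $a_{i-2,l}$, which is the work you flagged but did not do. Separately, your claim that $b_{n,n}=(n+1)c_n=\binom{2n}{n}$ follows by ``direct enumeration'' is unsubstantiated; the paper gets it from the $b$-in-terms-of-$a$ relation together with the identity $(n+2)c_{n+1}=2(2n+1)c_n$, not from a bijection. Finally, in part (3) the elementary symmetric functions of the roots are $e_i(\xi)=(-1)^{i+1}c_{i-1}$ for $i\ge 2$ (the signs alternate, as one sees from $\prod_j(X-\xi_j)=\sum_i(-1)^ie_iX^{l-i}$ versus $F_l(X)=X^l-2X^{l-1}-\sum_{i\ge 2}c_{i-1}X^{l-i}$), not $e_i(\xi)=-c_{i-1}$ as you write; this sign error would propagate through the Newton-identity computation.
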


We remark that $F_l(X)$ is the characteristic polynomial of 
the common recurrence relation of the sequences 
$(a_{n,l})_{n=1}^\infty$ and $(b_{n,l})_{n=1}^\infty$
stated in the parts (1) and (2) of the above theorem.

From Theorem \ref{Nakayama_main}, we obtain Tables \ref{table_A} and \ref{table_B} below.

\begin{table}[h]
\begin{minipage}{0.45\hsize}
\centering
\begin{tabular}{cc|ccccccc}
& & \multicolumn{7}{c}{$n$} \\ 
& & 1 & 2 & 3 & 4 & 5 & 6 & 7 \\
\hline
\multirow{7}*{$l$} & 1 & 2 & 4 & 8 & 16 & 32 & 64 & 128 \\
& 2 & 2 & 5 & 12 & 29 & 70 & 169 & 408 \\
& 3 & 2 & 5 & 14 & 37 & 98 & 261 & 694 \\
& 4 & 2 & 5 & 14 & 42 & 118 & 331 & 934 \\
& 5 & 2 & 5 & 14 & 42 & 132 & 387 & 1130 \\
& 6 & 2 & 5 & 14 & 42 & 132 & 429 & 1298 \\
& 7 & 2 & 5 & 14 & 42 & 132 & 429 & 1430 \\
\end{tabular}
\caption{The table of $a_{n,l}$}\label{table_A}
\vspace{-\baselineskip}
\end{minipage}
\begin{minipage}{0.45\hsize}
\centering
\begin{tabular}{cc|ccccccc}
& & \multicolumn{7}{c}{$n$} \\ 
& & 1 & 2 & 3 & 4 & 5 & 6 & 7\\
\hline
\multirow{7}*{$l$} & 1 & 2 & 4 & 8 & 16 & 32 & 64 & 128 \\
& 2 & 2 & 6 & 14 & 34 & 82 & 198 & 478 \\
& 3 & 2 & 6 & 20 & 50 & 132 & 354 & 940 \\
& 4 & 2 & 6 & 20 & 70 & 182 & 504 & 1430 \\
& 5 & 2 & 6 & 20 & 70 & 252 & 672 & 1920 \\
& 6 & 2 & 6 & 20 & 70 & 252 & 924 & 2508 \\
& 7 & 2 & 6 & 20 & 70 & 252 & 924 & 3432 \\
\end{tabular}
\caption{The table of $b_{n,l}$}\label{table_B}
\vspace{-\baselineskip}
\end{minipage}
\end{table}

The algebras $A_{n,l}$ and $B_{n,l}$ are representation-finite,
so every semibrick is left finite.
For each of these algebras, Theorem \ref{sttilt_fsbrick} implies that
the number of semibricks coincides with
the number of support $\tau$-tilting modules.
Support $\tau$-tilting modules over Nakayama algebras were studied in the paper
\cite{Adachi},
which contains the tables of the numbers of support $\tau$-tilting modules
for the algebras $A_{n,l}$ and $B_{n,l}$ \cite[Tables 3 and 5]{Adachi}.
Compare them with our Tables \ref{table_A} and \ref{table_B} above.
%We also remark that Theorem \ref{Nakayama_main} tells us that
%each path algebra of type $\mathbb{A}_n$ has exactly $c_{n+1}$ semibricks.
%Hence, it has exactly $c_{n+1}$ support $\tau$-tilting modules,
%which is well-known.

For $u \in \{1,2,\ldots,n\}$, we write $e_u$ for the idempotent of $A_{n,l}$ and $B_{n,l}$
for the vertex $u$, and set $e_U:=\sum_{u \in U} e_u$
for each subset $U \subset \{1,2,\ldots,n\}$.
The notation $\supp M:=\{ u \in \{1,2,\ldots,n\} \mid Me_i \ne 0\}$
denotes the support of a module $M$ over $A_{n,l}$ or $B_{n,l}$.
We define (cyclic) intervals $[u,v],[[u,v]] \subset \{1,2,\ldots,n\}$ 
for $u,v \in \{1,2,\ldots,n\}$ as 
\begin{align*}
[u,v] = \begin{cases}
\{ u, u+1, \ldots, v\} & (u \le v) \\
\emptyset & (u > v) 
\end{cases}, \quad
[[u,v]] = \begin{cases}
\{ u, u+1, \ldots, v\} & (u \le v) \\
\{ u, u+1, \ldots, n, 1, 2, \ldots, v\} & (u > v) 
\end{cases}.
\end{align*}
Since $A_{n,l}$ and $B_{n,l}$ are Nakayama algebras,
for $u,v \in \{1,2,\ldots,n\}$,
there exists at most one brick $X$
satisfying $\operatorname{\mathsf{top}} X=S_u$ and $\soc X=S_v$,
where $S_u$ and $S_v$ are the simple modules corresponding to the vertices $u$ and $v$,
respectively.
If such an $X$ exists, then we set $S_{u,v}:=X$.

First, we remark the next property, which follows 
from $A_{n,l}=A_{n,n}$ for $n \le l$.

\begin{Lem}\label{A_down}
Let $n,l \ge 1$ be integers with $n \le l$.
Then we have $a_{n,l}=a_{n,n}$.
\end{Lem}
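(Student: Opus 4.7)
The proof is almost immediate from the definitions, and the statement is essentially observing that the length-$l$ relation is vacuous for small $n$. My plan is as follows.

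First, I would note that the underlying quiver $Q_n \colon 1 \to 2 \to \cdots \to n$ of $A_{n,l}$ has no oriented cycles, and every oriented path in $Q_n$ has length at most $n-1$. Consequently, in the path algebra $KQ_n$ there are no nonzero paths of length $\ge n$ to begin with.

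Next, under the hypothesis $n \le l$, every path of length $l$ in $Q_n$ has length $l \ge n > n-1$, so no such path exists. Therefore the defining relations ``all paths of length $l$ are $0$'' are vacuous for both $A_{n,l}$ and $A_{n,n}$, giving canonical isomorphisms
\begin{align*}
A_{n,l} \cong KQ_n \cong A_{n,n}
\end{align*}
of $K$-algebras.

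Finally, since the notion of brick and of semibrick depends only on the algebra up to isomorphism, any isomorphism $A_{n,l} \cong A_{n,n}$ induces a bijection $\sbrick A_{n,l} \to \sbrick A_{n,n}$, and hence $a_{n,l}=a_{n,n}$. There is no real obstacle here; the only thing to check is the elementary fact that the length of a path in the linear quiver $Q_n$ is bounded by $n-1$, which makes the length-$l$ relations trivial whenever $l \ge n$.
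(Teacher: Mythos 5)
Your proof is correct and is exactly the paper's (one-line) justification: the paper states that Lemma~\ref{A_down} "follows from $A_{n,l}=A_{n,n}$ for $n \le l$," which is precisely the observation you spell out, namely that the longest path in $Q_n$ has length $n-1$, so the length-$l$ relations (and the length-$n$ relations) are vacuous and both algebras equal $KQ_n$.
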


The next proposition is a keypoint to determine the values of $a_{n,l}$.

\begin{Prop}\label{zenka_A}
For any integers $n,l \ge 1$, the following equation holds:
\begin{align*}
a_{n,l}=2a_{n-1,l}+\sum_{i=2}^{l}a_{i-2,l}a_{n-i,l}.
\end{align*}
\end{Prop}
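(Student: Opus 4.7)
The plan is to prove this recursion by direct combinatorics: first translate semibricks over $A_{n,l}$ into configurations of intervals, and then split on the interval of the semibrick containing the rightmost vertex $n$.

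First I would observe that since $A_{n,l}$ is a Nakayama algebra of type $\mathbb{A}_n$, each indecomposable $A_{n,l}$-module is uniserial of length at most $l$ and is automatically a brick; hence
$$\brick A_{n,l}=\{S_{u,v}\mid 1\le u\le v\le n,\ v-u+1\le l\}.$$
A direct $\Hom$ calculation---each homomorphism $S_{u,v}\to S_{u',v'}$ is determined by the image of a top generator, the requirement that this image lie at vertex $u$ in the target gives $u'\le u\le v'$, and the relation killing the arrow at the socle of the source forces $v'\le v$---yields
$$\Hom_{A_{n,l}}(S_{u,v},S_{u',v'})\ne 0\iff u'\le u\le v'\le v.$$
Consequently two distinct bricks $S_{u,v}$ and $S_{u',v'}$ are Hom-orthogonal iff their intervals $[u,v]$ and $[u',v']$ are either disjoint or strictly nested (one strictly contained in the other, sharing no endpoint). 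Thus a semibrick $\cS$ corresponds bijectively to a family of intervals in $\{1,\ldots,n\}$ of length $\le l$ that are pairwise disjoint or strictly nested; in particular, at most one interval of $\cS$ can have right endpoint $n$.

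I would then partition $\sbrick A_{n,l}$ according to how the vertex $n$ is covered by intervals of $\cS$, which gives three mutually exclusive cases:
\begin{itemize}
\item[(A)] no interval of $\cS$ contains $n$; then all intervals lie in $\{1,\ldots,n-1\}$, contributing $a_{n-1,l}$;
\item[(B)] the singleton $[n,n]$ belongs to $\cS$; every other interval must be disjoint from it (strict nesting inside $[n,n]$ is impossible) and so lies in $\{1,\ldots,n-1\}$, contributing another $a_{n-1,l}$;
\item[(C)] a unique interval $[n-i+1,n]\in\cS$ of length $i$ with $2\le i\le\min(n,l)$ is present; every other interval is either strictly nested inside (hence contained in the $i-2$ vertices $\{n-i+2,\ldots,n-1\}$) or disjoint (hence contained in the $n-i$ vertices $\{1,\ldots,n-i\}$), and the two sub-families are independent, contributing $a_{i-2,l}\,a_{n-i,l}$.
\end{itemize}
Summing yields $a_{n,l}=2a_{n-1,l}+\sum_{i=2}^{\min(n,l)}a_{i-2,l}\,a_{n-i,l}$, and since $a_{n-i,l}=0$ whenever $i>n$, the upper limit can be extended to $l$ without changing the value, giving the stated identity.

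The only non-routine step is the $\Hom$ computation and the resulting description of orthogonality; afterwards the argument is pure bookkeeping. The one care-point is to keep case (B) separate from (C) rather than treating it as the $i=1$ instance of (C): the naive formula would involve $a_{-1,l}\,a_{n-1,l}=0$ and lose the singleton contribution, and accounting for (B) honestly is precisely what produces the coefficient $2$ in front of $a_{n-1,l}$ in the final expression.
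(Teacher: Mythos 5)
Your proposal is correct and follows essentially the same decomposition as the paper: both classify semibricks $\cS$ by the unique brick (if any) whose support contains the rightmost vertex $n$, then observe that removing this brick splits the remainder into independent semibricks over two smaller linear Nakayama algebras, yielding exactly the three cases and the factor $a_{i-2,l}\,a_{n-i,l}$. The paper phrases the bookkeeping via quotient algebras $A_{n,l}/\langle e_{[n-i+1,n]}\rangle$ and $A_{n,l}/\langle 1-e_{[n-i+2,n-1]}\rangle$ and leaves the Hom-orthogonality of intervals implicit, whereas you make the $\Hom(S_{u,v},S_{u',v'})\ne 0\iff u'\le u\le v'\le v$ computation and the ``disjoint or strictly nested'' criterion explicit; this is a welcome, self-contained supplement, but not a different route.
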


\begin{proof}
Let $m:=\min\{ n,l \}$.
A brick $S_{n-i+1,n}$ is well-defined for $i=1,\ldots,m$.

Let $\cS \in \sbrick A_{n,l}$.
Because $\cS$ is a semibrick, 
$\cS$ satisfies exactly one of the following conditions $(0), (1), \ldots, (m)$:
\begin{itemize}
\item[(0)] no brick $S \in \cS$ satisfies $n \in \supp S$,
\item[($i$)] 
($i \in \{1,2,\ldots,m\}$)
the brick $S_{n-i+1,n}$ belongs to $\cS$.
\end{itemize}
We define $\sbr(i)$ as the subset of $\sbrick A_{n,l}$
consisting of the semibricks
satisfying the condition $(i)$ for each $i \in \{0,1,\ldots,m\}$.
Then $\#\sbrick A_{n,l}=\sum_{i=0}^m \# \sbr(i)$ holds.

First, we clearly have $\sbr(0)=\sbrick A_{n,l}/\langle e_n \rangle$
and $A_{n,l}/\langle e_n \rangle \cong A_{n-1,l}$.
Thus, we get that $\# \sbr(0)=a_{n-1,l}$.

Second, we have a bijection
$\sbr(1) \to \sbrick A_{n,l}/\langle e_n \rangle$
defined as $\cS \mapsto \cS \setminus \{S_{n,n}\}$.
Because $A_{n,l}/\langle e_n \rangle \cong A_{n-1,l}$,
we get that $\# \sbr(1)=a_{n-1,l}$.

Next, for each $i \in \{2,3,\ldots,m\}$,
there exists a bijection
\begin{align*}
\sbr(i) &\to
\sbrick A_{n,l}/\langle e_{[n-i+1,n]} \rangle \times
\sbrick A_{n,l}/\langle 1-e_{[n-i+2,n-1]} \rangle \\
\cS &\mapsto 
(\{ S \in \cS \mid \supp S \cap [n-i+1,n]=\emptyset \}, 
\{ S \in \cS \mid \supp S \subset [n-i+2,n-1] \}).
\end{align*}
The inverse map 
is given by $(\cS_1,\cS_2) \mapsto \cS_1 \cup \cS_2 \cup \{ S_{n-i+1,n} \}$.
Since there are isomorphisms of algebras 
$A_{n,l}/\langle e_{[n-i+1,n]} \rangle \cong A_{n-i,l}$
and $A_{n,l}/\langle 1-e_{[n-i+2,n-1]} \rangle \cong A_{i-2,l}$ (including $i=2$),
we have $\# \sbr(i)=a_{i-2,l}a_{n-i,l}$.

Now, we obtain the equations
\begin{align*}
a_{n,l}=\#\sbrick A_{n,l}=\sum_{i=0}^m \# \sbr(i)
=2a_{n-1,l}+\sum_{i=2}^m a_{i-2,l}a_{n-i,l}.
\end{align*}
This equation implies the assertion,
because $a_{n,l}=0$ for $n<0$.
\end{proof}

We can regard the equation in Proposition \ref{zenka_A}
as a recurrence relation on $n$ with the coefficients $a_{n,l}$.
We determine the values of these coefficients.

\begin{Lem}\label{A_catalan}
We have $a_{n,l}=c_{n+1}$ if $0 \le n \le l$ and $1 \le l$.
\end{Lem}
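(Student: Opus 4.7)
The plan is to argue by induction on $n$, using the recurrence from Proposition \ref{zenka_A} together with the standard Catalan convolution identity $(*)$.

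First I would handle the base cases $n = 0$ and $n = 1$ directly: we have $a_{0,l} = 1 = c_1$ (the only semibrick over the zero algebra is the empty one) and $a_{1,l} = 2 = c_2$ (the algebra $A_{1,l}$ is just the base field, with semibricks $\emptyset$ and $\{S_1\}$).

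For the inductive step, assume $2 \le n \le l$ and $a_{j,l} = c_{j+1}$ for all $0 \le j < n$. Apply Proposition \ref{zenka_A}:
\begin{align*}
a_{n,l} = 2a_{n-1,l} + \sum_{i=2}^{l} a_{i-2,l}\,a_{n-i,l}.
\end{align*}
Since $a_{n-i,l} = 0$ for $i > n$, and since for $2 \le i \le n$ we have $i - 2 \le n - 2 < n$ and $n - i < n$, the induction hypothesis applies to every nonzero summand. So the recurrence becomes
\begin{align*}
a_{n,l} = 2c_n + \sum_{i=2}^{n} c_{i-1}\,c_{n-i+1}.
\end{align*}

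The final step is to identify this with $c_{n+1}$. Reindexing $j = i - 1$, the sum equals $\sum_{j=1}^{n-1} c_j\,c_{n-j}$, and the Catalan convolution $(*)$ gives
\begin{align*}
c_{n+1} = \sum_{i=0}^{n} c_i\,c_{n-i} = c_0 c_n + c_n c_0 + \sum_{j=1}^{n-1} c_j\,c_{n-j} = 2c_n + \sum_{j=1}^{n-1} c_j\,c_{n-j},
\end{align*}
which matches. This closes the induction. There is no real obstacle here: the only subtlety is making sure the truncation of the sum at $i = n$ (versus $i = l$) is legitimate, which is exactly why the hypothesis $n \le l$ is needed, and noting that $A_{n,n}$ coincides with $A_{n,l}$ for $l \ge n$ (reflected already in Lemma \ref{A_down}), so there is no loss in running the induction within this range.
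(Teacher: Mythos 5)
Your proof is correct and takes essentially the same route as the paper: induction on $n$ with base cases $n=0,1$, applying Proposition \ref{zenka_A}, truncating the sum at $i=n$ because $a_{n-i,l}=0$ for $i>n$, and closing via the Catalan convolution identity $(*)$. You are simply a bit more explicit about the truncation step, and the closing remark about Lemma \ref{A_down} is harmless but unnecessary since the induction never leaves the range $n\le l$.
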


\begin{proof}
We use induction on $n$.
Note that we have defined $a_{n,l}=0$ for $n<0$.

For $n=0,1$, we have $a_{0,l}=1=c_1$ and $a_{1,l}=2=c_2$ as desired.

If $n \ge 2$, by Proposition \ref{zenka_A} and the induction hypothesis, we have
\begin{align*}
a_{n,l}=2a_{n-1,l}+\sum_{i=2}^n a_{i-2,l}a_{n-i,l}
=2c_n+\sum_{i=2}^{n}c_{i-1}c_{n-i+1}
=\sum_{i=0}^n c_i c_{n-i}\overset{(*)}{=}c_{n+1},
\end{align*}
and the assertion holds.
The induction process is now complete.
\end{proof}

Now, we have a recurrence relation of 
the sequence $(a_{n,l})_{n=1}^\infty$ for each $l \ge 1$.

\begin{proof}[Proof of Theorem \ref{Nakayama_main} (1)]
Proposition \ref{zenka_A} and Lemma \ref{A_catalan} imply the assertion.
\end{proof}

Next, we consider the algebra $B_{n,l}$.

\begin{Lem}\label{B_down}
We have $\sbrick B_{n,n} = \sbrick B_{n,l}$ and $b_{n,l}=b_{n,n}$ if $1 \le n \le l$.
\end{Lem}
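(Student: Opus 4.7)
The plan is to realize $B_{n,n}$ as the quotient of $B_{n,l}$ by an ideal generated by a single central radical element, and then invoke Theorem \ref{EJR_th} (1) together with the fact that Nakayama algebras are representation-finite, which forces $\sbrick = \fLsbrick$ for the algebras in question. Concretely, writing $\alpha_i \colon i \to i+1$ (indices modulo $n$) for the arrows of the underlying cyclic quiver, I would consider the element
\[
c := \sum_{i=1}^{n} \alpha_i \alpha_{i+1} \cdots \alpha_{i+n-1} \in B_{n,l},
\]
the sum, over each vertex $i$, of the unique length-$n$ cyclic path based at $i$ (these paths are nonzero when $n < l$, and all vanish when $n = l$, in which case the lemma is trivial since $B_{n,l} = B_{n,n}$).

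First I would verify that $c \in Z(B_{n,l}) \cap \rad B_{n,l}$. The radical condition is immediate since every summand is a path of positive length. For centrality, picking out the unique summand supported at each vertex gives $e_j c = c e_j = \alpha_j \alpha_{j+1} \cdots \alpha_{j+n-1}$, and $\alpha_j c = c \alpha_j$ follows because both sides equal the unique length-$(n{+}1)$ path starting with $\alpha_j$ (or both vanish when $n = l$). Next, I would show that $\langle c \rangle$ coincides with the ideal $I_n$ of paths of length $\ge n$ in $B_{n,l}$: for each $i$, we have $e_i c e_i = \alpha_i \alpha_{i+1} \cdots \alpha_{i+n-1} \in \langle c \rangle$, and every path of length $\ge n$ factors as one of these length-$n$ cyclic prefixes followed by further arrows, hence lies in $\langle c \rangle$; the reverse inclusion $\langle c \rangle \subset I_n$ is obvious. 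This yields $B_{n,l}/\langle c \rangle \cong B_{n,n}$.

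Combining these verifications with Theorem \ref{EJR_th} (1) gives $\fLsbrick B_{n,l} = \fLsbrick B_{n,n}$. Since both $B_{n,l}$ and $B_{n,n}$ are representation-finite Nakayama algebras, every semibrick over each is left finite, so this equality lifts to $\sbrick B_{n,l} = \sbrick B_{n,n}$; taking cardinalities gives $b_{n,l} = b_{n,n}$. The only step with any real content is the centrality computation for $c$, which reduces to a direct manipulation in the path algebra using the cyclic structure; the identification of $\langle c \rangle$ with $I_n$ is routine.
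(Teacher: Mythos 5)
Your argument is correct, and the key computations check out: $c$ is central and radical, $\langle c\rangle$ equals the ideal $I_n$ of paths of length at least $n$, and hence $B_{n,l}/\langle c\rangle \cong B_{n,n}$. But you reach the statement by a noticeably heavier route than the paper. The paper's proof is elementary: it observes that $B_{n,n}$ is a quotient of $B_{n,l}$ when $n \le l$ (so $\sbrick B_{n,n} \subset \sbrick B_{n,l}$), and that every brick in $\mod B_{n,l}$ is already a $B_{n,n}$-module (so $\sbrick B_{n,l} \subset \sbrick B_{n,n}$); the latter holds because a uniserial $B_{n,l}$-module of length greater than $n$ has a repeated composition factor and hence a nonzero nilpotent endomorphism --- equivalently, because your central radical element $c$ acts nilpotently on every module and must therefore act by zero on any brick. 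Your proposal packages this same core observation into the machinery of Subsection \ref{on_EJR}. One simplification available even within your framework: the unnumbered Proposition immediately preceding Theorem \ref{EJR_th} already states $\sbrick A = \sbrick (A/I)$ directly whenever $I = \langle X\rangle$ with $X \subset Z(A) \cap \rad A$, and its one-line proof is exactly the ``central radical element annihilates a brick'' argument; citing it would let you drop the detour through $\fLsbrick$ via Theorem \ref{EJR_th}(1) and representation-finiteness. What the EJR framing buys in general is a uniform way to recognize such reductions, but for this particular lemma the direct argument is considerably shorter.
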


\begin{proof}
It is easy to see that $B_{n,n}$ is a quotient algebra of $B_{n,l}$,
and that any brick in $\mod B_{n,l}$ belongs to $\mod B_{n,n}$.
Thus the assertions follow.
\end{proof}

The value $b_{n,l}$ can be calculated from the sequence $(a_{n,l})_{n=1}^\infty$.

\begin{Prop}\label{zenka_B_A}
For any integers $n,l \ge 1$, the following equation holds:
\begin{align*}
b_{n,l}=2a_{n-1,l}+\sum_{i=2}^{l}ic_{i-1}a_{n-i,l}.
\end{align*}
\end{Prop}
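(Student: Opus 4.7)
The plan is to follow the strategy of Proposition \ref{zenka_A}, classifying semibricks $\cS\in\sbrick B_{n,l}$ by the subset $\cS_n:=\{S\in\cS\mid n\in\supp S\}$ of bricks whose support contains the distinguished vertex $n$. When $\cS_n=\emptyset$, the semibrick lives in the quotient $B_{n,l}/\langle e_n\rangle\cong A_{n-1,l}$, contributing $a_{n-1,l}$. The genuinely new feature compared with the linear case is that, because the quiver is cyclic, $\cS_n$ may have more than one element, so the first task will be to control its internal structure.

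The key technical input is the Hom formula for uniserial modules over a Nakayama algebra, which I would derive by matching quotients of the source with submodules of the target: $\Hom_A(S_{u_1,v_1},S_{u_2,v_2})\ne 0$ if and only if $u_1\in[[u_2,v_2]]$ and $v_2\in[[u_1,v_1]]$. Applied to two distinct bricks of $\cS$ whose supports both contain $n$, a short case analysis (equal tops; equal socles; strict nesting with both endpoints differing; overlap without inclusion) shows that the only Hom-orthogonal configuration is strict support inclusion with both top and socle different. Hence $\cS_n$ is a chain under support inclusion, with a unique outermost element $S\in\cS_n$. A parallel Hom computation rules out any $T\in\cS\setminus\{S\}$ whose support meets exactly one of the endpoints of $\supp S$, so $\supp T$ is forced to lie entirely in the cyclic complement of $\supp S$ (a linear interval on $n-i$ vertices) or in the open interior of $\supp S$ (a linear interval on $i-2$ vertices, empty if $i\le 2$); moreover, a brick in the complement and a brick in the interior are automatically Hom-orthogonal, since their supports are disjoint.

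Once this decomposition is established, the count is routine. The cyclic complement carries the subalgebra $A_{n-i,l}$ and the interior carries $A_{i-2,l}$, and for each length $i\in\{1,\dots,l\}$ there are exactly $i$ choices of outermost brick, one per position of $n$ inside an interval of length $i$. Splitting off the cases $\cS_n=\emptyset$ and $i=1$, I obtain
\begin{align*}
b_{n,l}=a_{n-1,l}+a_{n-1,l}+\sum_{i=2}^{l}i\cdot a_{n-i,l}\cdot a_{i-2,l}=2a_{n-1,l}+\sum_{i=2}^{l}i\cdot a_{n-i,l}\cdot a_{i-2,l}.
\end{align*}
Since $0\le i-2\le l$ throughout, Lemma \ref{A_catalan} substitutes $a_{i-2,l}=c_{i-1}$ to yield the claimed formula. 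The main obstacle will be the chain-structure step of the second paragraph: establishing that bricks in a single semibrick all passing through a fixed vertex must be strictly nested with distinct tops and distinct socles is where the argument departs nontrivially from the linear proof of Proposition \ref{zenka_A}, and it is the point where the cyclic Hom formula must be used most carefully.
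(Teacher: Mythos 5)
Your proposal is correct and follows essentially the same route as the paper: the paper singles out the brick $S_{u,v}$ of maximal dimension among those with $n$ in their support (for each fixed dimension $i$ there are $i$ candidates, one per position of $n$ in a cyclic interval of length $i$), and asserts that every other brick of $\cS$ has support either disjoint from $[[u,v]]$ or contained in $[[u,v]]\setminus\{u,v\}$, whence the bijection with $\sbrick A_{n-i,l}\times\sbrick A_{i-2,l}$. Your outermost element of the nested chain $\cS_n$ is precisely that maximal brick, and your Hom formula makes explicit what the paper takes for granted.

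One small point to tighten: your second paragraph rules out a brick $T\ne S$ whose support meets \emph{exactly one} endpoint of $\supp S=[[u,v]]$, and concludes directly that $\supp T$ is disjoint from $\supp S$ or inside its open interior. But you must also exclude the case where $\supp T$ contains both endpoints $u,v$ while not being contained in $[[u,v]]$ (so $\supp T$ is a cyclic interval whose complementary gap lies strictly inside $[[u,v]]$). If $n\in\supp T$, your chain observation handles it since $\supp T\not\subset\supp S$ and $\supp S\not\subset\supp T$ contradicts the nesting; if $n\notin\supp T$ (the gap contains $n$), a direct application of your Hom formula gives $\Hom(S,T)\ne 0$, since the top of $S$ is $u\in\supp T$ and the socle of $T$ lies in $[[u,v]]$. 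With that case closed, the decomposition and the count go through exactly as you and the paper describe.
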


\begin{proof}
We set $m:=\min\{ n,l \}$.

Let $\cS \in \sbrick B_{n,l}$.
Because $\cS$ is a semibrick, 
$\cS$ satisfies exactly one of the following conditions $(0), (1), \ldots, (m)$:
\begin{itemize}
\item[(0)] no brick $S \in \cS$ satisfies $n \in \supp S$,
\item[($i$)] 
($i \in \{1,2,\ldots,m\}$)
there exists some brick $S \in \cS$ satisfying $n \in \supp S$,
and $\max\{\dim_K S \mid S \in \cS, \ n \in \supp S\}=i$ holds.
\end{itemize}
We define $\sbr(i)$ as the subset of $\sbrick B_{n,l}$
consisting of the semibricks
satisfying the condition $(i)$ for each $i \in \{0,1,\ldots,m\}$.
Then $\#\sbrick B_{n,l}=\sum_{i=0}^m \# \sbr(i)$ holds.

First, we clearly have $\sbr(0)=\sbrick B_{n,l}/\langle e_n \rangle$
and $B_{n,l}/\langle e_n \rangle \cong A_{n-1,l}$.
Thus, we get that $\#\sbr(0)=a_{n-1,l}$.

Second, we have a bijection
$\sbr(1) \to \sbrick B_{n,l}/\langle e_n \rangle$
defined as $\cS \mapsto \cS \setminus \{S_{n,n}\}$.
Because $B_{n,l}/\langle e_n \rangle \cong A_{n-1,l}$,
we get that $\#\sbr(1)=a_{n-1,l}$.

Next, let $i \in \{2,3,\ldots,m\}$.
For each pair $(u,v)$ satisfying $n \in [[u,v]]$ and $\#[[u,v]]=i$,
a brick $S_{u,v}$ is well-defined,
so let $\sbr(u,v)$ be the subset of $\sbr(i)$
consisting of $\cS$ with $S_{u,v} \in \cS$.

If $\cS \in \sbr(u,v)$,
then any brick $S \in \cS$ different from $S_{u,v}$
satisfies $\supp S \cap [[u,v]]=\emptyset$ or 
$\supp S \subset [[u,v]] \setminus \{u,v\}$.
In particular, 
$S_{u,v}$ is the unique brick $S \in \cS$ satisfying 
$n \in \supp S$ and $\#\supp S=i$.
 
Thus, we have the following decomposition into a disjoint union:
\begin{align*}
\sbr(i) = \coprod_{n \in [[u,v]],\: \#[[u,v]]=i} \sbr(u,v).
\end{align*}
For a pair $(u,v)$ satisfying $n \in [[u,v]]$ and $\#[[u,v]]=i$, 
there exists a bijection
\begin{align*}
\sbr(u,v) &\to
\sbrick B_{n,l}/\langle e_{[[u,v]]} \rangle \times
\sbrick B_{n,l}/\langle 1-e_{[[u,v]]\setminus\{u,v\}} \rangle \\
\cS &\mapsto 
(\{ S \in \cS \mid \supp S \cap [[u,v]]=\emptyset \}, 
\{ S \in \cS \mid \supp S \subset [[u,v]] \setminus \{u,v\} \}).
\end{align*}
The inverse is given by $(\cS_1,\cS_2) \mapsto \cS_1 \cup \cS_2 \cup \{ S_{u,v} \}$.
Since there are isomorphisms of algebras, $B_{n,l}/\langle e_{[[u,v]]}\rangle \cong A_{n-i,l}$
and $B_{n,l}/\langle 1-e_{[[u,v]]\setminus\{u,v\}} \rangle \cong A_{i-2,l}$
(including $i=2$),
we get that $\#\sbr(u,v)=a_{i-2,l}a_{n-i,l}=c_{i-1}a_{n-i,l}$.
There exist exactly $i$ pairs
$(u,v)$ satisfying $n \in [[u,v]]$ and $\#[[u,v]]=i$,
so we have $\#\sbr(i)=ic_{i-1}a_{n-i,l}$.

Now, we have equations
\begin{align*}
b_{n,l}=\#\sbrick B_{n,l}=\sum_{i=0}^m \#\sbr(i)
=2a_{n-1,l}+\sum_{i=2}^m ic_{i-1}a_{n-i,l}.
\end{align*}
This equation implies the assertion, 
because $a_{n,l}=0$ for $n<0$.
\end{proof}

We can also determine $b_{n,l}$ explicitly if $n \le l$.

\begin{Lem}\label{B_catalan}
We have $b_{n,l}=(n+1)c_n$ if $1 \le n \le l$.
\end{Lem}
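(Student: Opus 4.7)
The plan is to reduce everything to a straightforward generating-function / Catalan-number identity by combining the formula just established in Proposition \ref{zenka_B_A} with the explicit values of $a_{k,l}$ in the range where Lemma \ref{A_catalan} applies. For $1\le n\le l$, Proposition \ref{zenka_B_A} gives
\begin{align*}
b_{n,l}=2a_{n-1,l}+\sum_{i=2}^{l}ic_{i-1}a_{n-i,l},
\end{align*}
and since $a_{k,l}=0$ for $k<0$ the sum effectively terminates at $i=n$. For $0\le k\le n-1\le l-1$ we have $a_{k,l}=c_{k+1}$ by Lemma \ref{A_catalan}, so substituting $c_n$ for $a_{n-1,l}$ and $c_{n-i+1}$ for $a_{n-i,l}$ turns the right-hand side into a finite sum involving only Catalan numbers.

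Next I would re-index by $j=i-1$ and split $(j+1)=j+1$ to rewrite the expression as
\begin{align*}
b_{n,l}=2c_n+\sum_{j=1}^{n-1}c_jc_{n-j}+\sum_{j=1}^{n-1}jc_jc_{n-j}.
\end{align*}
The first sum is computed directly from the Catalan convolution identity~$(*)$ (which gives $\sum_{j=1}^{n-1}c_jc_{n-j}=c_{n+1}-2c_n$), and the second is computed by the usual symmetry trick $j\leftrightarrow n-j$, which yields $\sum_{j=1}^{n-1}jc_jc_{n-j}=\tfrac{n}{2}(c_{n+1}-2c_n)$. Together these reduce $b_{n,l}$ to a simple linear combination of $c_n$ and $c_{n+1}$, namely $\tfrac{n+2}{2}c_{n+1}-nc_n$.

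Finally, applying the standard multiplicative recurrence $c_{n+1}=\tfrac{2(2n+1)}{n+2}c_n$ collapses this expression to $(n+1)c_n$, which is the desired value. The one mild subtlety to double-check is that all indices $n-i$ appearing in the sum lie in the range $0\le n-i\le l$, so that the formula $a_{n-i,l}=c_{n-i+1}$ of Lemma \ref{A_catalan} is actually applicable; this is automatic from $n\le l$ and $2\le i\le n$. There is no substantive obstacle—the whole argument is a bookkeeping exercise combining Proposition \ref{zenka_B_A}, Lemma \ref{A_catalan}, and two elementary Catalan identities.
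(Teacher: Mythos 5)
Your proof is correct and follows essentially the same route as the paper: both substitute the Catalan values from Lemma \ref{A_catalan} into Proposition \ref{zenka_B_A}, then evaluate the resulting sum using the convolution identity $(*)$ together with the symmetry trick $j\leftrightarrow n-j$, and close with the multiplicative recurrence $(n+2)c_{n+1}=2(2n+1)c_n$. The only cosmetic difference is that you split the coefficient $(j+1)=j+1$ and symmetrize only the $j$-part, whereas the paper symmetrizes the full coefficient in one step, which avoids the intermediate factor of $n/2$; the underlying computation is identical.
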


\begin{proof}
By Proposition \ref{zenka_B_A} and Lemma \ref{A_catalan}, we have
\begin{align*}
b_{n,l}=2a_{n-1,l}+\sum_{i=2}^n ia_{i-2,l}a_{n-i,l}
=2c_n+\sum_{i=2}^{n}ic_{i-1}c_{n-i+1}.
\end{align*}
On the other hand, the equalities
\begin{align*}
\sum_{i=2}^{n}ic_{i-1}c_{n-i+1}=\sum_{i=1}^{n-1} (i+1) c_i c_{n-i}
=\sum_{i=1}^{n-1} (n-i+1) c_i c_{n-i},
\end{align*}
hold, so we get that
\begin{align*}
2\sum_{i=2}^{n}ic_{i-1}c_{n-i+1}
&=\sum_{i=1}^{n-1} (i+1) c_i c_{n-i}+\sum_{i=1}^{n-1} (n-i+1) c_i c_{n-i} \\
&=(n+2) \sum_{i=1}^{n-1} c_i c_{n-i}
=(n+2) \left( \sum_{i=0}^n c_i c_{n-i} -2c_n \right)
\overset{(*)}{=}(n+2)(c_{n+1}-2c_n).
\end{align*}
Since $(n+2)c_{n+1}=2(2n+1)c_n$ holds by definition, we have
$\sum_{i=2}^{n}ic_{i-1}c_{n-i+1}=(n-1)c_n$;
hence, $b_{n,l}=(n+1)c_n$.
\end{proof}

Now, we prove Theorem \ref{Nakayama_main} (2).

\begin{proof}[Proof of Theorem \ref{Nakayama_main} (2)]
We have obtained the first equation in Lemma \ref{B_catalan}. 
For the second equation, we use the assumption $n \ge l+1$; 
then Propositions \ref{zenka_A} and \ref{zenka_B_A} imply the assertion.
\end{proof}

Let $l \ge 1$ be an integer.
It remains to prove Theorem \ref{Nakayama_main} (3).
For each $i \in \{0,1,\ldots,l\}$, 
we define an integer $d_i$ so that $F_l(X)=\sum_{i=0}^{l}d_iX^{l-i}$.
Then we have $d_0=1$, $d_1=-2$, and $d_i=-c_{i-1}$.
Recall the following symmetric polynomials 
$\be_n, \bh_n, \bp_n \in \Z[X_1,X_2,\ldots,X_l]$ (see \cite{MacDonald}):
\begin{align*}
\be_n(X_1,\ldots,X_l)&:=\sum_{J \subset \{1,2,\ldots,l\}, \: \#J=n} \;
\prod_{j \in J} X_j \quad (n=0,1,\ldots,l), \\
\bh_n(X_1,\ldots,X_l)&:=\sum_{\begin{smallmatrix} 
t_1,t_2\ldots,t_l \in \Z_{\ge 0}, \\ t_1+t_2+\cdots+t_l=n 
\end{smallmatrix}} X_1^{t_1} X_2^{t_2} \cdots X_l^{t_l} \quad (n \in \Z), \\
\bp_n(X_1,\ldots,X_l)&:=X_1^n+X_2^n+\cdots+X_l^n \quad (n \ge 1).
\end{align*}
In particular, we have $\be_0=1$, $\bh_0=1$, and $\bh_n=0$ for $n<0$.
We need a technical lemma on these polynomials.

\begin{Lem}\label{poly_sum}\cite{MacDonald}
For any integer $n \ge 1$, we have polynomial equations 
$\sum_{i=0}^l (-1)^i \be_i \bh_{n-i}=0$ and
$\sum_{i=1}^l (-1)^{i-1} i \be_i \bh_{n-i}=\bp_n$.
\end{Lem}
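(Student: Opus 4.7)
The plan is to prove both identities simultaneously using standard generating function manipulations for symmetric functions, exactly as in Macdonald's book. Introduce the generating series
\begin{align*}
E(t) := \prod_{j=1}^{l}(1+X_j t) = \sum_{i=0}^{l} \be_i\, t^{i}, \qquad
H(t) := \prod_{j=1}^{l}\frac{1}{1-X_j t} = \sum_{n\ge 0} \bh_n\, t^{n},
\end{align*}
in the formal power series ring over $\Z[X_1,\dots,X_l]$. The crucial observation is the telescoping product identity
\begin{align*}
E(-t)\,H(t) = \prod_{j=1}^{l}(1-X_j t)\cdot\prod_{j=1}^{l}\frac{1}{1-X_j t}=1.
\end{align*}

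For the first identity, I would simply extract the coefficient of $t^{n}$ on both sides of $E(-t)H(t)=1$. The left-hand coefficient is $\sum_{i\ge 0}(-1)^{i}\be_i\bh_{n-i}$, and since $\be_i=0$ for $i>l$ and $\bh_{n-i}=0$ for $i>n$ by convention, this truncates to the desired sum $\sum_{i=0}^{l}(-1)^{i}\be_i\bh_{n-i}$, which vanishes for each $n\ge 1$.

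For the second identity, I would differentiate the logarithm of $H(t)$. Since
\begin{align*}
\frac{d}{dt}\log H(t) = \sum_{j=1}^{l}\frac{X_j}{1-X_j t}=\sum_{n\ge 1}\bp_n\,t^{n-1},
\end{align*}
we have the key relation $H'(t)=H(t)\cdot\sum_{n\ge 1}\bp_n\,t^{n-1}$. On the other hand, differentiating $E(-t)H(t)=1$ with respect to $t$ yields $E(-t)H'(t)=E'(-t)H(t)$, and combining these gives
\begin{align*}
E'(-t)\,H(t) = E(-t)\,H(t)\cdot\sum_{n\ge 1}\bp_n\,t^{n-1}=\sum_{n\ge 1}\bp_n\,t^{n-1}.
\end{align*}
Since $E'(-t)=\sum_{i=1}^{l}(-1)^{i-1}\,i\,\be_i\,t^{i-1}$, extracting the coefficient of $t^{n-1}$ on the left produces exactly $\sum_{i=1}^{l}(-1)^{i-1}i\,\be_i\,\bh_{n-i}$ (with the same boundary conventions as above), which equals $\bp_n$.

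There is essentially no obstacle here: both identities are standard consequences of the fundamental relation $E(-t)H(t)=1$ between the generating functions of elementary and complete homogeneous symmetric polynomials, together with the logarithmic derivative computation that connects $H$ to the power sums. The only mild care needed is in the book-keeping of truncation and the conventions $\be_i=0$ for $i>l$ and $\bh_m=0$ for $m<0$, which ensure that the infinite sums over $i\ge 0$ reduce to the finite sums appearing in the statement.
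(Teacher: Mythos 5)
Your proof is correct and takes essentially the same approach as the paper, which simply cites Macdonald's book for both identities — in Macdonald these are proved by precisely the generating function manipulations you carry out, namely the relation $E(-t)H(t)=1$ for the first identity and the logarithmic derivative of $H(t)$ for the second. You have merely made explicit the argument the paper delegates to the reference, including the correct bookkeeping conventions $\be_i=0$ for $i>l$ and $\bh_m=0$ for $m<0$.
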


\begin{proof}
The first equation is shown in \cite[I.2, (2.6$'$)]{MacDonald},
and the second one is deduced from the second and the third equations in 
\cite[I.2, Example 8]{MacDonald}.
\end{proof}

Now, we complete the proof of Theorem \ref{Nakayama_main}.

\begin{proof}[Proof of Theorem \ref{Nakayama_main} (3)]
We first show the assertion for $a_{n,l}$.
Fix $l \ge 1$.
For any $n \in \Z$, we set $a'_{n,l}:=\bh_n(\xi_1,\xi_2,\ldots,\xi_l)$.
It is enough to show that $a'_{n,l}=a_{n,l}$ for all $n \ge 1$.

We claim that $\sum_{i=0}^{l}d_ia'_{n-i,l}=0$ for $n \ge 1$. 
Indeed, 
$\be_i(\xi_1,\xi_2,\ldots,\xi_l)=(-1)^id_i$ holds for each $i=0,1,\ldots,l$.
Substituting $\xi_j$ for $X_j$ in the first equation in Lemma \ref{poly_sum}, 
we have $\sum_{i=0}^{l}d_ia'_{n-i,l}=0$ for $n \ge 1$.

By Theorem \ref{Nakayama_main} (1), we also have 
$\sum_{i=0}^{l}d_ia_{n-i,l}=0$ for $n \ge 1$.
Because $a_{0,l}=1=a'_{0,l}$ and $a_{n,l}=0=a'_{n,l}$ for $n<0$,
it can be shown inductively that $a_{n,l}=a'_{n,l}$ for all $n \ge 1$.
The assertion for $a_{n,l}$ is now proved.

We next show the assertion for $b_{n,l}$.
In the second equation in Lemma \ref{poly_sum},
by setting $X_j=\xi_j$ and using $a'_{n,l}=a_{n,l}$,
we have $\sum_{i=1}^l (-id_i) a_{n-i,l}=\xi_1^n+\xi_2^n+\cdots+\xi_l^n$.
By Proposition \ref{zenka_B_A}, 
it is easily seen that $b_{n,l}=\sum_{i=1}^l (-id_i)a_{n-i,l}$, 
which yields $b_{n,l}=\xi_1^n+\xi_2^n+\cdots+\xi_l^n$.
\end{proof}

\subsection{Wide subcategories for tilted algebras}\label{Section_tilt}

We investigate functorially finiteness
of wide subcategories and semibricks mainly for tilted algebras here.
Recall from Subsection \ref{wide_mod} 
that $\fwide A$ is the set of functorially finite wide subcategories of $\mod A$.
For a given algebra $A$,
we consider the following two conditions in this subsection.

\begin{Cond}\label{wide_fwide}
The equalities $\fwide A = \fLwide A$ and $\fwide A = \fRwide A$ hold.
\end{Cond}

Note that we always have the inclusions 
$\fLwide A \subset \fwide A$ and $\fRwide A \subset \fwide A$,
see Proposition \ref{left_func}.

\begin{Cond}\label{subset}
For any $\cS \in \fLsbrick A$, any subset of $\cS$ belongs to $\fLsbrick A$.
\end{Cond}

Ingalls--Thomas studied functorially finite wide subcategories for hereditary algebras
\cite{IT}.
From their results, it follows that 
Condition \ref{wide_fwide} holds if $A$ is hereditary.

\begin{Prop}\label{fwide_MS}
If $A$ is hereditary, $\fwide A=\fLwide A=\fRwide A$ hold.
\end{Prop}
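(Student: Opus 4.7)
The plan is to reduce the claim to results of Ingalls--Thomas \cite{IT} on the classification of functorially finite wide subcategories over hereditary algebras. Note first that, by Proposition \ref{left_func} applied both to $A$ and to $A^{\mathrm{op}}$, we already have the inclusions $\fLwide A \subset \fwide A$ and $\fRwide A \subset \fwide A$, so only the reverse inclusions need to be established.

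First I would recall from \cite{IT} that, for a hereditary algebra $A$, the operations $\sT \colon \wide A \to \tors A$ and $\sW_\sL \colon \tors A \to \wide A$ restrict to mutually inverse bijections $\fwide A \to \ftors A$ and $\ftors A \to \fwide A$. Given any $\cW \in \fwide A$, this immediately says $\sT(\cW) \in \ftors A$, which by definition means $\cW \in \fLwide A$; hence $\fwide A \subset \fLwide A$. Dually, applying the Ingalls--Thomas bijection in the torsion-free picture shows that $\sF(\cW) \in \ftorf A$ for every $\cW \in \fwide A$, whence $\fwide A \subset \fRwide A$.

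Combining these inclusions with the ones from Proposition \ref{left_func} gives the desired equalities $\fwide A = \fLwide A = \fRwide A$. The main obstacle is simply invoking the correct Ingalls--Thomas statement in the form above; once this is identified, the argument is formal, relying only on Proposition \ref{tors_wide}(1) to ensure that $\sT$ is injective on wide subcategories so that the two sides of the bijection match. The hereditary hypothesis is essential here: Example \ref{MS_ex}, which treats a tilted algebra, is intended precisely to show that without hereditariness the inclusions $\fLwide A \subset \fwide A$ and $\fRwide A \subset \fwide A$ can be strict.
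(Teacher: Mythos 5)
Your proposal is correct and follows essentially the same route as the paper: both use Proposition \ref{left_func} for the easy inclusions and appeal to Ingalls--Thomas for the reverse direction, with the $\fRwide$ case reduced to the $\fLwide$ case by passing to the opposite algebra. The only cosmetic difference is that you invoke the packaged Ingalls--Thomas bijection $\fwide A \leftrightarrow \ftors A$ directly, whereas the paper unpacks it into the two constituent steps ($\Fac\cW = \sT(\cW)$ from [IT, Prop.\ 2.13] and $\Fac\cW \in \ftors A$ from [IT, Cor.\ 2.17]); these amount to the same thing.
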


\begin{proof}
We only show that $\fwide A=\fLwide A$,
because the opposite algebra $A^\mathrm{op}$ is also hereditary.

By Proposition \ref{left_func}, it suffices to show that $\fwide A \subset \fLwide A$.
Let $\cW \in \fwide A$, then $\Fac \cW=\sT(\cW)$ holds by \cite[Proposition 2.13]{IT}.  
Then \cite[Corollary 2.17]{IT} implies $\sT(\cW)=\Fac \cW \in \ftors A$, which yields
$\cW \in \fLwide A$.
Now $\fwide A \subset \fLwide A$ has been proved,
and we obtain the assertion.
\end{proof}

We also remark relationship between Conditions \ref{wide_fwide} and \ref{subset}.

\begin{Prop}
If $\fwide A = \fLwide A$, then 
the algebra $A$ satisfies Condition \ref{subset}.
\end{Prop}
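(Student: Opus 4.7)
The plan is to exploit the characterization of left finite wide subcategories as module categories of quotient algebras (Theorem \ref{wide_B}) to show that any ``sub-wide-subcategory'' of a left finite one is functorially finite, and then invoke the hypothesis $\fwide A = \fLwide A$ to conclude.

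First I would start with $\cS \in \fLsbrick A$ and an arbitrary subset $\cS' \subset \cS$, and set $\cW := \Filt \cS$ and $\cW' := \Filt \cS'$. By Proposition \ref{sbrick_wide} both are wide subcategories of $\mod A$, and by construction $\cW \in \fLwide A \subset \fwide A$ (using Proposition \ref{left_func}). The whole problem reduces to showing $\cW' \in \fwide A$, since then the hypothesis gives $\cW' \in \fLwide A$, which by the restriction of $\Filt$ to left finite objects means exactly that $\cS'$ is left finite.

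Next, I would apply Theorem \ref{wide_B} to pick a support $\tau$-tilting module $M \in \sttilt A$ corresponding to $\cW$ and realize $\cW \cong \mod B/\langle e \rangle$ via $\Hom_A(M,?)$, where $B = \End_A(M)$ and $e = 1 - \sum_{i \in I} e_i$ with $\cS = \{N_i \mid i \in I\}$. The subset $\cS' \subset \cS$ corresponds to a subset $I' \subset I$, and under this equivalence $\cW' = \Filt \cS'$ is identified with the Serre subcategory of $\mod B/\langle e \rangle$ generated by the simples $\{\Hom_A(M,N_i) \mid i \in I'\}$. This Serre subcategory is exactly $\mod B/\langle e' \rangle$ for the idempotent $e' := 1 - \sum_{i \in I'} e_i$, because $B/\langle e' \rangle$ is a quotient of $B/\langle e \rangle$ whose simple modules are precisely the simples of $\mod B/\langle e \rangle$ annihilated by $e'$.

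The key step is then functorially finiteness. Since $B/\langle e' \rangle$ is a quotient algebra of $B/\langle e \rangle$, the full subcategory $\mod B/\langle e' \rangle \subset \mod B/\langle e \rangle$ is functorially finite (it is closed under quotients and submodules, and the inclusion admits both a left adjoint $? \otimes_{B/\langle e \rangle} B/\langle e' \rangle$ and a right adjoint $\Hom_{B/\langle e \rangle}(B/\langle e' \rangle, ?)$, whose unit/counit provide approximations). Transporting through the equivalence, $\cW'$ is functorially finite in $\cW$. Since $\cW$ is itself functorially finite in $\mod A$, composing approximations yields that $\cW'$ is functorially finite in $\mod A$, i.e.\ $\cW' \in \fwide A$. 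Applying the hypothesis $\fwide A = \fLwide A$ then concludes the proof.

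The only mildly delicate point is the transitivity of functorially finiteness through the chain $\cW' \subset \cW \subset \mod A$; this is standard but should be stated explicitly, since a left (resp.\ right) approximation of $X \in \mod A$ by $\cW'$ is obtained by first taking a left (resp.\ right) $\cW$-approximation and then a left (resp.\ right) $\cW'$-approximation of the result, noting that the existence of both adjoints for the inner inclusion ensures this composition is again an approximation. All other steps are formal consequences of results already established in the paper.
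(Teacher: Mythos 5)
Your proposal is correct and follows essentially the same route as the paper's proof: pass to $M \in \sttilt A$, use Theorem \ref{wide_B} to identify $\cW = \Filt\cS$ with a module category, observe that $\Filt\cS'$ becomes a Serre subcategory (hence functorially finite) there, and then conclude by transitivity of functorial finiteness and the hypothesis $\fwide A = \fLwide A$. The extra detail you supply about the explicit idempotent $e'$ and the transitivity of approximations is a harmless elaboration of what the paper states more briefly.
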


\begin{proof}
Assume $\fwide A = \fLwide A$.
Let $\cS \in \fLsbrick A$ and $\cS_1 \subset \cS$.

Set $\cW:=\Filt \cS$ and $\cW_1:=\Filt \cS_1$.
Since $\cS \in \fLsbrick A$, we have $\cW \in \fLwide A$,
and Proposition \ref{left_func} implies that $\cW \in \fwide A$.

We claim that $\cW_1$ is a functorially finite subcategory of $\cW$.
There exists $M \in \sttilt A$ such that 
$\ind (M/{\rad_B M})=\cS$ with $B:=\End_A(M)$ by Theorem \ref{sttilt_fsbrick}.
By Theorem \ref{wide_B}, 
we obtain an equivalence $\Hom_A(M,?) \colon \cW \to \mod C$ 
for some algebra $C$
sending the elements of $\cS$ to the simple $C$-modules.
Thus, $\Hom_A(M,\cW_1)$ is equivalent to a Serre subcategory of $\mod C$,
and it is functorially finite in $\mod C$.
This implies that $\cW_1$ is functorially finite in $\cW$.

Because $\cW$ belongs to $\fwide A$, we get that $\cW_1 \in \fwide A$.
By assumption, we have $\cW_1 \in \fLwide A$. 
Therefore, $A$ satisfies Condition \ref{subset}. 
\end{proof}

Thus, every hereditary algebra $A$ satisfies 
both Conditions \ref{wide_fwide} and \ref{subset}.
On the other hand, neither of these conditions holds for 
the following algebra $A$ in Example \ref{MS_ex}.

\begin{Ex}\label{MS_ex}
Let $A$ be the finite-dimensional $K$-algebra
given by the following quiver with relation:
\begin{align*}
\begin{xy}
( 0,0) *+{1} = "1",
(10,0) *+{2} = "2",
(20,0) *+{3} = "3"
\ar         ^{\gamma} "2"; "3"
\ar @<1mm>  ^{\alpha}  "1"; "2"
\ar @<-1mm> _{\beta} "1"; "2"
\end{xy}, \quad
\alpha \gamma = 0.
\end{align*}
We write $e_i$ for the corresponding idempotent for the vertex $i=1,2,3$.  

We consider the indecomposable injective module $I_3$:
\begin{align*}
I_3 = \left( \begin{smallmatrix} 1 \\ 2 \\ 3 \end{smallmatrix} \right), \quad
I_3 \alpha=0, \quad I_3 \beta=I_3e_2.
\end{align*}

First, $I_3$ is a brick, so $\Filt I_3 \in \wide A$ follows 
from Proposition \ref{sbrick_wide}.
Because $I_3$ is injective, we have $\Filt I_3 = \add I_3$, so it belongs to $\fwide A$.

We claim that $\Filt I_3 \notin \fLwide A$,
that is, $\sT(\Filt I_3)=\sT(I_3) \notin \ftors A$.
We assume that $\sT(I_3) \in \ftors A$ and deduce a contradiction.
We consider the quotient algebra $A':=A/\langle e_3 \rangle$.
The algebra $A'$ is isomorphic to the path algebra of 
the Kronecker quiver $1 \rightrightarrows 2$.
Let $M:=I_3 \otimes_A A'$.
Because $\sT(I_3)=\Filt(\Fac I_3) \in \ftors A$,
the torsion class $\sT(M)=\Filt(\Fac M)$ must belong to $\ftors A'$.
We can see $M \in \brick A'$,
because
\begin{align*}
M= \left( \begin{smallmatrix} 1 \\ 2 \end{smallmatrix} \right), \quad 
M \alpha=0, \quad M \beta=M e_2,
\end{align*}
so we have $\Filt M \in \wide A'$.
Because $\sT(M)=\sT(\Filt M) \in \ftors A'$, the wide subcategory
$\Filt M$ must be in $\fwide A'$ by Proposition \ref{left_func}.
However, $M$ is in a homogeneous tube in the Auslander--Reiten quiver of $\mod A'$,
so we obtain that $\Filt M \notin \fwide A$, and it is a contradiction.
Thus, the claim $\sT(\Filt I_3)=\sT(I_3) \notin \ftors A$ is now shown.

Therefore, Condition \ref{wide_fwide} does not hold.

Moreover, we claim $\fLwide A \ne \fRwide A$.
To prove this, we show that $\Filt I_3 \in \fRwide A$, that is, $\sF(I_3) \in \ftorf A$.
We use the map $\tirigid A \to \fRsbrick A$ in Theorem \ref{sttilt_fsbrick}.
Clearly, $I_3 \in \tirigid A$ and $I_3 \in \brick A$.
Thus, the corresponding right finite semibrick is $\{ I_3 \}$,
and we have $\sF(I_3) \in \ftorf A$.

For Condition \ref{subset}, we consider a simple module $S_2$:
\begin{align*}
S_2 = \left( \begin{smallmatrix} 2 \end{smallmatrix} \right).
\end{align*}
We can see $\{I_3,S_2\} \in \sbrick A$,
and we have shown that $\{ I_3 \} \notin \fLsbrick A$.
To show that Condition \ref{subset} does not hold, 
it is sufficient to prove that $\{ I_3, S_2 \} \in \fLsbrick A$,
that is, $\sT(I_3,S_2) \in \ftors A$.
We use the map $\trigid A \to \fLsbrick A$ in Theorem \ref{sttilt_fsbrick}.
Let $P_1$ be the following indecomposable projective module:
\begin{align*}
P_1 = \left( \begin{smallmatrix} &1&& \\ 2&&2& \\ &&&3 \end{smallmatrix} \right).
\end{align*}
By straightforward calculation, 
we obtain that $S_2 \oplus P_1$ is a $\tau$-rigid $A$-module,
and then the corresponding left finite semibrick is $\{ I_3, S_2 \}$.
This implies that $\sT(I_3,S_2) \in \ftors A$.
Thus, Condition \ref{subset} does not hold.
\end{Ex}

The algebra in Example \ref{MS_ex} is a tilted algebra of type 
$\tilde{\mathbb{A}}_2$.
The exchange quivers of $\sttilt A$ and $\stitilt A$ are 
written in Figure \ref{A2} below.
The corresponding left finite semibricks and 
the corresponding right finite semibricks are denoted by bold letters. 
We can see that $\fLsbrick A \subset \fRsbrick A$ and 
that $\{ I_3 \}$ is the unique element in $\fRsbrick A \setminus \fLsbrick A$.
\begin{figure}[pt]
{\small 
\begin{align*}
\rotatebox{90}{
\begin{xy}
(-108.5,0)*+{\boxed{\sttilt A}},
(  7, 27)*+{
\begin{smallmatrix} \rthr \end{smallmatrix},
\begin{smallmatrix} \rtwo\!\!& \\ &3 \end{smallmatrix},
\begin{smallmatrix} &\rone\!\!&& \\ 2\!\!&&2\!\!& \\ &&&3 \end{smallmatrix}
}="55",
( 35, 27)*+{
\begin{smallmatrix} \rthr \end{smallmatrix},
\begin{smallmatrix} &\rone\!\!&& \\ \rtwo\!\!&&\rtwo\!\!& \\ &&&3 \end{smallmatrix},
\begin{smallmatrix} &1\!\!&&1\!\!&& \\ 2\!\!&&2\!\!&&2\!\!& \\ &&&&&3 \end{smallmatrix}
}="65",
( 70, 27)*+{
\begin{smallmatrix} \rthr \end{smallmatrix},
\begin{smallmatrix} &\rone\!\!&&\rone\!\!&& \\ \rtwo\!\!&&\rtwo\!\!&&\rtwo\!\!& \\ &&&&&3 \end{smallmatrix},
\begin{smallmatrix} &1\!\!&&1\!\!&&1\!\!&& \\ 2\!\!&&2\!\!&&2\!\!&&2\!\!& \\ &&&&&&&3 \end{smallmatrix}
}="75",
(-14, 27)*+{
\begin{smallmatrix} \rthr \end{smallmatrix},
\begin{smallmatrix} \rtwo\!\!& \\ &3 \end{smallmatrix}
}="44",
(  7,  9)*+{
\begin{smallmatrix} \rtwo\!\!& \\ &\rthr \end{smallmatrix},
\begin{smallmatrix} 2 \end{smallmatrix},
\begin{smallmatrix} &\rone\!\!&& \\ 2\!\!&&2\!\!& \\ &&&3 \end{smallmatrix}
}="54",
( 35,  9)*+{
\begin{smallmatrix} &\rone\!\!&& \\ \rtwo\!\!&&\rtwo\!\!& \\ &&&\rthr \end{smallmatrix},
\begin{smallmatrix} &1\!\!& \\ 2\!\!&&2 \end{smallmatrix},
\begin{smallmatrix} &1\!\!&&1\!\!&& \\ 2\!\!&&2\!\!&&2\!\!& \\ &&&&&3 \end{smallmatrix}
}="64",
( 70,  9)*+{
\begin{smallmatrix} &\rone\!\!&&\rone\!\!&& \\ \rtwo\!\!&&\rtwo\!\!&&\rtwo\!\!& \\ &&&&&\rthr \end{smallmatrix},
\begin{smallmatrix} &1\!\!&&1\!\!& \\ 2\!\!&&2\!\!&&2 \end{smallmatrix},
\begin{smallmatrix} &1\!\!&&1\!\!&&1\!\!&& \\ 2\!\!&&2\!\!&&2\!\!&&2\!\!& \\ &&&&&&&3 \end{smallmatrix} 
}="74",
(-14,  0)*+{
\begin{smallmatrix} \rtwo\!\!& \\ &\rthr \end{smallmatrix},
\begin{smallmatrix} 2 \end{smallmatrix}
}="43",
(  7, -9)*+{
\begin{smallmatrix} \rtwo \end{smallmatrix},
\begin{smallmatrix} &\rone\!\!&& \\ 2\!\!&&\rtwo\!\!& \\ &&&\rthr \end{smallmatrix},
\begin{smallmatrix} &1\!\!& \\ 2\!\!&&2 \end{smallmatrix}
}="53",
( 35, -9)*+{
\begin{smallmatrix} &\rone\!\!& \\ \rtwo\!\!&&\rtwo \end{smallmatrix},
\begin{smallmatrix} &1\!\!&&\rone\!\!&& \\ 2\!\!&&2\!\!&&\rtwo\!\!& \\ &&&&&\rthr \end{smallmatrix},
\begin{smallmatrix} &1\!\!&&1\!\!& \\ 2\!\!&&2\!\!&&2 \end{smallmatrix}
}="63",
( 70, -9)*+{
\begin{smallmatrix} &\rone\!\!&&\rone\!\!& \\ \rtwo\!\!&&\rtwo\!\!&&\rtwo \end{smallmatrix},
\begin{smallmatrix} &1\!\!&&1\!\!&&\rone\!\!&& \\ 2\!\!&&2\!\!&&2\!\!&&\rtwo\!\!& \\ &&&&&&&\rthr \end{smallmatrix},
\begin{smallmatrix} &1\!\!&&1\!\!&&1\!\!& \\ 2\!\!&&2\!\!&&2\!\!&&2 \end{smallmatrix}
}="73",
(-98, 27)*+{
\begin{smallmatrix} \rthr \end{smallmatrix},
\begin{smallmatrix} \rone\!\!&&\rone \\ &\rtwo\!\!& \end{smallmatrix},
\begin{smallmatrix} 1 \end{smallmatrix}
}="12",
(-63, 27)*+{
\begin{smallmatrix} \rthr \end{smallmatrix},
\begin{smallmatrix} \rone \end{smallmatrix}
}="22",
(-35, 27)*+{
\begin{smallmatrix} \rthr \end{smallmatrix}
}="32",
(-14,-27)*+{
\begin{smallmatrix} \rtwo \end{smallmatrix}
}="42",
(  7,-27)*+{
\begin{smallmatrix} \rtwo \end{smallmatrix},
\begin{smallmatrix} &\rone\!\!& \\ 2\!\!&&2 \end{smallmatrix}
}="52",
( 35,-27)*+{
\begin{smallmatrix} &\rone\!\!& \\ \rtwo\!\!&&\rtwo \end{smallmatrix},
\begin{smallmatrix} &1\!\!&&1\!\!& \\ 2\!\!&&2\!\!&&2 \end{smallmatrix}
}="62",
( 70,-27)*+{
\begin{smallmatrix} &\rone\!\!&&\rone\!\!& \\ \rtwo\!\!&&\rtwo\!\!&&\rtwo \end{smallmatrix},
\begin{smallmatrix} &1\!\!&&1\!\!&&1\!\!& \\ 2\!\!&&2\!\!&&2\!\!&&2 \end{smallmatrix}
}="72",
(-98,-27)*+{
\begin{smallmatrix} \rone\!\!&&\rone \\ &\rtwo\!\!& \end{smallmatrix},
\begin{smallmatrix} 1 \end{smallmatrix}
}="11",
(-63,-27)*+{
\begin{smallmatrix} \rone \end{smallmatrix}
}="21",
(-35,-27)*+{
0
}="31",
\ar^{\begin{smallmatrix} 2 \end{smallmatrix}} "55";"65" 
\ar_{\begin{smallmatrix} 1 \end{smallmatrix}} "55";"44" 
\ar^{\begin{smallmatrix} 3 \end{smallmatrix}} "55";"54"
\ar^{\begin{smallmatrix} &1\!\!& \\ 2\!\!&&2 \end{smallmatrix}} "65";"75" 
\ar^{\begin{smallmatrix} 3 \end{smallmatrix}} "65";"64"
\ar^{\begin{smallmatrix} 3 \end{smallmatrix}} "75";"74"
\ar^{\begin{smallmatrix} 3 \end{smallmatrix}} "44";"43" 
\ar_{\begin{smallmatrix} 2 \end{smallmatrix}} "44";"32"
\ar^{\begin{smallmatrix} 1 \end{smallmatrix}} "54";"43"
\ar^{\begin{smallmatrix} 2\!\!& \\ &3 \end{smallmatrix}} "54";"53" 
\ar^{\begin{smallmatrix} &1\!\!&& \\ 2\!\!&&2\!\!& \\ &&&3 \end{smallmatrix}} "64";"63"
\ar^{\begin{smallmatrix} &1\!\!&&1\!\!&& \\ 2\!\!&&2\!\!&&2\!\!& \\ &&&&&3 \end{smallmatrix}} "74";"73"  
\ar^{\begin{smallmatrix} 2\!\!& \\ &3 \end{smallmatrix}} "43";"42" 
\ar^{\begin{smallmatrix} 2 \end{smallmatrix}} "53";"64" 
\ar^{\begin{smallmatrix} 1\!\!&& \\ &2\!\!& \\ &&3 \end{smallmatrix}} "53";"52" 
\ar^{\begin{smallmatrix} 1\!\!&& \\ &2\!\!& \\ &&3 \end{smallmatrix}} "63";"62"
\ar^{\begin{smallmatrix} &1\!\!& \\ 2\!\!&&2 \end{smallmatrix}} "63";"74"
\ar^{\begin{smallmatrix} 1\!\!&& \\ &2\!\!& \\ &&3 \end{smallmatrix}} "73";"72"  
\ar^{\begin{smallmatrix} 1\!\!&&1 \\ &2\!\!& \end{smallmatrix}} "12";"22"
\ar^{\begin{smallmatrix} 3 \end{smallmatrix}} "12";"11"
\ar^{\begin{smallmatrix} 1 \end{smallmatrix}} "22";"32"
\ar^{\begin{smallmatrix} 3 \end{smallmatrix}} "22";"21"
\ar^{\begin{smallmatrix} 3 \end{smallmatrix}} "32";"31"
\ar_{\begin{smallmatrix} 2 \end{smallmatrix}} "42";"31"
\ar_{\begin{smallmatrix} 1 \end{smallmatrix}} "52";"42"
\ar^{\begin{smallmatrix} 2 \end{smallmatrix}} "52";"62" 
\ar^{\begin{smallmatrix} &1\!\!& \\ 2\!\!&&2 \end{smallmatrix}} "62";"72"
\ar^{\begin{smallmatrix} 1\!\!&&1 \\ &2\!\!& \end{smallmatrix}} "11";"21"
\ar^{\begin{smallmatrix} 1 \end{smallmatrix}} "21";"31"
\ar (-119, 27);"12"
\ar (-119,-27);"11"
\ar "75";(94.5, 27)
\ar "73";(94.5, -3)
\ar "72";(94.5,-27)
\end{xy}
}
\quad
\rotatebox{90}{
\begin{xy}
(-108.5,0)*+{\boxed{\stitilt A}},
(  7, 27)*+{
0
}="55",
( 35, 27)*+{
\begin{smallmatrix} \rtwo \end{smallmatrix}
}="65",
( 70, 27)*+{
\begin{smallmatrix} 2 \end{smallmatrix},
\begin{smallmatrix} &\rone\!\!& \\ \rtwo\!\!&&\rtwo \end{smallmatrix}
}="75",
(-14, 27)*+{
\begin{smallmatrix} \rone \end{smallmatrix}
}="44",
(  7,  9)*+{
\begin{smallmatrix} \rthr \end{smallmatrix}
}="54",
( 35,  9)*+{
\begin{smallmatrix} 2\!\!& \\ &\rthr \end{smallmatrix},
\begin{smallmatrix} \rtwo \end{smallmatrix}
}="64",
( 70,  9)*+{
\begin{smallmatrix} 2 \end{smallmatrix},
\begin{smallmatrix} &1\!\!&& \\ 2\!\!&&2\!\!& \\ &&&\rthr \end{smallmatrix},
\begin{smallmatrix} &\rone\!\!& \\ \rtwo\!\!&&\rtwo \end{smallmatrix}
}="74",
(-14,  0)*+{
\begin{smallmatrix} \rthr \end{smallmatrix},
\begin{smallmatrix} \rone \end{smallmatrix}
}="43",
(  7, -9)*+{
\begin{smallmatrix} 3 \end{smallmatrix},
\begin{smallmatrix} \rtwo\!\!& \\ &\rthr \end{smallmatrix}
}="53",
( 35, -9)*+{
\begin{smallmatrix} 2\!\!& \\ &3 \end{smallmatrix},
\begin{smallmatrix} 2 \end{smallmatrix},
\begin{smallmatrix} &\rone\!\!&& \\ \rtwo\!\!&&\rtwo\!\!& \\ &&&\rthr \end{smallmatrix}
}="63",
( 70, -9)*+{
\begin{smallmatrix} &1\!\!&& \\ 2\!\!&&2\!\!& \\ &&&3 \end{smallmatrix},
\begin{smallmatrix} &1\!\!& \\ 2\!\!&&2 \end{smallmatrix},
\begin{smallmatrix} &\rone\!\!&&\rone\!\!&& \\ \rtwo\!\!&&\rtwo\!\!&&\rtwo\!\!& \\ &&&&&\rthr \end{smallmatrix}
}="73",
(-98, 27)*+{
\begin{smallmatrix} 1\!\!&&1\!\!&&1\!\!&&1 \\ &2\!\!&&2\!\!&&2\!\!& \end{smallmatrix},
\begin{smallmatrix} \rone\!\!&&\rone\!\!&&\rone \\ &\rtwo\!\!&&\rtwo\!\!& \end{smallmatrix}
}="12",
(-63, 27)*+{
\begin{smallmatrix} 1\!\!&&1\!\!&&1 \\ &2\!\!&&2\!\!& \end{smallmatrix},
\begin{smallmatrix} \rone\!\!&&\rone \\ &\rtwo\!\!& \end{smallmatrix}
}="22",
(-35, 27)*+{
\begin{smallmatrix} 1\!\!&&1 \\ &2\!\!& \end{smallmatrix},
\begin{smallmatrix} \rone \end{smallmatrix}
}="32",
(-14,-27)*+{
\begin{smallmatrix} 3 \end{smallmatrix},
\begin{smallmatrix} 1\!\!&& \\ &\rtwo\!\!& \\ &&\rthr \end{smallmatrix},
\begin{smallmatrix} \rone \end{smallmatrix}
}="42",
(  7,-27)*+{
\begin{smallmatrix} 3 \end{smallmatrix},
\begin{smallmatrix} 2\!\!& \\ &3 \end{smallmatrix},
\begin{smallmatrix} \rone\!\!&& \\ &\rtwo\!\!& \\ &&\rthr \end{smallmatrix}
}="52",
( 35,-27)*+{
\begin{smallmatrix} 2\!\!& \\ &3 \end{smallmatrix},
\begin{smallmatrix} &1\!\!&& \\ \rtwo\!\!&&2\!\!& \\ &&&3 \end{smallmatrix},
\begin{smallmatrix} \rone\!\!&& \\ &\rtwo\!\!& \\ &&\rthr \end{smallmatrix}
}="62",
( 70,-27)*+{
\begin{smallmatrix} &1\!\!&& \\ 2\!\!&&2\!\!& \\ &&&3 \end{smallmatrix},
\begin{smallmatrix} &\rone\!\!&&1\!\!&& \\ \rtwo\!\!&&\rtwo\!\!&&2\!\!& \\ &&&&&3 \end{smallmatrix},
\begin{smallmatrix} \rone\!\!&& \\ &\rtwo\!\!& \\ &&\rthr \end{smallmatrix}
}="72",
(-98,-27)*+{
\begin{smallmatrix} 1\!\!&& \\ &2\!\!& \\ &&\rthr \end{smallmatrix},
\begin{smallmatrix} 1\!\!&&1\!\!&&1\!\!&&1 \\ &2\!\!&&2\!\!&&2\!\!& \end{smallmatrix},
\begin{smallmatrix} \rone\!\!&&\rone\!\!&&\rone \\ &\rtwo\!\!&&\rtwo\!\!& \end{smallmatrix}
}="11",
(-63,-27)*+{
\begin{smallmatrix} 1\!\!&& \\ &2\!\!& \\ &&\rthr \end{smallmatrix},
\begin{smallmatrix} 1\!\!&&1\!\!&&1 \\ &2\!\!&&2\!\!& \end{smallmatrix},
\begin{smallmatrix} \rone\!\!&&\rone \\ &\rtwo\!\!& \end{smallmatrix}
}="21",
(-35,-27)*+{
\begin{smallmatrix} 1\!\!&& \\ &2\!\!& \\ &&\rthr \end{smallmatrix},
\begin{smallmatrix} 1\!\!&&1 \\ &\rtwo\!\!& \end{smallmatrix},
\begin{smallmatrix} \rone \end{smallmatrix}
}="31",
\ar^{\begin{smallmatrix} 2 \end{smallmatrix}} "55";"65" 
\ar_{\begin{smallmatrix} 1 \end{smallmatrix}} "55";"44" 
\ar^{\begin{smallmatrix} 3 \end{smallmatrix}} "55";"54"
\ar^{\begin{smallmatrix} &1\!\!& \\ 2\!\!&&2 \end{smallmatrix}} "65";"75" 
\ar^{\begin{smallmatrix} 3 \end{smallmatrix}} "65";"64"
\ar^{\begin{smallmatrix} 3 \end{smallmatrix}} "75";"74"
\ar^{\begin{smallmatrix} 3 \end{smallmatrix}} "44";"43" 
\ar_{\begin{smallmatrix} 2 \end{smallmatrix}} "44";"32"
\ar^{\begin{smallmatrix} 1 \end{smallmatrix}} "54";"43"
\ar^{\begin{smallmatrix} 2\!\!& \\ &3 \end{smallmatrix}} "54";"53" 
\ar^{\begin{smallmatrix} &1\!\!&& \\ 2\!\!&&2\!\!& \\ &&&3 \end{smallmatrix}} "64";"63"
\ar^{\begin{smallmatrix} &1\!\!&&1\!\!&& \\ 2\!\!&&2\!\!&&2\!\!& \\ &&&&&3 \end{smallmatrix}} "74";"73"  
\ar^{\begin{smallmatrix} 2\!\!& \\ &3 \end{smallmatrix}} "43";"42" 
\ar^{\begin{smallmatrix} 2 \end{smallmatrix}} "53";"64" 
\ar^{\begin{smallmatrix} 1\!\!&& \\ &2\!\!& \\ &&3 \end{smallmatrix}} "53";"52" 
\ar^{\begin{smallmatrix} 1\!\!&& \\ &2\!\!& \\ &&3 \end{smallmatrix}} "63";"62"
\ar^{\begin{smallmatrix} &1\!\!& \\ 2\!\!&&2 \end{smallmatrix}} "63";"74"
\ar^{\begin{smallmatrix} 1\!\!&& \\ &2\!\!& \\ &&3 \end{smallmatrix}} "73";"72"  
\ar^{\begin{smallmatrix} 1\!\!&&1 \\ &2\!\!& \end{smallmatrix}} "12";"22"
\ar^{\begin{smallmatrix} 3 \end{smallmatrix}} "12";"11"
\ar^{\begin{smallmatrix} 1 \end{smallmatrix}} "22";"32"
\ar^{\begin{smallmatrix} 3 \end{smallmatrix}} "22";"21"
\ar^{\begin{smallmatrix} 3 \end{smallmatrix}} "32";"31"
\ar_{\begin{smallmatrix} 2 \end{smallmatrix}} "42";"31"
\ar_{\begin{smallmatrix} 1 \end{smallmatrix}} "52";"42"
\ar^{\begin{smallmatrix} 2 \end{smallmatrix}} "52";"62" 
\ar^{\begin{smallmatrix} &1\!\!& \\ 2\!\!&&2 \end{smallmatrix}} "62";"72"
\ar^{\begin{smallmatrix} 1\!\!&&1 \\ &2\!\!& \end{smallmatrix}} "11";"21"
\ar^{\begin{smallmatrix} 1 \end{smallmatrix}} "21";"31"
\ar (-119, 27);"12"
\ar (-119,-27);"11"
\ar "75";(94.5, 27)
\ar "73";(94.5, -3)
\ar "72";(94.5,-27)
\end{xy}
}
\end{align*}
\caption{The exchange quivers of $\sttilt A$ and $\stitilt A$}
\label{A2}
}
\end{figure}

On Condition \ref{wide_fwide},
we prove more general properties for tilted algebras.
In the rest, we assume that $K$ is an algebraically closed field.

\begin{Th}\label{tilt_th}
Let $H$ be a hereditary algebra,
$T \in \mod H$ be a tilting module,
and $A:=\End_H(T)$.
Then the following assertions hold.
\begin{itemize}
\item[(1)]
If $\Sub_H \tau T$ has only finitely many indecomposable $H$-modules, 
then $\fwide A = \fLwide A$.
If $\Fac_H T$ has only finitely many indecomposable $H$-modules, 
then $\fwide A = \fRwide A$.
\item[(2)]
If $T$ is either preprojective or preinjective, 
then $\fwide A=\fLwide A=\fRwide A$.
\item[(3)]
Assume that $H$ is a hereditary algebra of extended Dynkin type.
We decompose $T$ as $T_{\ppr} \oplus T_{\reg} \oplus T_{\pin}$ with  
a preprojective module $T_{\ppr}$, 
a regular module $T_{\reg}$, 
and a preinjective module $T_{\pin}$.
If $T_{\reg} \ne 0$ and $T_{\pin} = 0$, 
then we have $\fwide A \supsetneq \fRwide A$, and
if $T_{\reg} \ne 0$ and $T_{\ppr} = 0$, 
then we have $\fwide A \supsetneq \fLwide A$.
\end{itemize}
\end{Th}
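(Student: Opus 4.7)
The plan is to dispose of the three parts in sequence, using Brenner--Butler tilting to pull questions about $\fwide A$ back to $\mod H$, where the hereditary case (Proposition \ref{fwide_MS}) can be deployed. Throughout I write $F:=\Hom_H(T,?) \colon \Fac_H T \xrightarrow{\sim} \Sub_A DT$ and $F':=\Ext_H^1(T,?) \colon \Sub_H \tau T \xrightarrow{\sim} DT^\perp$ for the two Brenner--Butler equivalences, and I use the two induced torsion pairs $(\Fac_H T, T^\perp)$ in $\mod H$ and $(DT^\perp, \Sub_A DT)$ in $\mod A$. A short preliminary observation is that, for any wide subcategory $\cW$ of $\mod A$ and the torsion pair just above, the two ``halves'' $\cW \cap DT^\perp$ and $\cW \cap \Sub_A DT$ are wide subcategories of $\mod A$ contained in $DT^\perp$ and $\Sub_A DT$ respectively, and their transports under $F'^{-1}$ and $F^{-1}$ are wide subcategories of $\Sub_H \tau T$ and $\Fac_H T$ inside $\mod H$.

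For Part (1), assume $\Sub_H \tau T$ has only finitely many indecomposables; I wish to show $\fwide A \subset \fLwide A$ (the other inclusion being Proposition \ref{left_func}). Given $\cW \in \fwide A$, its half $\cW \cap \Sub_A DT$ pulls back via $F$ to a functorially finite wide subcategory of $\mod H$, hence a left finite one by Proposition \ref{fwide_MS}; its half $\cW \cap DT^\perp$ pulls back via $F'$ to a wide subcategory of the representation-finite category $\Sub_H \tau T$, which is automatically left finite. I then reassemble via the two equivalences and argue that $\sT(\cW)$ is the extension-closure of two functorially finite torsion classes of $\mod A$; the stability of functorial finiteness under such extensions (which is the content of \cite[Theorem 2.6]{SikS} invoked in Lemma \ref{EJR_tors} (3)) then yields $\sT(\cW) \in \ftors A$, i.e., $\cW \in \fLwide A$. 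The $\Fac_H T$-version is proved dually via the opposite algebra.

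For Part (2), I need the following lemma: if $M \in \mod H$ is preprojective over a hereditary $H$, then $\Sub_H M$ contains only finitely many indecomposables, and dually $\Fac_H M$ is finite if $M$ is preinjective. This follows from the directed structure of the preprojective (resp. preinjective) component of the Auslander--Reiten quiver of $H$, together with the fact that indecomposable submodules of a preprojective module are preprojective of bounded dimension. Applying this lemma to $\tau T$ (resp. $T$), Part (1) immediately yields $\fwide A = \fLwide A = \fRwide A$.

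For Part (3), assume $H$ is extended Dynkin, $T_{\reg}\ne 0$ and $T_{\pin}=0$; I aim to produce $\cW \in \fwide A \setminus \fRwide A$ (the dual case being analogous). Choose an indecomposable regular summand $R$ of $T$ lying in a tube $\mathcal{T}$, and, patterned on the brick $I_3$ of Example \ref{MS_ex}, construct an indecomposable module $Y \in \mathcal{T}$ (inside $\Sub_H \tau T$, chosen so that it is a brick in $\mod H$ and distinct from all summands of $\tau T$) such that $I := F'(Y) \in \mod A$ is a brick; because $F'$ transports $Y$ to a module whose image is isomorphic to an injective-like object relative to the smaller torsion class $\Sub_A DT$, the subcategory $\Filt I$ turns out to be functorially finite in $\mod A$. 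However, the tube $\mathcal{T}$ provides an infinite family of indecomposable bricks in $\mathcal{T}$ mapping under $F'$ to an infinite family of bricks in $\Sub_A I$, which (by Corollary \ref{card_bound} and the dual of Proposition \ref{comm_3}) obstructs $\sF(I) \in \ftorf A$. Hence $\{I\} \notin \fRsbrick A$, so $\Filt I \notin \fRwide A$ while $\Filt I \in \fwide A$. The main obstacle I foresee is this Step in Part (3): producing the brick $I$ and showing that $\Filt I$ is functorially finite yet not right finite requires careful use of the Auslander--Reiten structure of tame tilted algebras in \cite[XVII.3]{SimS}, since the same regular tube that makes the construction possible also makes functorial finiteness subtle to verify; a secondary technicality in Part (1) is the reassembly step, where one must verify that the two halves of $\cW$ indeed recombine to an extension-closed union of two functorially finite torsion classes rather than something larger.
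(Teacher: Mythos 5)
Your overall skeleton — decompose a functorially finite wide subcategory $\cW$ along the tilting torsion pair in $\mod A$, handle the two halves, and reassemble via \cite[Theorem 2.6]{SikS} — is the right shape, but there are genuine gaps at each stage, and the paper in fact proceeds along a different internal route.

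In Part (1) you propose transporting $\cW\cap\Sub_A DT$ via $F^{-1}$ into $\mod H$ and appealing to Proposition \ref{fwide_MS} there. The problem is that $F^{-1}(\cW\cap\Sub_A DT)$ lives in $\Fac_H T$, and the Brenner--Butler equivalence respects the exact structure of the subcategory $\Fac_H T$, not the abelian structure of $\mod H$: the kernel of a map between objects of $\Fac_H T$ need not lie in $\Fac_H T$, and the kernel taken in the exact category $\Fac_H T$ (when it exists) can differ from the kernel in $\mod H$. So the claim that the transport is a wide subcategory of $\mod H$ is unjustified, and even if it were, left finiteness in $\mod H$ would not transparently produce left finiteness of $\sT_A(\cW_\cY)$ in $\mod A$. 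The paper avoids this entirely: it proves a self-contained Lemma~\ref{fwide_ok} about splitting torsion pairs $(\cX,\cY)$ with $\Ext^2_A(\cY,\cY)=0$ and $\cX$ representation-finite, and then checks that $(\Fac_A D(\tau T),\Sub_A DT)$ satisfies these hypotheses. What you dismiss as a ``secondary technicality'' — that $\sT(\cW)$ equals $\sT(\cW_\cY)*\sT(\cW_\cX)$ — is in fact the heart of the paper's argument; the key inclusion $\Fac\cW_\cX * \Fac\cW_\cY\subset \Fac\cW_\cY * \Fac\cW_\cX$ requires a pullback-pushout diagram chase using the splitting and $\Ext^2$ vanishing, and the functorial finiteness of $\sT(\cW_\cY)=\Fac\cW_\cY$ is established by another several-step argument (covariant finiteness of $\cW_\cY$, then \cite[Proposition 4.6]{AS1}), not by transport.

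In Part (2) there is a concrete error. If $T$ is preprojective then $\tau T$ is preprojective, so $\Sub_H\tau T$ is finite and (1) gives $\fwide A = \fLwide A$; but $\Fac_H T$ is not finite for a preprojective tilting module, so applying your lemma ``to $T$'' does not yield $\fwide A = \fRwide A$. The paper gets the second equality by observing that $A$ is also $\End_H(T')$ for some preinjective tilting $H$-module $T'$, and then applying the other half of (1) to $T'$; your proposal lacks this step.

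Part (3) is too sketchy to evaluate as a proof, and what detail there is does not match the required construction. You aim to exhibit a single brick $I$ with $\Filt I\in\fwide A\setminus\fRwide A$. In Example~\ref{MS_ex} this works because the brick $I_3$ is injective, making $\Filt I_3 = \add I_3$ trivially functorially finite; that special feature is not available in general. The paper instead builds a semibrick $\cS = \{P_1, M_2,\ldots,M_m\}$ with $P_1 = \Hom_H(T,V_1)$ projective and $M_2,\ldots,M_m$ running over all but one of the mouth modules of a standard stable tube in $\mod A/\langle e\rangle$, verifies $\Filt_A\cS\in\fwide A$ using $\add P_1 * \Filt(M_2,\ldots,M_m)$ and \cite[Theorem 2.6]{SikS}, and then shows $\sF_A(\cW)\notin\ftorf A$ by a restriction-to-a-quotient argument. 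Your ``injective-like object relative to the smaller torsion class'' heuristic does not capture why functorial finiteness holds, and your appeal to Corollary~\ref{card_bound} to obstruct right finiteness is not how the paper closes the argument (Corollary~\ref{card_bound} bounds the size of a right finite semibrick, but the semibrick $\{I\}$ has size one, so the bound gives no obstruction). You correctly flagged Part (3) as the main obstacle, but the approach you outline would need to be rebuilt from scratch along the lines of the cone/mouth analysis in \cite[XVII.2--3]{SimS}.
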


Now, we begin the proof of the theorem.
For additive full subcategories $\cC_1,\cC_2 \subset \mod A$, 
the notation $\cC_1*\cC_2$ denotes
the full subcategory of $\mod A$ consisting of all $M$
such that there exists a short exact sequence $0 \to M_1 \to M \to M_2 \to 0$ 
with $M_1 \in \cC_1$ and $M_2 \in \cC_2$.

\begin{Lem}\label{fwide_ok}
Let $(\cX,\cY)$ be a torsion pair in $\mod A$ satisfying the following conditions:
\begin{itemize}
\item the torsion pair $(\cX,\cY)$ is splitting,
\item for any $N_1,N_2 \in \cY$, we have $\Ext_A^2(N_1,N_2)=0$,
\item the torsion class $\cX$ has only finitely many indecomposable $A$-modules.
\end{itemize}
Then we have $\fwide A = \fLwide A$.
\end{Lem}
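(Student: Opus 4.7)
The plan is to start with an arbitrary $\cW \in \fwide A$, which corresponds to a semibrick $\cS$ via Proposition \ref{sbrick_wide}, and show that $\sT(\cS) \in \ftors A$; by definition this gives $\cW \in \fLwide A$. I will decompose $\cW$ along the torsion pair, treat the two pieces separately, and then combine them.

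First, since $(\cX,\cY)$ is splitting, every indecomposable lies in $\cX$ or $\cY$, from which one deduces both $\Ext_A^1(\cX,\cY) = 0$ and $\Ext_A^1(\cY,\cX) = 0$. Set $\cS_X := \cS \cap \cX$, $\cS_Y := \cS \cap \cY$, $\cW_X := \Filt \cS_X \subset \cX$, and $\cW_Y := \Filt \cS_Y \subset \cY$. An induction on filtration length, using $\Hom_A(\cX,\cY) = 0$ together with the two $\Ext^1$-vanishings above to split every mixed extension, proves that each $M \in \cW$ decomposes canonically as $M = \mathsf{t}M \oplus (M/\mathsf{t}M)$ with $\mathsf{t}M \in \cW_X$ and $M/\mathsf{t}M \in \cW_Y$, where $\mathsf{t}M$ denotes the torsion submodule for $(\cX,\cY)$. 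Consequently $\cW = \cW_X \oplus \cW_Y$ as subcategories of $\mod A$.

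The $\cX$-part is handled immediately by finiteness: $\sT(\cW_X) \subset \cX$ has only finitely many indecomposables, and a torsion class with finitely many indecomposables is of the form $\Fac M = \add M$ for $M$ the sum of its indecomposable objects, so $\sT(\cW_X) \in \ftors A$. For the $\cY$-part, the vanishing $\Hom_A(\cX,\cY) = 0$ implies that the $\cY$-summand of any left (respectively right) $\cW$-approximation of $N \in \mod A$ is a left (respectively right) $\cW_Y$-approximation, so $\cW_Y \in \fwide A$. To see that $\Fac \cW_Y$ is extension-closed (hence equals $\sT(\cW_Y)$), consider a short exact sequence $0 \to N_1 \to N \to N_2 \to 0$ with $N_i \in \Fac \cW_Y$ and surjections $W_i \twoheadrightarrow N_i$ from $W_i \in \cW_Y$; the kernels $K_i \subset W_i$ lie in $\cY$ as $\cY$ is closed under submodules, and the long exact sequence of $\Hom_A(W_2, -)$ applied to $0 \to K_1 \to W_1 \to N_1 \to 0$, combined with $\Ext_A^2(W_2,K_1) = 0$, gives
\begin{align*}
\Ext_A^1(W_2, W_1) \longrightarrow \Ext_A^1(W_2, N_1) \longrightarrow 0.
\end{align*}
Hence the pullback extension class of $N$ along $W_2 \to N_2$ lifts to some $0 \to W_1 \to E \to W_2 \to 0$ with $E \in \cW_Y$ (wide subcategory), and a horseshoe diagram chase yields a surjection $E \twoheadrightarrow N$, so $N \in \Fac \cW_Y$. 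Functorial finiteness of $\Fac \cW_Y$ follows from that of $\cW_Y$, whence $\sT(\cW_Y) \in \ftors A$.

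It remains to show $\sT(\cW) = \sT(\cW_X) \vee \sT(\cW_Y)$ is functorially finite. By Proposition \ref{comm_3}, choose support $\tau$-tilting modules $M_X, M_Y \in \sttilt A$ with $\Fac M_X = \sT(\cW_X)$ and $\Fac M_Y = \sT(\cW_Y)$. Using the Auslander--Reiten formula together with the Ext-vanishings $\Ext_A^1(\cX,\cY) = \Ext_A^1(\cY,\cX) = 0$ and the $\Ext_A^2$-vanishing on $\cY$, I argue that the cross-terms $\Hom_A(M_X, \tau M_Y)$ and $\Hom_A(M_Y, \tau M_X)$ vanish, so that $M_X \oplus M_Y$ (with duplicate indecomposable summands removed) is $\tau$-rigid and completes to a support $\tau$-tilting module whose $\Fac$ is $\sT(\cW_X) \vee \sT(\cW_Y) = \sT(\cW)$. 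The main obstacle is this combining step: summands of $M_Y$, being Ext-projectives in $\sT(\cW_Y) = \Fac \cW_Y$, need not themselves lie in $\cY$ (since $\cY$ is not closed under factors), so the direct Ext-vanishings between $\cX$ and $\cY$ do not immediately control the cross-terms. One must either analyze each such summand via its own decomposition through the splitting torsion pair, or bypass the explicit construction of a support $\tau$-tilting module by a lattice-theoretic argument showing that $\ftors A$ is closed under this particular join.
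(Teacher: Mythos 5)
Your reduction to the two pieces $\cW_\cX$ and $\cW_\cY$, the proof that $\sT(\cW_\cY)=\Fac\cW_\cY$ via the $\Ext^2$-vanishing, and the covariant finiteness of $\cW_\cY$ obtained by projecting a left $\cW$-approximation all match the paper's argument. But the proof is not complete: the final step, combining $\sT(\cW_\cX)$ and $\sT(\cW_\cY)$ into a functorially finite $\sT(\cW)$, is exactly the point you leave open, and it is the crux of the lemma. The join of two functorially finite torsion classes is \emph{not} functorially finite in general, so writing $\sT(\cW)=\sT(\cW_\cX)\vee\sT(\cW_\cY)$ buys nothing by itself, and your proposed route via $\tau$-rigidity of $M_X\oplus M_Y$ runs into precisely the obstruction you describe (the Ext-projectives of $\Fac\cW_\cY$ need not lie in $\cY$), so the cross-term vanishing is not established. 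The missing idea is a commutation relation: one shows $\Fac\cW_\cX * \Fac\cW_\cY \subset \Fac\cW_\cY * \Fac\cW_\cX$ (using $\Ext_A^1(N,L)=0$ for $N\in\cY$, $L\in\cX$, which is what splitting gives, to lift a surjection $N\twoheadrightarrow L_2$ along $L\twoheadrightarrow L_2$). This lets you sort every filtration so that the $\cY$-part sits as a submodule and the $\cX$-part as a quotient, collapsing $\sT(\cW)=\Filt(\Fac\cW_\cX\cup\Fac\cW_\cY)$ to the single product $\sT(\cW_\cY)*\sT(\cW_\cX)$, which is functorially finite by Sikko--Smal\o{} because each factor is. Without some such argument the lemma is not proved.

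A secondary point: your assertion that splitting yields $\Ext_A^1(\cX,\cY)=0$ is false. Splitting gives $\Ext_A^1(Y,X)=0$ for $X\in\cX$, $Y\in\cY$ (any extension $0\to X\to E\to Y\to 0$ is the canonical sequence of $E$), but extensions of a torsion module by a torsion-free one need not split; e.g.\ for the path algebra of $1\to 2$ with $\cX=\add S_1$ and $\cY=\add(S_2,P_1)$ one has $\Ext_A^1(S_1,S_2)\ne 0$. This does not damage your direct-sum decomposition of objects of $\cW$ (splitting alone suffices there, and one can simply define $\cW_\cX:=\cW\cap\cX$, $\cW_\cY:=\cW\cap\cY$), but you should not rely on it elsewhere.
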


\begin{proof}
We note that $\Ext_A^1(N,M)=0$ holds for any $M \in \cX$ and any $N \in \cY$,
because the torsion pair $(\cX,\cY)$ is splitting.

Let $\cW \in \fwide A$ and set $\cW_\cX:=\cW \cap \cX$ and $\cW_\cY:=\cW \cap \cY$.
It is sufficient to prove that $\sT(\cW) \in \ftors A$.

We first claim that $\Fac \cW_\cX * \Fac \cW_\cY
\subset \Fac \cW_\cY * \Fac \cW_\cX$ holds.
Let $L$ belong to $\Fac \cW_\cX * \Fac \cW_\cY$.
There exists a short exact sequence 
$0 \to L_1 \xrightarrow{f} L \xrightarrow{g} L_2 \to 0$ 
such that
$L_1 \in \Fac \cW_\cX$ and $L_2 \in \Fac \cW_\cY$.
Because $L_2 \in \Fac \cW_\cY$, 
there exists a surjection $h \colon N \to L_2$ with $N \in \cW_\cY$.
We have $N \in \cY$ and 
$L_1 \in \Fac \cW_\cX \subset \cX$, because $\cX$ is a torsion class.
Thus, we have $\Ext_A^1(N,L_1)=0$, 
so $\Hom_A(N,g) \colon \Hom_A(N,L) \to \Hom_A(N,L_2)$ is surjective.
There exists $h' \colon N \to L$ such that $h=gh'$. 
We have the following commutative diagram
with the rows exact:
\begin{align*}
\begin{xy}
(  0, 16) *+{0} = "01",
( 16, 16) *+{0} = "11",
( 32, 16) *+{N} = "21",
( 48, 16) *+{N} = "31",
( 64, 16) *+{0} = "41", 
(  0,  0) *+{0} = "00",
( 16,  0) *+{L_1} = "10",
( 32,  0) *+{L} = "20",
( 48,  0) *+{L_2} = "30",
( 64,  0) *+{0} = "40", 
\ar "01"; "11" \ar "11"; "21" \ar@{=} "21"; "31" \ar "31"; "41"
\ar "00"; "10" \ar^{f} "10"; "20" \ar^{g} "20"; "30" \ar "30"; "40"
\ar "11"; "10" \ar^{h'} "21"; "20" \ar^{h} "31"; "30"
\end{xy}.
\end{align*}
We have exact sequences
$L_1 \to \Coker h' \to \Coker h = 0$ by Snake Lemma and
$0 \to \Im h' \to L \to \Coker h' \to 0$ by definition.
We can easily see $\Im h' \in \Fac N \subset \Fac \cW_\cY$ and
$\Coker h' \in \Fac L_1 \subset \Fac \cW_\cX$.
Thus, we have $L \in \Fac \cW_\cY * \Fac \cW_\cX$.
Therefore, the claim $\Fac \cW_\cX * \Fac \cW_\cY
\subset \Fac \cW_\cY * \Fac \cW_\cX$ is proved.

By assumption, every indecomposable module in $\cW$ belongs to $\cX$ or $\cY$. 
Thus, we have $\sT(\cW)=\Filt(\Fac \cW_\cX \cup \Fac \cW_\cY)$.
Because $\Fac \cW_\cX * \Fac \cW_\cY
\subset \Fac \cW_\cY * \Fac \cW_\cX$,
we have $\sT(\cW) = \sT(\cW_\cY) * \sT(\cW_\cX)$.
If $\sT(\cW_\cY)$ and $\sT(\cW_\cX)$ are functorially finite,
then $\sT(\cW)$ is functorially finite,
see \cite[Theorem 2.6]{SikS}.

We would like to show the functorially finiteness of $\sT(\cW_\cY)$.

We prove that 
$\sT(\cW_\cY)=\Fac \cW_\cY$ by a similar argument to \cite[Proposition 2.13]{IT}.
Let $0 \to L_1 \to L_2 \to L_3 \to 0$ be 
a short exact sequence with $L_1,L_3 \in \Fac \cW_\cY$.
It is sufficient to show that $L_2 \in \Fac \cW_\cY$.
By assumption, there exists a surjection $f_3 \colon N_3 \to L_3$
with $N_3 \in \cW_\cY$.
Taking the pull back, 
we have the following commutative diagram with the rows exact
and $f_2$ surjective:
\begin{align*}
\begin{xy}
(  0, 16) *+{0} = "01",
( 16, 16) *+{L_1} = "11",
( 32, 16) *+{L'_2} = "21",
( 48, 16) *+{N_3} = "31",
( 64, 16) *+{0} = "41", 
(  0,  0) *+{0} = "00",
( 16,  0) *+{L_1} = "10",
( 32,  0) *+{L_2} = "20",
( 48,  0) *+{L_3} = "30",
( 64,  0) *+{0} = "40", 
\ar "01"; "11" \ar "11"; "21" \ar "21"; "31" \ar "31"; "41"
\ar "00"; "10" \ar "10"; "20" \ar "20"; "30" \ar "30"; "40"
\ar@{=} "11"; "10" \ar^{f_2} "21"; "20" \ar^{f_3} "31"; "30"
\end{xy}.
\end{align*}
By assumption again, there exists a surjection $g_1 \colon N_1 \to L_1$
with $N_1 \in \cW_\cY$.
Because $\cY$ is a torsion-free class, we have $\Ker g_1 \in \cY$.
By assumption, we get $\Ext_A^2(N_3,\Ker g_1)=0$.
Thus, we have an exact sequence $\Ext_A^1(N_3,N_1) \to \Ext_A^1(N_3,L_1) \to 0$,
so there exists the following commutative diagram with the rows exact and $g_2$ surjective:
\begin{align*}
\begin{xy}
(  0, 16) *+{0} = "01",
( 16, 16) *+{N_1} = "11",
( 32, 16) *+{L''_2} = "21",
( 48, 16) *+{N_3} = "31",
( 64, 16) *+{0} = "41", 
(  0,  0) *+{0} = "00",
( 16,  0) *+{L_1} = "10",
( 32,  0) *+{L'_2} = "20",
( 48,  0) *+{N_3} = "30",
( 64,  0) *+{0} = "40", 
\ar "01"; "11" \ar "11"; "21" \ar "21"; "31" \ar "31"; "41"
\ar "00"; "10" \ar "10"; "20" \ar "20"; "30" \ar "30"; "40"
\ar^{g_1} "11"; "10" \ar^{g_2} "21"; "20" \ar@{=} "31"; "30"
\end{xy}.
\end{align*}
Here, we have $L''_2 \in \cW_\cY$, because $\cW_\cY$ is extension closed.
Since $f_2$ and $g_2$ are surjective, $f_2g_2 \colon L''_2 \to L_2$ is surjective,
so we have $L_2 \in \Fac \cW_\cY$.
It is now proved that $\sT(\cW_\cY)=\Fac \cW_\cY$.

We claim that $\cW_\cY$ is covariantly finite in $\mod A$.
Let $L \in \mod A$.
There exists a left $\cW$-approximation $f \colon L \to M \oplus N$
with $M \in \cW_\cX$ and $N \in \cW_\cY$,
because $\cW$ is functorially finite and $(\cX,\cY)$ is splitting.
Compose the projection $p \colon M \oplus N \to N$,
then we have a left $\cW_\cY$-approximation $pf \colon L \to N$,
because $\Hom_A(\cX,\cY)=0$.
Therefore, $\cW_\cY$ is covariantly finite.

Now we can take a left $\cW_\cY$-approximation $A \to N$ of $A$.
Then we get $N \in \cW_\cY \subset \Fac N$ and $\Fac \cW_\cY=\Fac N$.
By \cite[Proposition 4.6]{AS1},
$\sT(\cW_\cY)=\Fac \cW_\cY=\Fac N$ is functorially finite.

On the other hand, it is clear that $\sT(\cW_\cX)$ is contained in $\cX$,
so $\sT(\cW_\cX)$ has only finitely many indecomposable modules.
Thus, $\sT(\cW_\cX)$ is functorially finite in $\mod A$.

We finally get that $\sT(\cW)=\sT(\cW_\cY)*\sT(\cW_\cX)$ is functorially finite.
\end{proof}

We can show Theorem \ref{tilt_th}.

\begin{proof}[Proof of Theorem \ref{tilt_th}]
(1)
We consider the first statement.
The other one is shown by taking $K$-duals.

The tilting $H$-module $T$
induces a torsion pair $(\Fac_H T,\Sub_H \tau T)$ in $\mod A$ and 
a torsion pair $(\Fac_A D(\tau T),\Sub_A DT)$ in $\mod A$.
We check that the torsion pair $(\Fac_A D(\tau T),\Sub_A DT)$ satisfies 
the conditions in Lemma \ref{fwide_ok}.

First, since $H$ is hereditary, 
$(\Fac_A D(\tau T),\Sub_A DT)$ is a splitting torsion pair in $\mod A$
by \cite[VI.5.7.\ Corollary]{ASS}.

Second, by \cite[VI.4.1.\ Lemma]{ASS},
the projective dimension of every module in $\Sub_A DT$ is at most one.
Thus, we have $\Ext_A(N_1,N_2)=0$ for any $N_1,N_2 \in \Sub_A DT$.

Third, 
the torsion-free class $\Sub_H \tau T$ in $\mod H$ 
has only finitely many indecomposable $H$-modules by assumption.
Thus, the torsion class $\Fac_A D(\tau T)$ in $\mod A$ 
has only finitely many indecomposable $A$-modules.

Thus, the conditions in Lemma \ref{fwide_ok} hold for $(\Fac_A D(\tau T),\Sub_A DT)$.
Thus, $\fwide A=\fLwide A$.

(2)
We consider the case that $T$ is preprojective.
The other case is shown similarly.

Since $T$ is preprojective, the torsion-free class $\Sub_H \tau T$ has
only finitely many indecomposable $H$-modules \cite[VIII.2.5.\ Lemma]{ASS}.
Thus, we get $\fwide A=\fLwide A$ by (1).

On the other hand, 
there exists some preinjective tilting $H$-module $T'$ with $A \cong \End_H(T')$. 
Since $T$ is preinjective, the torsion class $\Fac_H T'$ has
only finitely many indecomposable $H$-modules.
Thus, we also have $\fwide A=\fRwide A$ by (1).

(3)
We consider the first statement.
The other one is shown by taking $K$-duals.
We fully refer to \cite{SimS}.

By assumption, there exists a regular stable tube $\cC$ 
in the Auslander--Reiten quiver of $\mod H$
containing an indecomposable direct summand of $T_{\reg}$.
Let $V$ be the direct sum of the indecomposable direct summands of $T$ in $\cC$,
and decompose $T$ as $U \oplus V$.
We have $\Hom_H(V,U)=0$, because $H$ is a hereditary algebra of extended Dynkin type.

Now we define the \textit{cone} 
for each indecomposable $H$-module in the stable tube $\cC$ as in \cite{SimS}.
Let $r$ be the rank of the stable tube $\cC$.
Then $\cC$ is isomorphic to $\Z \mathbb{A}_\infty/\langle \tau^r \rangle$
as translation quivers.

We may assume that 
the set of vertices of $\Z \mathbb{A}_\infty/\langle \tau^r \rangle$
is $\{ 1,2,3,\ldots \} \times (\Z/r\Z)$,
and that the set of arrows is
\begin{align*}
&\{(a,b+r\Z) \to (a+1,b+r\Z) \mid a \in \{ 1,2,3,\ldots \}, \ b \in \{0,1,\ldots,r-1\} \}\\
&\cup \{(a+1,b+r\Z) \to (a,b+1+r\Z) \mid a \in \{ 1,2,3,\ldots \}, \ b \in \{0,1,\ldots,r-1\} \}.
\end{align*}
Fix an isomorphism $\cC \to \Z \mathbb{A}_\infty/\langle \tau^r \rangle$
and identify $\cC$ with $\Z \mathbb{A}_\infty/\langle \tau^r \rangle$ by this isomorphism.

Let $W$ be an indecomposable module in $\cC$ and 
$(a,b+r\Z)$ be its position,
then the cone $\Cone W$ is defined as the set
of indecomposable modules in $\cC$ located in 
\begin{align*}
\{ (c,b+d+r\Z) \mid c \in \{ 1,2,\ldots,a \}, \ d \in \{0,1,\ldots,a-c \}\}.
\end{align*}
For example, if $a=3$, this set is pictured as follows:
\begin{align*}
\begin{xy}
(  0, 10)*+{(3,b  +r\Z)}="30",
(-16,  0)*+{(2,b  +r\Z)}="20",
( 16,  0)*+{(2,b+1+r\Z)}="21",
(-32,-10)*+{(1,b  +r\Z)}="10",
(  0,-10)*+{(1,b+1+r\Z)}="11",
( 32,-10)*+{(1,b+2+r\Z)}="12",
\ar "10"; "20"
\ar "20"; "30"
\ar "30"; "21"
\ar "21"; "12"
\ar "20"; "11"
\ar "11"; "21"
\end{xy}.
\end{align*}

Now, let $V_1,\ldots,V_p$ be the distinct elements of $\ind_H V$.
They are indecomposable $H$-modules in $\cC$.
By \cite[XVII.1.7.\ Lemma]{SimS}, 
if $i \ne j$, then $\Cone V_i \subsetneq \Cone V_j$ or $\Cone V_i \supsetneq \Cone V_j$
or $\Cone V_i \cap \Cone V_j=\emptyset$ holds.
Thus, we may assume that 
$\Cone V_1 \supsetneq \Cone V_j$ or $\Cone V_1 \cap \Cone V_j=\emptyset$ holds
for $j=2,\ldots,p$.

We define an idempotent $e \in A$ as $U \oplus V \to V \to U \oplus V$.
Since $\Hom_H(V,U)=0$, 
we have $A/\langle e \rangle \cong \End_H(U)$
as $K$-algebras.
By \cite[XVII.2.3.\ Theorem]{SimS} (c), 
the indecomposable $A$-modules in $\Hom_H(U,\cC \cap \Fac_H T \cap V^\perp)$,
where $V^\perp$ is considered in $\mod A/ \langle e \rangle$, forms
a standard stable tube $\cC'$ in the Auslander--Reiten quiver of 
$\mod A/\langle e \rangle$.
Let $M_1,\ldots,M_m$ be all the distinct $A/\langle e \rangle$-modules 
in the mouth of the stable tube $\cC'$.

In this setting, $P_1:=\Hom_H(T,V_1)$ is a projective $A$-module.
If $V_1$ is located on $(a,b+r\Z)$ in $\cC$, 
we consider the $H$-module $W_1$ corresponding to the vertex $(a+1,b-1+r\Z)$.
From the proof of \cite[XVII.2.3.\ Theorem]{SimS},
we obtain the following properties: 
\begin{itemize}
\item 
$\Hom_H(T,W_1)$ is an $A/\langle e \rangle$-module 
lying in the mouth of the stable tube $\cC'$,
\item
there exists an $A/\langle 1-e \rangle$-module $N$ such that 
$\rad_A P_1 = \Hom_H(T,W_1) \oplus N$.
\end{itemize}
Therefore, we may assume $M_1=\Hom_H(T,W_1)$,
and we obtain that 
$M_1=\Hom_H(T,W_1)$ coincides with the maximum $A$-submodule $X$ of $P_1$ satisfying
$X \in \mod A/\langle e \rangle$, or equivalently, $M_1=\Hom_A(A/\langle e \rangle,P_1)$.

Now, we claim that the set $\cS:=\{P_1,M_2,\ldots,M_m \}$ 
is a semibrick in $\mod A$.
First, by a property of standard stable tubes,
we have $\{ M_2,\ldots,M_m \} \in \sbrick A$.
We also have $\Hom_A(P_1,M_i)=0$, since $M_i$ is an $A/\langle e \rangle$-module.
Next, $\Hom_A(M_i,P_1) \cong \Hom_A(M_i,M_1)=0$ holds for $i=2,\ldots,m$, 
because $M_i \in \mod A/\langle e \rangle$ and $M_1=\Hom_A(A/\langle e \rangle,P_1)$.
It remains to show $P_1 \in \brick A$.
Since $V_1$ is a direct summand of $V$, 
it satisfies $\Ext_H^1(V_1,V_1)=0$;
hence, $V_1 \in \brick H$ by \cite[XVII.1.6.\ Lemma]{SimS}.
Thus, $P_1 \in \brick A$.
Therefore, we have $\cS \in \sbrick A$.
We can consider a wide subcategory $\cW=\Filt_A \cS$ of $\mod A$.

We prove that $\cW \in \fwide A$.
We have $\cW=\add_A P_1*\Filt_A(M_2,\ldots,M_m)$,
since $P_1$ is projective.
By construction, $M_2,\ldots,M_m$ are $A/\langle e \rangle$-modules 
in the mouth of $\cC'$
and there also exists $M_1$ in the mouth.
Thus, a wide subcategory 
$\Filt_A(M_2,\ldots,M_m)$ belongs to $\fwide A/\langle e \rangle$;
hence, $\Filt_A(M_2,\ldots,M_m) \in \fwide A$.
On the other hand, $\add_A P_1$ is obviously functorially finite in $\mod A$.
Thus, we get that $\cW \in \fwide A$ by \cite[Theorem 2.6]{SikS}.

We finally prove that $\cW \notin \fRwide A$, or equivalently, 
$\sF_A(\cW) \notin \ftorf A$.
We assume that $\sF_A(\cW) \in \ftorf A$,
and deduce a contradiction.
This assumption implies that $\sF_A(\cW) \cap \mod A/\langle e \rangle$ 
is functorially finite in $\mod A/\langle e \rangle$.
The bricks $M_2,\ldots,M_m$ already belong to $\mod A/\langle e \rangle$,
and $M_1=\Hom_A(A/\langle e \rangle,P_1)$ holds.
Therefore, we have 
$\sF_A(M_1,\ldots,M_m)=\sF_A(\cW) \cap \mod A/\langle e \rangle 
\in \ftorf A/\langle e \rangle$;
hence, a wide subcategory $\Filt_A(M_1,\ldots,M_m)$ must belong to 
$\fwide A/\langle e \rangle$ by Proposition \ref{left_func}.
However, this is a contradiction, because
$M_1,\ldots,M_m$ are all the $A/\langle e \rangle$-modules 
in the mouth of the stable tube $\cC'$.
Thus, we obtain that $\sF_A(\cW) \notin \ftorf A$.

The proof is now complete.
\end{proof}

Finally, we obtain the following result for tilted algebras $A$ of extended Dynkin type.

\begin{Cor}\label{ex_Dynkin_table}
We use the setting of Theorem \ref{tilt_th} (3).
Then, the tilting $H$-module $T$ satisfies one of the conditions (1)--(5) 
in the following table, 
which shows whether $\fwide A = \fLwide A$ and $\fwide A = \fRwide A$
hold in each case.
\textup{
\begin{center}
\begin{tabular}{c|ccc|cc}
No. & $T_{\ppr}$ & $T_{\reg}$ & $T_{\pin}$ &
$\fwide A = \fLwide A$ & $\fwide A = \fRwide A$ \\
\hline
(1) & $\ne 0$ & $=   0$ & $=   0$ & Yes & Yes \\
(2) & $=   0$ & $=   0$ & $\ne 0$ & Yes & Yes \\
(3) & $\ne 0$ & $\ne 0$ & $=   0$ & Yes & No  \\
(4) & $=   0$ & $\ne 0$ & $\ne 0$ & No  & Yes \\
(5) & $\ne 0$ &         & $\ne 0$ & Yes & Yes 
\end{tabular}
\end{center}
}
\end{Cor}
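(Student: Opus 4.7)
The plan is a case analysis on the composition of $T$, invoking Theorem \ref{tilt_th} together with the classification of tilting modules over tame hereditary algebras in \cite[XVII.3]{SimS}. First I will verify that the five rows exhaust all possibilities. Over extended Dynkin type, every regular $H$-module has Grothendieck class lying in the radical of the Euler form, which is one-dimensional; a purely regular module therefore cannot generate $K_0(H)$ and so cannot be tilting. Combined with $T \ne 0$, this rules out the pattern $(T_{\ppr},T_{\reg},T_{\pin}) = (0,\ne 0,0)$ and leaves exactly the six patterns grouped in rows (1)--(5).

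Rows (1) and (2), where $T$ is purely preprojective or purely preinjective, are immediate from Theorem \ref{tilt_th}(2). The ``No'' entries in rows (3) and (4) are the content of Theorem \ref{tilt_th}(3). All remaining ``Yes'' entries are to be obtained from Theorem \ref{tilt_th}(1). In row (3), with $T_{\pin} = 0$, the $H$-module $\tau T = \tau T_{\ppr} \oplus \tau T_{\reg}$ is a direct sum of a preprojective module and a regular module; since over extended Dynkin type submodules of a fixed preprojective module are preprojective and form a finite set of isoclasses, and submodules of a regular module stay in the same tube and are likewise finite, the category $\Sub_H \tau T$ has only finitely many indecomposable $H$-modules, so Theorem \ref{tilt_th}(1) gives $\fwide A = \fLwide A$. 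Row (4) is dual. In row (5), where both $T_{\ppr}$ and $T_{\pin}$ are nonzero, the structural description of mixed tilting modules from \cite[XVII.3]{SimS} forces both $\Sub_H \tau T$ and $\Fac_H T$ to be of finite representation type, whence both equalities hold by Theorem \ref{tilt_th}(1).

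The main obstacle is the finite-representation-type verification in row (5): a priori a non-injective preinjective summand of $T_{\pin}$ produces a nonzero preinjective summand of $\tau T$ whose submodule category can be infinite (as happens already over the Kronecker quiver, where preinjectives of dimension vector $(2,1)$ admit a one-parameter family of regular submodules). Resolving this requires the fine structure of mixed tilting modules from \cite[XVII.3]{SimS}, which exploits the joint nonvanishing of $T_{\ppr}$ and $T_{\pin}$ to bound the preinjective part of $\tau T$ and its submodules (and dually to bound $\Fac_H T$), so that Theorem \ref{tilt_th}(1) applies on both sides simultaneously.
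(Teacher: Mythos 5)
Your overall case analysis agrees with the paper's, and rows (1), (2), and the ``No'' entries in (3) and (4) are handled identically (Theorem~\ref{tilt_th}(2) and~(3)). The main divergence is in row~(5). The paper does not go through Theorem~\ref{tilt_th}(1) at all: it simply cites \cite[XVII.3.3.\ Lemma]{SimS}, which says that when $T_{\ppr} \ne 0 \ne T_{\pin}$ the tilted algebra $A$ is representation-finite, and then both equalities hold trivially (representation-finiteness forces $\tors A = \ftors A$ and $\fwide A = \wide A$, so all three sets coincide with $\wide A$). Your route instead tries to verify the finiteness hypotheses of Theorem~\ref{tilt_th}(1) directly, and you correctly flag the obstruction --- a non-injective preinjective summand of $T$ makes $\Sub_H\tau T$ a priori infinite --- but you do not actually resolve it; you only gesture at ``the fine structure of mixed tilting modules.'' If you do want to go this way, the cleanest version is still to first invoke \cite[XVII.3.3.\ Lemma]{SimS} to get $A$ representation-finite, and then note that $\Sub_H \tau T$ and $\Fac_H T$ are equivalent (via Brenner--Butler) to subcategories of $\mod A$ and hence finite; but at that point you have already concluded the result from representation-finiteness directly, so Theorem~\ref{tilt_th}(1) is an unnecessary detour. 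For the ``Yes'' in row (3), your informal argument about submodules of preprojective and regular modules points in the right direction, but the paper's precise citation is Step~$3^\circ$ of the proof of \cite[XVII.3.5.\ Theorem]{SimS}; your claim that submodules of a fixed preprojective module form a finite set of isoclasses is not self-evident in the generality you state it (one needs to control $\Sub$ of arbitrary direct sums), so you should lean on the reference rather than the heuristic.
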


\begin{proof}
Since there never exists a regular tilting $H$-module \cite[XVII.3.4.\ Lemma]{SimS},
$T$ satisfies one of the conditions (1)--(5).

In the cases (1) and (2),
both of the conditions $\fwide A = \fLwide A$ and $\fwide A = \fRwide A$ hold 
by Theorem \ref{tilt_th} (2).

Next, we consider the case (3).
By Step $3^\circ$ of the proof of \cite[XVII.3.5.\ Theorem]{SimS}, 
$\Sub_H \tau T$ has only finitely many indecomposable $H$-modules.
Thus, we can apply Theorem \ref{tilt_th} (1) and obtain $\fwide = \fLwide A$.
On the other hand, Theorem \ref{tilt_th} (3) implies $\fwide \ne \fRwide A$.

Similarly, in the case (4), we get $\fwide = \fRwide A$ and $\fwide \ne \fLwide A$.

It remains to deal with the case (5). 
Then, $A$ is representation-finite \cite[XVII.3.3.\ Lemma]{SimS}.
Thus, we have both $\fwide A = \fLwide A$ and $\fwide A = \fRwide A$.
\end{proof}

\section*{Funding}

This work was supported by Japan Society for the Promotion of Science KAKENHI [JP16J02249].

\section*{Acknowledgement}

The author would like to thank his supervisor Osamu Iyama 
for giving him many interesting topics and thorough instructions.
He also thanks Laurent Demonet, 
Jan \v{S}t\!'ov\'{i}\v{c}ek, Takahide Adachi, and Aaron Chan for valuable discussions.
Moreover, he is grateful to the anonymous referees 
for helping him correct his mistakes and make arguments easier to understand.

\end{document}